\newcommand*{\addFileDependency}[1]{%
  \typeout{(#1)}
  \@addtofilelist{#1}
  \IfFileExists{#1}{}{\typeout{No file #1.}}
}
\newenvironment{breakablealgorithm}
{
		\begin{center}
			\refstepcounter{algorithm}
			\hrule height.8pt depth0pt \kern2pt
			\renewcommand{\caption}[2][\relax]{
				{\raggedright\textbf{\ALG@name~\thealgorithm} ##2\par}%
				\ifx\relax##1\relax 
				\addcontentsline{loa}{algorithm}{\protect\numberline{\thealgorithm}##2}%
				\else 
				\addcontentsline{loa}{algorithm}{\protect\numberline{\thealgorithm}##1}%
				\fi
				\kern2pt\hrule\kern2pt
			}
		}{
		\kern2pt\hrule\relax
	\end{center}
}
\newcommand{\startsupplement}[1][Supplementary Material]{%
  \clearpage
  \appendix
  \addappheadtotoc
  \begin{center}
    {\LARGE #1\par}
  \end{center}
}
\newtheorem{assumption}{Assumption}
\newtheorem{definition}{Definition}
\newtheorem{theorem}{Theorem}
\newtheorem{lemma}{Lemma}
\newtheorem{corollary}{Corollary}
\newtheorem{proposition}{Proposition}
\newcommand{\widebreve}[1]{\breve{#1}}
\newcommand{\bm}{\mathbf}
\newcommand{\bbm}{\boldsymbol}
\title{A robust and scalable estimation for high-dimensional volatility models}
\author{Kejun Chen$^{a}$, Yuchang Lin$^{b}$, Qianqian Zhu$^{a}$\\
\textit{$^{a}$Shanghai University of Finance and Economics, $^{b}$The University of Hong Kong}}
\date{}
\begin{document}
	\newcolumntype{L}[1]{>{\raggedright\arraybackslash}p{#1}}
	\newcolumntype{C}[1]{>{\centering\arraybackslash}p{#1}}
	\newcolumntype{R}[1]{>{\raggedleft\arraybackslash}p{#1}}
\maketitle
\begin{abstract}
This paper introduces a robust and computationally efficient estimation framework for high-dimensional volatility models in the BEKK-ARCH class. The proposed approach employs data truncation to ensure robustness against heavy-tailed distributions and utilizes a regularized least squares method for efficient optimization in high-dimensional settings. Non-asymptotic error bounds are established for the resulting estimators under heavy-tailed regimes, and the minimax optimal convergence rate is derived. Moreover, a robust BIC and a Ridge-type estimator are introduced for selecting the model order and the number of BEKK components, respectively, with their selection consistency established under heavy-tailed settings. Simulation studies demonstrate finite-sample performance of the proposed method, and two empirical applications illustrate its practical utility. The results show that the new framework outperforms existing alternatives in both computational speed and forecasting accuracy.
\end{abstract}
{\it Key words:} BEKK, data truncation, heavy-tailedness, minimax optimal rate, non-asymptotic bounds, regularized least squares estimation.

\newpage

	\section{Introduction}\label{sec:introduction}

	Volatility modeling plays a fundamental role in risk management, asset pricing and portfolio selection \citep{bauwens2006multivariate}. In today's financial markets, institutions routinely deal with large portfolios containing numerous correlated assets, resulting in high-dimensional data environments. Moreover, financial returns are characterized by heavy-tailed distributions and frequent outliers, phenomena often driven by investor herding behavior and sudden market shocks. These inherent features, especially the high-dimensionality and tail risk, complicate the reliable estimation of conditional covariances. Consequently, there is a growing need for estimation approaches that are not only capable of capturing complex dependence structures in high dimensions but also remain robust to extreme observations. This underscores the necessity of developing robust modeling frameworks tailored for high-dimensional financial assets in the era of big data.

	Over recent decades, considerable progress has been made in the development of methodologies for financial volatility modeling. The autoregressive conditional heteroskedastic (ARCH) and generalized autoregressive conditional heteroskedastic (GARCH) models provide a powerful framework for capturing the time-varying conditional variances in asset returns \citep{engle1982autoregressive,bollerslev1986generalized,francq2019garch}, which are crucial for risk management and asset pricing. 
	Nevertheless, such univariate models cannot capture the dependence structure across many assets. This gap has spurred multivariate extensions to model the conditional covariance or correlation matrices, leading to a variety of influential frameworks including the vector GARCH model \citep{bollerslev1988capital}, the constant conditional correlation (CCC) model \citep{bollerslev1990modelling}, the dynamic conditional correlation (DCC) model \citep{engle2002dynamic,aielli2013dynamic} and the Baba, Engle, Kraft and Kroner's (BEKK) GARCH model \citep{engle1995multivariate}.
	For these volatility models, Gaussian quasi-maximum likelihood estimation (QMLE) is a standard choice. However, the multivariate models fitted by QMLE have two limitations: 
	\begin{enumerate}
		\item[(L1)] (curse of dimensionality) the QMLE gets increasingly unstable and computationally intensive as the cross-sectional dimension $N$ grows. This is because optimizing the QMLE involves repeatedly inverting $N \times N$ conditional covariance matrices \citep{pakel2021,francq2019garch}. 
		\item[(L2)] (non-robustness) the QMLE is not robust for heavy-tailed financial data, as it is sensitive to outliers \citep{boudt2010robust} and requires high-order moment conditions on the data process \citep{comte2003asymptotic,hafner2009multivariate}. 
	\end{enumerate}

    To address (L1), structural restrictions are introduced. 
    For instance, factor GARCH models assume that volatility is driven by a small number of latent factors \citep{lanne2007multivariate, hafner2009factor}, while restricted BEKK specifications constrain parameter matrices to be diagonal or scalar. However, such models often prove overly restrictive, potentially failing to capture more complex dynamics and cross-sectional dependencies. 
    On the other hand, several alternative estimation procedures have been developed to enhance feasibility and computational efficiency for multivariate volatility models, including variance targeting estimation \citep{francq2016variance}, equation by equation approaches \citep{francq2016estimating}, composite likelihood methods \citep{engle2019large}, and penalized methods \citep{poignard2021high,yao20241}. 
    Nevertheless, these techniques either rely on strong assumptions or lack theoretical guarantees in high-dimensional settings. 
    
    To overcome (L2), a number of robust estimation methods have been proposed to mitigate the impact of heavy tails and outliers. Employing a robust loss function offers a useful approach in this regard. 
    Commonly used robust losses include the least absolute deviation (LAD) loss \citep{wang2007robust}, the quantile loss \citep{koenker1978regression}, and the Huber loss \citep{huber1964robust,sun2020adaptive}. However, LAD- and quantile-based criteria are often computationally expensive due to their non-smooth nature, limiting their applicability in high-dimensional settings. Moreover, Huber loss primarily down-weights outliers in response but does not account for heavy-tailed covariates. Consequently, estimators based on this loss may be less effective in multivariate volatility modeling, where both response and covariates are heavy-tailed. 
    An alternative strategy for robust estimation is data truncation. 
    For example, \cite{lugosi2021robust} developed trimmed estimators for the mean, whereas \cite{fan2021shrinkage} and \cite{wang2024robust} considered truncated variants of the sample covariance matrix. 
    There also exist hybrid approaches that integrate robust losses with truncation including the works of \cite{muler2002robust} on univariate ARCH models and \cite{muler2009robust} on ARMA models. However, research on robust estimation for high-dimensional volatility models remains limited, particularly for methods employing data truncation.

	To address the aforementioned research gaps, this paper introduces a scalable and robust estimation method for high-dimensional volatility models with heavy-tailed data. Specifically, we consider the BEKK-ARCH model, which admits a VAR representation after vectorization \citep{caporin2012we}. We propose a regularized least squares estimator (LSE) that incorporates data truncation to achieve robustness while imposing row-wise sparsity in the coefficient matrices to handle high dimensionality. This design preserves the computational efficiency of least squares while mitigating the impact of heavy-tailedness. Importantly, our truncation is applied to the returns themselves rather than to the derived response vector in the VAR representation, distinguishing our method from existing truncation-based robust methods for VAR models \citep{wang2023rate}. The proposed framework is not only novel in robust volatility estimation, but also offers broad applicability to other high-dimensional linear problems, such as robust tensor regressions. We validate its practical value through empirical studies in portfolio construction, where our method demonstrates superior performance in both estimation speed and predictive accuracy compared to existing benchmarks.

	Our main contributions are threefold. 
        First, we develop a robust and computationally efficient two-stage estimation framework for high-dimensional BEKK-ARCH models. The approach not only integrates data truncation with the VAR representation, but also introduces a novel inverse mapping to recover the original BEKK coefficient matrices. This recovery procedure is applicable to general bilinear structures of the form $\mathbf{A}\mathbf{X}\mathbf{B}$ and offers a broadly useful tool for estimating $\mathbf{A}$ and $\mathbf{B}$ given $\mathbf{X}$, which is of independent interest.

         Second, we establish matching non-asymptotic upper and minimax lower bounds for our estimators, thereby proving their minimax optimality under heavy-tailed data. Remarkably, these theoretical guarantees are achieved under only an element-wise $(4 + 4\epsilon)$-moment condition, making them valid across a wide range of heavy-tailed processes.
        Third, we develop a robust high-dimensional Bayesian information criterion (BIC) for order selection and a Ridge-type estimator for determining the number of BEKK-ARCH components, and we rigorously establish their selection consistency under heavy-tailed settings. To the best of our knowledge, this is the first work that provides theoretical guarantees for component number selection in BEKK models, a critical problem that has been largely overlooked in the existing literature.

	The remainder of this paper is organized as follows. Section~\ref{sec:methodology} introduces the BEKK-ARCH model and its VAR representation, and proposes a two-stage estimation procedure consisting of a robust regularized least-squares estimator for the VAR representation and a recovery step for the original BEKK coefficients. Section \ref{sec:algorithm and implementation} outlines the algorithm and implementation details for the proposed estimation procedure, including a BIC-based lag-order selection criterion and a Ridge-type estimator for determining the number of BEKK-ARCH components. Section \ref{sec:theory} establishes non-asymptotic theories for estimators in high-dimensional settings and proves the consistency of model-selection procedures. Simulation results and two real-data applications are presented in Sections \ref{sec:simulation}--\ref{sec:real data analysis}, respectively. Section \ref{sec:conclusion and discussion} provides concluding remarks and discussion. All technical proofs and additional materials are provided in the Supplementary Material.

	Throughout this paper, we adopt the following notation. Scalars are denoted by roman letters (e.g., $a,\tau$), vectors by bold lowercase (e.g., $\bm{u},\bbm{\eta}$), and matrices by bold uppercase letters (e.g., $\mathbf{A},\bm{\Theta}$). For a vector $\bm{u}\in\mathbb{R}^N$ and $q\in[0,\infty]$, define the $\ell_q$-norm as $\|\bm{u}\|_q=(\sum_{i=1}^N |u_i|^q)^{1/q}$ (with $\|\bm{u}\|_0=\sum_i \mathds{1}(u_i\neq 0)$ and $\|\bm{u}\|_\infty=\max_i |u_i|$). For a matrix $\mathbf{A}=(a_{ij})\in\mathbb{R}^{N_1\times N_2}$, let $\operatorname{vec}(\mathbf{A})$ be the column-stacking vectorization and $\operatorname{vech}(\mathbf{A})$ the half-vectorization. Their inverse operations are denoted by $\operatorname{vec}^{-1}(\cdot)$ and $\operatorname{vech}^{-1}(\cdot)$, respectively. The $j$-th column of $\mathbf{A}$ and the submatrix consisting of columns indexed by a set $J$ are denoted by $\mathbf{A}_{\cdot,j}$ and $\mathbf{A}_{\cdot,J}$, respectively. We define the Frobenius inner product as $\langle \mathbf{A},\mathbf{B}\rangle=\mathrm{tr}(\mathbf{A}^\mathrm{T} \mathbf{B})$, with the Frobenius norm $\|\mathbf{A}\|_{\mathrm{F}}=\langle \mathbf{A},\mathbf{A}\rangle^{1/2}$. The operator norm is given by $\|\mathbf{A}\|_{\mathrm{op}}=\sigma_1(\mathbf{A})$, and the nuclear norm by $\|\mathbf{A}\|_*=\sum_i \sigma_i(\mathbf{A})$, where $\sigma_i(\mathbf{A})$ denotes the $i$-th singular value of $\mathbf{A}$. The elementwise max norm is defined as $\|\mathbf{A}\|_{\infty,\infty}=\max_{i,j}|a_{ij}|$. For grouped (columnwise) norms, we write $\|\mathbf{A}\|_{p,q}=\big(\sum_{j=1}^{N_2}\|\mathbf{A}_{\bm{\cdot},j}\|_p^{\,q}\big)^{1/q}$. The Kronecker product is denoted by $\otimes$. We use $C$ to denote a generic positive constant that is independent of the dimension and sample size. For sequences $\{a_n\}$ and $\{b_n\}$, we write $a_n\gtrsim b_n$ if $a_n\ge C b_n$ for all $n$, and $a_n\asymp b_n$ if both $a_n\gtrsim b_n$ and $b_n\gtrsim a_n$ hold.
    The dataset in Section \ref{sec:real data analysis} and computer programs for the analysis are available at \url{https://github.com/CKKQ/High-dim-BEKK}. 
	
	\section{Methodology}\label{sec:methodology}
	\subsection{BEKK-ARCH model and its equivalent form}\label{sec:model}
	Consider a BEKK-ARCH model of order $p$ for an $N$-dimensional time series $\{\bm{r}_{t}\}$: 
	\begin{align}\label{eq:BEKK-ARCH}
		\begin{cases}
			\bm{r}_t = \bm{\Sigma}_t^\text{1/2}\bbm{\eta}_t,\\
			\bm{\Sigma}_t = \bbm{\Omega} +\sum_{i=1}^{p}\sum_{k=1}^{K_i} \mathbf{A}_{ik}\bm{r}_{t-i}\bm{r}_{t-i}^{\mathrm{T}}\mathbf{A}_{ik}^{\mathrm{T}},
		\end{cases}
	\end{align}
	where $\bm{r}_{t}, \bbm{\eta}_{t} \in \mathbb{R}^{N}$, $\{\bbm{\eta}_{t}\}$ are independent and identically distributed random vectors with zero mean and identity covariance matrix, $\bm{\Sigma}_{t}\in \mathbb{R}^{N \times N}$ is the conditional covariance matrix of $\bm{r}_{t}$ given the $\sigma$-field $\mathcal{F}_{t-1} = \sigma\{\bm{r}_{t-1}, \bm{r}_{t-2}, \ldots\}$, $\bbm{\Omega} \in \mathbb{R}^{N \times N}$ and $\mathbf{A}_{ik}\in \mathbb{R}^{N \times N}$ with $1\leq i\leq p$ and $1\leq k\leq K_i$ are parameter matrices, and $\bbm{\Omega}$ is assumed to be positive definite. 
	Model \eqref{eq:BEKK-ARCH} permits varying values of $K_i$ for $1\leq i\leq p$, while the restricted case $K_1=\cdots=K_p=K$ is typically employed for practical convenience. 
	Noted that $\mathbf{A}_{ik}$ and $-\mathbf{A}_{ik}$ can generate the same $\bm{\Sigma}_t$, making the mapping from $\mathbf{A}_{ik}$ to $\bm{\Sigma}_t$ non-unique. 
	To solve this identification issue, we assume one nonzero element of each $\mathbf{A}_{ik}$ to be positive as in \cite{engle1995multivariate}. 

        The BEKK model in \eqref{eq:BEKK-ARCH} is a widely used framework for modeling conditional covariance matrices of financial returns. However, its applicability in high-dimensional settings is severely hindered by two major limitations. First, it involves $O(N^2)$ parameters, making it susceptible to the curse of dimensionality. Second, conventional likelihood-based estimation becomes computationally unstable and often infeasible for large $N$, due to the need for repeated inversion of large conditional covariance matrices during optimization. 
        To reduce the dimension without inducing degeneracy in the conditional covariance, we impose row-wise sparsity on $\bbm{\Omega}$ and on each $\mathbf{A}_{ik}$, subject to the constraint that their diagonal elements remain nonzero.        

        We then leverage the VAR representation of model \eqref{eq:BEKK-ARCH} to enable the use of a regularized least squares estimator (LSE), which circumvents the computational bottlenecks of likelihood-based methods and ensures both tractability and theoretical robustness. 
        
        Specifically, model \eqref{eq:BEKK-ARCH} can be rewritten in the following linear form:
        \begin{align}\label{eq:vec linear form}
                \operatorname{vec}(\bm{r}_t\bm{r}_t^{\mathrm{T}}) = \operatorname{vec}(\bbm{\Omega}) + \sum_{i=1}^{p}\sum_{k=1}^{K_i} (\mathbf{A}_{ik} \otimes \mathbf{A}_{ik})\operatorname{vec}(\bm{r}_{t-i}\bm{r}_{t-i}^{\mathrm{T}}) + \operatorname{vec}(\mathbf{E}_t), 
        \end{align} 
        where $\mathbf{E}_t=\bm{\Sigma}_t^{1/2} (\bbm{\eta}_t\bbm{\eta}_t^{\mathrm{T}}-\mathbf{I}_N) \bm{\Sigma}_t^{1/2}$ with $\mathbb{E}(\mathbf{E}_t)=\bm{0}$. 
        To address the redundancy in $\operatorname{vec}(\bm{r}_t\bm{r}_t^{\mathrm{T}})$ caused by the symmetry of matrix $\bm{r}_t\bm{r}_t^{\mathrm{T}}$, we use its half-vectorization instead. 
        Denote $\bm{y}_t=\operatorname{vech}(\bm{r}_t\bm{r}_t^{\mathrm{T}}) \in \mathbb{R}^d$ with $d=N(N+1)/2$, $\bbm{\omega}=\operatorname{vech}(\bbm{\Omega}) \in \mathbb{R}^d$, and $\bm{\Phi}_i=\mathbf{D}_N^{\dagger}(\sum_{k=1}^{K_i}\mathbf{A}_{ik} \otimes \mathbf{A}_{ik})\mathbf{D}_N\in \mathbb{R}^{d \times d}$, where $\mathbf{D}_N \in \mathbb{R}^{N^2 \times d}$ is the duplication matrix and $\mathbf{D}_N^{\dagger}=(\mathbf{D}_N^{\mathrm{T}}\mathbf{D}_N)^{-1}\mathbf{D}_N^{\mathrm{T}} \in \mathbb{R}^{d \times N^2}$ is the corresponding elimination matrix; see Section S.6.1 of Supplementary Material for their detailed forms. Then, model \eqref{eq:BEKK-ARCH} can be reformulated into the following VAR form: 
        \begin{align}\label{eq:vech linear form}
                \bm{y}_t= \bbm{\omega}+\sum_{i=1}^{p}\bm{\Phi}_i\bm{y}_{t-i}+\bm{e}_t,
        \end{align}   
        where $\bm{e}_t=\operatorname{vech}(\mathbf{E}_t) \in \mathbb{R}^d$ is a zero-mean innovation vector. Unlike the traditional VAR model, which requires $\bm{e}_t$ to be white noise, model \eqref{eq:vech linear form} accommodates a much broader class of processes by allowing $\bm{e}_t$ to exhibit serial correlation and heteroskedasticity. This flexibility makes it particularly suitable for modeling financial returns. Importantly, the row-wise sparsity of $\bbm{\Omega}$ and $\mathbf{A}_{ik}$ in the original BEKK model is preserved in the parameter vector $\bbm{\omega}$ and matrices $\bm{\Phi}_i$. 
        
        Model \eqref{eq:vech linear form} allows for efficient estimation of $\bbm{\omega}$ and $\bm{\Phi}_i$'s via regularized LSE, a convex optimization problem well-suited for high-dimensional settings. 
        Since the conditional covariance matrix is given by $\bm{\Sigma}_t=\operatorname{vech}^{-1}(\bbm{\omega}+\sum_{i=1}^{p}\bm{\Phi}_i\bm{y}_{t-i})$, then we can estimate $\bm{\Sigma}_t$ by replacing $\bbm{\omega}$ and $\bm{\Phi}_i$'s with their regularized LSEs. 
        Based on $\bbm{\omega}$ and $\bm{\Phi}_i$'s, we can also recover the original BEKK-ARCH parameters $\bbm{\Omega}$ and $\mathbf{A}_{ik}$'s for estimating $\bm{\Sigma}_t$ via model \eqref{eq:BEKK-ARCH} directly, as detailed in Section \ref{sec:vech-to-bekk}. 
        
	\subsection{Estimation}\label{sec:estimation}

	To estimate model \eqref{eq:vech linear form} and the associated conditional covariance matrix $\bm{\Sigma}_t$ in heavy-tailed and high-dimensional settings, this section proposes a robust procedure based on data truncation and regularized least squares estimation (LSE). 
	
	Denote $\mathbf{X}=(\bm{x}_1,\dots,\bm{x}_T)^{\mathrm{T}} \in \mathbb{R}^{T \times (pd+1)}$ with $\bm{x}_t=(1,\bm{y}_{t-1}^{\mathrm{T}},\dots,\bm{y}_{t-p}^{\mathrm{T}})^{\mathrm{T}} \in \mathbb{R}^{pd+1}$, $\mathbf{Y}=(\bm{y}_1,\dots,\bm{y}_T)^{\mathrm{T}} \in \mathbb{R}^{T \times d}$, $\mathbf{E}=(\mathbf{e}_1,\dots,\mathbf{e}_T)^{\mathrm{T}} \in \mathbb{R}^{T \times d}$, and $\bm{\Theta}=(\bbm{\omega},\bm{\Phi}_1,\dots,\bm{\Phi}_p)^{\mathrm{T}} \in \mathbb{R}^{(pd+1) \times d}$. 
	Model \eqref{eq:vech linear form} can then be rewritten into the following matrix form: 
	\begin{align}\label{eq:stacked-VAR}
		\mathbf{Y} = \mathbf{X}\bm{\Theta} + \mathbf{E}.
	\end{align}
	Owing to the row-wise sparsity of $\bm{\Phi}_i$'s, the coefficient matrix $\bm{\Theta}$ is column-wise sparse. 
	Since financial returns $\{\bm{r}_{t}\}$ are typically heavy-tailed, a direct regularized LSE applied to \eqref{eq:stacked-VAR} can be unstable. To address this, we first apply an element-wise truncation to the original return series $\{\bm{r}_{t}\}$ for robustness, and then perform the regularized LSE on the truncated data. This strategy is particularly suitable for volatility modeling, as $\{\bm{y}_t\}$ is generally not mean-zero and its tail behavior is hence more difficult to characterize.

	Specifically, for a given truncation parameter $\tau>0$, define the truncated return component-wise as $r_{t,j}(\tau)=\operatorname{sign}(r_{t,j})\min\{|r_{t,j}|,\tau\}$ with $1 \leq t \leq T$ and $1 \leq j \leq N$, and let $\bm{r}_t(\tau)=(r_{t,1}(\tau),r_{t,2}(\tau),\dots,r_{t,N}(\tau))^{\mathrm{T}}$. 
	Based on this, we construct the truncated response matrix $\mathbf{Y}(\tau)=(\bm{y}_1(\tau),\dots,\bm{y}_T(\tau))^{\mathrm{T}}$ with $\bm{y}_t(\tau)=\operatorname{vech}(\bm{r}_t(\tau)\bm{r}_t^{\mathrm{T}}(\tau))$, and the truncated design matrix $\mathbf{X}(\tau)=(\bm{x}_1(\tau),\dots,\bm{x}_T(\tau))^{\mathrm{T}}$ with $\bm{x}_t(\tau)=(1,\bm{y}_{t-1}^{\mathrm{T}}(\tau),\dots,\bm{y}_{t-p}^{\mathrm{T}}(\tau))^{\mathrm{T}}$. 
	The regularized least squares estimator (LSE) for model \eqref{eq:stacked-VAR} based on these truncated data is then given by:
	\begin{align}\label{eq:loss function}
       \widehat{\bm{\Theta}}(\lambda, \tau)=\bigl(\widehat{\bbm{\omega}},\widehat{\bm{\Phi}}_1,\dots,\widehat{\bm{\Phi}}_p\bigr)^{\mathrm{T}} \in \operatorname{argmin}_{\bm{\Theta} \in \mathbb{R}^{(pd+1) \times d}} \limits \frac{1}{2T}\|\mathbf{Y}(\tau)-\mathbf{X}(\tau)\bm{\Theta}\|_{\mathrm{F}}^2+\lambda\|\bm{\Theta}\|_{1,1},
	\end{align}
	where $\lambda>0$ is a regularization parameter balancing model fit with complexity, and the truncation parameter $\tau$ controls the trade-off between truncation bias and robustness. Specifically, a smaller $\tau$ enhances robustness to heavy tails at the cost of increased truncation bias. The implementation details, including the algorithm for solving the regularized LSE in \eqref{eq:loss function} and the data-driven selection of $\lambda$ and $\tau$, are thoroughly discussed in Section \ref{sec:algorithm and implementation}.
	Simulation results in Section \ref{sec:simulation} indicate that $\widehat{\bm{\Theta}}(\lambda, \tau)$ outperforms the estimator without data truncation if the dimension is large and data are heavier-tailed.

	Using the regularized LSE, the conditional covariance matrix $\bm{\Sigma}_t$ can be estimated as 
	\begin{align}\label{eq:breve_Sigma_t}
       \widebreve{\bm{\Sigma}}_t=\mathcal{P}\!\left(\operatorname{vech}^{-1}\!\big(\widehat{\bm{\Theta}}^{\mathrm T}(\lambda,\tau)\bm x_t\big)\right),
	\end{align}
	where $\mathcal{P}(\cdot)$ denotes the projection operator onto the cone of symmetric positive definite (SPD) matrices. In practice, the estimated matrix $\operatorname{vech}^{-1}(\widehat{\bm{\Theta}}^{\mathrm{T}}\bm{x}_t)$ is typically positive definite under moderate sample sizes, as confirmed by the simulation results in Section S.7 of Supplementary Material. Nonetheless, to maintain methodological rigor, we retain the projection step in our estimation procedure.

 \subsection{Recovering BEKK-ARCH model from its vech form}\label{sec:vech-to-bekk}

        This section studies recovering $\bbm{\Omega}$ and $\mathbf{A}_{ik}$'s of model \eqref{eq:BEKK-ARCH} from $\bbm{\omega}$ and $\bm{\Phi}_i$'s of model \eqref{eq:vech linear form}, and then obtains estimators of $\bbm{\omega}$ and $\mathbf{A}_{ik}$'s using the regularized LSE at \eqref{eq:loss function}. 

        Recall that $\bbm{\omega}=\operatorname{vech}(\bbm{\Omega}) \in \mathbb{R}^d$ and $\bm{\Phi}_i=\mathbf{D}_N^{\dagger}(\sum_{k=1}^{K_i}\mathbf{A}_{ik} \otimes \mathbf{A}_{ik})\mathbf{D}_N\in \mathbb{R}^{d \times d}$, where $\mathbf{D}_N \in \mathbb{R}^{N^2 \times d}$ and $\mathbf{D}_N^{\dagger}=(\mathbf{D}_N^{\mathrm{T}}\mathbf{D}_N)^{-1}\mathbf{D}_N^{\mathrm{T}} \in \mathbb{R}^{d \times N^2}$ are the duplication matrix and elimination matrix, respectively.
        Applying the inverse duplication operation to $\bbm{\omega} = \operatorname{vech}(\bbm{\Omega})$ readily gives $\bbm{\Omega}=\operatorname{vec}^{-1}(\mathbf{D}_N\bbm{\omega})$. However, recovering the original coefficient matrices ${\mathbf{A}_{ik}}$ from $\bm{\Phi}_i$ is a nontrivial and non-unique problem due to two key difficulties: (D1) the mapping $\sum_{k=1}^{K_i}\mathbf{A}_{ik}\otimes\mathbf{A}_{ik}\mapsto\bm{\Phi}_i$ is many-to-one; (D2) the mapping $\{\mathbf{A}_{ik}\}_{k=1}^{K_i}\mapsto\sum_{k=1}^{K_i}\mathbf{A}_{ik}\otimes \mathbf{A}_{ik}$ is also many-to-one. Consequently, without imposing additional structural restrictions, it is challenging to uniquely recover $\sum_{k=1}^{K_i}\mathbf{A}_{ik}\otimes\mathbf{A}_{ik}$ from $\bm{\Phi}_i$, and then to identify $\{\mathbf{A}_{ik}\}_{k=1}^{K_i}$ from this Kronecker sum.

        We begin by examining the cause of (D1) and then present a solution to resolve it. From models \eqref{eq:vec linear form} and \eqref{eq:vech linear form}, it can be observed that $\sum_{k=1}^{K_i}\mathbf{A}_{ik}\otimes \mathbf{A}_{ik}$ corresponds to the coefficients of $\operatorname{vec}(\bm{r}_{t-i}\bm{r}_{t-i}^{\mathrm{T}})$, while $\bm{\Phi}_i$ corresponds to those of $\bm{y}_{t-i}=\operatorname{vech}(\bm{r}_{t-i}\bm{r}_{t-i}^{\mathrm{T}})$. Consequently, certain entries in $\bm{\Phi}_i$ merge coefficients that correspond to different cross-products in $\operatorname{vec}(\bm{r}_{t-i}\bm{r}_{t-i}^{\mathrm{T}})$, namely $r_{t-i,j_1}r_{t-i,j_2}$ and $r_{t-i,j_2}r_{t-i,j_1}$ for $j_1\neq j_2$. For illustration, consider the case where $N=2$ and $K_i=1$, and denote $\mathbf{A}_{i1}=(a_{jl})_{2\times2}$. Then we have $\bm{\Phi}_i=\mathbf{D}_2^{\dagger}\,(\mathbf{A}_{i1}\!\otimes \mathbf{A}_{i1})\,\mathbf{D}_2$,
        \begin{align*}
                \mathbf{A}_{i1}\otimes \mathbf{A}_{i1}=
                \begin{bmatrix}
                        a_{11}^2 & a_{11}a_{12} & a_{12}a_{11} & a_{12}^2\\
                        a_{11}a_{21} & a_{11}a_{22} & a_{12}a_{21} & a_{12}a_{22}\\
                        a_{21}a_{11} & a_{21}a_{12} & a_{22}a_{11} & a_{22}a_{12}\\
                        a_{21}^2 & a_{21}a_{22} & a_{22}a_{21} & a_{22}^2
                \end{bmatrix} \;\;\text{and}\;\; 
                \bm{\Phi}_i
                =\begin{bmatrix}
                        a_{11}^2 & 2a_{11}a_{12} & a_{12}^2\\
                        a_{11}a_{21} & a_{11}a_{22}+a_{12}a_{21} & a_{12}a_{22}\\
                        a_{21}^2 & 2a_{21}a_{22} & a_{22}^2
                \end{bmatrix}.
        \end{align*}
        Here, the entry $a_{11}a_{22}+a_{12}a_{21}$ in $\bm{\Phi}_i$ combines the two distinct terms $a_{11}a_{22}$ and $a_{12}a_{21}$ from $\mathbf{A}_{i1}\otimes \mathbf{A}_{i1}$, making them indistinguishable from their sum alone. In the general case with arbitrary $N$ and $K_i$, there are $g^2$ such coefficient pairs to distinguish, where $g=N(N-1)/2$.
        For each pair, we assign one parameter as the coefficient of $r_{t-i,j_1}r_{t-i,j_2}$, and set the coefficient of $r_{t-i,j_2}r_{t-i,j_1}$ as the corresponding entry in $\bm{\Phi}_i$ minus that parameter. These $g^2$ parameters are collected into an auxiliary matrix $\mathbf{W}_i\in\mathbb{R}^{g\times g}$. We then employ a padding operator $\mathcal{H}(\cdot,\cdot):\mathbb{R}^{d\times d}\times\mathbb{R}^{g\times g}\mapsto \mathbb{R}^{N^2\times N^2}$ to map $\bm{\Phi}_i$ back to $\sum_{k=1}^{K_i}\mathbf{A}_{ik}\otimes \mathbf{A}_{ik}$; see Section S.6.3 of the Supplementary Material for its detailed construction. When the coefficients are correctly assigned, $\mathcal{H}(\cdot,\cdot)$ exactly reconstructs the Kronecker sum:
        \begin{align}\label{eq:allocation operator}
                \mathcal{H}(\bm{\Phi}_i,\mathbf{W}_i) \;=\; \sum_{k=1}^{K_i} \mathbf{A}_{ik} \otimes \mathbf{A}_{ik}.
        \end{align}
        To address (D1), we further introduce a rearrangement operator $\mathcal{R}(\cdot):\mathbb{R}^{N^2\times N^2}\mapsto\mathbb{R}^{N^2\times N^2}$ such that $\mathcal{R}(\mathbf{M}\otimes \mathbf{M}) = \operatorname{vec}(\mathbf{M})\,\operatorname{vec}^\mathrm{T}(\mathbf{M})$ for any $\mathbf{M}\in\mathbb{R}^{N\times N}$. The  construction of $\mathcal{R}(\cdot)$ is provided in Section S.6.2 of Supplementary Material. Applying $\mathcal{R}(\cdot)$ to \eqref{eq:allocation operator}, then we have
        \begin{equation}\label{eq:pop-rearrangement}
                \mathcal{R}\left(\mathcal{H}(\bm{\Phi}_i,\mathbf{W}_i)\right)
                =\sum_{k=1}^{K_i}\operatorname{vec}(\mathbf{A}_{ik})\,\operatorname{vec}^\mathrm{T}(\mathbf{A}_{ik}) \quad\text{with}\quad \mathrm{rank}(\mathcal{R}\left(\mathcal{H}(\bm{\Phi}_i,\mathbf{W}_i)\right))\leq K_i.
        \end{equation}
        This low rank property serves as a criterion for identifying the Kronecker sum $\sum_{k=1}^{K_i}\mathbf{A}_{ik}\otimes \mathbf{A}_{ik}$ in (D1).

        We now address the identifiability issue in (D2). To ensure that the matrices $\mathbf{A}_{ik}$ can be uniquely recovered from their Kronecker sum $\sum_{k=1}^{K_i}\mathbf{A}_{ik}\otimes \mathbf{A}_{ik}$, we impose the constraints given in Assumption \ref{assumption:restricted parameter space}. Specifically, condition (i) in Assumption \ref{assumption:restricted parameter space} corresponds to a reparameterization that leaves the model unchanged, while condition (ii) introduces a mild ordering constraint to ensure uniqueness, which is generally nonrestrictive in practice. The validity of this reparameterization is formally established in Proposition \ref{prop:equivalence of orthohognol forms}. 
        Under Assumption \ref{assumption:restricted parameter space}, the matrix $\mathcal{R}(\mathcal{H}(\bm{\Phi}_i,\mathbf{W}_i))$ is symmetric positive semidefinite and possesses exactly $K_i$ positive eigenvalues. Its nonzero eigenpairs are given by $\big(\|\mathbf{A}_{ik}\|_{\mathrm{F}}^{2}, \operatorname{vec}(\mathbf{A}_{ik})/\|\mathbf{A}_{ik}\|_{\mathrm{F}}\big)$, sorted in decreasing order for $k=1,\dots,K_i$. This spectral structure guarantees a one-to-one correspondence between the Kronecker sum and $\{\mathbf{A}_{ik}\}_{k=1}^{K_i}$, thereby resolving the identifiability issue stated in (D2).

        \begin{assumption}[Restricted Parameter Space]\label{assumption:restricted parameter space}
                For each $1 \leq i \leq p$, the matrices $\mathbf{A}_{ik}$ satisfy: (i) $\langle\mathbf{A}_{ik_1},\mathbf{A}_{ik_2}\rangle = 0$ for all $1 \leq k_1 < k_2 \leq K_i$; (ii) $\mathbf{A}_{ik}$ are arranged in strictly descending order of Frobenius norms, i.e., $\|\mathbf{A}_{i1}\|_\mathrm{F} > \|\mathbf{A}_{i2}\|_\mathrm{F} > \cdots > \|\mathbf{A}_{iK_i}\|_\mathrm{F} > 0$.
        \end{assumption}

        \begin{proposition}[Orthogonalization invariance of BEKK coefficient matrices]\label{prop:equivalence of orthohognol forms}
                Let $\{\mathbf{B}_{ik}\}_{k=1}^{K_i^B} \subset \mathbb{R}^{N\times N}$ be the collection of coefficient matrices at lag $i$ in the BEKK model \eqref{eq:BEKK-ARCH}.
                Then there exists an integer $K_i^A \leq K_i^B$ and a set of matrices $\{\mathbf{A}_{ik}\}_{k=1}^{K_i^A} \subset \mathbb{R}^{N\times N}$ that are pairwise orthogonal, i.e., $\langle \mathbf{A}_{ij}, \mathbf{A}_{i\ell}\rangle = 0$ for all $j \neq \ell$, such that for any $\mathbf{M} \in \mathbb{R}^{N\times N}$,
                \[
                \sum_{k=1}^{K_i^B} \mathbf{B}_{ik} \, \mathbf{M} \, \mathbf{B}_{ik}^{\mathrm{T}}
                \;=\;
                \sum_{k=1}^{K_i^A} \mathbf{A}_{ik} \, \mathbf{M} \, \mathbf{A}_{ik}^{\mathrm{T}} .
                \]
        \end{proposition}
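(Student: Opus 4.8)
The plan is to reduce the claimed identity to a single statement about one symmetric positive semidefinite matrix and then read off the $\mathbf{A}_{ik}$'s from its spectral decomposition. The starting point is the elementary vectorization identity $\operatorname{vec}(\mathbf{B}\mathbf{M}\mathbf{B}^{\mathrm{T}})=(\mathbf{B}\otimes\mathbf{B})\operatorname{vec}(\mathbf{M})$, valid for all $\mathbf{B},\mathbf{M}\in\mathbb{R}^{N\times N}$. Summing over $k$, the quadratic map $\mathbf{M}\mapsto\sum_{k=1}^{K_i^B}\mathbf{B}_{ik}\mathbf{M}\mathbf{B}_{ik}^{\mathrm{T}}$ is represented, in vectorized coordinates, by the matrix $\sum_{k=1}^{K_i^B}\mathbf{B}_{ik}\otimes\mathbf{B}_{ik}$. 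Applying the rearrangement operator $\mathcal{R}(\cdot)$ of Section \ref{sec:vech-to-bekk} — which, being a fixed rearrangement of matrix entries, is a linear bijection on $\mathbb{R}^{N^2\times N^2}$ satisfying $\mathcal{R}(\mathbf{M}\otimes\mathbf{M})=\operatorname{vec}(\mathbf{M})\operatorname{vec}^{\mathrm{T}}(\mathbf{M})$ — and using linearity, we get $\mathcal{R}\big(\sum_{k}\mathbf{B}_{ik}\otimes\mathbf{B}_{ik}\big)=\mathbf{S}_i$, where $\mathbf{S}_i:=\sum_{k=1}^{K_i^B}\operatorname{vec}(\mathbf{B}_{ik})\operatorname{vec}^{\mathrm{T}}(\mathbf{B}_{ik})$. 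Since $\mathcal{R}$ is invertible, $\sum_{k}\mathbf{B}_{ik}\otimes\mathbf{B}_{ik}=\mathcal{R}^{-1}(\mathbf{S}_i)$ is uniquely determined by $\mathbf{S}_i$, and hence so is the quadratic map: any two collections of coefficient matrices at lag $i$ induce the same map $\mathbf{M}\mapsto\sum_k(\cdot)_{ik}\mathbf{M}(\cdot)_{ik}^{\mathrm{T}}$ if and only if they yield the same matrix $\mathbf{S}_i$.

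It then remains to construct pairwise orthogonal matrices realizing $\mathbf{S}_i$. By construction $\mathbf{S}_i$ is symmetric and positive semidefinite, with $\operatorname{rank}(\mathbf{S}_i)\le K_i^B$; set $K_i^A:=\operatorname{rank}(\mathbf{S}_i)$. Take the spectral decomposition $\mathbf{S}_i=\sum_{k=1}^{K_i^A}\mu_k\,\bm{v}_k\bm{v}_k^{\mathrm{T}}$ with $\mu_1\ge\cdots\ge\mu_{K_i^A}>0$ and $\{\bm{v}_k\}_{k=1}^{K_i^A}\subset\mathbb{R}^{N^2}$ orthonormal, and define $\mathbf{A}_{ik}:=\operatorname{vec}^{-1}(\sqrt{\mu_k}\,\bm{v}_k)$ for $k=1,\dots,K_i^A$. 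Then $\langle\mathbf{A}_{ij},\mathbf{A}_{i\ell}\rangle=\operatorname{vec}^{\mathrm{T}}(\mathbf{A}_{ij})\operatorname{vec}(\mathbf{A}_{i\ell})=\sqrt{\mu_j\mu_\ell}\,\bm{v}_j^{\mathrm{T}}\bm{v}_\ell=0$ for $j\neq\ell$, so the $\mathbf{A}_{ik}$'s are pairwise orthogonal (in fact $\|\mathbf{A}_{ik}\|_{\mathrm{F}}^2=\mu_k$ is strictly positive and nonincreasing in $k$, consistent with Assumption \ref{assumption:restricted parameter space}), and $\sum_{k=1}^{K_i^A}\operatorname{vec}(\mathbf{A}_{ik})\operatorname{vec}^{\mathrm{T}}(\mathbf{A}_{ik})=\sum_{k=1}^{K_i^A}\mu_k\bm{v}_k\bm{v}_k^{\mathrm{T}}=\mathbf{S}_i$. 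Invoking the equivalence of the first step with the roles of $\mathbf{B}$ and $\mathbf{A}$ interchanged, we conclude $\sum_{k=1}^{K_i^A}\mathbf{A}_{ik}\mathbf{M}\mathbf{A}_{ik}^{\mathrm{T}}=\sum_{k=1}^{K_i^B}\mathbf{B}_{ik}\mathbf{M}\mathbf{B}_{ik}^{\mathrm{T}}$ for every $\mathbf{M}\in\mathbb{R}^{N\times N}$, which is the assertion (in the degenerate case $\mathbf{S}_i=\bm{0}$ all $\mathbf{B}_{ik}$ vanish, $K_i^A=0$, and both sides equal $\bm{0}$).

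The only genuinely delicate point is the first step — showing that the quadratic-form map depends on the coefficient matrices solely through $\mathbf{S}_i$; everything afterward is routine application of the spectral theorem. If one prefers not to invoke the properties of $\mathcal{R}(\cdot)$, the same conclusion can be obtained by a direct index computation: under column-stacking vectorization the $(a,b)$ entry of $\sum_k\mathbf{B}_{ik}\mathbf{M}\mathbf{B}_{ik}^{\mathrm{T}}$ equals $\sum_{c,e}M_{ce}\,(\mathbf{S}_i)_{\,a+(c-1)N,\;b+(e-1)N}$, exhibiting the map explicitly as a fixed linear functional of $\mathbf{S}_i$. Either route works; I would present the $\mathcal{R}$-based argument in the main text since it ties directly into the identification machinery of Section \ref{sec:vech-to-bekk}, and relegate the elementary index verification to a remark.
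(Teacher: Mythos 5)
Your proposal is correct and follows essentially the same route as the paper's own proof: the vectorization identity, the rearrangement operator $\mathcal{R}(\cdot)$ reducing the problem to the positive semidefinite matrix $\sum_k \operatorname{vec}(\mathbf{B}_{ik})\operatorname{vec}^{\mathrm{T}}(\mathbf{B}_{ik})$, a spectral decomposition to define the pairwise orthogonal $\mathbf{A}_{ik}$, and inversion of $\mathcal{R}$ to recover the identity of the quadratic maps. The extra remark on the direct index computation is a fine optional addition but not needed.
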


        We are ready to estimate the parameter matrices $\bbm{\Omega}$ and $\mathbf{A}_{ik}$'s of the BEKK model in \eqref{eq:BEKK-ARCH} from the regularized LSEs $\widehat{\bbm{\omega}}$ and $\widehat{\bm{\Phi}}_i$.
        Given that $\bbm{\Omega}=\operatorname{vec}^{-1}(\mathbf{D}_N\bbm{\omega})$, a natural estimator is $\widehat{\bbm{\Omega}}=\mathcal{P}(\operatorname{vec}^{-1}(\mathbf{D}_N\widehat{\bbm{\omega}}))$.
        To estimate $\{\mathbf{A}_{ik}\}$ uniquely, we use the identity in \eqref{eq:pop-rearrangement} together with the minimal rank property. Under Assumption \ref{assumption:restricted parameter space}, the rank of $\mathcal{R}(\mathcal{H}(\bm{\Phi}_i,\mathbf{W}_i))$ attains the lower bound $K_i$ for the true matrices $\bm{\Phi}_i$ and $\mathbf{W}_i$. The key step is therefore to estimate the auxiliary matrix $\mathbf{W}_i$.  
        Given $\widehat{\bm{\Phi}}_i$, a natural convex relaxation for estimating $\mathbf{W}_i$ is 
        \begin{equation}\label{eq:nuclear-argmin}
                \widetilde{\mathbf{W}}_i
                \;\in\;
                \arg\min_{\mathbf{W}}
                \big\|
                \mathcal{R}\big(\mathcal{H}(\widehat{\bm{\Phi}}_i,\mathbf{W})\big)
                \big\|_*,
                \ \ \text{for} \ \ i=1,\dots,p.
        \end{equation}
        Then $\mathbf{A}_{ik}$ can be estimated via eigen-decomposition of $\mathcal{R}(\mathcal{H}(\widehat{\bm{\Phi}}_i,\widetilde{\mathbf{W}}_i))$, and the nuclear-norm-based estimator is given by $\widetilde{\mathbf{A}}_{ik}=\operatorname{vec}^{-1}\!\big(\sqrt{\widetilde{\lambda}_{i,k}^{+}}\,\widetilde{\bm{u}}_{i,k}\big)$ for $k=1,\ldots,K_i$, where $\widetilde{\lambda}_{i,k}^{+}=\max\{\widetilde{\lambda}_{i,k},0\}$ and $(\widetilde{\lambda}_{i,k},\widetilde{\bm{u}}_{i,k})$ are the top $K_i$ eigenpairs of $\mathcal{R}(\mathcal{H}(\widehat{\bm{\Phi}}_i,\widetilde{\mathbf{W}}_i))$.

        However, solving \eqref{eq:nuclear-argmin} incurs a computational cost of $O(N^6)$, which becomes computationally prohibitive for large $N$. Moreover, the nuclear norm objective may not sufficiently separate the leading $K_i$ eigenvalues from the remainder. To address these issues, we instead adopt a top-eigenvalue (TE) loss for estimating $\mathbf W_i$, defined as
        \begin{equation}\label{eq:te-argmin}
                \widehat{\mathbf W}_i
                \;\in\;
                \arg\min_{\mathbf W}
                \;\mathcal{L}_{\mathrm{TE}}\!\left(\mathcal{R}\big(\mathcal{H}(\widehat{\bm{\Phi}}_i,\mathbf W)\big)\right), 
                \ \ \text{for} \ \ i=1,\dots,p,
        \end{equation}
        where the TE loss $\mathcal{L}_{\mathrm{TE}}(\mathbf{M}) = -\sum_{j=1}^{K_i} \lambda_j(\mathbf{M}) + \gamma_{\mathrm{TE}}\sum_{j=K_i+1}^{N^2} \lambda_j^2(\mathbf{M})$ encourages a large gap between the top $K_i$ eigenvalues and the trailing ones, with the tuning parameter $\gamma_{\mathrm{TE}}>0$ controlling the penalization on trailing eigenvalues. In practice, $K_i$ can be estimated using the Ridge-type selector described in Section~\ref{subsec:model order selection}.      
        Crucially, evaluating this loss requires only the Frobenius norm and the top $K_i$ eigenvalues of $\mathcal{R}(\mathcal{H}(\widehat{\bm{\Phi}}_i,\mathbf W))$, reducing the computational cost from $O(N^6)$ to $O(K_iN^4)$.
        Similar to the nuclear-norm-based estimator $\widetilde{\mathbf{A}}_{ik}$, the TE-loss-based estimator of $\mathbf{A}_{ik}$ is then defined as $\widehat{\mathbf{A}}_{ik}=\operatorname{vec}^{-1}\!\big(\sqrt{\widehat{\lambda}_{i,k}^{+}}\,\widehat{\bm{u}}_{i,k}\big)$ for $k=1,\ldots,K_i$, where $\widehat{\lambda}_{i,k}^{+}=\max\{\widehat{\lambda}_{i,k},0\}$ and $(\widehat{\lambda}_{i,k},\widehat{\bm{u}}_{i,k})$ are the top $K_i$ eigenpairs of $\mathcal{R}(\mathcal{H}(\widehat{\bm{\Phi}}_i,\widehat{\mathbf{W}}_i))$.

        Finally, we construct the following plug-in estimators for the conditional covariance matrix based on the BEKK model in \eqref{eq:BEKK-ARCH} and the recovered BEKK coefficient matrices:
        \begin{align}\label{eq:tilde_Sigma_t_and_hat_Sigma_t}
                \widetilde{\bm{\Sigma}}_t
                =\widehat{\bbm{\Omega}}+\sum_{i=1}^{p}\sum_{k=1}^{K_i}
                \widetilde{\mathbf{A}}_{ik}\bm r_{t-i}\bm r_{t-i}^{\mathrm T}\widetilde{\mathbf{A}}_{ik}^{\mathrm T}
                \quad\text{and}\quad
                \widehat{\bm{\Sigma}}_t
                =\widehat{\bbm{\Omega}}+\sum_{i=1}^{p}\sum_{k=1}^{K_i}
                \widehat{\mathbf{A}}_{ik}\bm r_{t-i}\bm r_{t-i}^{\mathrm T}\widehat{\mathbf{A}}_{ik}^{\mathrm T}.
        \end{align}
        These estimators complement $\widebreve{\bm{\Sigma}}_t$ defined in \eqref{eq:breve_Sigma_t}. In Section~\ref{sec:simulation}, we compare their finite-sample performance and find that both $\widehat{\bm{\Sigma}}_t$ and $\widetilde{\bm{\Sigma}}_t$ consistently outperform $\widebreve{\bm{\Sigma}}_t$ in statistical accuracy. Moreover, $\widehat{\bm{\Sigma}}_t$ further outperforms $\widetilde{\bm{\Sigma}}_t$ in both statistical accuracy and computational efficiency. Therefore, we recommend using $\widehat{\bm{\Sigma}}_t$ in practice.

\section{Implementation Issues}\label{sec:algorithm and implementation}

        \subsection{Algorithm}
        To estimate BEKK model \eqref{eq:BEKK-ARCH} and the conditional covariance matrix $\bm{\Sigma}_t$, this subsection presents a two-stage estimation procedure:  
        \begin{enumerate}
                \item[(S1)] Obtain the regularized LSE $\widehat{\bm{\Theta}}$ in \eqref{eq:loss function} for model \eqref{eq:vech linear form}, as described in Algorithm \ref{alg:fista-core};
                \item[(S2)] Use $\widehat{\bm{\Theta}}$ to calculate $\widehat{\bbm{\Omega}}$ and $\{\widehat{\mathbf{A}}_{ik}\}$ or $\{\widetilde{\mathbf{A}}_{ik}\}$ for model \eqref{eq:BEKK-ARCH}, as described in Algorithm \ref{alg:recovery-omega-aik}.
        \end{enumerate}

        For (S1), we apply the fast iterative shrinkage-thresholding algorithm (FISTA) of \cite{beck2009fast} to solve the $\ell_{1,1}$-penalized convex problem in \eqref{eq:loss function}. We have several comments for Algorithm \ref{alg:fista-core}: 
        (i) The selected model order $p$, truncation parameter $\tau>0$ and regularization parameter $\lambda>0$ will be discussed in Sections \ref{subsec:model order selection}--\ref{subsec:tuning parameter selection}; 
        (ii) The tolerance parameter $\varrho>0$ is set to $\varrho=10^{-3}$ for stopping criterion, and the block size $B\in\mathbb{N}$ is set to $B=256$ for faster speed (see (v) for details); 
        (iii) $\mathcal{T}$ denotes the element-wise soft-thresholding operator to a matrix, i.e. $(\mathcal{T}_{\rho}(\mathbf{M}))_{lj}=\operatorname{sign}(m_{lj})\max\{|m_{lj}|-\rho, 0\}$ with $\mathbf{M}=(m_{lj})$. Moreover, $\mathbf{X}^\mathrm{T}(\tau) \mathbf{X}(\tau)$ and $\mathbf{X}^\mathrm{T} (\tau)\mathbf{Y}(\tau)$ can be computed before the loop to reduce computational burden;
        (iv) We simply set $\bm{\Theta}^{(0)}=\mathbf{0}$, as the objective function in \eqref{eq:loss function} is convex and FISTA converges from any starting point with the standard $O(1/n^{2})$ rate, where $n$ is the inner iteration index;  
        (v) All computations are accelerated using GPUs, with on-device memory serving as the limiting constraint. Directly solving \eqref{eq:loss function} without decomposition would require a peak memory footprint of $\mathcal{O}((pd+T)d)$, which becomes infeasible as $N$ increases. To mitigate this, we employ a blockwise strategy that updates $B$ columns at a time, thus reducing the peak device memory to $\mathcal{O}((pd+T)B)$ while preserving high computational throughput. In practice, $B$ is chosen as large as possible to fully utilize available device memory, thereby maximizing speed.

        For (S2), the optimization problems in \eqref{eq:nuclear-argmin} and \eqref{eq:te-argmin} are independent of the sample size and thus do not pose statistical estimation challenges.     
        Hence, we solve the problems in \eqref{eq:nuclear-argmin} and \eqref{eq:te-argmin} using the Adam optimizer during the padding stage of Algorithm \ref{alg:recovery-omega-aik}. 
        For moderate dimensions (e.g., $N \leq 20$), \eqref{eq:nuclear-argmin} or \eqref{eq:te-argmin} can be solved directly via Adam with random initialization. For larger $N$, we exploit the sparsity of $\mathcal{H}(\bm{\Phi}_i,\mathbf{W}_i)$ by truncating small values to zero after each iteration, thereby accelerating computation while preserving solution quality.

        The computational cost of our two-stage estimation method is $O(TN^2+p\sum_{i=1}^{p}K_iN^4)$ when the regularized LSE in \eqref{eq:loss function} is solved by FISTA as described in Algorithm~\ref{alg:fista-core}, and the recovery of BEKK coefficient matrices is performed using \eqref{eq:te-argmin}. In contrast, solving the QMLE for the BEKK model incurs a computational cost of $O(TN^3)$. Under the condition $T \gtrsim N^{1+1/\epsilon}\log(pd+1)$ specified in Theorem~\ref{thm:vech upper bound}, it follows that our method is computationally more efficient than the QMLE, even in high dimensions with large $N$. Moreover, this computational advantage becomes increasingly pronounced as the sample size $T$ grows.

        \algrenewcommand\algorithmicfor{\textbf{For}}
        \algrenewtext{EndFor}{\textbf{End for}}
        \begin{breakablealgorithm}
                \caption{Blockwise FISTA for $\widehat{\bm{\Theta}}$}
                \label{alg:fista-core}
                \begin{algorithmic}[1]
                        \State \textbf{Input:} $p \geq 1$, $\tau>0$, $\lambda>0$, $\varrho>0$, $B\in\mathbb{N}$,
                        $\mathbf{Y}(\tau)\!\in\!\mathbb{R}^{T\times d}$, $\mathbf{X}(\tau)\!\in\!\mathbb{R}^{T\times(pd+1)}$
                        \State \textbf{Initialize:} $\bm{\Theta}^{(0)}=\mathbf{U}^{(0)}=\mathbf{0} \in \mathbb{R}^{(pd+1)\times d}$, $t_0=1$
                        \State \textbf{Set:} step size $\eta=1/L$, where $L=\|\mathbf{X}(\tau)\|_{\mathrm{op}}^2/T$
                        \For{$j=1,\,1+B,\,1+2B,\,\dots$ \textbf{ set } $J \gets \{j,\dots,\min(j+B-1,d)\}$}
                        \For{$n=0,1,2,\dots$}
                        \State $\mathbf{G}^{(n)} \gets \frac{1}{T}\mathbf{X}(\tau)^{\mathrm{T}}(\mathbf{X}(\tau)\mathbf{U}^{(n)}_{\cdot,J}-\mathbf{Y}(\tau)_{\cdot,J})$
                        \State $\bm{\Theta}^{(n+1)}_{\cdot,J} \gets \mathcal{T}_{\lambda\eta}\!\big(\mathbf{U}^{(n)}_{\cdot,J} - \eta\mathbf{G}^{(n)}\big)$
                        \State $\mathbf{U}^{(n+1)}_{\cdot,J}\gets \bm{\Theta}^{(n+1)}_{\cdot,J}+\big(\frac{t_n-1}{t_{n+1}}\big)\big(\bm{\Theta}^{(n+1)}_{\cdot,J}-\bm{\Theta}^{(n)}_{\cdot,J}\big),\;\text{where}\;t_{n+1} = \frac{1+\sqrt{1+4t_n^2}}{2}$
                        \State \textbf{if} $\|\bm{\Theta}^{(n+1)}_{\cdot,J}-\bm{\Theta}^{(n)}_{\cdot,J}\|_\mathrm{F}\big/\big\|\bm{\Theta}^{(n)}_{\cdot,J}\|_\mathrm{F}<\varrho$, \textbf{ then break}
                        \EndFor
                        \State $\widehat{\bm{\Theta}}_{\,\cdot,J}\gets \bm{\Theta}^{(n+1)}_{\cdot,J}$
                        \EndFor
                        \State \textbf{Return:} $\widehat{\bm{\Theta}}$
                \end{algorithmic}
        \end{breakablealgorithm}

        \begin{breakablealgorithm}
                \caption{Recovery of $\widehat{\bbm{\Omega}}$ and $\{\widehat{\mathbf{A}}_{ik},\,\widetilde{\mathbf{A}}_{ik}\}$ from $\widehat{\bm{\Theta}}$}
                \label{alg:recovery-omega-aik}
                \begin{algorithmic}[1]
                        \State \textbf{Input:} $\{K_i\}_{i=1}^p$, $\widehat{\bm{\Theta}}\in\mathbb{R}^{(pd+1) \times d}$, $\mathbf{D}_N$, $\mathcal{P}(\cdot)$, $\mathcal{H}(\cdot,\cdot)$, $\mathcal{R}(\cdot)$, \textbf{method} $\in\{\textsf{nuclear},\textsf{TE}\}$
                        \State \textbf{Estimate $\bbm{\Omega}$ via PSD projection:} $\widehat{\bbm{\Omega}} \gets \mathcal{P}\!\left(\mathrm{vec}^{-1}\big(\mathbf{D}_N\,\widehat{\bbm{\omega}}\big)\right)$, where $\widehat{\bbm{\omega}} = \widehat{\bm{\Theta}}_{1,\cdot}$
                        \State \textbf{Padding:} For each $i=1,\dots,p$, set $\widehat{\bm{\Phi}}_i \gets \big(\widehat{\bm{\Theta}}_{\mathsf{R}_i,\bm{\cdot}}\big)^{\!\mathrm{T}}\in\mathbb{R}^{d\times d}$ with $\mathsf{R}_i=\{\,1+(i-1)d,\dots,id\,\}$. Then solve the optimization problem in \eqref{eq:nuclear-argmin} or \eqref{eq:te-argmin} to obtain $\widetilde{\mathbf{W}}_i$ or $\widehat{\mathbf{W}}_i$.
                        \State \textbf{Estimate $\mathbf{A}_{ik}$ via eigen-decomposition:} For each $i=1,\dots,p$ and $k=1,\dots,K_i$,
                        \If{\textbf{method} $=\textsf{nuclear}$}
                        \State Let $(\widetilde{\lambda}_{i,k},\widetilde{\bm{u}}_{i,k})$ be the top $K_i$ eigenpairs of $\mathcal{R}\big(\mathcal{H}(\widehat{\bm{\Phi}}_i,\widetilde{\mathbf{W}}_i)\big)$, and set
                        \[
                        \widetilde{\mathbf{A}}_{ik}\gets \mathrm{vec}^{-1}\!\Big(\sqrt{\widetilde{\lambda}_{i,k}^{+}}\,\widetilde{\bm{u}}_{i,k}\Big),
                        \qquad \widetilde{\lambda}_{i,k}^{+}=\max\{\widetilde{\lambda}_{i,k},0\}.
                        \]
                        \ElsIf{\textbf{method} $=\textsf{TE}$}
                        \State Let $(\widehat{\lambda}_{i,k},\widehat{\bm{u}}_{i,k})$ be the top $K_i$ eigenpairs of $\mathcal{R}\big(\mathcal{H}(\widehat{\bm{\Phi}}_i,\widehat{\mathbf{W}}_i)\big)$, and set
                        \[
                        \widehat{\mathbf{A}}_{ik}\gets \mathrm{vec}^{-1}\!\Big(\sqrt{\widehat{\lambda}_{i,k}^{+}}\,\widehat{\bm{u}}_{i,k}\Big),
                        \qquad \widehat{\lambda}_{i,k}^{+}=\max\{\widehat{\lambda}_{i,k},0\}.
                        \]	
                        \EndIf
                        \State \textbf{Return:} $\widehat{\bbm{\Omega}}$ and $\{\widetilde{\mathbf{A}}_{ik}\}$ if \textbf{method}$=\textsf{nuclear}$; $\widehat{\bbm{\Omega}}$ and $\{\widehat{\mathbf{A}}_{ik}\}$ if \textbf{method}$=\textsf{TE}$.
                \end{algorithmic}
        \end{breakablealgorithm}

        \subsection{Model selection}\label{subsec:model order selection}

        This section discusses on selecting the true lag order $p^{*}$ and the true numbers of components $K_{i}^{*}$'s. 
        First, we introduce a robust Bayesian information criterion (BIC) based approach to select the lag order $p^{*}$ for the high-dimensional BEKK model in \eqref{eq:BEKK-ARCH}. The BIC is defined as
        \begin{align}\label{eq:BIC}
                \text{BIC}(p) = \log\mathbb{L}(\widehat{\bm{\Theta}}_p) + \iota_d \left(\frac{\log(pd+1)}{T_{\textnormal{eff}}}\right)^{\frac{1+2\epsilon}{1+\epsilon}}\log(T),
        \end{align}
        where $\widehat{\bm{\Theta}}_p$ is the estimator obtained by fitting model \eqref{eq:loss function} with lag order $p$, $\mathbb{L}(\bm{\Theta}_p) = 1/(2T) \|\mathbf{Y}(\tau) - \mathbf{X}_p(\tau)\bm{\Theta}_p\|_\mathrm{F}^2$ with $\mathbf{X}_p(\tau)$ adapted to the lag order $p$, and $T_{\textnormal{eff}}=T/(\log(T))^2$. Here, $\epsilon>0$ and $\iota_d>0$ are tuning parameters of the penalty. For notational simplicity, the dependence of $\mathbb{L}(\cdot)$ on $p$ is suppressed. 
        The tuning parameter $\epsilon$ is introduced to establish selection consistency under the finite ($4 + 4\epsilon$)-th moment condition on data process in Assumption \ref{assumption:process conditions}(i). Meanwhile, $\iota_d$, which may depend on the dimension $d$, is also required to guarantee the consistency of BIC. 
        In practice, we recommend setting $\epsilon = 0.1$ and $\iota_d = 0.05$, as these values yield satisfactory performance in simulation studies of Section \ref{sec:simulation}.
        The lag order is selected by minimizing BIC over a candidate set of values for $p$, that is, $\widehat{p} = \arg\min_{1 \leq p \leq \overline{p}} \text{BIC}(p)$, where $\overline{p}$ is a pre-specified maximum lag order. A small value such as $\overline{p} = 5$ is typically sufficient in practice. The selection consistency of the proposed BIC is established in Theorem \ref{thm:selection consistency of model order}. 

        Next, we introduce Ridge-type estimators for the true numbers of components $K_i^*$ for $1 \leq i \leq p$. Under Assumption \ref{assumption:restricted parameter space}, the rank of $\mathcal{R}(\mathcal{H}(\bm{\Phi}_i^*,\mathbf{W}_i^*))$ equals $K_i^*$, with $\bm{\Phi}_i^*$ and $\mathbf{W}_i^*$ denoting the true coefficient and auxiliary matrices, respectively. This motivates us to estimate $K_i^*$ by the rank of $\mathcal{R}(\mathcal{H}(\bm{\Phi}_i^*,\mathbf{W}_i^*))$, using the estimates $\widehat{\bm{\Phi}}_i$ in \eqref{eq:loss function} and $\widetilde{\mathbf{W}}_i$ in \eqref{eq:nuclear-argmin}.
        Let $\widetilde{\lambda}_{i,1} \geq \widetilde{\lambda}_{i,2} \geq \cdots \geq \widetilde{\lambda}_{i,N^2} $ be the eigenvalues of $\mathcal{R}(\mathcal{H}(\widehat{\bm{\Phi}}_i,\widetilde{\mathbf{W}}_i))$. We estimate $K_i^*$ via the following Ridge-type estimator \citep{xia2015consistently,wang2024highdimtensor}:
        \begin{align}\label{eq:Ridge-type estimator}
                \widehat{K}_i = \operatorname{argmin}_{1 \leq k \leq \overline{K}} \limits \frac{\widetilde{\lambda}_{i,k+1}+c(N,T)}{\widetilde{\lambda}_{i,k}+c(N,T)}, \text{ for } 1 \leq i \leq p,
        \end{align}
        where $c(N,T)>0$ is a constant depending on $N$ and $T$, and $\overline{K}$ is a pre-specified upper bound satisfying $K_i^* \leq \overline{K}$ for all $i$. 
        The consistency of $\widehat{K}_i$, established in Theorem \ref{thm:selection consistency of K_i}, requires appropriate choices of $\overline{K}$ and $c(N, T)$. We recommend choosing $\overline{K}$ sufficiently large to avoid underestimating $K_i^*$. For $c(N,T)$, the form $\alpha N(Np \log(T)/T_\textnormal{eff})^{\epsilon/(1+\epsilon)}$ with $\alpha=10^{-3}$ and $\epsilon=0.1$ performs well in practice, as demonstrated in Section \ref{sec:simulation}.

        \subsection{Tuning parameter selection}\label{subsec:tuning parameter selection}

        To calculate the regularized LSE $\widehat{\bm{\Theta}}$ defined in \eqref{eq:loss function} via Algorithm \ref{alg:fista-core}, appropriate choices of the regularization parameter $\lambda$ and the truncation parameter $\tau$ are required. Although Theorem \ref{thm:vech upper bound} provides optimal values for these parameters, they depend on unknown quantities. Instead, we adopt a rolling forecasting validation scheme to jointly select $\lambda$ and $\tau$. 

        A two-dimensional grid is constructed for the tuning parameters. The grid for $\lambda$ is scaled adaptively according to the data, while the grid for $\tau$ is set to a data-adaptive interval $[\tau_{\min}, \tau_{\max}]$, where $\tau_{\min}$ and $\tau_{\max}$ are empirical quantiles of the observed data. For example, we may take $\tau_{\min} = \operatorname{median}_{t,j}|y_{t,j}|$ and $\tau_{\max} = \max_{t,j}|y_{t,j}|$.
        For each pair $(\lambda,\tau)$, a one-step-ahead forecasting procedure is implemented using an expanding moving window. The forecasting performance is evaluated via the mean squared forecast error (MSFE):
        \begin{align*}
                \operatorname{MSFE}\left(\lambda,\tau\right) = \frac{1}{T_{1}} \sum_{t = T_{0}}^{T_{0} +T_{1}-1} \| \bm{y}_{t+1} - \widehat{\bm{\Theta}}^{(t)\mathrm{T}}(\lambda, \tau) \bm{x}_{t+1}\|_2^2, 
        \end{align*}
        where $T_{0}$ and $T_{1}$ denote the lengths of the training and validation sets, respectively; $\widehat{\bm{\Theta}}^{(t)}(\lambda, \tau)$ is the regularized LSE estimated from the training data up to time $t$, with $t=T_{0},\ldots,T_{0}+T_{1}-1$.
        Consequently, the values of $\lambda$ and $\tau$ are selected by minimizing $\operatorname{MSFE}(\lambda, \tau)$ over the two-dimensional grid. 

        In addition, we also need to choose the tuning parameter $\gamma_{\mathrm{TE}}$ for Algorithm \ref{alg:recovery-omega-aik} when the padding step is solved via \eqref{eq:te-argmin}. The aforementioned rolling forecasting validation scheme can be similarly used to select $\gamma_{\mathrm{TE}}$. Specifically, for each candidate value of $\gamma_{\mathrm{TE}}$, we conduct a one-step-ahead forecasting procedure for the BEKK conditional covariance matrix using an expanding window. The forecasting performance is evaluated by
        \begin{align*}
                \operatorname{MSFE}(\gamma_{\mathrm{TE}})
                =
                \frac{1}{T_1}
                \sum_{t=T_0}^{T_0+T_1-1}
                \Bigl\|
                \bm r_{t+1}\bm r_{t+1}^\top
                -
                \widehat{\bbm \Omega}
                -
                \sum_{i=1}^{p}\sum_{k=1}^{K_i}
                \widehat{\bm A}_{ik}^{(t)}(\gamma_{\mathrm{TE}})
                \bm r_{t+1-i}\bm r_{t+1-i}^{\mathrm{T}}
                \bigl(\widehat{\bm A}_{ik}^{(t)}(\gamma_{\mathrm{TE}})\bigr)^{\mathrm{T}}
                \Bigr\|_{\mathrm{F}}^2,
        \end{align*}
        where $\widehat{\bbm{\Omega}} = \mathcal{P}(\mathrm{vec}^{-1}(\mathbf{D}_N\,\widehat{\bm{\Theta}}_{1,\cdot}))$, and $\widehat{\bm A}_{ik}^{(t)}(\gamma_{\mathrm{TE}})$ denotes the parameter estimates recovered from \eqref{eq:te-argmin} using observations up to time $t$ under the candidate value $\gamma_{\mathrm{TE}}$. The optimal $\gamma_{\mathrm{TE}}$ is selected by minimizing $\operatorname{MSFE}(\gamma_{\mathrm{TE}})$ over the predefined grid for $\gamma_{\mathrm{TE}}$.

\section{Theoretical Analysis}\label{sec:theory}
        In this section, we derive non-asymptotic error bounds for the regularized LSE of model \eqref{eq:vech linear form}, and demonstrate that its convergence rate is minimax optimal. We further extend the analysis to obtain estimation error bounds for the coefficient matrices of BEKK model \eqref{eq:BEKK-ARCH}. Finally, we establish selection consistency for both the BIC criterion and the Ridge-type selector. 
        Throughout this section, the superscript $*$ is used to denote the true values of the coefficient matrices.

        \subsection{Error bounds for the regularized LSE}\label{subsec:vech model}

        Denote by $\mathcal{S}_j$ the support set of the $j$th column of $\bm{\Theta}^*$, with cardinality $|\mathcal{S}_j| = s_j \leq s$. To derive the error bounds for the regularized LSE $\widehat{\bm{\Theta}}$ at \eqref{eq:loss function}, we first introduce the following conditions on the process $\{\bm{r}_t\}$.

        \begin{assumption}[Process]\label{assumption:process conditions}
                (i) $\{\bm{r}_t\}$ is strictly stationary, and for some $\epsilon \in (0,1]$, it holds that $\mathbb{E}|r_{t,j}|^{4+4\epsilon} \leq M_{4+4\epsilon}$ for all $1 \leq j \leq N$. (ii) $\{\bm{r}_t\}$ is $\alpha$-mixing with coefficients $\alpha(\ell) = O(\zeta^\ell)$, where $\zeta$ possibly depends on $N$ such that $0 \leq \zeta(N) \leq \bar{\zeta}$ for some constant $\bar{\zeta} < 1$.
        \end{assumption}

        Assumption \ref{assumption:process conditions}(i) relaxes the commonly used sub-Gaussian condition in high-dimensional time series analysis, thereby allowing for potentially heavy-tailed distributions. The $\alpha$-mixing condition in Assumption \ref{assumption:process conditions}(ii) is standard in time series modeling for characterizing temporal dependence. As showed by \cite{boussama2011stationarity} under mild conditions, the process defined by BEKK model \eqref{eq:BEKK-ARCH} is geometrically $\beta$-mixing, which implies that the $\alpha$-mixing condition in Assumption \ref{assumption:process conditions}(ii) holds.

        \begin{theorem}[Upper bounds for regularized LSE]\label{thm:vech upper bound}
                Suppose that Assumption \ref{assumption:process conditions} holds. Denote $T_{\textnormal{eff}}=T/(\log(T))^2$, $M = \max\{M_{4+4\epsilon},1\}$, and $\bm{\Gamma}_x = \mathbb{E}(\bm{x}_t\bm{x}_t^{\mathrm{T}})$. If $T \gtrsim N^{1+1/\epsilon}\log(pd+1)$,
                \begin{align*}
                        \tau \asymp \left(\frac{M T_{\textnormal{eff}}}{\log(pd+1)}\right)^{\frac{1}{4+4\epsilon}} \text{ and } \lambda \asymp s\|\bm{\Theta}^*\|_{1,\infty}\left(\frac{M^{1/\epsilon}\log(pd+1)}{T_{\textnormal{eff}}}\right)^{\frac{\epsilon}{1+\epsilon}},
                \end{align*}
                then, with probability at least $1-C\operatorname{exp}[-Cs\log(T)\log(pd+1)]$, we have 
                \begin{align*}
                        \|\widehat{\bm{\Theta}} - \bm{\Theta}^*\|_{\mathrm{F}} \lesssim s^{3/2}N\|\bm{\Theta}^*\|_{1,\infty}\lambda_{\min}^{-1}(\bm{\Gamma}_x)\left(\frac{M^{1/\epsilon}\log(pd+1)}{T_{\textnormal{eff}}}\right)^{\frac{\epsilon}{1+\epsilon}}.
                \end{align*}
        \end{theorem}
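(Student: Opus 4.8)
The plan is to run the standard $\ell_1$-penalized least-squares argument column by column. Because $\|\mathbf{Y}(\tau)-\mathbf{X}(\tau)\bm{\Theta}\|_{\mathrm{F}}^2=\sum_{j=1}^{d}\|\mathbf{Y}(\tau)_{\bm{\cdot},j}-\mathbf{X}(\tau)\bm{\Theta}_{\bm{\cdot},j}\|_2^2$ and $\|\bm{\Theta}\|_{1,1}=\sum_{j=1}^{d}\|\bm{\Theta}_{\bm{\cdot},j}\|_1$, the program \eqref{eq:loss function} separates into $d$ ordinary Lasso problems that share the single design $\mathbf{X}(\tau)$ (hence the same population second-moment matrix $\bm{\Gamma}_x$); it therefore suffices to bound $\|\widehat{\bm{\Theta}}_{\bm{\cdot},j}-\bm{\Theta}^*_{\bm{\cdot},j}\|_2$ uniformly over $j$, after which $\|\widehat{\bm{\Theta}}-\bm{\Theta}^*\|_{2,\infty}$ is the maximum over $j$ and $\|\widehat{\bm{\Theta}}-\bm{\Theta}^*\|_{\mathrm{F}}\le\sqrt{d}\,\|\widehat{\bm{\Theta}}-\bm{\Theta}^*\|_{2,\infty}$ with $\sqrt{d}\asymp N$ delivers the Frobenius bound. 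Fixing $j$, writing $\widehat{\bm{\Delta}}=\widehat{\bm{\Theta}}_{\bm{\cdot},j}-\bm{\Theta}^*_{\bm{\cdot},j}$ and $\widehat{\bm{\xi}}_j=T^{-1}\mathbf{X}(\tau)^{\mathrm{T}}(\mathbf{Y}(\tau)_{\bm{\cdot},j}-\mathbf{X}(\tau)\bm{\Theta}^*_{\bm{\cdot},j})$, optimality of $\widehat{\bm{\Theta}}_{\bm{\cdot},j}$ gives the basic inequality
\[
\frac{1}{2T}\|\mathbf{X}(\tau)\widehat{\bm{\Delta}}\|_2^2\;\le\;\langle\widehat{\bm{\xi}}_j,\widehat{\bm{\Delta}}\rangle+\lambda\bigl(\|\bm{\Theta}^*_{\bm{\cdot},j}\|_1-\|\widehat{\bm{\Theta}}_{\bm{\cdot},j}\|_1\bigr),
\]
which, once $\lambda\ge 2\|\widehat{\bm{\xi}}_j\|_\infty$, forces $\widehat{\bm{\Delta}}$ into the cone $\{\bm{\Delta}:\|\bm{\Delta}_{\mathcal{S}_j^c}\|_1\le 3\|\bm{\Delta}_{\mathcal{S}_j}\|_1\}$ in the usual way.

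The heart of the proof is to show $\|\widehat{\bm{\xi}}_j\|_\infty\le\lambda/2$ for all $j$ simultaneously with $\lambda$ of the stated order, on an event of the stated probability. Since truncation is applied to $\bm{r}_t$ rather than directly to $\bm{y}_t=\operatorname{vech}(\bm{r}_t\bm{r}_t^{\mathrm{T}})$, the residual at the truth is not mean zero, and I would split
\[
\mathbf{Y}(\tau)-\mathbf{X}(\tau)\bm{\Theta}^*=\bigl(\mathbf{Y}(\tau)-\mathbf{Y}\bigr)-\bigl(\mathbf{X}(\tau)-\mathbf{X}\bigr)\bm{\Theta}^*+\mathbf{E}
\]
and treat the three pieces separately. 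The $\mathbf{X}(\tau)^{\mathrm{T}}\mathbf{E}$ piece is a martingale sum, since each $\bm{x}_t(\tau)$ is $\mathcal{F}_{t-1}$-measurable while $\mathbb{E}(\bm{e}_t\mid\mathcal{F}_{t-1})=\bm{0}$, whose innovations $e_{t,j}$ (entries of $\bm{r}_t\bm{r}_t^{\mathrm{T}}-\bm{\Sigma}_t$) inherit only $(2+2\epsilon)$ moments from the $(4+4\epsilon)$-moment condition in Assumption \ref{assumption:process conditions}(i); it is controlled by an internal truncation combined with a Bernstein-type inequality for bounded $\alpha$-mixing sequences, where geometric mixing (Assumption \ref{assumption:process conditions}(ii)) lets one pass to blocks at the cost of replacing $T$ by $T_{\textnormal{eff}}=T/(\log T)^2$. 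The two truncation pieces are bias terms: using Markov's inequality on the events $\{|r_{t,j}|>\tau\}$ together with the $(4+4\epsilon)$-moment bound, each coordinate has population mean of polynomial order $M\tau^{-4\epsilon}$, inflated through $(\mathbf{X}(\tau)-\mathbf{X})\bm{\Theta}^*$ by $\|\bm{\Theta}^*\|_{1,\infty}$ and, after careful bookkeeping of the columnwise support structure, by a further factor $s$; their fluctuations are again handled by the mixing Bernstein inequality. A union bound over the $(pd+1)d$ coordinates, together with the covering argument underlying the restricted-eigenvalue step (over $s$-sparse supports, contributing the $s\log(pd+1)$; the $\log T$ traces back to the block construction), produces the exponent $s\log(T)\log(pd+1)$. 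Substituting the prescribed $\tau\asymp(MT_{\textnormal{eff}}/\log(pd+1))^{1/(4+4\epsilon)}$ balances the truncation bias against the stochastic deviation and yields $\|\widehat{\bm{\xi}}_j\|_\infty\lesssim s\|\bm{\Theta}^*\|_{1,\infty}(M^{1/\epsilon}\log(pd+1)/T_{\textnormal{eff}})^{\epsilon/(1+\epsilon)}\asymp\lambda$.

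The remaining ingredient is restricted strong convexity for the truncated design: on the cone above, $T^{-1}\|\mathbf{X}(\tau)\bm{\Delta}\|_2^2\gtrsim\lambda_{\min}(\bm{\Gamma}_x)\|\bm{\Delta}\|_2^2$. I would start from the population identity $T^{-1}\mathbb{E}\|\mathbf{X}\bm{\Delta}\|_2^2=\bm{\Delta}^{\mathrm{T}}\bm{\Gamma}_x\bm{\Delta}\ge\lambda_{\min}(\bm{\Gamma}_x)\|\bm{\Delta}\|_2^2$, then (i) replace $\mathbf{X}$ by $\mathbf{X}(\tau)$ using the same truncation-bias estimates applied to the second-moment matrix, and (ii) convert the entrywise deviation $\|T^{-1}\mathbf{X}(\tau)^{\mathrm{T}}\mathbf{X}(\tau)-\bm{\Gamma}_x\|_{\infty,\infty}$ (again bounded via the mixing Bernstein inequality) into a quadratic-form error through $|\bm{\Delta}^{\mathrm{T}}\mathbf{B}\bm{\Delta}|\le\|\mathbf{B}\|_{\infty,\infty}\|\bm{\Delta}\|_1^2$ and the cone bound $\|\bm{\Delta}\|_1\le 4\sqrt{s_j}\|\bm{\Delta}\|_2$, which is where the $s$ enters the probability exponent. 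Combining the basic inequality, the score bound, and restricted strong convexity gives $\lambda_{\min}(\bm{\Gamma}_x)\|\widehat{\bm{\Delta}}\|_2^2\lesssim\lambda\,\|\widehat{\bm{\Delta}}_{\mathcal{S}_j}\|_1\le\lambda\sqrt{s_j}\,\|\widehat{\bm{\Delta}}\|_2$, hence $\|\widehat{\bm{\Delta}}\|_2\lesssim\sqrt{s}\,\lambda\,\lambda_{\min}^{-1}(\bm{\Gamma}_x)$; plugging in $\lambda$ and maximizing over $j$ gives the $\|\cdot\|_{2,\infty}$ bound, and multiplying by $\sqrt{d}\asymp N$ gives the Frobenius bound.

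The main obstacle is the score bound of the second step, which must accommodate heavy tails and serial dependence at once: one has to establish that the truncation bias decays fast enough in $\tau$ under only $(4+4\epsilon)$ moments — tracking precisely how $\|\bm{\Theta}^*\|_{1,\infty}$ and the support size $s$ enter through the design-side term $(\mathbf{X}(\tau)-\mathbf{X})\bm{\Theta}^*$ — and simultaneously obtain a Bernstein/Freedman-type concentration for the martingale and mixing sums whose increments are merely $(2+2\epsilon)$-integrable, so that the prescribed $\tau$ can balance the two into the minimax-type rate $(M^{1/\epsilon}\log(pd+1)/T_{\textnormal{eff}})^{\epsilon/(1+\epsilon)}$. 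The restricted-eigenvalue step reuses exactly the same tail and mixing machinery, so once the score bound is in place the rest follows the standard Lasso template.
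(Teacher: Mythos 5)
Your proposal is correct and follows essentially the same route as the paper: column-wise separation of the $\ell_{1,1}$-penalized problem, a score bound obtained by decomposing the truncated residual into truncation-bias terms (which carry the $s\|\bm{\Theta}^*\|_{1,\infty}$ factor through the design-side term) plus a centered part controlled by a Bernstein-type inequality for geometrically $\alpha$-mixing sequences with effective sample size $T_{\textnormal{eff}}$, the $\ell_1$-cone with constant $3$, restricted strong convexity deduced from $\|T^{-1}\mathbf{X}(\tau)^{\mathrm{T}}\mathbf{X}(\tau)-\bm{\Gamma}_x\|_{\infty,\infty}$ together with the cone inequality $\|\bm{\Delta}\|_1\lesssim\sqrt{s}\|\bm{\Delta}\|_2$, and the final per-column bound $\|\widehat{\bm{\Delta}}\|_2\lesssim\sqrt{s}\,\lambda\,\lambda_{\min}^{-1}(\bm{\Gamma}_x)$ followed by $\sqrt{d}\asymp N$ for the Frobenius norm. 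The only cosmetic differences are that the paper avoids your internal-truncation/martingale step by noting the truncated residual is automatically bounded, $|y_{t,k}(\tau)-\bm{x}_t^{\mathrm{T}}(\tau)\bm{\Theta}^*_{\cdot,k}|\le\tau^2(1+\|\bm{\Theta}^*\|_{1,\infty})$, so the mixing concentration inequality applies directly to the whole score, and the $s$ in the probability exponent $s\log(T)\log(pd+1)$ arises from the deviation scale in that concentration step rather than from a covering argument over $s$-sparse supports.
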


        Theorem \ref{thm:vech upper bound} establishes the non-asymptotic error bounds for the regularized LSE under column-wise sparsity. We have several comments on Theorem \ref{thm:vech upper bound}: First, the quantity $T_{\textnormal{eff}}$ can be interpreted as the effective sample size for the underlying $\alpha$-mixing time series. Notably, $T_{\textnormal{eff}}$ is of nearly the same order as $T$. 
        The sample size condition $T \gtrsim N^{1+1/\epsilon}\log(pd+1)$ stems from the derivation of the Frobenius norm error bound. Due to the column-wise sparsity, the total number of parameters is $O(d)$, and controlling the Frobenius error accordingly necessitates a larger sample size. In contrast, if the objective is solely to bound the $\ell_{2,\infty}$-norm error without deriving the results for estimating coefficient matrices in the original BEKK model \eqref{eq:BEKK-ARCH}, a much milder condition $T \gtrsim \log(pd+1)$ suffices. 
        Second, the boundedness of $\|\bm{\Theta}^*\|_{1,\infty}$ follows naturally from the column-wise sparsity of $\bm{\Theta}^*$, implies by the row-wise sparsity of the coefficient matrices in BEKK model \eqref{eq:BEKK-ARCH}. 
        Third, if $\operatorname{Cov}(\widetilde{\bm{x}}_t)$ is positive definite with $\widetilde{\bm{x}}_t = (\bm{y}_{t-1}^{\mathrm{T}},\ldots,\bm{y}_{t-p}^{\mathrm{T}})^{\mathrm{T}}$, then $\bm{\Gamma}_{x} = \mathbb{E}(\bm{x}_t\bm{x}_t^\mathrm{T})$ is also positive definite, with its smallest eigenvalue bounded away from zero, i.e., $\lambda_{\min}(\bm{\Gamma}_x) >0$.
        Notably, it is standard to assume that $\operatorname{Cov}(\widetilde{\bm{x}}_t)$ is positive definite in literature of covariance-stationary sparse VAR models \citep{wu2016performance,wang2023rate}. 
        Finally, the scaling factor $s^{3/2}\|\bm{\Theta}^*\|_{1,\infty}$ arises from the choice of $\lambda$, which ensures that the estimator for each column lies within a localized cone adapted to the time-series setting, differing from the typical scaling factor $s^{1/2}$ for the standard regression context. When $s$, $\|\bm{\Theta}^*\|_{1,\infty}$, $M_{4+4\epsilon}$ and $\lambda_{\min}(\bm{\Gamma}_{x})$ are fixed, the upper bound scales as $(\log(pd+1)/T_{\textnormal{eff}})^{\epsilon/(1+\epsilon)}$ for $\epsilon \in (0,1)$ and $(\log(pd+1)/T_{\textnormal{eff}})^{1/2}$ for $\epsilon \geq 1$. This result aligns with those in robust high-dimensional linear regression \citep{sun2020adaptive,tan2023sparse}, with the effective sample size $T_{\textnormal{eff}}$ accounting for the temporal dependence. 

        Next, we derive the minimax lower bound for the regularized LSE for general data generating processes under heavy-tailed distributions.  
        Let $\mathsf{P}_y(\epsilon,M',\zeta')$ be the class of all joint distributions for $\alpha$-mixing process $\bm{h}_t = (h_{t,1},\ldots,h_{t,d})^\mathrm{T}$ with $\epsilon \in (0,1]$, $M' > 0$ and $\zeta \in (0,1)$, such that $\max_{i}\mathbb{E}|h_{t,i}|^{2+2\epsilon} = M'$ for $1 \leq i \leq d$ and $\alpha(\ell) \leq (\zeta')^\ell$. Let $\mathbb{P}$ be the distribution of $\{\bm{y}_t\}$, and denote by $\bm{\Theta}^*(\mathbb{P})$ and $\bm{\Gamma}(\mathbb{P})$ the true coefficient matrix and Gram matrix under $\mathbb{P}$, respectively. Under model \eqref{eq:stacked-VAR}, we have the minimax lower bound below.

        \begin{theorem}[Minimax lower bound for regularized LSE]\label{thm:vech minimax lower bound}
                For any $\epsilon \in (0,1]$, $M' > 0$, $\zeta' \in (0,1)$ and $T\big/\log_{\zeta^{-2}}^T \geq 2s$, suppose that the joint distribution $\mathbb{P}$ of $\bm{y}_t$ belongs to $\mathsf{P}_y(\epsilon,M',\zeta')$. Then, for any estimator $\widehat{\bm{\Theta}} = \widehat{\bm{\Theta}}(\{\bm{y}_t\})$, we have
                \begin{align*}
                        \inf_{\widehat{\bm{\Theta}}} \sup_{\substack{\mathbb{P} \in 	\mathsf{P}_y(\epsilon,M',\zeta')\\ \|\bm{\Theta}_{\cdot,j}^*\|_0 \leq s, 1\leq j \leq d}} \mathbb{E}\|\widehat{\bm{\Theta}} - \bm{\Theta}^*(\mathbb{P})\|_{\mathrm{F}}^2 \gtrsim sN^2\lambda^{-2}_{\min}(\bm{\Gamma}(\mathbb{P}))\left(\frac{M'^{1/\epsilon}\log(T)}{T}\right)^{\frac{2\epsilon}{1+\epsilon}}.
                \end{align*}
        \end{theorem}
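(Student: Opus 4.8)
The plan is to prove both inequalities by reducing estimation of $\bm{\Theta}^*$ to a collection of binary testing problems and applying Assouad's lemma, with the per-coordinate obstruction supplied by a Le Cam two-point argument calibrated to the $(2+2\epsilon)$-moment budget, and with the $\alpha$-mixing dependence absorbed into an \emph{effective} sample size. Assouad (which aggregates single-coordinate obstructions) is the right device here because the target rates are linear in the sparsity $s$ rather than carrying a $\log(pd)$ factor; the $s$ (resp.\ $sN^2$) scaling will come from building $s$ (resp.\ $sd$) conditionally orthogonal scalar sub-problems, and the heavy-tailed exponent $\epsilon/(1+\epsilon)$ will come from the fact that the quantity whose tail governs estimation of a VAR coefficient is effectively a \emph{product} of a heavy lagged value and an innovation term (visible in the score $\bm{x}_t e_{t,i}$), so that a $(2+2\epsilon)$-moment budget on $\bm{y}_t$ translates into a $(1+\epsilon)$-moment budget on the statistically relevant noise.

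\emph{Step 1 (hard sub-family).} For the $\|\cdot\|_{2,\infty}$ bound I would construct, for each sign vector $\bm{\delta}\in\{-1,+1\}^{s}$, a process $\mathbb{P}_{\bm{\delta}}\in\mathsf{P}_y(\epsilon,M',\zeta')$ obeying the VAR structure \eqref{eq:stacked-VAR} in which a single column of $\bm{\Theta}^*(\mathbb{P}_{\bm{\delta}})$ is activated: $s$ fixed entries of that column (say of one $\bm{\Phi}_i$) are set to $\pm\Delta\,\delta_\ell$ and multiply $s$ coordinates of a heavy-tailed lagged vector that also controls the conditional scale of the corresponding innovation, while every other component of the process is independent across time with a common, well-conditioned heavy-tailed law. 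A ``feed-forward'' layout — in which the perturbed equations are leaves whose responses are never reused as predictors — makes the predictor process, and hence the Gram matrix $\bm{\Gamma}(\mathbb{P}_{\bm{\delta}})$, the same for all $\bm{\delta}$; $\bm{\Gamma}$ can then be taken block-diagonal with a fixed $\lambda_{\min}$, and the $s$ activated coordinates placed so that their estimation problems are conditionally orthogonal. For the Frobenius bound the same device is applied to all $d$ columns simultaneously, which is exactly what produces the extra factor $d\asymp N^2$ in the squared error.

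\emph{Steps 2--3 (calibration and dependence).} Within this family, estimating one activated coordinate is a location problem whose relevant noise carries only $1+\epsilon$ moments; I would calibrate the two-point pair by injecting a rare deviation of magnitude $b$ with probability $q$ into the perturbed equation so that the $(2+2\epsilon)$-moment budget on $\bm{y}_t$ is exactly met, $q\,b^{2+2\epsilon}\asymp M'$, and so that over the effective sample size $n_{\mathrm{eff}}\asymp T/\log_{\zeta^{-2}}(T)$ the deviation is unlocalizable, $n_{\mathrm{eff}}\,q\asymp1$. Here $n_{\mathrm{eff}}$ is produced by a blocking argument: partition $\{1,\dots,T\}$ into consecutive blocks of length $L\asymp\log_{\zeta^{-2}}(T)$, so that the $\alpha$-mixing coefficient between non-adjacent blocks is $\le(\zeta')^{L}\lesssim T^{-1}$ and the cost of replacing the joint law of the odd (resp.\ even) blocks by the corresponding product measure is negligible, after which the $\asymp T/L=n_{\mathrm{eff}}$ blocks act as independent observations and the per-block likelihood ratios multiply. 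Standard computation then gives total variation bounded away from $1$ together with a detectable location shift of order $(M'^{1/\epsilon}\log(T)/T)^{\epsilon/(1+\epsilon)}$; translating through the fixed, block-diagonal design covariance yields a per-coordinate parameter separation $\Delta$ with $\Delta^2\asymp\lambda_{\min}^{-2}(\bm{\Gamma}(\mathbb{P}))\,(M'^{1/\epsilon}\log(T)/T)^{2\epsilon/(1+\epsilon)}$, and Assouad's lemma over the $s$ (resp.\ $sd$) conditionally orthogonal coordinates gives $\mathbb{E}\|\widehat{\bm{\Theta}}-\bm{\Theta}^*\|_{2,\infty}^2\gtrsim s\Delta^2$ and $\mathbb{E}\|\widehat{\bm{\Theta}}-\bm{\Theta}^*\|_{\mathrm{F}}^2\gtrsim sd\,\Delta^2\asymp sN^2\Delta^2$, which are the stated bounds; the hypothesis $T/\log_{\zeta^{-2}}^{T}\ge 2s$ is what guarantees that each activated coordinate still receives enough effective observations for this calibration to be admissible.

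The main obstacle is Step 1: one must exhibit processes that simultaneously (i) lie in $\mathsf{P}_y(\epsilon,M',\zeta')$ — i.e.\ have exactly the prescribed $(2+2\epsilon)$-moment order and a geometric $\alpha$-mixing rate $\le(\zeta')^\ell$ even after the rare deviation is injected, and are stationary; (ii) are genuine instances of the VAR \eqref{eq:stacked-VAR} with a fixed, well-conditioned Gram matrix $\bm{\Gamma}(\mathbb{P})$ independent of the label $\bm{\delta}$; and (iii) keep the $s$ (or $sd$) scalar sub-problems orthogonal so that the likelihood ratio factorizes — the autoregressive feedback makes (ii) and (iii) the delicate points, and the feed-forward/leaf layout is what resolves them. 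A secondary technical point, needed so that the $\log(T)$ factor genuinely appears on the lower-bound side rather than only in the upper bound, is to arrange the injected deviation so that it is tied to a whole block of length $\asymp L$ instead of a single time point, which requires tracking the precise interplay among $L$, the spike probability $q$, and the mixing constraint.
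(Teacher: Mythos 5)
Your plan is workable and, at the quantitative core, it reproduces exactly the mechanism the paper uses: rare spikes of magnitude $b$ with per-effective-block probability $q\asymp \log T/T$ calibrated so that $qb^{2+2\epsilon}\asymp M'$ (whence $qb^{2}\asymp (M'^{1/\epsilon}\log T/T)^{\epsilon/(1+\epsilon)}$), spikes tied to entire blocks of length $\asymp\log_{\zeta^{-2}}T$ so that the $\log T$ factor survives, indistinguishability coming from the event that no spike occurs, and the factor $d\asymp N^{2}$ for the Frobenius bound from running the column problems in parallel. Where you genuinely differ from the paper is in the aggregation and in the reduction: you aggregate $s$ (resp.\ $sd$) per-coordinate Le Cam obstructions through Assouad's lemma, whereas the paper uses a single two-point argument in which the entire $s$-sparse column is flipped at once, realized by an explicit coupling of two block-independent processes $\{\bm{y}_t\}$ and $\{\bm{y}_t'\}$ that are \emph{equal} on the observed sample with constant probability (so no likelihood-ratio or product-measure computation is needed at all), and it obtains the Frobenius bound by column separability rather than a larger hypercube. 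Your route buys a cleaner $s$-scaling (the per-coordinate calibration is free of $s$, and Assouad sums the obstructions), at the price of having to verify a factorizing likelihood-ratio bound; the paper's coupling makes the testing step trivial and—importantly—it never requires the hard instances to be genuine VAR processes: the class $\mathsf{P}_y(\epsilon,M',\zeta')$ only constrains moments and mixing, and $\bm{\Theta}^*(\mathbb{P})$ is \emph{defined} through the Yule--Walker projection $\bm{\Gamma}^{-1}\mathbb{E}[\bm{\phi}_t y_{t+1,\cdot}]$, which dissolves the feedback/identifiability difficulty you single out as the main obstacle in your Step 1.

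Two delicate points in your sketch deserve attention. First, the mixing-to-product approximation is both unnecessary and, as calibrated, too optimistic: with $L\asymp\log_{\zeta^{-2}}T$ one only gets $\alpha(L)\asymp T^{-1/2}$ (when $\zeta'=\zeta$), and summing over $\asymp T/L$ blocks this is not negligible; you would need a larger constant in $L$ (harmless for the rate), or better, simply build the hard process to be exactly independent across blocks and constant within blocks, as the paper does, and then verify the geometric $\alpha$-mixing directly. Second, your requirement of a ``fixed, well-conditioned'' Gram matrix cannot mean $\lambda_{\min}(\bm{\Gamma})\asymp 1$: since the response second moment is bounded by $M'^{1/(1+\epsilon)}$, a dense (non-spiking) predictor component would force $\Delta^{2}\lesssim \log T/T$, which is \emph{smaller} than the target $(\log T/T)^{2\epsilon/(1+\epsilon)}$; the hard instances necessarily have $\lambda_{\min}(\bm{\Gamma}(\mathbb{P}))$ of the order of the rate itself (equivalently, constant parameter separation), which is precisely why the theorem states the bound with the factor $\lambda_{\min}^{-2}(\bm{\Gamma}(\mathbb{P}))$ attached to the worst-case instance. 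Interpreting ``fixed'' as label-independent (the same $\bm{\Gamma}$ for all $\bm{\delta}$), your calibration is consistent and the argument goes through.
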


        Notably, when the sparsity level $s$, $\|\bm\Theta^*\|_{1,\infty}$, $M$, and $\lambda_{\min}(\bm\Gamma_x)$ are treated as fixed constants, the upper bound in Theorem \ref{thm:vech upper bound} nearly matches the lower bound rate in Theorem \ref{thm:vech minimax lower bound} with respect to the sample size $T$. Consequently, the proposed regularized LSE is nearly minimax rate-optimal for the vech form in \eqref{eq:vech linear form}.

        \subsection{Error bounds for estimating BEKK coefficient matrices}\label{subsec:bekk model}
        Under Assumption \ref{assumption:restricted parameter space}, we define the minimum adjacent norm gap at lag $i$ as
        \[
        \vartheta_i \;=\; \min_{1 \leq k \leq K_i} 
        \Big\{\, \|\mathbf{A}_{i(k-1)}^*\|_\mathrm{F} - \|\mathbf{A}_{ik}^*\|_\mathrm{F},\;
        \|\mathbf{A}_{ik}^*\|_\mathrm{F} - \|\mathbf{A}_{i(k+1)}^*\|_\mathrm{F}\,\Big\},
        \]
        with the conventions $\|\mathbf{A}_{i0}^*\|_\mathrm{F}=\infty$ and $\|\mathbf{A}_{i(K_i+1)}^*\|_\mathrm{F}=0$ for $1\le i\le p$. To derive the estimation error bounds for estimating the BEKK-ARCH coefficient matrices $\mathbf{A}_{ik}^*$ and $\bbm{\Omega}^*$, and to establish the selection consistency of each $K_i$ in Theorem \ref{thm:selection consistency of K_i}, we introduce the following assumption.

        \begin{assumption}[Padding error]\label{assump:asym_error}
                The padding error induced by \eqref{eq:nuclear-argmin} and \eqref{eq:te-argmin} is controlled by the first-stage estimation error. Specifically, for each $1 \leq i \leq p$,
                \[
                \|\mathcal{H}(\widehat{\bm{\Phi}}_i,\widetilde{\mathbf{W}}_i)-\mathcal{H}(\bm{\Phi}_i^*,\mathbf{W}_i^*)\|_{\mathrm{F}}
                \vee
                \|\mathcal{H}(\widehat{\bm{\Phi}}_i,\widehat{\mathbf{W}}_i)-\mathcal{H}(\bm{\Phi}_i^*,\mathbf{W}_i^*)\|_{\mathrm{F}}
                \lesssim
                \|\widehat{\bm{\Phi}}_i-\bm{\Phi}_i^*\|_{\mathrm{F}}.
                \]
        \end{assumption}

        Assumption \ref{assump:asym_error} is a stability condition for the padding step. It ensures that the perturbation caused by replacing the true $\bm\Phi_i^*$ with its first-stage estimator $\widehat{\bm\Phi}_i$ is not amplified when solving the padding optimization problems at \eqref{eq:nuclear-argmin} and \eqref{eq:te-argmin}. This condition is naturally satisfied when the padding optimization is restricted to a constrained feasible space, such as a closed convex set $\mathcal W_i$ that contains $\mathbf W_i^*$ and is element-wise bounded. Our simulation results indicate that solving the unconstrained versions of \eqref{eq:nuclear-argmin} and \eqref{eq:te-argmin} does not violate this stability condition in practice. Consequently, we employ the unconstrained optimization in both simulations and empirical applications.

        \begin{corollary}[BEKK-ARCH Upper Bound]\label{cor:bekk upper bound}
                Suppose that Assumptions \ref{assumption:restricted parameter space}--\ref{assump:asym_error} and conditions in Theorem \ref{thm:vech upper bound} hold. Then with probability at least $1-C\operatorname{exp}[-Cs\log(T)\log(pd+1)]$,
                \begin{align*}
                        &\|\widehat{\bbm{\Omega}} - \bbm{\Omega}^*\|_{\mathrm{F}} \lesssim s^{3/2}N\|\bm{\Theta}^*\|_{1,\infty}\lambda_{\min}^{-1}(\bm{\Gamma}_x)\left(\frac{M^{1/\epsilon}\log(pd+1)}{T_{\textnormal{eff}}}\right)^{\frac{\epsilon}{1+\epsilon}},
                \end{align*}
                and for all $1 \leq i \leq p$, $1 \leq k \leq K_i$, $\langle\widetilde{\mathbf{A}}_{ik}, \mathbf{A}_{ik}^*\rangle \geq 0$ and $\langle\widehat{\mathbf{A}}_{ik}, \mathbf{A}_{ik}^*\rangle \geq 0$,
                \begin{align*}
                        & \|\widetilde{\mathbf{A}}_{ik} - \mathbf{A}_{ik}^*\|_{\mathrm{F}} \vee \|\widehat{\mathbf{A}}_{ik} - \mathbf{A}_{ik}^*\|_{\mathrm{F}} \lesssim s^{3/2}N\|\bm{\Theta}^*\|_{1,\infty}\vartheta_i^{-1/2}\lambda_{\min}^{-1}(\bm{\Gamma}_x)\left(\frac{M^{1/\epsilon}\log(pd+1)}{T_{\textnormal{eff}}}\right)^{\frac{\epsilon}{1+\epsilon}}.
                \end{align*}
        \end{corollary}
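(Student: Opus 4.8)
The plan is to transfer the error bounds for $\widehat{\bm{\Theta}}$ from Theorem \ref{thm:vech upper bound} through the (deterministic) recovery maps of Section \ref{sec:vech-to-bekk}. For $\widehat{\bm{\Omega}}$ the argument is short: since $\bm{\Omega}^*=\operatorname{vec}^{-1}(\mathbf{D}_N\bm{\omega}^*)$ and $\widehat{\bm{\Omega}}=\mathcal{P}(\operatorname{vec}^{-1}(\mathbf{D}_N\widehat{\bm{\omega}}))$, I would first use that the projection $\mathcal{P}$ onto the SPD cone is $1$-Lipschitz in Frobenius norm and that $\bm{\Omega}^*$ already lies in the cone, so $\|\widehat{\bm{\Omega}}-\bm{\Omega}^*\|_{\mathrm{F}}\le\|\operatorname{vec}^{-1}(\mathbf{D}_N(\widehat{\bm{\omega}}-\bm{\omega}^*))\|_{\mathrm{F}}=\|\mathbf{D}_N(\widehat{\bm{\omega}}-\bm{\omega}^*)\|_2$. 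Since $\mathbf{D}_N$ has $\|\mathbf{D}_N\|_{\mathrm{op}}\le\sqrt{2}$ (each column of $\mathbf{D}_N$ has at most two unit entries), this is bounded by $\sqrt{2}\,\|\widehat{\bm{\omega}}-\bm{\omega}^*\|_2\le\sqrt{2}\,\|\widehat{\bm{\Theta}}-\bm{\Theta}^*\|_{2,\infty}$, and the claimed bound follows from the $\ell_{2,\infty}$-bound in Theorem \ref{thm:vech upper bound} (note $N\gtrsim 1$, so the extra factor $N$ relative to the $\ell_{2,\infty}$ rate only weakens the bound and is harmless).

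For the $\mathbf{A}_{ik}$'s I would proceed through three linked steps. Step one: control $\|\widehat{\bm{\Phi}}_i-\bm{\Phi}_i^*\|_{\mathrm{F}}$. Each $\widehat{\bm{\Phi}}_i$ is formed from the rows $\mathsf{R}_i$ of $\widehat{\bm{\Theta}}$, so $\|\widehat{\bm{\Phi}}_i-\bm{\Phi}_i^*\|_{\mathrm{F}}\le\|\widehat{\bm{\Theta}}-\bm{\Theta}^*\|_{\mathrm{F}}$, which Theorem \ref{thm:vech upper bound} bounds by $s^{3/2}N\|\bm{\Theta}^*\|_{1,\infty}\lambda_{\min}^{-1}(\bm{\Gamma}_x)(M^{1/\epsilon}\log(pd+1)/T_{\mathrm{eff}})^{\epsilon/(1+\epsilon)}$. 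Step two: propagate through $\mathcal{R}\circ\mathcal{H}$. By Assumption \ref{assump:asym_error}, $\|\mathcal{H}(\widehat{\bm{\Phi}}_i,\widehat{\mathbf{W}}_i)-\mathcal{H}(\bm{\Phi}_i^*,\mathbf{W}_i^*)\|_{\mathrm{F}}\lesssim\|\widehat{\bm{\Phi}}_i-\bm{\Phi}_i^*\|_{\mathrm{F}}$, and $\mathcal{R}(\cdot)$ is a permutation of matrix entries, hence Frobenius-norm-preserving; so writing $\widehat{\mathbf{G}}_i=\mathcal{R}(\mathcal{H}(\widehat{\bm{\Phi}}_i,\widehat{\mathbf{W}}_i))$ and $\mathbf{G}_i^*=\mathcal{R}(\mathcal{H}(\bm{\Phi}_i^*,\mathbf{W}_i^*))=\sum_k\operatorname{vec}(\mathbf{A}_{ik}^*)\operatorname{vec}^{\mathrm T}(\mathbf{A}_{ik}^*)$, we get $\|\widehat{\mathbf{G}}_i-\mathbf{G}_i^*\|_{\mathrm{F}}\lesssim\|\widehat{\bm{\Theta}}-\bm{\Theta}^*\|_{\mathrm{F}}$. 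Step three: turn a perturbation bound on $\widehat{\mathbf{G}}_i$ into one on the recovered eigenpairs. Here $\mathbf{G}_i^*$ is SPD with $K_i$ positive eigenvalues $\|\mathbf{A}_{ik}^*\|_{\mathrm{F}}^2$ and eigenvectors $\operatorname{vec}(\mathbf{A}_{ik}^*)/\|\mathbf{A}_{ik}^*\|_{\mathrm{F}}$, with eigenvalue gaps controlled from below by $\vartheta_i$ times a factor comparable to $\|\mathbf{A}_{ik}^*\|_{\mathrm{F}}$ (this is where the adjacent-norm-gap definition enters). I would apply a Davis–Kahan / Weyl argument: Weyl gives $|\widehat{\lambda}_{i,k}-\|\mathbf{A}_{ik}^*\|_{\mathrm{F}}^2|\le\|\widehat{\mathbf{G}}_i-\mathbf{G}_i^*\|_{\mathrm{op}}\le\|\widehat{\mathbf{G}}_i-\mathbf{G}_i^*\|_{\mathrm{F}}$, and Davis–Kahan gives $\|\widehat{\bm{u}}_{i,k}-\bm{u}_{i,k}^*\|_2\lesssim\|\widehat{\mathbf{G}}_i-\mathbf{G}_i^*\|_{\mathrm{F}}/\mathrm{gap}_k$ (after fixing the sign so the inner product is nonnegative — this yields the stated $\langle\widehat{\mathbf{A}}_{ik},\mathbf{A}_{ik}^*\rangle\ge 0$). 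Then $\widehat{\mathbf{A}}_{ik}=\operatorname{vec}^{-1}((\widehat{\lambda}_{i,k}^+)^{1/2}\widehat{\bm{u}}_{i,k})$ versus $\mathbf{A}_{ik}^*=\operatorname{vec}^{-1}(\|\mathbf{A}_{ik}^*\|_{\mathrm{F}}\,\bm{u}_{i,k}^*)$, and I would split $\|\widehat{\mathbf{A}}_{ik}-\mathbf{A}_{ik}^*\|_{\mathrm{F}}\le|(\widehat{\lambda}_{i,k}^+)^{1/2}-\|\mathbf{A}_{ik}^*\|_{\mathrm{F}}|+\|\mathbf{A}_{ik}^*\|_{\mathrm{F}}\|\widehat{\bm{u}}_{i,k}-\bm{u}_{i,k}^*\|_2$. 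The first term is handled via $|\sqrt{a}-\sqrt{b}|\le|a-b|/\sqrt{b}$ (or $\le\sqrt{|a-b|}$ when the linear bound is worse), giving a contribution $\lesssim\vartheta_i^{-1/2}\|\widehat{\mathbf{G}}_i-\mathbf{G}_i^*\|_{\mathrm{F}}^{1/2}$ or $\lesssim\|\mathbf{A}_{iK_i}^*\|_{\mathrm{F}}^{-1}\|\widehat{\mathbf{G}}_i-\mathbf{G}_i^*\|_{\mathrm{F}}$; the second is $\lesssim\vartheta_i^{-1}\|\mathbf{A}_{ik}^*\|_{\mathrm{F}}\|\widehat{\mathbf{G}}_i-\mathbf{G}_i^*\|_{\mathrm{F}}$. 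On the high-probability event of Theorem \ref{thm:vech upper bound} the perturbation $\|\widehat{\mathbf{G}}_i-\mathbf{G}_i^*\|_{\mathrm{F}}$ is $o(1)$, so the square-root term dominates and produces the advertised $\vartheta_i^{-1/2}$ dependence; absorbing the fixed constants $\|\mathbf{A}_{ik}^*\|_{\mathrm{F}}$ into $C$ yields the stated bound.

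The main obstacle is Step three — in particular making the eigenvalue-gap accounting clean and verifying that the square-root (rather than linear) dependence on the $\mathbf{A}_{ik}^*$-perturbation is the correct leading term, so that the exponent $-1/2$ on $\vartheta_i$ emerges. Two subtleties need care: (a) the eigenvalues of $\mathbf{G}_i^*$ are the \emph{squares} $\|\mathbf{A}_{ik}^*\|_{\mathrm{F}}^2$, so the eigenvalue gap $\|\mathbf{A}_{i(k-1)}^*\|_{\mathrm{F}}^2-\|\mathbf{A}_{ik}^*\|_{\mathrm{F}}^2$ factors as $(\|\mathbf{A}_{i(k-1)}^*\|_{\mathrm{F}}+\|\mathbf{A}_{ik}^*\|_{\mathrm{F}})(\|\mathbf{A}_{i(k-1)}^*\|_{\mathrm{F}}-\|\mathbf{A}_{ik}^*\|_{\mathrm{F}})\gtrsim\vartheta_i$, using that the norms are bounded above by a constant; (b) the smallest positive eigenvalue $\|\mathbf{A}_{iK_i}^*\|_{\mathrm{F}}^2$ is separated from the zero eigenvalues by the convention $\|\mathbf{A}_{i(K_i+1)}^*\|_{\mathrm{F}}=0$, so its gap is exactly $\|\mathbf{A}_{iK_i}^*\|_{\mathrm{F}}^2\gtrsim\vartheta_i\|\mathbf{A}_{iK_i}^*\|_{\mathrm{F}}\gtrsim\vartheta_i^2$ — this is what forces the $\vartheta_i^{-1/2}$ (and not a worse power) after taking the square root in the eigenvalue-magnitude term. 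Everything else (the $\ell_{2,\infty}$ to $\ell_2$ column bound, the norm equivalences for $\mathbf{D}_N$ and $\mathcal{R}$, the high-probability statement) is a direct inheritance from Theorem \ref{thm:vech upper bound} and Assumption \ref{assump:asym_error}.
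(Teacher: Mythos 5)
Your overall architecture coincides with the paper's: for $\widehat{\bm{\Omega}}$, nonexpansiveness of $\mathcal{P}$ plus $\|\mathbf{D}_N\bm{\beta}\|_2\le\sqrt{2}\|\bm{\beta}\|_2$; for the $\mathbf{A}_{ik}$'s, the chain Theorem~\ref{thm:vech upper bound} $\Rightarrow$ Assumption~\ref{assump:asym_error} $\Rightarrow$ Frobenius-isometry of $\mathcal{R}(\cdot)$ $\Rightarrow$ Weyl (Lemma~\ref{lemma:weyl inequality}) and the Davis--Kahan variant (Lemma~\ref{lemma:davis-kahan theorem}) applied to the split $\|\widehat{\mathbf{A}}_{ik}-\mathbf{A}_{ik}^*\|_{\mathrm{F}}\le|(\widehat{\lambda}_{i,k}^{+})^{1/2}-\lambda_{i,k}^{*1/2}|+\lambda_{i,k}^{*1/2}\|\widehat{\bm{u}}_{i,k}-\bm{u}_{i,k}^*\|_2$, with the sign convention giving $\langle\widehat{\mathbf{A}}_{ik},\mathbf{A}_{ik}^*\rangle\ge 0$. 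However, two concrete steps as you wrote them would fail.

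First, for $\widehat{\bm{\Omega}}$ you claim $\|\widehat{\bm{\omega}}-\bm{\omega}^*\|_2\le\|\widehat{\bm{\Theta}}-\bm{\Theta}^*\|_{2,\infty}$. But $\bm{\omega}$ is the first \emph{row} of $\bm{\Theta}$ (one entry in each of the $d$ columns; see Algorithm~\ref{alg:recovery-omega-aik}, $\widehat{\bm{\omega}}=\widehat{\bm{\Theta}}_{1,\cdot}$), while $\|\cdot\|_{2,\infty}$ here is the maximum of the \emph{column} $\ell_2$-norms, so the inequality is false in general (a matrix whose first row is all ones has $\|\cdot\|_{2,\infty}=1$ but row norm $\sqrt{d}$). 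The correct chains are $\|\widehat{\bm{\omega}}-\bm{\omega}^*\|_2\le\|\widehat{\bm{\Theta}}-\bm{\Theta}^*\|_{\mathrm{F}}$ (the paper's) or $\le\sqrt{d}\,\|\widehat{\bm{\Theta}}-\bm{\Theta}^*\|_{2,\infty}$; either way the factor $N\asymp\sqrt{d}$ in the stated bound is genuinely needed, not slack. Second, for the eigenvalue-magnitude term your fallback $|(\widehat{\lambda}_{i,k}^{+})^{1/2}-\lambda_{i,k}^{*1/2}|\le\sqrt{|\widehat{\lambda}_{i,k}-\lambda_{i,k}^*|}$ yields an error of order $\|\widehat{\mathbf{G}}_i-\mathbf{G}_i^*\|_{\mathrm{F}}^{1/2}$, i.e.\ the \emph{square root} of the Theorem~\ref{thm:vech upper bound} rate, which is strictly slower than the linear-in-rate bound the corollary asserts; so the claim that ``the square-root term dominates and produces the advertised $\vartheta_i^{-1/2}$'' cannot establish the statement. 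The paper keeps this term linear, $|(\widehat{\lambda}_{i,k}^{+})^{1/2}-\lambda_{i,k}^{*1/2}|\le|\widehat{\lambda}_{i,k}-\lambda_{i,k}^*|/\lambda_{i,k}^{*1/2}\le\|\widehat{\mathbf{G}}_i-\mathbf{G}_i^*\|_{\mathrm{F}}/\lambda_{i,k}^{*1/2}$, and the exponent $-1/2$ on $\vartheta_i$ emerges from combining this $1/\lambda_{i,k}^{*1/2}$ with the Davis--Kahan contribution $\vartheta_i^{-1}\lambda_{i,k}^{*1/2}$ (the two balance at $\lambda_{i,k}^{*1/2}\asymp\vartheta_i^{1/2}$, giving the constant $2^{7/4}\vartheta_i^{-1/2}$ after absorbing the bounded $\|\mathbf{A}_{ik}^*\|_{\mathrm{F}}$), not from taking a square root of the perturbation. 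With these two corrections your argument reduces to the paper's proof.
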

        Corollary \ref{cor:bekk upper bound} shows that the estimators $\widehat{\bbm{\Omega}}$, $\widehat{\mathbf{A}}_{ik}$ and $\widetilde{\mathbf{A}}_{ik}$ achieve the same convergence rate as $\widehat{\bm{\Theta}}$ in Theorem \ref{thm:vech upper bound}. This indicates that the regularized LSE for the vech transformed model \eqref{eq:vech linear form} preserves estimation accuracy for the original BEKK-ARCH coefficient matrices.

        \subsection{Model selection consistency}\label{subsec:model order selection consistency}

        Let $p^*$ denote the true lag order of the BEKK-ARCH model in \eqref{eq:BEKK-ARCH}. For a candidate order $p$, define $\bm{\Theta}_p^\circ = \operatorname{argmin}_{\|\bm{\Theta}_p\|_0 \leq sd} \mathbb{E}\mathbb{L}(\bm{\Theta}_p)$ with $\mathbb{L}(\bm{\Theta}_p) = 1/(2T) \|\mathbf{Y}(\tau) - \mathbf{X}_p(\tau)\bm{\Theta}_p\|_\mathrm{F}^2$.
        For $1 \leq p \leq p^*$, let $\widetilde{\bm{\Theta}}^\circ_p = (\bm{\Theta}_p^{\circ\mathrm{T}}, \mathbf{0}_{d \times (p^* - p)d})^\mathrm{T} \in \mathbb{R}^{(p^*d+1) \times d}$. We quantify the discrepancy between the true model and a misspecified model of order $p < p^*$ by $\varphi_p = \|\widetilde{\bm{\Theta}}^\circ_p - \bm{\Theta}^*\|_{1,1}$, which captures the signal strength of the misspecification. 
        To ensure the selection consistency of the proposed BIC-type criterion for the lag order $p$, we make the following assumptions.

        \begin{assumption}[Penalty parameter]\label{assump:penalty parameter}
                $\iota_d \asymp s^3M^{1/(1+\epsilon)}$.
        \end{assumption}

        \begin{assumption}[Minimum signal strength]\label{assump:minimum signal strength}
                (i). $\min_{p < p^*} \limits \varphi_p \gg sd\lambda_p\log(T)^{1/2}$, where $\lambda_p \asymp s\|\bm{\Theta}^*\|_{1,\infty}\allowbreak(M^{1/\epsilon}\log(pd+1)/T_{\textnormal{eff}})^{\epsilon/(1+\epsilon)}$ and (ii). $\max_{p \neq p^*} \limits |\mathbb{L}(\widehat{\bm{\Theta}}_p) - \mathbb{L}(\bm{\Theta}_p^\circ)| = O_p(\varphi_p^2/sd)$. 
        \end{assumption}

		Assumption \ref{assump:penalty parameter} provides the theoretical guarantee for the BIC to consistently exclude overspecified models, where $\iota_d$ is a positive constant that may depend on $d$. When both $s$ and $M$ are fixed, $\iota_d$ can be chosen as a positive constant.
        Assumption \ref{assump:minimum signal strength} ensures that the misspecification signal $\varphi_p$ is strong enough to be detected by the BIC. 
        Similar minimum signal and estimation error conditions are commonly used in establishing the selection consistency of BIC-type criteria; see, for example, \cite{zheng2025interpretable}. 
        Under the sample size requirement in Theorem \ref{thm:vech upper bound}, the lower bound in Assumption \ref{assump:minimum signal strength}(i) decays to zero as $T \to \infty$. Meanwhile, Assumption \ref{assump:minimum signal strength}(ii) is a mild regularity condition that ensures any fitted misspecified model converges to the population model at a moderate rate. Here, the misspecified model with parameter matrix $\bm{\Theta}_p^\circ$ can be viewed as the best approximation of the process $\{\bm{y}_t\}$ under misspecification. 
        The following theorem establishes the consistency of the BIC-based order selection procedure.

        \begin{theorem}[Selection consistency of $\widehat{p}$]\label{thm:selection consistency of model order}
                Suppose that $p^* \leq \overline{p}$, Assumptions \ref{assump:penalty parameter}--\ref{assump:minimum signal strength} and conditions in Theorem \ref{thm:vech upper bound} hold. Then $\mathbb{P}(\widehat{p} = p^*) \to 1$ as $N,T \to \infty$.
        \end{theorem}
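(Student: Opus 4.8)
The plan is to prove $\mathbb{P}(\widehat p=p^*)\to 1$ by a union bound over the finite index set $\{1,\dots,\overline p\}$: since $p^*\le\overline p$ with $\overline p$ fixed, it suffices to show $\mathbb P\big(\mathrm{BIC}(p)>\mathrm{BIC}(p^*)\big)\to 1$ for each fixed $p\neq p^*$ and then sum the finitely many exceptional probabilities. Write $\mathrm{BIC}(p)-\mathrm{BIC}(p^*)=\Delta\mathbb{L}(p)+\Delta\mathrm{pen}(p)$, where $\Delta\mathbb{L}(p)=\log\mathbb L(\widehat{\bm\Theta}_p)-\log\mathbb L(\widehat{\bm\Theta}_{p^*})$ and $\Delta\mathrm{pen}(p)=\iota_d\log(T)\big[(\log(pd+1)/T_{\mathrm{eff}})^{(1+2\epsilon)/(1+\epsilon)}-(\log(p^*d+1)/T_{\mathrm{eff}})^{(1+2\epsilon)/(1+\epsilon)}\big]$; note $\Delta\mathrm{pen}(p)>0$ iff $p>p^*$, and that, since $\log(pd+1)\asymp\log(p^*d+1)\asymp\log d$ for fixed $p,p^*$, a mean-value expansion gives $|\Delta\mathrm{pen}(p)|\asymp\iota_d\log(T)(\log d)^{\epsilon/(1+\epsilon)}T_{\mathrm{eff}}^{-(1+2\epsilon)/(1+\epsilon)}$. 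Throughout I would use the $\tau,\lambda$ prescribed in Theorem~\ref{thm:vech upper bound}, under which the truncation bias is of lower order, so that $\bm\Theta^\circ_{p^*}$ and $\bm\Theta^*$ are interchangeable up to negligible terms and $\mathbb L(\bm\Theta^*)$ is bounded away from zero with high probability.

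\emph{Underfitting} $(p<p^*)$. Here $\Delta\mathrm{pen}(p)<0$ and is $o(1)$, so the task is to bound $\Delta\mathbb{L}(p)$ below by a strictly larger quantity. I would (i) use $\mathbb L(\widehat{\bm\Theta}_p)\ge\mathbb L(\bm\Theta^\circ_p)-|\mathbb L(\widehat{\bm\Theta}_p)-\mathbb L(\bm\Theta^\circ_p)|$ and its analogue at $p^*$, controlling the correction terms via Assumption~\ref{assump:minimum signal strength}(ii), which gives $|\mathbb L(\widehat{\bm\Theta}_p)-\mathbb L(\bm\Theta^\circ_p)|=O_p(\varphi_p^2/(sd))$; (ii) replace $\mathbb L$ by $\mathbb E\mathbb L$ through a concentration bound for the truncated $\alpha$-mixing process, of the same type as the deviation bounds underlying Theorem~\ref{thm:vech upper bound}, to obtain $|\mathbb L(\bm\Theta^\circ_p)-\mathbb E\mathbb L(\bm\Theta^\circ_p)|=o_p(\varphi_p^2/(sd))$ and likewise at $p^*$; and (iii) lower bound the population gap by strong convexity of $\bm\Theta\mapsto\mathbb E\mathbb L(\bm\Theta)$ around its minimizer $\bm\Theta^\circ_{p^*}$, namely $\mathbb E\mathbb L(\bm\Theta^\circ_p)-\mathbb E\mathbb L(\bm\Theta^\circ_{p^*})\gtrsim\lambda_{\min}(\bm\Gamma)\|\widetilde{\bm\Theta}^\circ_p-\bm\Theta^*\|_{\mathrm F}^2\gtrsim\lambda_{\min}(\bm\Gamma)\varphi_p^2/(sd)$, where the last inequality is Cauchy--Schwarz together with the fact that $\widetilde{\bm\Theta}^\circ_p-\bm\Theta^*$ has $O(sd)$ nonzero entries. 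Combining (i)--(iii) with $\log(1+x)=x+O(x^2)$ yields $\Delta\mathbb{L}(p)\gtrsim\lambda_{\min}(\bm\Gamma)\varphi_p^2/(sd\,\mathbb L(\bm\Theta^*))$ with probability tending to one, and Assumption~\ref{assump:minimum signal strength}(i), $\min_{p<p^*}\varphi_p\gg sd\lambda_p\sqrt{\log T}$, is precisely the condition making this lower bound dominate $|\Delta\mathrm{pen}(p)|$ (and the $O_p$ corrections absorbed via Assumption~\ref{assump:minimum signal strength}(ii)).

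\emph{Overfitting} $(p>p^*)$. Here $\Delta\mathrm{pen}(p)>0$ and the goal is to show $|\Delta\mathbb{L}(p)|$ is of strictly smaller order, uniformly in $p^*<p\le\overline p$. Since the embedded truth $\bm\Theta^*_p=(\bm\Theta^{*\mathrm T},\mathbf 0)^{\mathrm T}$ retains columnwise sparsity $\le s$, Theorem~\ref{thm:vech upper bound} applies at order $p$ and gives $\widehat{\bm\Theta}_p$ the same rate as $\widehat{\bm\Theta}_{p^*}$. Using the basic inequality for the penalized LSE together with the restricted-eigenvalue/compatibility machinery and the score bound from the proof of Theorem~\ref{thm:vech upper bound}, I would bound both $|\mathbb L(\widehat{\bm\Theta}_p)-\mathbb L(\bm\Theta^*)|$ and $|\mathbb L(\widehat{\bm\Theta}_{p^*})-\mathbb L(\bm\Theta^*)|$ by a multiple of $s^3\|\bm\Theta^*\|_{1,\infty}^2\lambda_{\min}^{-1}(\bm\Gamma_x)(M^{1/\epsilon}\log(pd+1)/T_{\mathrm{eff}})^{\epsilon/(1+\epsilon)}$ — the polynomial factor $s^3$ being the square of the $s^{3/2}$ in the Theorem~\ref{thm:vech upper bound} rate — and hence $|\Delta\mathbb{L}(p)|=O_p$ of the same quantity after dividing by the bounded $\mathbb L(\bm\Theta^*)$. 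Assumption~\ref{assump:penalty parameter}, $\iota_d\asymp s^3M^{1/(1+\epsilon)}$, is calibrated exactly so that $\Delta\mathrm{pen}(p)$ — carrying the extra $\log T$ factor and the matching $s$- and $M$-dependence — dominates $|\Delta\mathbb{L}(p)|$, giving $\mathrm{BIC}(p)-\mathrm{BIC}(p^*)>0$ with probability tending to one. An alternative route is a primal--dual witness argument: under the prescribed $\lambda$ the KKT conditions force the redundant lag blocks of $\widehat{\bm\Theta}_p$ to be exactly zero with high probability, so $\widehat{\bm\Theta}_p$ coincides with $\widehat{\bm\Theta}_{p^*}$ embedded, $\Delta\mathbb{L}(p)=0$, and any positive $\Delta\mathrm{pen}(p)$ suffices.

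The step I expect to be the main obstacle is the overfitting case: sharpening the bound on $\Delta\mathbb{L}(p)$ so that the comparatively small penalty increment $\Delta\mathrm{pen}(p)$ (small because $\log(pd+1)/\log(p^*d+1)\to 1$) still dominates it. This hinges on a uniform maximal inequality for the empirical process $\bm\Theta\mapsto\mathbb L(\bm\Theta)-\mathbb E\mathbb L(\bm\Theta)$ over a sparse neighbourhood of $\bm\Theta^*$, applied simultaneously at all orders $p^*<p\le\overline p$, for truncated, heavy-tailed, $\alpha$-mixing data — the same technical engine behind Theorem~\ref{thm:vech upper bound}, now needed in a uniform form. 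A secondary technical point is verifying that the truncation bias, i.e.\ the discrepancy between $\bm\Theta^\circ_{p^*}$ and $\bm\Theta^*$ (and between $\mathbb E\mathbb L(\bm\Theta^\circ_{p^*})$ and its untruncated analogue), is of lower order uniformly under the prescribed $\tau$, so that all the $\varphi_p$-level comparisons above go through.
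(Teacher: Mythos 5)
Your treatment of the underfitting case and the overall decomposition $\mathrm{BIC}(p)-\mathrm{BIC}(p^*)=\log\bigl(1+D_p/\mathbb{L}(\widehat{\bm\Theta})\bigr)+\Delta\mathrm{pen}(p)$ coincide with the paper's proof: the population quadratic form bounded below by $\lambda_{\min}(\bm\Gamma_x)\varphi_p^2/(sd)$, the empirical-minus-population term controlled by the concentration lemmas behind Theorem \ref{thm:vech upper bound}, the term $\mathbb{L}(\widehat{\bm\Theta}_p)-\mathbb{L}(\bm\Theta_p^\circ)$ absorbed by Assumption \ref{assump:minimum signal strength}(ii), and Assumption \ref{assump:minimum signal strength}(i) used to beat the (negative) penalty increment are exactly the paper's $D_{p,1},D_{p,2},D_{p,3},D_{p^*}$ steps.

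The genuine gap is the one you flagged yourself: the overfitting case, and under your own bounds it does not close. After dividing by the loss, your absolute bound on the loss gap is of order $s\lambda_p^2/(\tau^4\|\bm\Theta^*\|_{1,\infty}^2)\asymp s^3M^{1/(1+\epsilon)}\bigl(\log(pd+1)/T_{\mathrm{eff}}\bigr)^{(1+2\epsilon)/(1+\epsilon)}$, i.e.\ of the same order as the \emph{full} penalty factor $\iota_d\bigl(\log(pd+1)/T_{\mathrm{eff}}\bigr)^{(1+2\epsilon)/(1+\epsilon)}$, while your mean-value computation correctly shows that the penalty \emph{increment} $\Delta\mathrm{pen}(p)$ is smaller than that by a factor of order $\log d$ (since $\log(pd+1)-\log(p^*d+1)=O(1)$). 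Dominance of $\Delta\mathrm{pen}(p)$ over $|\Delta\mathbb{L}(p)|$ would then require $\log T\gg\log d$, which is not assumed, so Assumption \ref{assump:penalty parameter} alone does not finish the argument; and neither of your suggested rescues is carried out (the primal--dual witness route, which would give $\Delta\mathbb{L}(p)=0$ exactly, needs an irrepresentability-type condition that the paper does not impose). The paper avoids this comparison entirely: in its over-specified case it notes $\mathbb{L}(\bm\Theta_p^\circ)=\mathbb{L}(\bm\Theta^*)$ (so $D_{p,1}'=0$), bounds $D_{p,2}'$ and $D_{p^*}$ at the rates $s N^2\lambda_p^2$ and $sN^2\lambda_{p^*}^2$, and then—using $\mathbb{L}(\widehat{\bm\Theta})\asymp_P d\tau^4\|\bm\Theta^*\|_{1,\infty}^2$ and the calibration $\iota_d\asymp s^3M^{1/(1+\epsilon)}$—writes $D_p/\mathbb{L}(\widehat{\bm\Theta})=(\varphi_p-\varphi_{p^*})O_p(1)$ with $\varphi_p$ denoting the penalty factors themselves; since the same difference $(\varphi_p-\varphi_{p^*})>0$ multiplies $\log T$ in the penalty, the extra $\log T$ decides the sign with no comparison between $\log T$ and $\log d$. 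To repair your route you would need the loss-gap bound to inherit this same difference structure $(\varphi_p-\varphi_{p^*})$, rather than an absolute bound at the level of $\varphi_p$.
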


        Let $\mathcal{K}_p=\{K_1,\ldots,K_p\}$ denote the configuration of component numbers at each lag for a BEKK-ARCH model of order $p$. Furthermore, let $\mathcal{K}_p^*=\{K_1^*,\ldots,K_p^*\}$ be the true configuration and $\widehat{\mathcal{K}}_p=\{\widehat{K}_1,\ldots,\widehat{K}_p\}$ its estimator obtained via the Ridge-type selector in \eqref{eq:Ridge-type estimator}. 
        The following theorem establishes the consistency of the Ridge-type selector.

        \begin{theorem}[Selection consistency of $\widehat{\mathcal{K}}_p$]\label{thm:selection consistency of K_i}
                Suppose that $K_i^*\leq\overline{K}$ for all $1\leq i\leq p$, Assumption \ref{assump:asym_error} and conditions in Theorem \ref{thm:vech upper bound} hold. Let $\varsigma=\min_{1\leq i\leq p}\|\mathbf{A}^*_{iK_i^*}\|_{\mathrm{F}}^{2}$. If
                \[s^{3/2}N\|\bm{\Theta}^*\|_{1,\infty} \lambda_{\min}^{-1}(\bm{\Gamma}_x) \left(\dfrac{M^{1/\epsilon}\log(pd+1)}{T_{\mathrm{eff}}}\right)^{\frac{\epsilon}{1+\epsilon}}\ll c(N,T) \ll \varsigma \min_{\substack{1\leq i\leq p,\,1\leq k\leq K_i^*-1}}\dfrac{\|\mathbf{A}^*_{i(k+1)}\|_{\mathrm{F}}^{2}}{\|\mathbf{A}^*_{ik}\|_{\mathrm{F}}^{2}},\]
                then $\mathbb{P}(\widehat{\mathcal{K}}_p=\mathcal{K}_p^*)\to1$ as $N,T\to\infty$.
        \end{theorem}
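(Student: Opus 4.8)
The plan is to reduce the claim to a Weyl-type perturbation bound for the eigenvalues of $\mathcal{R}(\mathcal{H}(\widehat{\bm{\Phi}}_i,\widehat{\mathbf{W}}_i))$ and then to compare, regime by regime, the ratio objective minimized in \eqref{eq:Ridge-type estimator}, checking that the two-sided condition on $c(N,T)$ is exactly what separates $k=K_i^*$ from every competitor.

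\emph{Step 1 (eigenvalue error).} By construction $\mathcal{R}(\cdot)$ acts as a fixed entrywise permutation of an $N^2\times N^2$ matrix (its defining identity $\mathcal{R}(\mathbf{M}\otimes\mathbf{M})=\operatorname{vec}(\mathbf{M})\operatorname{vec}^{\mathrm{T}}(\mathbf{M})$ is realized by such a permutation), so it preserves the Frobenius norm; hence $\|\mathcal{R}(\mathcal{H}(\widehat{\bm{\Phi}}_i,\widehat{\mathbf{W}}_i))-\mathcal{R}(\mathcal{H}(\bm{\Phi}_i^*,\mathbf{W}_i^*))\|_{\mathrm{F}}=\|\mathcal{H}(\widehat{\bm{\Phi}}_i,\widehat{\mathbf{W}}_i)-\mathcal{H}(\bm{\Phi}_i^*,\mathbf{W}_i^*)\|_{\mathrm{F}}\lesssim\|\widehat{\bm{\Phi}}_i-\bm{\Phi}_i^*\|_{\mathrm{F}}$ by Assumption \ref{assump:asym_error}, and the last quantity is at most $\|\widehat{\bm{\Theta}}-\bm{\Theta}^*\|_{\mathrm{F}}$ since $\widehat{\bm{\Phi}}_i-\bm{\Phi}_i^*$ is a block of $\widehat{\bm{\Theta}}-\bm{\Theta}^*$. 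By Theorem \ref{thm:vech upper bound}, on an event $\mathcal{E}$ with $\mathbb{P}(\mathcal{E})\ge 1-C\exp[-Cs\log(T)\log(pd+1)]$ we have, simultaneously in $i$,
\[
\delta_i:=\|\mathcal{R}(\mathcal{H}(\widehat{\bm{\Phi}}_i,\widehat{\mathbf{W}}_i))-\mathcal{R}(\mathcal{H}(\bm{\Phi}_i^*,\mathbf{W}_i^*))\|_{\mathrm{op}}\le r_T:=s^{3/2}N\|\bm{\Theta}^*\|_{1,\infty}\lambda_{\min}^{-1}(\bm{\Gamma}_x)\left(\frac{M^{1/\epsilon}\log(pd+1)}{T_{\mathrm{eff}}}\right)^{\frac{\epsilon}{1+\epsilon}},
\]
and the lower-bound hypothesis of the theorem says precisely $r_T\ll c(N,T)$. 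Let $\lambda_{i,1}^*\ge\cdots\ge\lambda_{i,N^2}^*$ be the eigenvalues of $\mathcal{R}(\mathcal{H}(\bm{\Phi}_i^*,\mathbf{W}_i^*))$; under Assumption \ref{assumption:restricted parameter space} this matrix is positive semidefinite with $\lambda_{i,k}^*=\|\mathbf{A}_{ik}^*\|_{\mathrm{F}}^2$ for $k\le K_i^*$ and $\lambda_{i,k}^*=0$ for $k>K_i^*$. Weyl's inequality gives $\max_k|\widehat{\lambda}_{i,k}-\lambda_{i,k}^*|\le\delta_i\le r_T$ on $\mathcal{E}$.

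\emph{Step 2 (ratio comparison).} Write $c=c(N,T)$ and $g_i(k)=(\widehat{\lambda}_{i,k+1}+c)/(\widehat{\lambda}_{i,k}+c)$; on $\mathcal{E}$, and for $N,T$ large, every denominator is positive because $\widehat{\lambda}_{i,k}+c\ge c-r_T>0$. I will show $g_i(K_i^*)<g_i(k)$ for all $k\ne K_i^*$, $1\le k\le\overline{K}$. If $k=K_i^*$: $\widehat{\lambda}_{i,K_i^*+1}\le r_T$ and $\widehat{\lambda}_{i,K_i^*}\ge\|\mathbf{A}_{iK_i^*}^*\|_{\mathrm{F}}^2-r_T\ge\varsigma-r_T$, so $g_i(K_i^*)\le(r_T+c)/(\varsigma-r_T+c)\lesssim c/\varsigma$. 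If $K_i^*<k\le\overline{K}$: then $\widehat{\lambda}_{i,k+1},\widehat{\lambda}_{i,k}\in[-r_T,r_T]$, so $g_i(k)\ge(c-r_T)/(c+r_T)\to1$ because $r_T\ll c$. If $1\le k<K_i^*$: then $k+1\le K_i^*$, so both $\widehat{\lambda}_{i,k+1}$ and $\widehat{\lambda}_{i,k}$ exceed $\varsigma-r_T\gg c$, and for $N,T$ large $g_i(k)\ge\frac12\,\|\mathbf{A}_{i(k+1)}^*\|_{\mathrm{F}}^2/\|\mathbf{A}_{ik}^*\|_{\mathrm{F}}^2$, which strictly dominates $g_i(K_i^*)\lesssim c/\varsigma$ exactly because $c\ll\varsigma\,\|\mathbf{A}_{i(k+1)}^*\|_{\mathrm{F}}^2/\|\mathbf{A}_{ik}^*\|_{\mathrm{F}}^2$ by the upper bound imposed on $c(N,T)$. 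Combining the three regimes, $\widehat{K}_i=\operatorname{argmin}_{1\le k\le\overline{K}}g_i(k)=K_i^*$ for every $i$ on $\mathcal{E}$, hence $\mathbb{P}(\widehat{\mathcal{K}}_p=\mathcal{K}_p^*)\ge\mathbb{P}(\mathcal{E})\to1$ as $N,T\to\infty$.

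\emph{Main obstacle.} The delicate part is Step 1: establishing that $\mathcal{R}$ is Frobenius-norm preserving (so the operator-norm perturbation inherits the statistical rate of Theorem \ref{thm:vech upper bound} through $\|\cdot\|_{\mathrm{op}}\le\|\cdot\|_{\mathrm{F}}$), and invoking Assumption \ref{assump:asym_error} to transfer the deterministic padding error of $\mathcal{H}$ onto the estimation error $\|\widehat{\bm{\Phi}}_i-\bm{\Phi}_i^*\|_{\mathrm{F}}$. Once $\delta_i\lesssim r_T\ll c\ll\varsigma$ is secured, Step 2 is the routine three-regime sandwich whose thresholds are matched precisely by the two-sided condition on $c(N,T)$ — the lower bound kills the estimation noise $r_T$, and the upper bound keeps the true eigengap ratio detectable.
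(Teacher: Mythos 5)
Your proposal is correct and follows essentially the same route as the paper's proof: bound $\|\mathcal{R}(\mathcal{H}(\widehat{\bm{\Phi}}_i,\widehat{\mathbf{W}}_i))-\mathcal{R}(\mathcal{H}(\bm{\Phi}_i^*,\mathbf{W}_i^*))\|_{\mathrm{F}}$ via Assumption \ref{assump:asym_error}, the Frobenius-norm invariance of $\mathcal{R}(\cdot)$ and Theorem \ref{thm:vech upper bound}, apply Weyl's inequality to the eigenvalues, and then compare the ridge ratios in the three regimes $k<K_i^*$, $k=K_i^*$, $k>K_i^*$ using $r_T\ll c(N,T)\ll\varsigma\min_k\|\mathbf{A}^*_{i(k+1)}\|_{\mathrm{F}}^2/\|\mathbf{A}^*_{ik}\|_{\mathrm{F}}^2$, finishing with a union bound over $i$. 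The only cosmetic difference is that you argue on a single high-probability event while the paper phrases the regime comparison with $O_p/o_p$ limits; you could also note explicitly (as the paper does) that $\mathcal{R}(\mathcal{H}(\widehat{\bm{\Phi}}_i,\widehat{\mathbf{W}}_i))$ is symmetric, which is needed to invoke Weyl's inequality.
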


        The two-sided condition on $c(N,T)$ in Theorem~\ref{thm:selection consistency of K_i} has a clear interpretation.
        The lower bound enforces that the stochastic perturbation of $\mathcal{R}(\mathcal{H}(\widehat{\bm{\Phi}}_i,\widetilde{\mathbf W}_i))$ or $\mathcal{R}(\mathcal{H}(\widehat{\bm{\Phi}}_i,\widehat{\mathbf W}_i))$ (as controlled by Corollary~\ref{cor:bekk upper bound}) is dominated by $c(N,T)$, so that the Ridge-type ratio $(\widehat{\lambda}_{i,k+1}+c(N,T))/(\widehat{\lambda}_{i,k}+c(N,T))$ is not driven by noise-induced tail eigenvalues.
        The upper bound requires $c(N,T)$ to grow more slowly than the population eigenvalue gap among the leading $K_i^*$ components.
        Intuitively, this condition may be violated if the smallest nonzero component $\|\mathbf A^*_{iK_i^*}\|_{\mathrm F}^2$ is too small, or if there is an abrupt drop in the squared Frobenius norms between adjacent components for some $i$ and $k$.
        In either case, the Ridge-type ratio becomes less informative for identifying $K_i^*$.

\section{Simulation}\label{sec:simulation}
        This section evaluates the finite-sample performance of the proposed estimation and model selection methods. 
        Throughout this section, we generate data $\{\bm{r}_t\}$ from the stationary BEKK-ARCH model in \eqref{eq:BEKK-ARCH}, with the lag order fixed at $p=3$. Three component configurations are considered, namely $\mathcal{K}_3 \in \{\{1,1,1\}, \{2,1,1\}, \{1,2,1\}\}$. The innovation term $\{\bbm{\eta}_t\}$ is drawn from one of the three standardized distributions with mean zero and identity covariance matrix: the standard normal, standard Laplace, or standardized Student-$t$ distribution with $4.2$ degrees of freedom, listed in order from light-tailed to heavy-tailed. We consider two settings with dimensions $N=20$ and $N=100$, where each coefficient matrix is row-wise sparse with at most $s=3$ and $s=10$ nonzeros per row, respectively. 

        Within the BEKK-ARCH specification, the diagonal and nonzero off-diagonal entries of $\bbm{\Omega}$ are generated independently from $U(1,2)$ and $U(-0.1,0.1)$, respectively. For $\mathbf{A}_{ik}$ at each lag $i$, we construct different row support sets with $s$ nonzero entries per row, and arrange the matrices $\mathbf{A}_{ik}$ in descending order of Frobenius norm across all $k$. To preserve temporal dependence of each variable, the diagonal elements of matrices $\mathbf{A}_{ik}$ for each $i$ cannot be zero simultaneously across all $k$. Within the prescribed support sets, diagonals are sampled from $U(0.1,0.5)$, while off-diagonals are sampled from $U(-0.1,0.1)$.
        All subsequent simulation results are based on $100$ replications. The tuning parameters $\lambda$ and $\tau$ are selected via the rolling forecasting validation procedure outlined in Section \ref{subsec:tuning parameter selection}.

        We evaluate estimation and forecasting accuracy using the relative root mean squared error (rRMSE). For an estimated parameter matrix $\widehat{\bbm\Upsilon}$ with true value $\bbm\Upsilon^*$, the rRMSE is defined as 
        \begin{equation}\label{eq:rrmse-theta}
        \mathrm{rRMSE}(\widehat{\bbm\Upsilon}) = \frac{\|\widehat{\bbm\Upsilon}-\bbm\Upsilon^*\|_{\mathrm F}}{\|\bbm\Upsilon^*\|_{\mathrm F}}.
        \end{equation}
        For forecast evaluation, we apply the same metric to the one-step-ahead conditional covariance forecast $\widehat{\bm\Sigma}_{T+1}$ relative to its true value $\bm\Sigma_{T+1}$:
        \begin{equation}\label{eq:rrmse-sigma}
        \mathrm{rRMSE}(\widehat{\bm\Sigma}_{T+1})
        =
        \frac{\|\widehat{\bm\Sigma}_{T+1}-\bm\Sigma_{T+1}\|_{\mathrm F}}
        {\|\bm\Sigma_{T+1}\|_{\mathrm F}}.
        \end{equation}
        In the subsequent tables, we report the sample mean and standard deviation of these rRMSEs computed across all replications.
        To save space, we focus the presentation of estimation and prediction errors on the configuration $\mathcal K_3^*=\{2,1,1\}$. The results for the other two configurations are similar and are provided in the Supplementary Material. For model selection performance, we consider all configurations of $\mathcal K_3^*$ to provide a comprehensive assessment.

        \subsection{Vech BEKK model estimation}\label{subsec:vech bekksim}

        Based on the generated data, we fit the Vech-BEKK-ARCH model in \eqref{eq:vech linear form} and estimate $\bbm{\Theta}^*$ using the proposed regularized LSE $\widehat{\bbm \Theta}$ in \eqref{eq:loss function}, together with its non-truncated counterpart $\widehat{\bbm \Theta}^{\star}$, obtained by solving \eqref{eq:loss function} with $\tau=\infty$, for comparison. Table~\ref{tab:theta_error_combined_K211} reports the corresponding results for $(N,s)=(20,3)$ and $(N,s)=(100,10)$. We have two main findings. First, the truncated estimator $\widehat{\bbm \Theta}$ uniformly achieves smaller estimation errors than the non-truncated estimator $\widehat{\bbm \Theta}^{\star}$ across all distributions and sample sizes, showing the clear efficiency gain owing to truncation. Second, the truncated estimator is also more stable, as reflected by its substantially smaller standard deviations. This robustness advantage is especially pronounced under heavier-tailed innovations, such as Laplace and Student's $t$, and becomes more evident in the higher-dimensional setting $N=100$.

        \begin{table}[H]
        \centering
        \setlength{\tabcolsep}{3pt}
        \renewcommand{\arraystretch}{0.6}
        \caption{Comparison of estimation errors for $\bbm \Theta^*$ under $\mathcal{K}_3=\{2,1,1\}$ for $(N,s)=(20,3)$ and $(N,s)=(100,10)$. Entries are means with standard deviations in parentheses. The best values are shown in bold.}
        \label{tab:theta_error_combined_K211}
        \begin{tabular}{llcccc|cccc}
        \toprule \toprule
        &
        & \multicolumn{4}{c|}{$N=20,\ s=3$}
        & \multicolumn{4}{c}{$N=100,\ s=10$} \\
        \cmidrule(lr){3-6} \cmidrule(lr){7-10}
        Distribution $\backslash$ $T$
        &
        & $600$ & $1200$ & $1800$ & $2400$
        & $900$ & $1800$ & $2700$ & $3600$ \\
        \midrule
        \multirow[c]{2.5}{*}{Gaussian}
        & $\widehat{\bbm \Theta}$
        & \makecell{\textbf{1.859}\\\textbf{(0.016)}}
        & \makecell{\textbf{1.408}\\\textbf{(0.012)}}
        & \makecell{\textbf{1.159}\\\textbf{(0.016)}}
        & \makecell{\textbf{1.014}\\\textbf{(0.014)}}
        & \makecell{\textbf{1.360}\\\textbf{(0.006)}}
        & \makecell{\textbf{1.154}\\\textbf{(0.020)}}
        & \makecell{\textbf{1.038}\\\textbf{(0.006)}}
        & \makecell{\textbf{1.051}\\\textbf{(0.004)}} \\
        &
        $\widehat{\bbm \Theta}^{\star}$
        & \makecell{4.070\\(0.221)}
        & \makecell{2.189\\(0.091)}
        & \makecell{1.581\\(0.068)}
        & \makecell{1.309\\(0.042)}
        & \makecell{2.737\\(0.060)}
        & \makecell{2.558\\(0.080)}
        & \makecell{2.343\\(0.077)}
        & \makecell{2.019\\(0.112)} \\
        \midrule
        \multirow[c]{2.5}{*}{Laplace}
        & $\widehat{\bbm \Theta}$
        & \makecell{\textbf{1.425}\\\textbf{(0.016)}}
        & \makecell{\textbf{1.092}\\\textbf{(0.009)}}
        & \makecell{\textbf{1.018}\\\textbf{(0.008)}}
        & \makecell{\textbf{0.918}\\\textbf{(0.007)}}
        & \makecell{\textbf{1.223}\\\textbf{(0.011)}}
        & \makecell{\textbf{1.071}\\\textbf{(0.009)}}
        & \makecell{\textbf{0.989}\\\textbf{(0.008)}}
        & \makecell{\textbf{0.974}\\\textbf{(0.007)}} \\
        &
        $\widehat{\bbm \Theta}^{\star}$
        & \makecell{6.175\\(0.683)}
        & \makecell{2.879\\(0.331)}
        & \makecell{2.079\\(0.263)}
        & \makecell{1.750\\(0.273)}
        & \makecell{2.772\\(0.130)}
        & \makecell{2.587\\(0.095)}
        & \makecell{2.370\\(0.110)}
        & \makecell{2.004\\(0.136)} \\
        \midrule
        \multirow[c]{2.5}{*}{$t_{4.2}$}
        & $\widehat{\bbm \Theta}$
        & \makecell{\textbf{1.772}\\\textbf{(0.021)}}
        & \makecell{\textbf{1.359}\\\textbf{(0.021)}}
        & \makecell{\textbf{1.128}\\\textbf{(0.016)}}
        & \makecell{\textbf{0.968}\\\textbf{(0.016)}}
        & \makecell{\textbf{1.283}\\\textbf{(0.009)}}
        & \makecell{\textbf{1.173}\\\textbf{(0.007)}}
        & \makecell{\textbf{1.131}\\\textbf{(0.007)}}
        & \makecell{\textbf{1.015}\\\textbf{(0.009)}} \\
        &
        $\widehat{\bbm\Theta}^{\star}$
        & \makecell{6.584\\(1.700)}
        & \makecell{3.444\\(0.774)}
        & \makecell{2.636\\(1.114)}
        & \makecell{2.141\\(0.574)}
        & \makecell{2.904\\(0.171)}
        & \makecell{2.622\\(0.194)}
        & \makecell{2.392\\(0.190)}
        & \makecell{2.005\\(0.341)} \\
        \bottomrule
        \end{tabular}
        \end{table}

        \subsection{Conditional covariance matrix prediction}\label{subsec:sigma sim}
        We next evaluate the prediction accuracy of the conditional covariance matrix $\bm{\Sigma}_t$ using four competing estimators: 
        (i) the vech-based estimator $\widebreve{\bm{\Sigma}}_t$ in \eqref{eq:breve_Sigma_t}; 
        (ii) the nuclear-norm-based estimator $\widetilde{\bm{\Sigma}}_t$ in \eqref{eq:tilde_Sigma_t_and_hat_Sigma_t};
        (iii) the TE-loss-based estimator $\widehat{\bm{\Sigma}}_t$ in \eqref{eq:tilde_Sigma_t_and_hat_Sigma_t}; 
        (iv) the vech-based estimator in \eqref{eq:breve_Sigma_t} without truncation,  
        denoted by $\widebreve{\bm{\Sigma}}_t^{\star}$ with $\widebreve{\bm{\Sigma}}_t^{\star}=\mathcal{P}(\operatorname{vech}^{-1}(\widehat{\bm{\Theta}}^{\mathrm{T}}\bm{x}_t))$, where $\widehat{\bm{\Theta}}^{\star}$ is obtained from \eqref{eq:loss function} with $\tau=\infty$;
        (v) the nuclear-norm-based estimator in \eqref{eq:tilde_Sigma_t_and_hat_Sigma_t} without truncation, denoted by $\widetilde{\bm{\Sigma}}_t^{\star}$ with $\widetilde{\bm{\Sigma}}_t^{\star}\allowbreak=\widehat{\bbm{\Omega}}^{\star} \allowbreak+\sum_{i=1}^{p}\sum_{k=1}^{K_i} \allowbreak\widetilde{\mathbf{A}}_{ik}^{\star}\bm{r}_{t-i}\bm{r}_{t-i}^{\mathrm{T}}\widetilde{\mathbf{A}}_{ik}^{\star\mathrm{T}}$, where $\widehat{\bbm{\Omega}}^{\star} = \mathcal{P}(\operatorname{vech}^{-1}(\mathbf{D}_N\widehat{\bm{\Theta}}^{\star\mathrm{T}}_{\cdot,1}))$, and $\widetilde{\mathbf{A}}_{ik}^{\star}$ are obtained from $\widehat{\bm{\Theta}}^{\star}$ based on the nuclear-norm padding in \eqref{eq:nuclear-argmin}; and 
        (vi) the TE-loss-based estimator in \eqref{eq:tilde_Sigma_t_and_hat_Sigma_t} without truncation, denoted by $\widehat{\bm{\Sigma}}_t^{\star}$ with $\widehat{\bm{\Sigma}}_t^{\star}\allowbreak=\widehat{\bbm{\Omega}}^{\star} \allowbreak+\sum_{i=1}^{p}\sum_{k=1}^{K_i} \allowbreak\widehat{\mathbf{A}}_{ik}^{\star}\bm{r}_{t-i}\bm{r}_{t-i}^{\mathrm{T}}\widehat{\mathbf{A}}_{ik}^{\star\mathrm{T}}$, where  $\widehat{\mathbf{A}}_{ik}^{\star}$ are obtained from $\widehat{\bm{\Theta}}^{\star}$ based on the TE-loss padding in \eqref{eq:te-argmin}.

        Table~\ref{tab:sigma_error_combined_K211} reports the corresponding prediction errors under $\mathcal{K}_3=\{2,1,1\}$ for $(N,s)=(20,3)$ and $(N,s)=(100,10)$ across Gaussian, Laplace, and $t_{4.2}$ innovations. Several patterns emerge clearly. First, all estimators become more accurate as the sample size $T$ increases. Second, the two truncated BEKK-based estimators, $\widetilde{\bm{\Sigma}}_t$ and $\widehat{\bm{\Sigma}}_t$, uniformly outperform their non-truncated counterparts $\widetilde{\bm{\Sigma}}_t^{\star}$ and $\widehat{\bm{\Sigma}}_t^{\star}$, showing that truncation substantially improves estimation accuracy for the conditional covariance matrix. Third, in the moderate-dimensional setting $(N,s)=(20,3)$, the direct vech-based estimator $\widebreve{\bm{\Sigma}}_t$ exhibits very small standard deviations, indicating that it is numerically quite stable without padding-back. However, its estimation errors remain noticeably larger than those of the truncated BEKK-based estimators, so this stability does not translate into better accuracy. By contrast, in the high-dimensional setting $(N,s)=(100,10)$, the vech-based estimators $\widebreve{\bm{\Sigma}}_t$ and $\widebreve{\bm{\Sigma}}_t^{\star}$ deteriorate substantially and are clearly dominated by the padding-based estimators in terms of estimation error. This shows that the direct vech-based reconstruction becomes less effective in higher dimensions, whereas the padding-back step is essential for recovering the covariance structure more accurately. Moreover, among all competitors, the TE-loss-based estimator $\widehat{\bm{\Sigma}}_t$ most often attains the smallest error, while the nuclear-norm-based estimator $\widetilde{\bm{\Sigma}}_t$ is typically the second best. This suggests that the TE-loss-based padding step provides an additional gain in recovering the dynamic covariance structure, possibly because it makes use of the structural information implied by $K_i$.

        \setlength{\tabcolsep}{3pt}
        \renewcommand{\arraystretch}{0.6}

        \begin{longtable}{llcccc|cccc}
        \caption{Comparison of prediction errors for $\bm \Sigma_t$ under $\mathcal{K}_3=\{2,1,1\}$ for $(N,s)=(20,3)$ and $(N,s)=(100,10)$. Entries are mean with standard deviation in parentheses. The best values are shown in bold and the second-best are underlined.}
        \label{tab:sigma_error_combined_K211}\\
        \toprule \toprule
        &
        & \multicolumn{4}{c|}{$N=20,\ s=3$}
        & \multicolumn{4}{c}{$N=100,\ s=10$} \\
        \cmidrule(lr){3-6} \cmidrule(lr){7-10}
        Distribution\ \textbackslash \ $T$
        &
        & $600$ & $1200$ & $1800$ & $2400$
        & $900$ & $1800$ & $2700$ & $3600$ \\
        \midrule
        \endfirsthead

        \toprule \toprule
        &
        & \multicolumn{4}{c|}{$N=20,\ s=3$}
        & \multicolumn{4}{c}{$N=100,\ s=10$} \\
        \cmidrule(lr){3-6} \cmidrule(lr){7-10}
        Distribution\ \textbackslash \ $T$
        &
        & $600$ & $1200$ & $1800$ & $2400$
        & $900$ & $1800$ & $2700$ & $3600$ \\
        \midrule
        \endhead

        \midrule
        \multicolumn{10}{r}{Continued on next page}
        \endfoot

        \bottomrule
        \endlastfoot
        \multirow[c]{11}{*}{Gaussian}
        & $\widebreve{\boldsymbol{\Sigma}}_t$
        & \makecell{1.087\\(0.011)}
        & \makecell{0.873\\\textbf{(0.003)}}
        & \makecell{0.807\\\textbf{(0.003)}}
        & \makecell{0.773\\\textbf{(0.003)}}
        & \makecell{3.013\\(0.759)}
        & \makecell{2.843\\(0.217)}
        & \makecell{2.806\\(0.203)}
        & \makecell{2.762\\(0.161)} \\
        &
        $\widetilde{\boldsymbol{\Sigma}}_t$
        & \makecell{\underline{0.978}\\\textbf{(0.008)}}
        & \makecell{\underline{0.883}\\\underline{(0.017)}}
        & \makecell{0.764\\(0.016)}
        & \makecell{0.678\\(0.013)}
        & \makecell{1.424\\(0.426)}
        & \makecell{1.092\\(0.119)}
        & \makecell{1.011\\(0.093)}
        & \makecell{1.003\\(0.058)} \\
        &
        $\widehat{\boldsymbol{\Sigma}}_t$
        & \makecell{\textbf{0.977}\\\underline{(0.009)}}
        & \makecell{\textbf{0.800}\\(0.019)}
        & \makecell{\textbf{0.662}\\\underline{(0.014)}}
        & \makecell{\textbf{0.570}\\\underline{(0.011)}}
        & \makecell{\textbf{0.878}\\(0.048)}
        & \makecell{\textbf{0.812}\\\underline{(0.031)}}
        & \makecell{\textbf{0.777}\\\underline{(0.022)}}
        & \makecell{\textbf{0.753}\\\underline{(0.018)}} \\
        &
        $\widebreve{\boldsymbol{\Sigma}}_t^{\star}$
        & \makecell{2.595\\(0.034)}
        & \makecell{1.841\\(0.029)}
        & \makecell{1.501\\(0.027)}
        & \makecell{1.312\\(0.023)}
        & \makecell{3.424\\\textbf{(0.017)}}
        & \makecell{3.334\\\textbf{(0.013)}}
        & \makecell{3.254\\\textbf{(0.011)}}
        & \makecell{3.191\\\textbf{(0.009)}} \\
        &
        $\widetilde{\boldsymbol{\Sigma}}_t^{\star}$
        & \makecell{1.690\\(0.273)}
        & \makecell{1.096\\(0.125)}
        & \makecell{0.800\\(0.116)}
        & \makecell{0.654\\(0.076)}
        & \makecell{2.222\\(0.193)}
        & \makecell{1.986\\(0.103)}
        & \makecell{1.693\\(0.073)}
        & \makecell{1.398\\(0.046)} \\
        &
        $\widehat{\boldsymbol{\Sigma}}_t^{\star}$
        & \makecell{1.699\\(0.269)}
        & \makecell{1.096\\(0.126)}
        & \makecell{\underline{0.750}\\(0.131)}
        & \makecell{\underline{0.593}\\(0.091)}
        & \makecell{\underline{1.021}\\\underline{(0.042)}}
        & \makecell{\underline{0.990}\\(0.035)}
        & \makecell{\underline{0.949}\\(0.025)}
        & \makecell{\underline{0.934}\\(0.024)} \\
        \midrule
        \multirow[c]{9}{*}{Laplace}
        & $\widebreve{\boldsymbol{\Sigma}}_t$
        & \makecell{1.884\\\textbf{(0.003)}}
        & \makecell{1.844\\\textbf{(0.002)}}
        & \makecell{1.821\\\textbf{(0.003)}}
        & \makecell{1.814\\\textbf{(0.002)}}
        & \makecell{2.897\\(0.398)}
        & \makecell{2.786\\(0.279)}
        & \makecell{2.706\\(0.208)}
        & \makecell{2.675\\(0.199)} \\
        &
        $\widetilde{\boldsymbol{\Sigma}}_t$
        & \makecell{\underline{0.913}\\\underline{(0.009)}}
        & \makecell{\underline{0.804}\\\underline{(0.015)}}
        & \makecell{\underline{0.719}\\(0.014)}
        & \makecell{\underline{0.662}\\(0.016)}
        & \makecell{1.390\\(0.515)}
        & \makecell{1.074\\(0.119)}
        & \makecell{\underline{0.990}\\(0.093)}
        & \makecell{1.105\\(0.129)} \\
        &
        $\widehat{\boldsymbol{\Sigma}}_t$
        & \makecell{\textbf{0.903}\\(0.016)}
        & \makecell{\textbf{0.700}\\(0.016)}
        & \makecell{\textbf{0.608}\\\underline{(0.012)}}
        & \makecell{\textbf{0.566}\\\underline{(0.010)}}
        & \makecell{\textbf{0.826}\\(0.054)}
        & \makecell{\textbf{0.757}\\\underline{(0.030)}}
        & \makecell{\textbf{0.714}\\\underline{(0.019)}}
        & \makecell{\textbf{0.698}\\\underline{(0.021)}} \\
        &
        $\widebreve{\boldsymbol{\Sigma}}_t^{\star}$
        & \makecell{2.766\\(0.072)}
        & \makecell{2.015\\(0.083)}
        & \makecell{1.677\\(0.074)}
        & \makecell{1.497\\(0.109)}
        & \makecell{3.438\\\textbf{(0.024)}}
        & \makecell{3.340\\\textbf{(0.028)}}
        & \makecell{3.270\\\textbf{(0.009)}}
        & \makecell{3.204\\\textbf{(0.012)}} \\
        &
        $\widetilde{\boldsymbol{\Sigma}}_t^{\star}$
        & \makecell{2.489\\(0.569)}
        & \makecell{1.460\\(0.501)}
        & \makecell{1.184\\(0.371)}
        & \makecell{1.071\\(0.429)}
        & \makecell{2.196\\(0.284)}
        & \makecell{2.019\\(0.302)}
        & \makecell{1.770\\(0.141)}
        & \makecell{1.510\\(0.061)} \\
        &
        $\widehat{\boldsymbol{\Sigma}}_t^{\star}$
        & \makecell{2.495\\(0.561)}
        & \makecell{1.464\\(0.499)}
        & \makecell{1.168\\(0.377)}
        & \makecell{1.041\\(0.445)}
        & \makecell{\underline{1.075}\\\underline{(0.042)}}
        & \makecell{\underline{1.023}\\(0.033)}
        & \makecell{0.999\\(0.043)}
        & \makecell{\underline{0.971}\\(0.035)} \\
        \midrule
        \multirow[c]{9}{*}{$t_{4.2}$}
        & $\widebreve{\boldsymbol{\Sigma}}_t$
        & \makecell{1.955\\\textbf{(0.009)}}
        & \makecell{1.887\\\textbf{(0.004)}}
        & \makecell{1.829\\\textbf{(0.003)}}
        & \makecell{1.772\\\textbf{(0.003)}}
        & \makecell{2.871\\(1.134)}
        & \makecell{2.629\\(0.259)}
        & \makecell{2.591\\(0.258)}
        & \makecell{2.575\\(0.186)} \\
        &
        $\widetilde{\boldsymbol{\Sigma}}_t$
        & \makecell{\underline{0.966}\\\underline{(0.009)}}
        & \makecell{\underline{0.828}\\\underline{(0.019)}}
        & \makecell{\underline{0.705}\\(0.018)}
        & \makecell{\underline{0.619}\\(0.011)}
        & \makecell{1.382\\(0.386)}
        & \makecell{\underline{1.123}\\(0.182)}
        & \makecell{\underline{1.107}\\(0.259)}
        & \makecell{\underline{1.100}\\(0.114)} \\
        &
        $\widehat{\boldsymbol{\Sigma}}_t$
        & \makecell{\textbf{0.964}\\(0.010)}
        & \makecell{\textbf{0.747}\\(0.022)}
        & \makecell{\textbf{0.607}\\\underline{(0.015)}}
        & \makecell{\textbf{0.538}\\\underline{(0.010)}}
        & \makecell{\textbf{0.824}\\(0.069)}
        & \makecell{\textbf{0.753}\\\textbf{(0.038)}}
        & \makecell{\textbf{0.710}\\(0.034)}
        & \makecell{\textbf{0.689}\\\underline{(0.021)}} \\
        &
        $\widebreve{\boldsymbol{\Sigma}}_t^{\star}$
        & \makecell{2.667\\(0.125)}
        & \makecell{2.112\\(0.196)}
        & \makecell{1.843\\(0.355)}
        & \makecell{1.626\\(0.214)}
        & \makecell{3.479\\\textbf{(0.066)}}
        & \makecell{3.376\\\underline{(0.039)}}
        & \makecell{3.288\\\textbf{(0.019)}}
        & \makecell{3.219\\(0.025)} \\
        &
        $\widetilde{\boldsymbol{\Sigma}}_t^{\star}$
        & \makecell{3.504\\(1.618)}
        & \makecell{2.108\\(0.889)}
        & \makecell{1.839\\(1.240)}
        & \makecell{1.485\\(0.703)}
        & \makecell{2.301\\(1.328)}
        & \makecell{2.174\\(0.575)}
        & \makecell{1.741\\(0.235)}
        & \makecell{1.580\\(0.224)} \\
        &
        $\widehat{\boldsymbol{\Sigma}}_t^{\star}$
        & \makecell{3.501\\(1.609)}
        & \makecell{2.106\\(0.886)}
        & \makecell{1.833\\(1.237)}
        & \makecell{1.478\\(0.706)}
        & \makecell{\underline{1.309}\\\underline{(0.067)}}
        & \makecell{1.186\\(0.052)}
        & \makecell{1.179\\\underline{(0.021)}}
        & \makecell{1.155\\\textbf{(0.012)}} \\
        \bottomrule
        \end{longtable}

        \subsection{BEKK model estimation}\label{subsec:bekk sim}
        We now further compare the estimation performance for the representative coefficient component $\mathbf{A}_{11}^*$ using both the nuclear-norm-based and TE-loss-based procedures, with and without truncation. 
        Table~\ref{tab:a11_error_combined_K211} reports the corresponding Frobenius-norm errors under $\mathcal{K}_3^*=\{2,1,1\}$ for $(N,s)=(20,3)$ and $(N,s)=(100,10)$ across Gaussian, Laplace, and Student's $t$ innovations. Several patterns are clear from the table. First, for both padding schemes, the truncated estimators $\widetilde{\mathbf{A}}_{11}$ and $\widehat{\mathbf{A}}_{11}$ generally achieve smaller estimation errors than their non-truncated counterparts $\widetilde{\mathbf{A}}_{11}^{\star}$ and $\widehat{\mathbf{A}}_{11}^{\star}$, showing that truncation improves estimation accuracy. Second, the truncated estimators are also substantially more stable, as reflected by the smaller standard deviations in most cases, which indicates stronger robustness to heavy-tailed innovations and sample variability. Third, among the four competitors, the TE-loss-based truncated estimator $\widehat{\mathbf{A}}_{11}$ most often attains the smallest error, and its advantage becomes more pronounced when the dimension increases and the innovations become heavier-tailed.

        \begin{table}[H]
        \setlength{\tabcolsep}{3pt}
        \centering
        \renewcommand{\arraystretch}{0.6}
        \caption{Comparison of estimation errors for $\bm A_{11}^*$ under $\mathcal{K}_3=\{2,1,1\}$ for $(N,s)=(20,3)$ and $(N,s)=(100,10)$. Entries are mean with standard deviation in parentheses. The best values are shown in bold and the second-best are underlined.}
        \label{tab:a11_error_combined_K211}
        \begin{tabular}{llcccc|cccc}
        \toprule \toprule
        &
        & \multicolumn{4}{c|}{$N=20,\ s=3$}
        & \multicolumn{4}{c}{$N=100,\ s=10$} \\
        \cmidrule(lr){3-6} \cmidrule(lr){7-10}
        Distribution\ \textbackslash \ $T$
        &
        & $600$ & $1200$ & $1800$ & $2400$
        & $900$ & $1800$ & $2700$ & $3600$ \\
        \midrule
        \multirow[c]{6}{*}{Gaussian}
        & $\widetilde{\mathbf{A}}_{11}$
        & \makecell{\underline{1.281}\\\underline{(0.045)}}
        & \makecell{\underline{1.050}\\\underline{(0.112)}}
        & \makecell{\underline{0.794}\\\underline{(0.077)}}
        & \makecell{\underline{0.668}\\\underline{(0.081)}}
        & \makecell{1.487\\\textbf{(0.033)}}
        & \makecell{1.325\\\underline{(0.046)}}
        & \makecell{1.241\\(0.068)}
        & \makecell{1.172\\\underline{(0.036)}} \\
        &
        $\widehat{\mathbf{A}}_{11}$
        & \makecell{\textbf{1.280}\\\textbf{(0.045)}}
        & \makecell{\textbf{0.863}\\(0.138)}
        & \makecell{\textbf{0.609}\\\textbf{(0.071)}}
        & \makecell{\textbf{0.535}\\\textbf{(0.069)}}
        & \makecell{\textbf{0.918}\\(0.087)}
        & \makecell{\textbf{0.760}\\(0.063)}
        & \makecell{\textbf{0.707}\\\textbf{(0.022)}}
        & \makecell{\textbf{0.688}\\\textbf{(0.013)}} \\
        &
        $\widetilde{\mathbf{A}}_{11}^{\star}$
        & \makecell{1.720\\(0.090)}
        & \makecell{1.365\\\textbf{(0.102)}}
        & \makecell{1.074\\(0.164)}
        & \makecell{0.896\\(0.164)}
        & \makecell{1.660\\(0.105)}
        & \makecell{1.580\\\textbf{(0.055)}}
        & \makecell{1.443\\\underline{(0.050)}}
        & \makecell{1.288\\(0.045)} \\
        &
        $\widehat{\mathbf{A}}_{11}^{\star}$
        & \makecell{1.734\\(0.088)}
        & \makecell{1.353\\(0.128)}
        & \makecell{0.966\\(0.202)}
        & \makecell{0.806\\(0.187)}
        & \makecell{\underline{0.965}\\\underline{(0.078)}}
        & \makecell{\underline{0.887}\\(0.077)}
        & \makecell{\underline{0.882}\\(0.056)}
        & \makecell{\underline{0.870}\\(0.050)} \\
        \midrule
        \multirow[c]{6}{*}{Laplace}
        & $\widetilde{\mathbf{A}}_{11}$
        & \makecell{\underline{1.174}\\\textbf{(0.060)}}
        & \makecell{\underline{0.864}\\\underline{(0.111)}}
        & \makecell{\underline{0.694}\\\underline{(0.068)}}
        & \makecell{\underline{0.599}\\\underline{(0.053)}}
        & \makecell{1.416\\\textbf{(0.040)}}
        & \makecell{1.301\\\underline{(0.058)}}
        & \makecell{1.222\\\underline{(0.071)}}
        & \makecell{1.160\\\underline{(0.047)}} \\
        &
        $\widehat{\mathbf{A}}_{11}$
        & \makecell{\textbf{1.150}\\\underline{(0.087)}}
        & \makecell{\textbf{0.651}\\\textbf{(0.087)}}
        & \makecell{\textbf{0.546}\\\textbf{(0.063)}}
        & \makecell{\textbf{0.484}\\\textbf{(0.047)}}
        & \makecell{\textbf{0.875}\\(0.120)}
        & \makecell{\textbf{0.721}\\\textbf{(0.037)}}
        & \makecell{\textbf{0.678}\\\textbf{(0.018)}}
        & \makecell{\textbf{0.660}\\\textbf{(0.024)}} \\
        &
        $\widetilde{\mathbf{A}}_{11}^{\star}$
        & \makecell{2.020\\(0.142)}
        & \makecell{1.530\\(0.185)}
        & \makecell{1.353\\(0.231)}
        & \makecell{1.263\\(0.297)}
        & \makecell{1.682\\(0.135)}
        & \makecell{1.616\\(0.146)}
        & \makecell{1.464\\(0.099)}
        & \makecell{1.342\\(0.054)} \\
        &
        $\widehat{\mathbf{A}}_{11}^{\star}$
        & \makecell{2.026\\(0.139)}
        & \makecell{1.538\\(0.183)}
        & \makecell{1.323\\(0.262)}
        & \makecell{1.214\\(0.342)}
        & \makecell{\underline{1.064}\\\underline{(0.089)}}
        & \makecell{\underline{0.943}\\(0.083)}
        & \makecell{\underline{0.949}\\(0.115)}
        & \makecell{\underline{0.912}\\(0.094)} \\
        \midrule
        \multirow[c]{6}{*}{$t_{4.2}$}
        & $\widetilde{\mathbf{A}}_{11}$
        & \makecell{\textbf{1.240}\\\textbf{(0.065)}}
        & \makecell{\underline{0.958}\\\textbf{(0.113)}}
        & \makecell{\underline{0.762}\\\underline{(0.087)}}
        & \makecell{\underline{0.635}\\\underline{(0.073)}}
        & \makecell{1.442\\\textbf{(0.033)}}
        & \makecell{1.309\\\textbf{(0.048)}}
        & \makecell{1.249\\\textbf{(0.035)}}
        & \makecell{1.194\\\textbf{(0.036)}} \\
        &
        $\widehat{\mathbf{A}}_{11}$
        & \makecell{\underline{1.243}\\\underline{(0.075)}}
        & \makecell{\textbf{0.790}\\\underline{(0.125)}}
        & \makecell{\textbf{0.625}\\\textbf{(0.083)}}
        & \makecell{\textbf{0.538}\\\textbf{(0.061)}}
        & \makecell{\textbf{0.905}\\\underline{(0.121)}}
        & \makecell{\textbf{0.751}\\\underline{(0.086)}}
        & \makecell{\textbf{0.693}\\\underline{(0.042)}}
        & \makecell{\textbf{0.674}\\\underline{(0.037)}} \\
        &
        $\widetilde{\mathbf{A}}_{11}^{\star}$
        & \makecell{2.312\\(0.459)}
        & \makecell{1.840\\(0.340)}
        & \makecell{1.702\\(0.460)}
        & \makecell{1.522\\(0.367)}
        & \makecell{1.684\\(0.356)}
        & \makecell{1.678\\(0.164)}
        & \makecell{1.558\\(0.124)}
        & \makecell{1.466\\(0.116)} \\
        &
        $\widehat{\mathbf{A}}_{11}^{\star}$
        & \makecell{2.311\\(0.459)}
        & \makecell{1.839\\(0.348)}
        & \makecell{1.699\\(0.460)}
        & \makecell{1.510\\(0.391)}
        & \makecell{\underline{1.220}\\(0.399)}
        & \makecell{\underline{1.170}\\(0.240)}
        & \makecell{\underline{1.136}\\(0.146)}
        & \makecell{\underline{1.181}\\(0.144)} \\
        \bottomrule
        \end{tabular}
        \end{table}

        \subsection{Model selection}\label{subsec:model order selection sim}

        In this section, we assess the proposed model selection procedures in Section \ref{subsec:model order selection}. Specifically, the lag order $p^*$ is determined via the BIC-based criterion in \eqref{eq:BIC}, while the component configuration $K_i^*$ for each $1 \leq i \leq p$ is selected using the Ridge-type estimator in \eqref{eq:Ridge-type estimator}. We set the maximum lag order to $\overline{p}=5$ and the maximum component number to $\overline{K}=5$. Tables \ref{tab:selection_consistency_for_p} and \ref{tab:selection_consistency_for_K} summarize the percentages of correct selections for $p^*$ and $K_i^*$ for the settings $(N,s)=(20,3)$ and $(N,s)=(100,10)$, respectively. The results demonstrate that both selection procedures perform well, with correct selection rates improving as the sample size $T$ increases. Notably, even under heavy-tailed innovations, the procedures achieve satisfactory performance when the sample size is sufficiently large.

        \begin{table}[H]
                \centering
                \caption{Percentages of correct selection for the lag order $p^*$ by BIC-based procedure \eqref{eq:BIC}.}
                \label{tab:selection_consistency_for_p}
                \renewcommand{\arraystretch}{0.6}
                \setlength{\tabcolsep}{6pt}
                \makebox[\textwidth][c]{
                        \begin{tabular}{c ccc ccc ccc}
                                \toprule \toprule
                                $N=20$\multirow{2}{*} & \multicolumn{3}{c}{$\mathcal{K}_3^* = \{1, 1, 1\}$} & \multicolumn{3}{c}{$\mathcal{K}_3^* = \{2,1,1\}$} & \multicolumn{3}{c}{$\mathcal{K}_3^* = \{1,2,1\}$}\\
                                \cmidrule(lr{4pt}){2-4} \cmidrule(lr{4pt}){5-7} \cmidrule(lr{4pt}){8-10}
                                $T\backslash \bbm{\eta}_t$ & Gaussian & Laplace & $t_{4.2}$ & Gaussian & Laplace & $t_{4.2}$ & Gaussian & Laplace & $t_{4.2}$\\
                                \midrule
                                1200 & 84 & 70 & 59 & 81 & 65 & 67 & 69 & 51 & 54\\
                                1800 & 96 & 98 & 90 & 95 & 96 & 85 & 95 & 96 & 91\\
                                2400 & 99 & 100 & 97  & 99 & 99 & 96 & 100 & 99 & 97\\
                                \midrule
                                $N=100$\multirow{2}{*} & \multicolumn{3}{c}{$\mathcal{K}_3^* = \{1, 1, 1\}$} & \multicolumn{3}{c}{$\mathcal{K}_3^* = \{2,1,1\}$} & \multicolumn{3}{c}{$\mathcal{K}_3^* = \{1,2,1\}$}\\
                                \cmidrule(lr{4pt}){2-4} \cmidrule(lr{4pt}){5-7} \cmidrule(lr{4pt}){8-10}
                                $T\backslash \bbm{\eta}_t$ & Gaussian & Laplace & $t_{4.2}$ & Gaussian & Laplace & $t_{4.2}$ & Gaussian & Laplace & $t_{4.2}$\\
                                \midrule
                                1800 & 41 & 38 & 23 & 51 & 48 & 37 & 61 & 52 & 50\\
                                2700 & 99  & 97 & 72 & 99 & 91 & 88 & 93 & 94 & 93\\
                                3600 & 100  & 100 & 100 & 99 & 100 & 98 & 100 & 100 & 100\\
                                \bottomrule
                        \end{tabular}
                }
        \end{table}

        \begin{table}[H]
                \centering
                \caption{Percentages of correct selection for the component number $\mathcal{K}_3^*$ by the Ridge-type estimator \eqref{eq:Ridge-type estimator}.}
                \label{tab:selection_consistency_for_K}
                \renewcommand{\arraystretch}{0.6}
                \setlength{\tabcolsep}{6pt}
                \makebox[\textwidth][c]{
                        \begin{tabular}{c ccc ccc ccc}
                                \toprule \toprule
                                $N=20$\multirow{2}{*} & \multicolumn{3}{c}{$\mathcal{K}_3^* = \{1, 1, 1\}$} & \multicolumn{3}{c}{$\mathcal{K}_3^* = \{2,1,1\}$} & \multicolumn{3}{c}{$\mathcal{K}_3^* = \{1,2,1\}$}\\
                                \cmidrule(lr{4pt}){2-4} \cmidrule(lr{4pt}){5-7} \cmidrule(lr{4pt}){8-10}
                                $T\backslash \bbm{\eta}_t$ & Gaussian & Laplace & $t_{4.2}$ & Gaussian & Laplace & $t_{4.2}$ & Gaussian & Laplace & $t_{4.2}$\\
                                \midrule
                                1200 & 100 & 100 & 91 & 83 & 76 & 48 & 92 & 69 & 52\\
                                1800 & 100 & 100 & 97 & 96 & 93 & 76 & 99 & 81 & 72\\
                                2400 & 100 & 100 & 99 & 99 & 97 & 89 & 100 & 94 & 83\\
                                \midrule
                                $N=100$\multirow{2}{*} & \multicolumn{3}{c}{$\mathcal{K}_3^* = \{1, 1, 1\}$} & \multicolumn{3}{c}{$\mathcal{K}_3^* = \{2,1,1\}$} & \multicolumn{3}{c}{$\mathcal{K}_3^* = \{1,2,1\}$}\\
                                \cmidrule(lr{4pt}){2-4} \cmidrule(lr{4pt}){5-7} \cmidrule(lr{4pt}){8-10}
                                $T\backslash \bbm{\eta}_t$ & Gaussian & Laplace & $t_{4.2}$ & Gaussian & Laplace & $t_{4.2}$ & Gaussian & Laplace & $t_{4.2}$\\
                                \midrule
                                1800 & 97 & 81 & 65 & 92 & 77 & 81 & 93 & 84 & 78\\
                                2700 & 99 & 98 & 77 & 99 & 89 & 95 & 98 & 98 & 96\\
                                3600 & 100 & 100 & 99 & 100 & 100 & 100 & 100 & 99 & 99\\
                                \bottomrule
                        \end{tabular}
                }
        \end{table}

\section{Real Data Analysis}\label{sec:real data analysis}

        In this section, we consider two empirical applications to illustrate the performance of our proposed methodology. We analyze two financial datasets, the 17 industry portfolios and the 100 portfolios formed on size and investiment, downloaded from Kenneth French's data library (\url{https://mba.tuck.dartmouth.edu/pages/faculty/ken.french/data_library.html}). The two datasets are standard workhorses in empirical portfolio studies \citep{Behr20121414}. It is standard practice to construct aggregated portfolios based on these constituent assets \citep{Qiao2023220}. 
        Both datasets exhibit volatility clustering and heavy-tailedness across all components, which motivates us to analyze these two datasets using the BEKK-ARCH model in \eqref{eq:BEKK-ARCH}, and employ the proposed estimation methods in Section~\ref{sec:methodology} to obtain their conditional covariance matrices for portfolio construction. 
        The model is selected via the procedures in Section~\ref{subsec:model order selection} and all tuning parameters are selected via the rolling forecasting validation procedure described in Section~\ref{subsec:tuning parameter selection}. 
        All computations are implemented in Python with GPU acceleration (NVIDIA RTX 3090).

        \subsection{17 industry portfolios}\label{subsec:17 industry portfolios}

        We analyze the centered daily returns for 17 industry portfolios from January 2, 2020, to June 30, 2025,
        yielding a time series $\{\bm{r}_{t}\}$ with dimension $N = 17$ and sample size $T = 1380$.

        We fit $\{\bm{r}_{t}\}$ with BEKK-ARCH model in \eqref{eq:BEKK-ARCH}. Given the maximum lag order $\overline{p}=10$ and the maximum component number $\overline{K}=5$, the proposed BIC in \eqref{eq:BIC} and Ridge-type estimator in \eqref{eq:Ridge-type estimator} select $\widehat{p}=3$ and $\widehat{\mathcal{K}}_3=\{2,2,1\}$, respectively. Here, we consider six estimators for the conditional covariance matrix $\bm{\Sigma}_t$ based on our method: (i) the vech-based estimator $\widebreve{\bm{\Sigma}}_t$ in \eqref{eq:breve_Sigma_t}; 
        (ii) the nuclear-norm-based estimator $\widetilde{\bm{\Sigma}}_t$ in \eqref{eq:tilde_Sigma_t_and_hat_Sigma_t};
        (iii) the TE-loss-based estimator $\widehat{\bm{\Sigma}}_t$ in \eqref{eq:tilde_Sigma_t_and_hat_Sigma_t}; 
        (iv) the vech-based estimator in \eqref{eq:breve_Sigma_t} without truncation,  
        denoted by $\widebreve{\bm{\Sigma}}_t^{\star}$;
        (v) the nuclear-norm-based estimator in \eqref{eq:tilde_Sigma_t_and_hat_Sigma_t} without truncation, denoted by $\widetilde{\bm{\Sigma}}_t^{\star}$; and 
        (vi) the TE-loss-based estimator in \eqref{eq:tilde_Sigma_t_and_hat_Sigma_t} without truncation, denoted by $\widehat{\bm{\Sigma}}_t^{\star}$.

        We next assess forecasting performance through minimum-variance (MV) portfolio construction. Let $z_t=\bm{w}_t^{\mathrm{T}}\bm{r}_t$ denote the MV portfolio return, where the weight vector is $\bm{w}_t=(\mathbf{1}_N^\mathrm{T}\bm{\Sigma}_t^{-1}\mathbf{1}_N)^{-1}\bm{\Sigma}_t^{-1}\mathbf{1}_N$ with $\bm{\Sigma}_t=\operatorname{var}(\bm{r}_t|\mathcal{F}_{t-1})$.
        Replacing the unknown $\bm{\Sigma}_t$ by its estimators yields the corresponding portfolio returns $\widehat{z}_t$, $\widetilde{z}_t$, $\widebreve{z}_t$, $\widebreve{z}_t^{\star}$, $\widetilde{z}_t^{\star}$, and $\widehat{z}_t^{\star}$. To evaluate out-of-sample performance, we implement a one-step-ahead rolling forecasting procedure with an expanding window, and reserve the last 20\% of the sample for evaluation. Specifically, we fit the BEKK model in \eqref{eq:BEKK-ARCH} with $\widehat{p}=3$ and $\widehat{\mathcal{K}}_3=\{2,2,1\}$ using observations up to $T_0=1103$, and obtain the one-day-ahead forecast at $t_1=T_0+1=1104$. We then move the forecast origin forward by one day, re-estimate the model, and repeat this procedure until the end of the sample, yielding $T-T_0=277$ one-day-ahead MV portfolio forecasts.

        We further compare our methods with several counterparts. In total, eight portfolios are considered: 
        (1) $\mathbf{BEKK}$, the MV portfolio $\widebreve{z}_t$ based on $\widebreve{\bm{\Sigma}}_t$; 
        (2) $\mathbf{BEKK}_{\text{Nuc}}$, the MV portfolio $\widetilde{z}_t$ based on $\widetilde{\bm{\Sigma}}_t$; 
        (3) $\mathbf{BEKK}_{\text{TE}}$, the MV portfolio $\widehat{z}_t$ based on $\widehat{\bm{\Sigma}}_t$; 
        (4) $\mathbf{BEKK}^{\text{NT}}$, the MV portfolio $\widebreve{z}_t^{\star}$ based on $\widebreve{\bm{\Sigma}}_t^{\star}$; 
        (5) $\mathbf{BEKK}_{\text{Nuc}}^{\text{NT}}$, the MV portfolio $\widetilde{z}_t^{\star}$ based on $\widetilde{\bm{\Sigma}}_t^{\star}$; 
        (6) $\mathbf{BEKK}_{\text{TE}}^{\text{NT}}$, the MV portfolio $\widehat{z}_t^{\star}$ based on $\widehat{\bm{\Sigma}}_t^{\star}$;
        (7) $\bm{1/N}$, the naive equal-weighted portfolio; 
        (8) \textbf{CCC}, the MV portfolio based on $\bm{\Sigma}_t=\mathbf{D}_t\mathbf{R}\mathbf{D}_t$, where $\mathbf{D}_t$ is diagonal with each entry fitted by a univariate GARCH$(1,1)$ model via QMLE, and $\mathbf{R}$ is a constant conditional correlation matrix estimated by nonlinear shrinkage \citep{engle2019large}; and 
        (9) \textbf{DCC}, the MV portfolio based on $\bm{\Sigma}_t=\mathbf{D}_t\mathbf{R}_t\mathbf{D}_t$, where $\mathbf{D}_t$ is defined as above and $\mathbf{R}_t=(1-\alpha-\beta)\mathbf{C}+\alpha\,\bm{r}_{t-1}\bm{r}_{t-1}^{\mathrm{T}}+\beta\,\mathbf{R}_{t-1}$, with $\mathbf{C}$ estimated by nonlinear shrinkage \citep{engle2019large} and $\alpha,\beta$ estimated by QMLE.

        To evaluate the out-of-sample performance of each portfolio, we consider three metrics: (i) annualized mean return (AV), defined as the sample mean of portfolio returns $\{z_t\}$ multiplied by $252$; (ii) annualized standard deviation (SD), defined as the sample standard deviation of $\{z_t\}$  multiplied by $252$; (iii) annualized information ratio (IR), computed as the ratio of AV to SD. Notably, the Sharpe ratio reduces to IR when the risk-free rate is assumed to be zero.

        Table~\ref{tab:17 industry portfolios} summarizes the out-of-sample performance and computation time of all competing methods. We have the following findings. First, compared to the naive $1/N$ benchmark, all BEKK-based methods, as well as CCC and DCC, achieve substantially higher annualized returns and lower portfolio risk, leading to marked improvements in the information ratio. Second, while the standard BEKK estimator achieves performance comparable to CCC and DCC, it requires the lowest computational cost among all model-based competitors. Third, the proposed truncated padding-based estimators, $\textnormal{BEKK}_{\text{Nuc}}$ and $\textnormal{BEKK}_{\text{TE}}$, further improve upon BEKK and other benchmarks in terms of return and information ratio, highlighting the benefit of the padding-back step for recovering a more informative covariance structure. Fourth, comparing truncated and non-truncated versions, both $\textnormal{BEKK}$ and $\textnormal{BEKK}_{\text{Nuc}}$ dominate their counterparts $\textnormal{BEKK}^{\text{NT}}$ and $\textnormal{BEKK}_{\text{Nuc}}^{\text{NT}}$, indicating that suitable truncation improves portfolio performance under heavy-tailed data. Finally, in this moderate-dimensional setting, $\textnormal{BEKK}_{\text{Nuc}}$ slightly outperforms $\textnormal{BEKK}_{\text{TE}}$, suggesting that the additional structural information exploited by the TE-loss padding does not yet translate into a clear empirical gain when the eigengap in the nuclear-norm-based padding is already sufficiently pronounced. Overall, these results show that the proposed padding-based BEKK procedures can substantially improve portfolio performance while remaining computationally competitive.

        \begin{table}[H]
        \centering
        \caption{Out-of-sample performance and average computation time for 17 industry portfolios. The best methods in each evaluation metric are marked in bold, and the second-best methods are underlined.}
        \label{tab:17 industry portfolios}
        \renewcommand{\arraystretch}{0.6}
        \setlength{\tabcolsep}{4pt}
        \makebox[\textwidth][c]{
        \begin{tabular}{c ccccccccc}
        \toprule \toprule
        & $\mathbf{BEKK}$ & $\mathbf{BEKK}_{\text{Nuc}}$ & $\mathbf{BEKK}_{\text{TE}}$ & $\mathbf{BEKK}^{\text{NT}}$ & $\mathbf{BEKK}_{\text{Nuc}}^{\text{NT}}$ & $\mathbf{BEKK}_{\text{TE}}^{\text{NT}}$ & $1/N$ & CCC & DCC\\
        \midrule
        AV(\%)    & 11.21 & \textbf{14.61} & \underline{14.04} & 11.45 & 11.89 & 11.63 & 11.45 & 13.84 & 13.85\\
        SD(\%)    & \textbf{12.03} & 14.41 & \underline{12.67} & 12.69 & 12.84 & 12.73 & 17.40 & 15.18 & 15.18\\
        IR        & 0.93  & \underline{1.03} & \textbf{1.10} & 0.90  & 0.93 & 0.91 & 0.66 & 0.91 & 0.91\\
        Time($s$) & \textbf{0.01} & 0.69 & 0.43 & \underline{0.01} & 0.74 & 0.59 & -- & 0.17 & 0.96\\
        \bottomrule
        \end{tabular}
        }
        \end{table}

\subsection{100 portfolios}\label{subsec:100 portfolios}
        In this section, we analyze daily returns of $100$ portfolios formed on size and investment from January 4, 2010, to June 30, 2025. As in Section \ref{subsec:17 industry portfolios}, we calculate the centered time series $\{\bm{r}_t\}$ with $N=100$ and $T=3726$. We fit $\{\bm{r}_{t}\}$ with BEKK-ARCH model in \eqref{eq:BEKK-ARCH}. Given the maximum lag order set to $\overline{p}=10$, and the maximum component number set to $\overline{K}=5$, our proposed BIC and Ridge-type estimator select $\widehat{p}=3$ and $\widehat{\mathcal{K}}_3=\{1,1,1\}$, respectively. Following Section \ref{subsec:17 industry portfolios}, we construct eight portfolios using our methods and existing competitors. To compare their performance, we also conduct one-step-ahead rolling forecasting with an expanding window, reserving the last $10\%$ of observations as the test period (starting at $T_0 = 3354$). Three metrics including AV, SD and IR are calculated for evaluation.

        Table~\ref{tab:100 portfolios} reports the out-of-sample performance and computation time of all competing approaches in the high-dimensional case. The results reveal several clear patterns. First, the proposed BEKK-based methods consistently outperform the benchmarks ($1/N$, CCC, and DCC) in terms of annualized return and information ratio.
        Notably, $\textnormal{BEKK}_{\text{TE}}$ attains the highest information ratio among all methods, while also achieving one of the highest annualized returns and relatively low portfolio risk. 
        Second, the BEKK estimator remains the fastest among all model-based competitors. Moreover, between the two padding-based approaches, the TE-loss-based methods, $\textnormal{BEKK}_{\text{TE}}$ and $\textnormal{BEKK}_{\text{TE}}^{\text{NT}}$, are substantially less time-consuming than their nuclear-norm-based counterparts, $\textnormal{BEKK}_{\text{Nuc}}$ and $\textnormal{BEKK}_{\text{Nuc}}^{\text{NT}}$, highlighting the computational advantage of the TE loss in high dimensions. 
        Third, comparing the truncated and non-truncated versions, both $\textnormal{BEKK}$, $\textnormal{BEKK}_{\text{Nuc}}$ and $\textnormal{BEKK}_{\text{TE}}$ outperform their untruncated counterparts $\textnormal{BEKK}^{\text{NT}}$, $\textnormal{BEKK}_{\text{Nuc}}^{\text{NT}}$ and $\textnormal{BEKK}_{\text{TE}}^{\text{NT}}$ in terms of information ratio, which again highlights the practical value of truncation under heavy-tailed data. By contrast, the non-truncated nuclear-norm-based version does not lead to an improvement, indicating that without truncation the additional flexibility may amplify estimation noise in high dimensions. Overall, the results suggest that, in the high-dimensional setting, $\textnormal{BEKK}_{\text{TE}}$ provides the best overall balance between return, risk, and predictive accuracy, while the BEKK estimator remains an attractive choice when computational cost is the primary concern.

        \begin{table}[H]
        \centering
        \caption{Out-of-sample performance and average computation time for 100 portfolios. The best methods in each evaluation metric are marked in bold, and the second-best methods are underlined.}
        \label{tab:100 portfolios}
        \renewcommand{\arraystretch}{0.6}
        \setlength{\tabcolsep}{4pt}
        \makebox[\textwidth][c]{
        \begin{tabular}{c ccccccccc}
        \toprule \toprule
        & $\mathbf{BEKK}$ & $\mathbf{BEKK}_{\text{Nuc}}$ & $\mathbf{BEKK}_{\text{TE}}$ & $\mathbf{BEKK}^{\text{NT}}$ & $\mathbf{BEKK}_{\text{Nuc}}^{\text{NT}}$ & $\mathbf{BEKK}_{\text{TE}}^{\text{NT}}$ & $1/N$ & CCC & DCC\\
        \midrule
        AV(\%)    & 16.24 & 16.65 & \underline{16.94} & 16.35 & \textbf{17.15} & 17.01 & 13.37 & 13.78 & 10.70\\
        SD(\%)    & 15.38 & 15.97 & \underline{14.93} & 15.76 & 16.89 & 16.23 & 16.32 & 19.02 & \textbf{13.21}\\
        IR        & \underline{1.06} & 1.04 & \textbf{1.13} & 1.04 & 1.01 & 1.05 & 0.82 & 0.72 & 0.81\\
        Time($s$) & \textbf{0.14} & 232.64 & 20.03 & \underline{0.20} & 249.39 & 23.16 & -- & 1.31 & 18.30\\
        \bottomrule
        \end{tabular}
        }
        \end{table}

\section{Conclusion and Discussion}\label{sec:conclusion and discussion}
        This paper proposes a robust and scalable estimation framework for high-dimensional BEKK-ARCH models with heavy-tailed distributions. By imposing row-wise sparsity in the coefficient matrices of the BEKK-ARCH model, the proposed method employs data truncation to handle heavy-tailedness and utilizes an equivalent VAR representation to introduce a regularized least squares estimation (LSE) for efficient computation. We establish that the resulting regularized LSE achieves minimax optimality. 
        Building on the vech-based estimation step, we further develop a padding-back procedure to recover the original BEKK coefficient matrices. In particular, we propose both nuclear-norm-based and TE-loss-based padding schemes, which not only provide stable recovery of the bilinear BEKK structure but also extend naturally to other models with similar bilinear parameterizations.
        Furthermore, we propose a robust BIC for selecting the model order and a Ridge-type estimator for determining the number of BEKK components, establishing their selection consistency under heavy-tailed settings. 
        Finally, the practical value of our approach is illustrated through two financial datasets of 17 industry portfolios and 100 portfolios, respectively. Empirical results confirm that our method outperforms existing alternatives in both computational speed and forecasting accuracy.

        The proposed approach can be extended and improved in the following directions. First, the robust estimation framework developed for BEKK-ARCH models could be adapted to other multivariate volatility models, such as the DCC class, under heavy-tailed distributions.
        Second, our theoretical analysis currently relies on a finite $(4+4\epsilon)$-th moment condition on data process. Relaxing this requirement would broaden the applicability of the method to more extreme heavy-tailed settings.
        Third, while we impose a row-wise sparsity structure on the coefficient matrices to manage high-dimensionality, it would be valuable to weaken this assumption to a weak sparsity condition, thereby accommodating more flexible model structures.
        We leave these topics for future research.

\setlength{\bibsep}{1pt}
\bibliography{mybib.bib}

\startsupplement[Supplementary material for ``A robust and scalable estimation for high-dimensional volatility models'']
\begin{abstract}
This supplementary material provides technical details, proofs and additional simulation results supporting the main paper. Section \ref{append:notation and auxiliary definition} introduces the essential notation and an auxiliary definition. Section \ref{append:Primary lemmas} presents primary lemmas that form the theoretical foundation for our main results. Section \ref{append:proofs of main results} contains proofs of all theorems and propositions. Section \ref{append:proofs of primary lemmas} provides detailed proofs of the primary lemmas from Section \ref{append:Primary lemmas}. Section \ref{append:technical lemmas} collects technical lemmas from the literature. Section \ref{append:Specification} specifies the construction of auxiliary matrices and operators. Finally, Section \ref{append:additional simulation results} presents additional simulation results.
\end{abstract}

\vspace{5mm}
	\section{Notation and an Auxiliary Definition}\label{append:notation and auxiliary definition}
	\renewcommand{\thedefinition}{A.\arabic{definition}}
	\setcounter{definition}{0}
	We begin by establishing notations used throughout the supplement. Scalars are denoted by roman letters (e.g., $a,\tau$), vectors by bold lowercase (e.g., $\bm{u},\bm{\eta}$), and matrices by bold uppercase letters (e.g., $\mathbf{A},\bm{\Theta}$). For a vector $\bm{u}\in\mathbb{R}^N$ and $q\in[0,\infty]$, define the $\ell_q$-norm as $\|\bm{u}\|_q=(\sum_{i=1}^N |u_i|^q)^{1/q}$ (with $\|\bm{u}\|_\infty=\max_i |u_i|$) and the sparsity count $\|\bm{u}\|_0=\sum_i \mathds{1}(u_i\neq 0)$. For a matrix $\mathbf{A}=(a_{ij})\in\mathbb{R}^{N_1\times N_2}$, let $\operatorname{vec}(\mathbf{A})$ be the column-stacking vectorization and $\operatorname{vech}(\mathbf{A})$ the half-vectorization. Their inverse operations are denoted by $\operatorname{vec}^{-1}(\cdot)$ and $\operatorname{vech}^{-1}(\cdot)$, respectively. For matrix entries, we use $\mathbf{M}[i,j]$ to denote the element at position $(i,j)$. The $j$-th column of $\mathbf{A}$ and the submatrix consisting of columns indexed by a set $J$ are denoted by $\mathbf{A}_{\cdot,j}$ and $\mathbf{A}_{\cdot,J}$, respectively. We define the Frobenius inner product as $\langle \mathbf{A},\mathbf{B}\rangle=\mathrm{tr}(\mathbf{A}^\mathrm{T} \mathbf{B})$, and the Frobenius norm $\|\mathbf{A}\|_{\mathrm{F}}=\langle \mathbf{A},\mathbf{A}\rangle^{1/2}$. The operator norm is given by $\|\mathbf{A}\|_{\mathrm{op}}=\sigma_1(\mathbf{A})$, and the nuclear norm by $\|\mathbf{A}\|_*=\sum_i \sigma_i(\mathbf{A})$, where $\sigma_i(\mathbf{A})$ denotes the $i$-th singular value of $\mathbf{A}$. The elementwise max norm is defined as $\|\mathbf{A}\|_{\infty,\infty}=\max_{i,j}|a_{ij}|$. For grouped (columnwise) norms, we write $\|\mathbf{A}\|_{p,q}=\big(\sum_{j=1}^{N_2}\|\mathbf{A}_{\boldsymbol{\cdot},j}\|_p^{\,q}\big)^{1/q}$. The Kronecker product is denoted by $\otimes$. Let $C$ represents a generic positive constant that may vary across different contexts. For sequences $\{a_n\}$ and $\{b_n\}$, we write $a_n\gtrsim b_n$ if $a_n\ge C b_n$ for all $n$, and $a_n\asymp b_n$ if both $a_n\gtrsim b_n$ and $b_n\gtrsim a_n$ hold. 
	
	The empirical quadratic loss is given by $\mathbb{L}(\bm{\Theta}) = 1/(2T)\|\mathbf{Y}(\tau)-\mathbf{X}(\tau)\bm{\Theta}\|_\mathrm{F}^2$. Denote $\nabla \mathbb{L}(\bm{\Theta})$ as the gradient of the loss with respect to $\bm{\Theta}$. Let $\nabla^2 \mathbb{L}(\bm{\Theta}_{\bm{\cdot},j})$ be the Hessian matrix of $\mathbb{L}(\bm{\Theta})$ with respect to $\bm{\Theta}_{\bm{\cdot},j}$ for $1 \leq j \leq d$. Let $\mathcal{F}_{t_1}^{t_2}$ be the $\sigma$-algebra generated by $\{\bm{r}_{t_1},\dots,\bm{r}_{t_2}\}$. The $\alpha$-mixing coefficient is defined as
	$$
	\alpha(\ell) = \sup_{A \in \mathcal{F}_{-\infty}^{t}, B \in \mathcal{F}_{t+\ell}^{\infty}} |\mathbb{P}(A \cap B) - \mathbb{P}(A)\mathbb{P}(B)|.
	$$

	To mitigate the singularity issues of the Hessian matrix in high-dimensional settings, we study the property in a localized sense. Similar to \cite{sun2020adaptive}, we introduce the localized restricted eigenvalue of $\nabla^2 \mathbb{L}(\bm{\Theta}_{\bm{\cdot},j})$ as follows.

	\begin{definition}[Localized Restricted Eigenvalue]\label{def:localized restricted eigenvalue}
		The minimum localized restricted eigenvalue for $\nabla^2 \mathbb{L}(\bm{\Theta}_{\bm{\cdot},j})$ is defined as
		\begin{align*}
			\nu(\nabla^2 \mathbb{L}(\bm{\Theta}_{\bm{\cdot},j}),\mathcal{S}_j, \gamma) = \inf_{\mathbf{u}}\left\{\frac{\mathbf{u}^{\mathrm{T}} \nabla^2 \mathbb{L}(\bm{\Theta}_{\bm{\cdot},j}) \mathbf{u}}{\|\mathbf{u}\|_2^2}: \mathbf{u} \in \mathcal{C}(\mathcal{S}_j, \gamma)\right\},
		\end{align*}
		where $\mathcal{C}(\mathcal{S}_j, \gamma)=\{\mathbf{u} \in \mathbb{R}^{pd+1} : \mathbf{u} \neq \mathbf{0}, \|\mathbf{u}_{\mathcal{S}_j^c}\|_1 \leq \gamma \|\mathbf{u}_{\mathcal{S}_j}\|_1\}$.
	\end{definition}
	
	\section{Primary Lemmas}\label{append:Primary lemmas}
	\renewcommand{\thelemma}{B.\arabic{lemma}}
	\setcounter{lemma}{0}
	The proof for Theorem \ref{thm:vech upper bound} is based on Lemma \ref{lemma:first order derivative} -- \ref{lemma:restricted eigenvalue condition}. Specifically, Lemma \ref{lemma:first order derivative} establishes an upper bound on the first-order derivative of the loss function, and its proof is based on Lemma \ref{lemma:covariance matrix bound} which provides an upper bound for the empirical covariance matrix $\mathbf{S}=\frac{1}{T}\mathbf{X}(\tau)^{\mathrm{T}}\mathbf{X}(\tau)$ of the transformed vector $\boldsymbol{x}_t(\tau)$ in terms of matrix $\boldsymbol{\Gamma}_x = \mathbb{E}(\boldsymbol{x}_t\boldsymbol{x}_t^\mathrm{T})$, where $\mathbf{X}(\tau)=(\bbm{x}_1(\tau),\dots,\bbm{x}_T(\tau))^{\mathrm{T}}$. Lemma \ref{lemma:ell1 cone} shows that each column of $\widehat{\bm{\Theta}}$ falls into the $\ell_1$ cone under some regular conditions. Lemma~\ref{lemma:frobenius upper bound for symmetric bergman divergence} provides an upper bound of $\langle\nabla \mathbb{L}(\widehat{\bm{\Theta}}_{\bm{\cdot},j}) - \nabla\mathbb{L}(\bm{\Theta}_{\bm{\cdot},j}^*), \widehat{\bm{\Theta}}_{\bm{\cdot},j} - \bm{\Theta}_{\bm{\cdot},j}^*\rangle$ by the estimation error, scaled by $\sqrt{s}\lambda$. Lemma \ref{lemma:localized restricted eigenvalues} ensures that the minimum localized restricted eigenvalue is bounded away from zero with high probability.
    Let $M = \max\{M_{4+4\epsilon}, 1\}$. 

		\begin{lemma}\label{lemma:covariance matrix bound}
		Suppose Assumption \ref{assumption:process conditions} hold. If $T \gtrsim \log(pd+1)$ and $$\tau \asymp \biggl(\frac{M T_{\textnormal{eff}}}{\log(pd+1)}\biggr)^{\frac{1}{4+4\epsilon}},$$ then, for $\epsilon \in (0,1]$, with probability at least $1-C\operatorname{exp}[-C\log(T)\log(pd+1)]$,
		\begin{align*}
			\|\mathbf{S} - \boldsymbol{\Gamma}_x\|_{\infty,\infty} \lesssim \biggl(\frac{M^{1/\epsilon} \log(pd+1)}{T_{\textnormal{eff}}}\biggr)^{\frac{\epsilon}{1+\epsilon}}.
		\end{align*}
		\end{lemma}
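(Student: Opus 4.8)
The plan is to control $\|\mathbf{S}-\bm{\Gamma}_x\|_{\infty,\infty}$ entrywise and then take a union bound over the $O((pd+1)^2)$ entries. Fix an entry index $(a,b)$ and write $\mathbf{S}[a,b]-\bm{\Gamma}_x[a,b] = \frac{1}{T}\sum_{t=1}^{T}\big(x_{t,a}(\tau)x_{t,b}(\tau) - \mathbb{E}(x_{t,a}x_{t,b})\big)$. I would split this into a \emph{truncation bias} term and a \emph{stochastic fluctuation} term: the bias is $\mathbb{E}(x_{t,a}(\tau)x_{t,b}(\tau)) - \mathbb{E}(x_{t,a}x_{t,b})$, which I bound using Hölder/Markov and the $(4+4\epsilon)$-moment assumption on $\{r_{t,j}\}$ (recall each coordinate of $\bm{x}_t$ is either $1$ or a product $r_{t-i,j_1}r_{t-i,j_2}$, so $x_{t,a}$ has a finite $(2+2\epsilon)$-th moment of order $M$); the resulting bias is of order $M^{(1+2\epsilon)/(1+\epsilon)}\tau^{-2\epsilon}$ (or better), which under the prescribed $\tau \asymp (MT_{\mathrm{eff}}/\log(pd+1))^{1/(4+4\epsilon)}$ matches the target rate $(M^{1/\epsilon}\log(pd+1)/T_{\mathrm{eff}})^{\epsilon/(1+\epsilon)}$. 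The fluctuation term is a centered sum of bounded ($|x_{t,a}(\tau)x_{t,b}(\tau)|\le \tau^2$) $\alpha$-mixing random variables, to which I would apply a Bernstein-type inequality for $\alpha$-mixing sequences (e.g.\ a blocking argument à la Merlev\`ede–Peligrad–Rio, or the version used in the cited robust time-series literature), with variance proxy controlled again by the $(2+2\epsilon)$-moment bound $\mathbb{E}(x_{t,a}^2 x_{t,b}^2)\lesssim M$ via Cauchy–Schwarz on the truncated variables.

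The key computation is balancing the three quantities that appear in the mixing Bernstein bound: the boundedness parameter $\tau^2$, the variance $M$, and the effective sample size $T_{\mathrm{eff}} = T/(\log T)^2$ (the $(\log T)^2$ factor is exactly the price of the blocking construction for geometric $\alpha$-mixing). Choosing the deviation level $u \asymp (M^{1/\epsilon}\log(pd+1)/T_{\mathrm{eff}})^{\epsilon/(1+\epsilon)}$, the sub-Gaussian branch of Bernstein contributes $\exp(-cT_{\mathrm{eff}}u^2/M)$ and the sub-exponential branch contributes $\exp(-cT_{\mathrm{eff}}u/\tau^2)$; I would verify that with the stated choice of $\tau$ both exponents are $\gtrsim \log(T)\log(pd+1)$, so that after the union bound over $(pd+1)^2 \le \exp(2\log(pd+1))$ entries the total failure probability is $C\exp[-C\log(T)\log(pd+1)]$ as claimed. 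The condition $T\gtrsim \log(pd+1)$ is what guarantees $u$ is small enough (less than a constant) for the Bernstein bound to be in the usable regime and for $\tau$ to be well-defined as a growing-but-subpolynomial truncation level.

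The main obstacle I anticipate is the $\alpha$-mixing concentration step: one cannot use a clean i.i.d.\ Bernstein inequality, and the standard blocking/coupling arguments introduce the $(\log T)^2$ loss (hence $T_{\mathrm{eff}}$ rather than $T$) and require care in tracking how the geometric mixing rate $\bar\zeta<1$ — which may depend on $N$ but is uniformly bounded away from $1$ — enters the constants so that they remain dimension-free. A secondary subtlety is that the variables $x_{t,a}(\tau)x_{t,b}(\tau)$ are functions of $\bm{r}_{t-i}$ and $\bm{r}_{t-i'}$ for possibly different lags $i,i'\le p$, so the effective mixing coefficient is $\alpha(\ell - p)$ rather than $\alpha(\ell)$; since $p$ is fixed (or at most slowly growing) and the mixing is geometric, this only changes constants, but it must be stated explicitly. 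Once the entrywise bound holds with the stated probability, the union bound is immediate and completes the proof.
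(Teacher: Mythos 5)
Your overall route is the same as the paper's: split $\mathbf{S}-\bm{\Gamma}_x$ entrywise into a truncation-bias term $\mathbb{E}[x_{t,a}(\tau)x_{t,b}(\tau)]-\mathbb{E}[x_{t,a}x_{t,b}]$ plus a centered fluctuation of bounded, $\alpha$-mixing summands, bound the bias by H\"older/Markov under the $(4+4\epsilon)$-moment condition, control the fluctuation by a blocking Bernstein-type inequality for $\alpha$-mixing sequences (the paper uses Bosq's inequality with block parameter $q\asymp T/\log T$, which is exactly where $T_{\textnormal{eff}}$ and the failure probability $\exp[-C\log(T)\log(pd+1)]$ come from), and finish with a union bound over the $(pd+1)^2$ entries. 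Your remark that the lagged structure only shifts the mixing coefficient by the fixed lag $p$ also matches the paper's treatment.

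However, three of your quantitative claims are wrong as stated, and the "balance check" you defer cannot be carried out with them. First, the bias is of order $M\tau^{-4\epsilon}$, not $M^{(1+2\epsilon)/(1+\epsilon)}\tau^{-2\epsilon}$: Markov must be applied to $\mathbb{P}(|r_{t,j}|>\tau)$ at the $(4+4\epsilon)$-th moment, giving a factor $(M\tau^{-(4+4\epsilon)})^{\epsilon/(1+\epsilon)}$; with only $\tau^{-2\epsilon}$ the prescribed $\tau$ yields the rate $(\log(pd+1)/T_{\textnormal{eff}})^{\epsilon/(2+2\epsilon)}$, which does \emph{not} match the target, so "(or better)" is doing essential work there. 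Second, $|x_{t,a}(\tau)x_{t,b}(\tau)|\le\tau^4$, not $\tau^2$, since each truncated covariate is itself a product of two truncated returns. Third, and most importantly, the variance proxy $\mathbb{E}[x_{t,a}^2x_{t,b}^2]\lesssim M$ is not available: that is an eighth-order moment of the returns, whereas Assumption 2 only provides $(4+4\epsilon)$-th moments with $\epsilon\le 1$, and Cauchy--Schwarz does not rescue it. The paper instead interpolates with the truncation level,
\begin{align*}
\mathbb{E}\,|x_{t,a}(\tau)x_{t,b}(\tau)|^{2}\;\le\;\tau^{4-4\epsilon}\,\mathbb{E}\,|x_{t,a}(\tau)x_{t,b}(\tau)|^{1+\epsilon}\;\le\;\tau^{4-4\epsilon}M,
\end{align*}
and it is precisely this $\tau$-dependent variance, together with the bound $\tau^{4}$, that makes $\mu(\delta)\asymp\delta^2/(\tau^{4}\delta+\tau^{4-4\epsilon}M)\asymp\log(pd+1)/T_{\textnormal{eff}}$ and hence $q\,\mu(\delta)\asymp\log(T)\log(pd+1)$ under the prescribed $\tau$; the choice of $\tau$ is tight for exactly this trade-off, so verifying the exponents with your stated (too optimistic) variance and boundedness parameters would not constitute a proof. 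Once these three corrections are made, your plan coincides with the paper's argument.
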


		\begin{lemma}\label{lemma:first order derivative}
			Suppose Assumption \ref{assumption:process conditions} hold. If $T \gtrsim \log(pd+1)$ and $$\tau \asymp \left(\frac{M T_{\textnormal{eff}}}{\log(pd+1)}\right)^{\frac{1}{4+4\epsilon}},$$ then, for $\epsilon \in (0,1]$, with probability at least $1-C\operatorname{exp}[-Cs\log(T)\log(pd+1)]$, 
			\begin{align*}
				\|\nabla\mathbb{L}(\bm{\Theta}^*)\|_{\infty,\infty} \lesssim s\|\bm{\Theta}^*\|_{1,\infty}\left(\frac{M^{1/\epsilon}\log(pd+1)}{T_{\textnormal{eff}}}\right)^{\frac{\epsilon}{1+\epsilon}}.
			\end{align*}
		\end{lemma}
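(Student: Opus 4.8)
The plan is to use the closed form $\nabla\mathbb{L}(\bm{\Theta}^*)=-\tfrac1T\mathbf{X}(\tau)^{\mathrm{T}}\bigl(\mathbf{Y}(\tau)-\mathbf{X}(\tau)\bm{\Theta}^*\bigr)=\mathbf{S}\bm{\Theta}^*-\tfrac1T\mathbf{X}(\tau)^{\mathrm{T}}\mathbf{Y}(\tau)$, with $\mathbf{S}=\tfrac1T\mathbf{X}(\tau)^{\mathrm{T}}\mathbf{X}(\tau)$ as in Lemma \ref{lemma:covariance matrix bound}. Since $\bm{x}_t$ is $\mathcal{F}_{t-1}$-measurable while $\mathbb{E}[\bm{e}_t\mid\mathcal{F}_{t-1}]=\bm{0}$ (because $\mathbf{E}_t=\bm{\Sigma}_t^{1/2}(\bm{\eta}_t\bm{\eta}_t^{\mathrm{T}}-\mathbf{I}_N)\bm{\Sigma}_t^{1/2}$ with $\bm{\Sigma}_t$ measurable w.r.t.\ $\mathcal{F}_{t-1}$), the population cross moment obeys $\bm{\Gamma}_{xy}:=\mathbb{E}[\bm{x}_t\bm{y}_t^{\mathrm{T}}]=\bm{\Gamma}_x\bm{\Theta}^*$. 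Adding and subtracting this quantity gives $\nabla\mathbb{L}(\bm{\Theta}^*)=(\mathbf{S}-\bm{\Gamma}_x)\bm{\Theta}^*-\bigl(\tfrac1T\mathbf{X}(\tau)^{\mathrm{T}}\mathbf{Y}(\tau)-\bm{\Gamma}_{xy}\bigr)$, and I would bound $\|\nabla\mathbb{L}(\bm{\Theta}^*)\|_{\infty,\infty}$ through these two summands.

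For the first summand, $\|(\mathbf{S}-\bm{\Gamma}_x)\bm{\Theta}^*\|_{\infty,\infty}\le\|\mathbf{S}-\bm{\Gamma}_x\|_{\infty,\infty}\max_{b}\|\bm{\Theta}^*_{\cdot,b}\|_1$, where the column $\ell_1$-norm is at most $s\|\bm{\Theta}^*\|_{1,\infty}$ by the column-sparsity of $\bm{\Theta}^*$, so Lemma \ref{lemma:covariance matrix bound} immediately yields a bound of order $s\|\bm{\Theta}^*\|_{1,\infty}(M^{1/\epsilon}\log(pd+1)/T_{\textnormal{eff}})^{\epsilon/(1+\epsilon)}$ on the required event. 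For the second summand I would re-run the argument behind Lemma \ref{lemma:covariance matrix bound}, since its entries $\tfrac1T\sum_t x_{t,a}(\tau)y_{t,b}(\tau)-\mathbb{E}[x_{1,a}y_{1,b}]$ are of exactly the same type as the entries of $\mathbf{S}-\bm{\Gamma}_x$: each is an average of products of two coordinates, every coordinate itself being a product of two entries of some $\bm{r}_{t'}$. Splitting each such entry into a deterministic truncation bias $\mathbb{E}[x_{1,a}(\tau)y_{1,b}(\tau)-x_{1,a}y_{1,b}]$ and a centered stochastic part, the bias I would control by $\mathbb{E}\bigl[|x_{1,a}y_{1,b}|\,\mathds{1}(A)\bigr]$ with $A$ the event that one of the at most four relevant coordinates of $\bm{r}$ exceeds $\tau$; Markov's inequality under Assumption \ref{assumption:process conditions}(i) gives $\mathbb{P}(A)\lesssim M/\tau^{4+4\epsilon}$, two applications of Cauchy--Schwarz give the finite fractional moment $\mathbb{E}|x_{1,a}y_{1,b}|^{1+\epsilon}\lesssim M$, and Hölder's inequality then produces a bias of order $M^{1/(1+\epsilon)}(M/\tau^{4+4\epsilon})^{\epsilon/(1+\epsilon)}$, which for the prescribed $\tau$ is exactly $(M^{1/\epsilon}\log(pd+1)/T_{\textnormal{eff}})^{\epsilon/(1+\epsilon)}$.

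For the centered stochastic part, the summands are bounded (e.g.\ $|x_{1,a}(\tau)y_{1,b}(\tau)|\le\tau^4$) and form an $\alpha$-mixing sequence, being a function of finitely many $\bm{r}_{t'}$'s, so I would invoke a Bernstein-type inequality for bounded $\alpha$-mixing sequences; the variance proxy $\mathbb{E}[(x_{1,a}(\tau)y_{1,b}(\tau))^2]$ I would bound via the interpolation $(x(\tau)y(\tau))^2\le\tau^{4-4\epsilon}|xy|^{1+\epsilon}$ together with $\mathbb{E}|xy|^{1+\epsilon}\lesssim M$, giving $\lesssim\tau^{4-4\epsilon}M$. A union bound over the $(pd+1)d$ coordinate pairs supplies the $\log(pd+1)$ factor, the logarithmic cost of the mixing is absorbed into $T_{\textnormal{eff}}=T/(\log T)^2$, and calibrating the deviation level to the scale needed for the claimed probability $1-C\exp[-Cs\log(T)\log(pd+1)]$ makes the stochastic part of the same order $(M^{1/\epsilon}\log(pd+1)/T_{\textnormal{eff}})^{\epsilon/(1+\epsilon)}$ as the bias once $\tau\asymp(MT_{\textnormal{eff}}/\log(pd+1))^{1/(4+4\epsilon)}$ is substituted; combined with the first summand, the whole gradient is $\lesssim s\|\bm{\Theta}^*\|_{1,\infty}$ times that rate. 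The main obstacle is the heavy-tailedness: the gradient entries are averages of fourth-order-in-$\bm{r}$ quantities that possess only a $(1+\epsilon)$-th moment under Assumption \ref{assumption:process conditions}(i), so both the truncation-bias estimate and the Bernstein variance proxy must be pushed through fractional-moment and interpolation inequalities rather than ordinary second-moment bounds, and the temporal dependence forces a mixing (rather than i.i.d.) Bernstein inequality, whose logarithmic loss is precisely what is encoded by $T_{\textnormal{eff}}$.
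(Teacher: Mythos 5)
Your proof is correct, but it takes a genuinely different route from the paper. The paper works entry by entry on the gradient: it writes each entry as $-\tfrac1T\sum_t y_{t-l,j}(\tau)\varepsilon_{t,k}$ with $\varepsilon_{t,k}=y_{t,k}(\tau)-\bm{x}_t^{\mathrm{T}}(\tau)\bm{\Theta}^*_{\cdot,k}$, decomposes $\varepsilon_{t,k}=(y_{t,k}(\tau)-y_{t,k})+\langle\bm{x}_t-\bm{x}_t(\tau),\bm{\Theta}^*_{\cdot,k}\rangle+e_{t,k}$, bounds the bias terms by H\"older--Markov exactly as you do, and then applies the $\alpha$-mixing concentration inequality directly to the products $y_{t-l,j}(\tau)\varepsilon_{t,k}$, so the factor $s\|\bm{\Theta}^*\|_{1,\infty}$ enters through the moment bounds of $\varepsilon_{t,k}$ and through the chosen deviation level. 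You instead factor out $\bm{\Theta}^*$ via the identity $\nabla\mathbb{L}(\bm{\Theta}^*)=(\mathbf{S}-\bm{\Gamma}_x)\bm{\Theta}^*-\bigl(\tfrac1T\mathbf{X}(\tau)^{\mathrm{T}}\mathbf{Y}(\tau)-\bm{\Gamma}_{xy}\bigr)$ with $\bm{\Gamma}_{xy}=\bm{\Gamma}_x\bm{\Theta}^*$ (valid, since $\bm{x}_t\in\mathcal{F}_{t-1}$ and $\mathbb{E}[\bm{e}_t\mid\mathcal{F}_{t-1}]=\bm{0}$), which reduces everything to second-moment-type empirical bounds: Lemma \ref{lemma:covariance matrix bound} for the Gram part and a verbatim rerun of its proof for the cross-moment part (whose entries have the same product-of-four-$r$'s structure; nothing in that proof uses that both lags are positive). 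This is more modular and avoids handling $\varepsilon_{t,k}$ at all; the paper's direct treatment, on the other hand, naturally produces the stated probability. Two small calibration points on your side: first, with the paper's norm convention $\|\bm{\Theta}^*\|_{1,\infty}=\max_b\|\bm{\Theta}^*_{\cdot,b}\|_1$, your bound $\max_b\|\bm{\Theta}^*_{\cdot,b}\|_1\le s\|\bm{\Theta}^*\|_{1,\infty}$ is an unnecessary (harmless) inflation. Second, citing Lemma \ref{lemma:covariance matrix bound} as stated only yields the event probability $1-C\exp[-C\log(T)\log(pd+1)]$, without the factor $s$ claimed in the lemma; since $s$ may grow, to recover $1-C\exp[-Cs\log(T)\log(pd+1)]$ you should run both pieces of your decomposition at a deviation level inflated by $s$ (or $s\|\bm{\Theta}^*\|_{1,\infty}$), which the target bound tolerates and which is exactly how the paper obtains the $s$ in the exponent; you indicate this calibration for the cross-moment term but it is needed for the Gram term as well.
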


		\begin{lemma}[$\ell_1$-Cone]\label{lemma:ell1 cone}
			Assume that $\|\nabla\mathbb{L}(\bm{\Theta}^*)\|_{\infty,\infty} \leq \lambda/2$ for any $\lambda>0$. Let $\widehat{\bm{\Theta}}$ be a solution to \eqref{eq:loss function} and $\mathcal{S}_j$ be the support set of $\bm{\Theta}_{\bm{\cdot},j}^*$. Then, for each $1 \leq j \leq d$, $\widehat{\bm{\Theta}}_{\bm{\cdot},j}$ lies in the following $\ell_1$ cone:
			\begin{align*}
				\|(\widehat{\bm{\Theta}}_{\bm{\cdot},j} - \bm{\Theta}^*_{\bm{\cdot},j} )_{\mathcal{S}_j^c}\|_1 \leq 3\|(\widehat{\bm{\Theta}}_{\bm{\cdot},j} - \bm{\Theta}^*_{\bm{\cdot},j} )_{\mathcal{S}_j}\|_1.
			\end{align*}
		\end{lemma}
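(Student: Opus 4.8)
The plan is to run the standard basic-inequality argument for $\ell_1$-penalized least squares, exploiting that both the loss and the penalty in \eqref{eq:loss function} separate across the $d$ columns. First I would note that $\mathbb{L}(\bm{\Theta}) = \sum_{j=1}^{d}\mathbb{L}_j(\bm{\Theta}_{\bm{\cdot},j})$ with $\mathbb{L}_j(\bm{\theta}) = (2T)^{-1}\|\mathbf{Y}(\tau)_{\bm{\cdot},j} - \mathbf{X}(\tau)\bm{\theta}\|_2^2$, and $\|\bm{\Theta}\|_{1,1} = \sum_{j=1}^{d}\|\bm{\Theta}_{\bm{\cdot},j}\|_1$, so a minimizer $\widehat{\bm{\Theta}}$ of \eqref{eq:loss function} also solves, column by column, $\widehat{\bm{\Theta}}_{\bm{\cdot},j}\in\arg\min_{\bm{\theta}}\{\mathbb{L}_j(\bm{\theta}) + \lambda\|\bm{\theta}\|_1\}$. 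It then suffices to prove the cone inclusion for a fixed $j$; set $\bm{\Delta} = \widehat{\bm{\Theta}}_{\bm{\cdot},j} - \bm{\Theta}^*_{\bm{\cdot},j}$ and abbreviate $\bm{\Delta}_{\mathcal{S}_j}$, $\bm{\Delta}_{\mathcal{S}_j^c}$ for its restrictions to $\mathcal{S}_j$ and to its complement.

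Next I would combine optimality with the exact quadratic expansion of $\mathbb{L}_j$. From the optimality inequality $\mathbb{L}_j(\widehat{\bm{\Theta}}_{\bm{\cdot},j}) + \lambda\|\widehat{\bm{\Theta}}_{\bm{\cdot},j}\|_1 \le \mathbb{L}_j(\bm{\Theta}^*_{\bm{\cdot},j}) + \lambda\|\bm{\Theta}^*_{\bm{\cdot},j}\|_1$ together with
\[
\mathbb{L}_j(\widehat{\bm{\Theta}}_{\bm{\cdot},j}) - \mathbb{L}_j(\bm{\Theta}^*_{\bm{\cdot},j}) = \langle \nabla\mathbb{L}_j(\bm{\Theta}^*_{\bm{\cdot},j}),\,\bm{\Delta}\rangle + \tfrac{1}{2T}\|\mathbf{X}(\tau)\bm{\Delta}\|_2^2 \;\geq\; \langle \nabla\mathbb{L}_j(\bm{\Theta}^*_{\bm{\cdot},j}),\,\bm{\Delta}\rangle,
\]
I obtain $\lambda(\|\widehat{\bm{\Theta}}_{\bm{\cdot},j}\|_1 - \|\bm{\Theta}^*_{\bm{\cdot},j}\|_1) \le -\langle \nabla\mathbb{L}_j(\bm{\Theta}^*_{\bm{\cdot},j}),\bm{\Delta}\rangle \le \|\nabla\mathbb{L}_j(\bm{\Theta}^*_{\bm{\cdot},j})\|_\infty\|\bm{\Delta}\|_1 \le \tfrac{\lambda}{2}\|\bm{\Delta}\|_1$, where the last bound uses the hypothesis $\|\nabla\mathbb{L}(\bm{\Theta}^*)\|_{\infty,\infty}\le\lambda/2$, which dominates each column-wise sup-norm $\|\nabla\mathbb{L}_j(\bm{\Theta}^*_{\bm{\cdot},j})\|_\infty$.

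Finally I would lower-bound the left-hand side by splitting along $\mathcal{S}_j$. Because $\bm{\Theta}^*_{\bm{\cdot},j}$ is supported on $\mathcal{S}_j$, the (reverse) triangle inequality gives $\|\widehat{\bm{\Theta}}_{\bm{\cdot},j}\|_1 = \|\bm{\Theta}^*_{\bm{\cdot},j} + \bm{\Delta}_{\mathcal{S}_j}\|_1 + \|\bm{\Delta}_{\mathcal{S}_j^c}\|_1 \ge \|\bm{\Theta}^*_{\bm{\cdot},j}\|_1 - \|\bm{\Delta}_{\mathcal{S}_j}\|_1 + \|\bm{\Delta}_{\mathcal{S}_j^c}\|_1$, hence $\|\widehat{\bm{\Theta}}_{\bm{\cdot},j}\|_1 - \|\bm{\Theta}^*_{\bm{\cdot},j}\|_1 \ge \|\bm{\Delta}_{\mathcal{S}_j^c}\|_1 - \|\bm{\Delta}_{\mathcal{S}_j}\|_1$. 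Substituting this, writing $\|\bm{\Delta}\|_1 = \|\bm{\Delta}_{\mathcal{S}_j}\|_1 + \|\bm{\Delta}_{\mathcal{S}_j^c}\|_1$, cancelling $\lambda$ and rearranging yields $\tfrac12\|\bm{\Delta}_{\mathcal{S}_j^c}\|_1 \le \tfrac32\|\bm{\Delta}_{\mathcal{S}_j}\|_1$, i.e.\ $\|\bm{\Delta}_{\mathcal{S}_j^c}\|_1 \le 3\|\bm{\Delta}_{\mathcal{S}_j}\|_1$, which is the claim. I do not anticipate any genuine obstacle here: this is the textbook Lasso cone lemma, and the only point needing a little care is the column-wise decoupling of \eqref{eq:loss function}, which is what allows the entrywise gradient bound $\lambda/2$ to be applied within each individual column.
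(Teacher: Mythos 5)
Your proof is correct, and it reaches the paper's conclusion by a slightly different but equally standard route. The paper works through the Karush--Kuhn--Tucker stationarity condition of the column-wise problem: it invokes a subgradient $\bm{\gamma}_j \in \partial\|\widehat{\bm{\Theta}}_{\bm{\cdot},j}\|_1$ with $\nabla\mathbb{L}(\widehat{\bm{\Theta}}_{\bm{\cdot},j}) + \lambda\bm{\gamma}_j = \bm{0}$, combines it with gradient monotonicity $\langle\nabla\mathbb{L}(\widehat{\bm{\Theta}}_{\bm{\cdot},j}) - \nabla\mathbb{L}(\bm{\Theta}^*_{\bm{\cdot},j}), \widehat{\bm{\Theta}}_{\bm{\cdot},j} - \bm{\Theta}^*_{\bm{\cdot},j}\rangle \ge 0$, and then exploits the subgradient identities $\langle\bm{\gamma}_j,\widehat{\bm{\Theta}}_{\bm{\cdot},j}\rangle = \|\widehat{\bm{\Theta}}_{\bm{\cdot},j}\|_1$ and $\|\bm{\gamma}_j\|_\infty \le 1$. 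You instead use the basic inequality (comparison of penalized objective values at $\widehat{\bm{\Theta}}_{\bm{\cdot},j}$ and $\bm{\Theta}^*_{\bm{\cdot},j}$), discard the nonnegative quadratic remainder $\tfrac{1}{2T}\|\mathbf{X}(\tau)\bm{\Delta}\|_2^2$, apply H\"older with the assumed entrywise gradient bound $\lambda/2$, and finish with the decomposability (reverse triangle inequality) of the $\ell_1$-norm over $\mathcal{S}_j$ and $\mathcal{S}_j^c$. Both arguments rely on the same two ingredients in disguise --- column-wise separability of \eqref{eq:loss function} and nonnegativity of the curvature term (the paper's convexity step is exactly $\tfrac{1}{T}\|\mathbf{X}(\tau)\bm{\Delta}\|_2^2 \ge 0$) --- and both deliver the same cone constant $3$. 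Your value-comparison route avoids introducing the dual variable $\bm{\gamma}_j$ and is marginally more self-contained; the paper's KKT route has the advantage that the same subgradient identity is reused directly in the companion bound of Lemma \ref{lemma:frobenius upper bound for symmetric bergman divergence}, which is why the paper organizes the argument that way. Your handling of the column-wise gradient ($\|\nabla\mathbb{L}_j(\bm{\Theta}^*_{\bm{\cdot},j})\|_\infty \le \|\nabla\mathbb{L}(\bm{\Theta}^*)\|_{\infty,\infty} \le \lambda/2$) and the requirement $\lambda>0$ for the final cancellation are both sound, so there is no gap.
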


		\begin{lemma}\label{lemma:frobenius upper bound for symmetric bergman divergence}
			Under the same conditions as in Lemma \ref{lemma:ell1 cone}, we have, for each $1 \leq j \leq d$,
			\begin{align*}
				\langle\nabla \mathbb{L}(\widehat{\bm{\Theta}}_{\bm{\cdot},j}) - \nabla\mathbb{L}(\bm{\Theta}_{\bm{\cdot},j}^*), \widehat{\bm{\Theta}}_{\bm{\cdot},j} - \bm{\Theta}_{\bm{\cdot},j}^*\rangle \leq 6\lambda\sqrt{s}\|\widehat{\bm{\Theta}}_{\bm{\cdot},j} - \bm{\Theta}_{\bm{\cdot},j}^*\|_2.
			\end{align*}
		\end{lemma}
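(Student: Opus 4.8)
The plan is to exploit the column-wise separability of the optimization problem \eqref{eq:loss function}, combine it with the first-order (KKT) optimality conditions, and then convert an $\ell_1$-norm bound into an $\ell_2$-norm bound via the cone inclusion of Lemma \ref{lemma:ell1 cone}. The entire argument is deterministic.

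First I would note that both the quadratic loss $\mathbb{L}(\bm{\Theta}) = \frac{1}{2T}\|\mathbf{Y}(\tau)-\mathbf{X}(\tau)\bm{\Theta}\|_{\mathrm{F}}^2 = \sum_{j=1}^d \frac{1}{2T}\|\mathbf{Y}(\tau)_{\bm{\cdot},j} - \mathbf{X}(\tau)\bm{\Theta}_{\bm{\cdot},j}\|_2^2$ and the penalty $\|\bm{\Theta}\|_{1,1} = \sum_{j=1}^d \|\bm{\Theta}_{\bm{\cdot},j}\|_1$ decompose across columns, so $\widehat{\bm{\Theta}}_{\bm{\cdot},j}$ minimizes the column-$j$ objective. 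The subdifferential optimality condition then yields a vector $\widehat{\bm{z}}_j \in \partial\|\widehat{\bm{\Theta}}_{\bm{\cdot},j}\|_1$ with $\|\widehat{\bm{z}}_j\|_\infty \leq 1$ such that $\nabla\mathbb{L}(\widehat{\bm{\Theta}}_{\bm{\cdot},j}) = -\lambda\widehat{\bm{z}}_j$, where $\nabla\mathbb{L}(\bm{\Theta}_{\bm{\cdot},j})$ denotes the $j$-th column of $\nabla\mathbb{L}(\bm{\Theta})$ (which depends on $\bm{\Theta}$ only through $\bm{\Theta}_{\bm{\cdot},j}$).

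Writing $\bm{\Delta}_j = \widehat{\bm{\Theta}}_{\bm{\cdot},j} - \bm{\Theta}_{\bm{\cdot},j}^*$ and substituting the KKT identity, I obtain
\[
\langle\nabla \mathbb{L}(\widehat{\bm{\Theta}}_{\bm{\cdot},j}) - \nabla\mathbb{L}(\bm{\Theta}_{\bm{\cdot},j}^*), \bm{\Delta}_j\rangle = -\lambda\langle\widehat{\bm{z}}_j, \bm{\Delta}_j\rangle - \langle\nabla\mathbb{L}(\bm{\Theta}_{\bm{\cdot},j}^*), \bm{\Delta}_j\rangle.
\]
By Hölder's inequality the first term is at most $\lambda\|\widehat{\bm{z}}_j\|_\infty\|\bm{\Delta}_j\|_1 \leq \lambda\|\bm{\Delta}_j\|_1$, and the second is at most $\|\nabla\mathbb{L}(\bm{\Theta}^*)\|_{\infty,\infty}\|\bm{\Delta}_j\|_1 \leq (\lambda/2)\|\bm{\Delta}_j\|_1$ by the standing hypothesis $\|\nabla\mathbb{L}(\bm{\Theta}^*)\|_{\infty,\infty} \leq \lambda/2$; hence the left-hand side is bounded by $(3\lambda/2)\|\bm{\Delta}_j\|_1$. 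Finally, Lemma \ref{lemma:ell1 cone} gives $\|(\bm{\Delta}_j)_{\mathcal{S}_j^c}\|_1 \leq 3\|(\bm{\Delta}_j)_{\mathcal{S}_j}\|_1$, so $\|\bm{\Delta}_j\|_1 \leq 4\|(\bm{\Delta}_j)_{\mathcal{S}_j}\|_1 \leq 4\sqrt{s_j}\|(\bm{\Delta}_j)_{\mathcal{S}_j}\|_2 \leq 4\sqrt{s}\|\bm{\Delta}_j\|_2$ by Cauchy--Schwarz and $s_j \leq s$. Combining the two displays yields $\langle\nabla \mathbb{L}(\widehat{\bm{\Theta}}_{\bm{\cdot},j}) - \nabla\mathbb{L}(\bm{\Theta}_{\bm{\cdot},j}^*), \bm{\Delta}_j\rangle \leq 6\lambda\sqrt{s}\|\bm{\Delta}_j\|_2$, as claimed.

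There is no genuine obstacle here; the only points requiring care are the bookkeeping for the column-wise separation and the use of the \emph{exact} KKT equality (rather than a mere variational inequality) to replace the gradient evaluated at $\widehat{\bm{\Theta}}$, together with the observation that the cone condition of Lemma \ref{lemma:ell1 cone} is applicable because its hypothesis is precisely the standing assumption of this lemma.
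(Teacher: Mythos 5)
Your proof is correct and follows essentially the same route as the paper: both arguments substitute the exact column-wise KKT identity for $\nabla\mathbb{L}(\widehat{\bm{\Theta}}_{\bm{\cdot},j})$, bound the two resulting terms via H\"older together with the assumption $\|\nabla\mathbb{L}(\bm{\Theta}^*)\|_{\infty,\infty}\le\lambda/2$ and $\|\widehat{\bm{z}}_j\|_\infty\le 1$, and then convert $\|\bm{\Delta}_j\|_1$ to $\sqrt{s}\,\|\bm{\Delta}_j\|_2$ through the cone condition of Lemma \ref{lemma:ell1 cone}. The only difference is bookkeeping order (you apply the cone once at the end, the paper applies it to each of $P_1$ and $P_2$ separately), which yields the identical constant $6\lambda\sqrt{s}$.
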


		\begin{lemma}\label{lemma:localized restricted eigenvalues}
			Suppose Assumption \ref{assumption:process conditions} hold. If $T \gtrsim \log(pd+1)$ and  $$\tau \asymp \left(\frac{M T_{\textnormal{eff}}}{\log(pd+1)}\right)^{\frac{1}{4+4\epsilon}},$$ then, for $1 \leq j \leq d$, with probability at least $1-C\operatorname{exp}[-C\log(T)\log(pd+1)]$, the minimum localized restricted eigenvalue of $\nabla^2 \mathbb{L}(\bm{\Theta}_{\bm{\cdot},j})$ satisfies $$\nu(\nabla^2 \mathbb{L}(\bm{\Theta}_{\bm{\cdot},j}),\mathcal{S}_j, \gamma) \geq \frac12\lambda_{\min}(\boldsymbol{\Gamma}_x).$$
		\end{lemma}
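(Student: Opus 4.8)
The plan is to exploit that the loss in \eqref{eq:loss function} is quadratic in $\bm{\Theta}$, so its column-wise Hessian is a deterministic (given the data) object that does not depend on $\bm{\Theta}_{\bm{\cdot},j}$ or on $j$: a direct computation gives $\nabla^2\mathbb{L}(\bm{\Theta}_{\bm{\cdot},j}) = T^{-1}\mathbf{X}(\tau)^{\mathrm{T}}\mathbf{X}(\tau) = \mathbf{S}$. Hence, by Definition \ref{def:localized restricted eigenvalue}, it suffices to produce a uniform lower bound for $\mathbf{u}^{\mathrm{T}}\mathbf{S}\mathbf{u}/\|\mathbf{u}\|_2^2$ over $\mathbf{u}\in\mathcal{C}(\mathcal{S}_j,\gamma)$, and since $\mathbf{S}$ is the same matrix for every $j$, a single high-probability event will serve all $j\in\{1,\dots,d\}$ at once.

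First I would pass to the population Gram matrix by writing $\mathbf{u}^{\mathrm{T}}\mathbf{S}\mathbf{u} = \mathbf{u}^{\mathrm{T}}\bm{\Gamma}_x\mathbf{u} + \mathbf{u}^{\mathrm{T}}(\mathbf{S}-\bm{\Gamma}_x)\mathbf{u}$ and bounding the two pieces separately: $\mathbf{u}^{\mathrm{T}}\bm{\Gamma}_x\mathbf{u}\ge\lambda_{\min}(\bm{\Gamma}_x)\|\mathbf{u}\|_2^2$, while H\"older's inequality gives $|\mathbf{u}^{\mathrm{T}}(\mathbf{S}-\bm{\Gamma}_x)\mathbf{u}|\le\|\mathbf{S}-\bm{\Gamma}_x\|_{\infty,\infty}\|\mathbf{u}\|_1^2$. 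The cone restriction then converts the $\ell_1$ factor into an $\ell_2$ factor with an $s$-inflation: for $\mathbf{u}\in\mathcal{C}(\mathcal{S}_j,\gamma)$ one has $\|\mathbf{u}\|_1\le(1+\gamma)\|\mathbf{u}_{\mathcal{S}_j}\|_1\le(1+\gamma)\sqrt{s_j}\,\|\mathbf{u}_{\mathcal{S}_j}\|_2\le(1+\gamma)\sqrt{s}\,\|\mathbf{u}\|_2$, so $\|\mathbf{u}\|_1^2\le(1+\gamma)^2 s\,\|\mathbf{u}\|_2^2$. Combining,
\[
\frac{\mathbf{u}^{\mathrm{T}}\mathbf{S}\mathbf{u}}{\|\mathbf{u}\|_2^2}\;\ge\;\lambda_{\min}(\bm{\Gamma}_x)\;-\;(1+\gamma)^2 s\,\|\mathbf{S}-\bm{\Gamma}_x\|_{\infty,\infty}.
\]

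Finally I would invoke Lemma \ref{lemma:covariance matrix bound}: under the stated choice of $\tau$ and the sample-size condition, with probability at least $1-C\exp[-C\log(T)\log(pd+1)]$ we have $\|\mathbf{S}-\bm{\Gamma}_x\|_{\infty,\infty}\lesssim(M^{1/\epsilon}\log(pd+1)/T_{\textnormal{eff}})^{\epsilon/(1+\epsilon)}$. On this event, once the sample size is large enough that $(1+\gamma)^2 s$ times this bound is at most $\tfrac12\lambda_{\min}(\bm{\Gamma}_x)$ (which holds in the stated regime, with $\gamma$, $s$, $M$ and $\lambda_{\min}(\bm{\Gamma}_x)$ treated as fixed and $T_{\textnormal{eff}}/\log(pd+1)$ diverging), the displayed lower bound is $\ge\tfrac12\lambda_{\min}(\bm{\Gamma}_x)$; taking the infimum over $\mathbf{u}\in\mathcal{C}(\mathcal{S}_j,\gamma)$ proves the claim for every $j$.

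The only substantive ingredient is the elementwise deviation bound for the truncated empirical Gram matrix supplied by Lemma \ref{lemma:covariance matrix bound}; everything else here is the standard deterministic cone manipulation, so in the present lemma there is no real obstacle — the difficulty is already absorbed into Lemma \ref{lemma:covariance matrix bound}, whose proof must control entrywise concentration of $T^{-1}\sum_t x_{t,a}(\tau)x_{t,b}(\tau)$ under only a $(4+4\epsilon)$-moment condition and $\alpha$-mixing via a truncation bias/variance trade-off together with a Bernstein-type inequality for dependent sequences. The one point to keep explicit is that $\gamma$ is the fixed constant used in Lemmas \ref{lemma:ell1 cone}--\ref{lemma:frobenius upper bound for symmetric bergman divergence}, so $(1+\gamma)^2$ is an absolute constant and the sparsity factor multiplying $\|\mathbf{S}-\bm{\Gamma}_x\|_{\infty,\infty}$ is exactly of order $s$.
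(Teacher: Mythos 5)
Your proposal is correct and follows essentially the same route as the paper's own proof: identify the column-wise Hessian with the truncated empirical Gram matrix $\mathbf{S}$, split $\mathbf{u}^{\mathrm{T}}\mathbf{S}\mathbf{u}$ into the population term plus the deviation $\mathbf{u}^{\mathrm{T}}(\mathbf{S}-\bm{\Gamma}_x)\mathbf{u}$, bound the latter via H\"older and the cone inequality $\|\mathbf{u}\|_1\le(1+\gamma)\sqrt{s}\,\|\mathbf{u}\|_2$, and then absorb the resulting $(1+\gamma)^2 s\,\|\mathbf{S}-\bm{\Gamma}_x\|_{\infty,\infty}$ term using Lemma \ref{lemma:covariance matrix bound} under the stated sample-size condition. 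No gaps; the argument matches the paper's, including the observation that the high-probability event for $\mathbf{S}$ serves all columns $j$ simultaneously.
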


		\begin{lemma}[Restricted Eigenvalue Condition]\label{lemma:restricted eigenvalue condition}
			Assume that $\|\nabla\mathbb{L}(\bm{\Theta}^*)\|_{\infty,\infty} \leq \lambda/2$ for any $\lambda>0$. Under the same conditions as in Lemma \ref{lemma:localized restricted eigenvalues}, for any $\bm{\Theta}_{\bm{\cdot},j} - \bm{\Theta}_{\bm{\cdot},j}^* \in \mathcal{C}(\mathcal{S}_j,\gamma)$, with probability at least $1-C\operatorname{exp}\bigl[-C\log(T)\log(pd+1)\bigr]$,
			\begin{align*}
				\langle\nabla \mathbb{L}(\widehat{\bm{\Theta}}_{\bm{\cdot},j}) - \nabla\mathbb{L}(\bm{\Theta}_{\bm{\cdot},j}^*), \widehat{\bm{\Theta}}_{\bm{\cdot},j} - \bm{\Theta}_{\bm{\cdot},j}^*\rangle \geq \frac{1}{2}\lambda_{\textnormal{min}}(\boldsymbol{\Gamma}_x)\|\bm{\Theta}_{\bm{\cdot},j} - \bm{\Theta}_{\bm{\cdot},j}^*\|_2^2.
			\end{align*}
		\end{lemma}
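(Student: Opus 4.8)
The plan is to exploit that the empirical quadratic loss $\mathbb{L}(\bm{\Theta})=\frac{1}{2T}\|\mathbf{Y}(\tau)-\mathbf{X}(\tau)\bm{\Theta}\|_{\mathrm{F}}^2$ is, column by column, a quadratic function of $\bm{\Theta}_{\bm{\cdot},j}$, so that the symmetric Bregman divergence on the left-hand side collapses to an honest quadratic form in the (constant) Hessian. First I would record that, writing $\mathbf{S}=\tfrac1T\mathbf{X}(\tau)^{\mathrm{T}}\mathbf{X}(\tau)$, one has $\nabla^2\mathbb{L}(\bm{\Theta}_{\bm{\cdot},j})=\mathbf{S}$ for every $\bm{\Theta}$, and $\nabla\mathbb{L}(\widehat{\bm{\Theta}}_{\bm{\cdot},j})-\nabla\mathbb{L}(\bm{\Theta}^*_{\bm{\cdot},j})=\mathbf{S}(\widehat{\bm{\Theta}}_{\bm{\cdot},j}-\bm{\Theta}^*_{\bm{\cdot},j})$; taking the Frobenius inner product with $\widehat{\bm{\Theta}}_{\bm{\cdot},j}-\bm{\Theta}^*_{\bm{\cdot},j}$ identifies the left-hand side of the claim exactly with the quadratic form $(\widehat{\bm{\Theta}}_{\bm{\cdot},j}-\bm{\Theta}^*_{\bm{\cdot},j})^{\mathrm{T}}\nabla^2\mathbb{L}(\bm{\Theta}_{\bm{\cdot},j})(\widehat{\bm{\Theta}}_{\bm{\cdot},j}-\bm{\Theta}^*_{\bm{\cdot},j})$.

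Next, under the standing hypothesis $\|\nabla\mathbb{L}(\bm{\Theta}^*)\|_{\infty,\infty}\le\lambda/2$, Lemma~\ref{lemma:ell1 cone} places the error vector $\widehat{\bm{\Theta}}_{\bm{\cdot},j}-\bm{\Theta}^*_{\bm{\cdot},j}$ in the cone $\mathcal{C}(\mathcal{S}_j,3)$, which is contained in $\mathcal{C}(\mathcal{S}_j,\gamma)$ for any $\gamma\ge 3$ (the defining inequality $\|\mathbf{u}_{\mathcal{S}_j^c}\|_1\le 3\|\mathbf{u}_{\mathcal{S}_j}\|_1$ only tightens as the constant shrinks). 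Hence the quadratic form above is at least $\nu(\nabla^2\mathbb{L}(\bm{\Theta}_{\bm{\cdot},j}),\mathcal{S}_j,\gamma)\,\|\widehat{\bm{\Theta}}_{\bm{\cdot},j}-\bm{\Theta}^*_{\bm{\cdot},j}\|_2^2$ directly from the definition of the localized restricted eigenvalue in Definition~\ref{def:localized restricted eigenvalue}. Finally I would invoke Lemma~\ref{lemma:localized restricted eigenvalues}, whose hypotheses on $T$ and $\tau$ are precisely those assumed here, to get $\nu(\nabla^2\mathbb{L}(\bm{\Theta}_{\bm{\cdot},j}),\mathcal{S}_j,\gamma)\ge\tfrac12\lambda_{\min}(\bm{\Gamma}_x)$ on an event of probability at least $1-C\exp[-C\log(T)\log(pd+1)]$; chaining the three bounds on this event yields the claim.

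There is essentially no hard step here: the argument is a direct concatenation of Lemmas~\ref{lemma:ell1 cone} and~\ref{lemma:localized restricted eigenvalues} with an elementary quadratic identity. The only point requiring a moment of care is the bookkeeping on the cone constant — ensuring that the value $3$ supplied by Lemma~\ref{lemma:ell1 cone} is no larger than the $\gamma$ for which Lemma~\ref{lemma:localized restricted eigenvalues} is stated — together with the observation that, because $\mathbb{L}$ is quadratic, its Hessian is constant, so no subtlety arises from ``evaluating $\nabla^2\mathbb{L}$ at a point.'' This is exactly the feature that separates the present situation from the general non-quadratic $M$-estimation setting, where one must additionally control the variation of the Hessian over the localized cone.
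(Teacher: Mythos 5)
Your proposal is correct and follows essentially the same route as the paper: the paper writes the symmetric Bregman divergence as an integral of the Hessian along the segment joining $\bm{\Theta}^*_{\bm{\cdot},j}$ and $\bm{\Theta}_{\bm{\cdot},j}$, notes that this Hessian is the constant matrix $\tfrac1T\sum_t\bm{x}_t(\tau)\bm{x}_t^{\mathrm{T}}(\tau)$ so the integral collapses to the same quadratic form you obtain directly from the quadratic structure of $\mathbb{L}$, and then lower-bounds it by $\nu(\nabla^2\mathbb{L}(\bm{\Theta}_{\bm{\cdot},j}),\mathcal{S}_j,\gamma)\ge\tfrac12\lambda_{\min}(\bm{\Gamma}_x)$ via Lemma \ref{lemma:localized restricted eigenvalues}. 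Your additional care about the cone constant (that the constant $3$ from Lemma \ref{lemma:ell1 cone} is compatible with the $\gamma$ in Definition \ref{def:localized restricted eigenvalue}) is sound and matches how the lemma is used in the proof of Theorem \ref{thm:vech upper bound}.
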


	\section{Proofs of Main Results}\label{append:proofs of main results}
	\renewcommand{\thedefinition}{C.\arabic{definition}}
	\setcounter{definition}{0}
	\renewcommand{\theequation}{C.\arabic{equation}}
	\setcounter{lemma}{0}
	\renewcommand{\thelemma}{C.\arabic{lemma}}
	\setcounter{equation}{0}

	\begin{proof}[\textbf{Proof of Proposition \ref{prop:equivalence of orthohognol forms}}]
	By the vectorization identity $\mathrm{vec}(\mathbf{B}\mathbf{M}\mathbf{B}^\mathrm{T})=(\mathbf{B} \otimes \mathbf{B})\mathrm{vec}(\mathbf{M})$, we have
	\begin{align}\label{eq:vec-step}
	\mathrm{vec}\left(\sum_{k=1}^{K_i^B}\mathbf{B}_{ik}\mathbf{M}\mathbf{B}_{ik}^{\mathrm{T}}\right)
	\;=\;
	\left(\sum_{k=1}^{K_i^B}\mathbf{B}_{ik}\otimes \mathbf{B}_{ik}\right)\mathrm{vec}(\mathbf{M}).
	\end{align}
	Note that $\mathcal{R}(\cdot)$ is the rearrangement operator satisfying
	$\mathcal{R}(\mathbf{U}\otimes \mathbf{U})=\mathrm{vec}(\mathbf{U})\mathrm{vec}(\mathbf{U})^\mathrm{T}$, and it is a linear bijection. Applying $\mathcal{R}(\cdot)$ to the sum of Kronecker products on the right-hand side of \eqref{eq:vec-step}, we have
	\begin{align}\label{eq:permutation-operation}
	\mathcal{R}\left(\sum_{k=1}^{K_i^B}\mathbf{B}_{ik}\otimes \mathbf{B}_{ik}\right)
	\;=\;
	\sum_{k=1}^{K_i^B}\mathrm{vec}(\mathbf{B}_{ik})\,\mathrm{vec}(\mathbf{B}_{ik})^\mathrm{T}
	\;=:\;\mathbf{G}.
	\end{align}
	Therefore, the matrix $\mathbf{G}$ is positive semi-definite with $\operatorname{rank}(\mathbf{G})=K_i^A\le K_i^B$.
	By the spectral decomposition, we have
	\begin{align}\label{eq:spectral}
	\mathbf{G}=\sum_{k=1}^{K_i^A}\lambda_{i,k}\bm{u}_{i,k}\bm{u}_{i,k}^\mathrm{T},
	\quad \text{with}\;
	\lambda_{i,k}>0, \{\bm{u}_{i,k}\}_{k=1}^{K_i^A}\ \text{orthonormal}.
	\end{align}
	Let $\mathbf{A}_{ik} = \sqrt{\lambda_{i,k}}\,\mathrm{vec}^{-1}(\bm{u}_{i,k})$ for $k=1,\ldots,K_i^A$. Then, we have, for $j\neq l$, $\langle \mathbf{A}_{ij},\mathbf{A}_{il}\rangle = \sqrt{\lambda_{i,j}\lambda_{i,l}}\bm{u}_{i,j}^\mathrm{T} \bm{u}_{i,l}=0$ and 
	\begin{align*}
		\mathcal{R}\left(\sum_{k=1}^{K_i^B}\mathbf{B}_{ik}\otimes \mathbf{B}_{ik}\right) = \mathcal{R}\left(\sum_{k=1}^{K_i^A}\mathbf{A}_{ik}\otimes \mathbf{A}_{ik}\right).
	\end{align*}
	Since $\mathcal{R}(\cdot)$ is bijective, $\mathcal{R}^{-1}(\cdot)$ exists and is also bijective. Applying $\mathcal{R}^{-1}(\cdot)$ on the above equation and multiplying both sides by $\mathrm{vec}(\mathbf{M})$ yields
	\begin{align}\label{eq:kronecker-equality}
		\left(\sum_{k=1}^{K_i^B}\mathbf{B}_{ik}\otimes \mathbf{B}_{ik}\right)\mathrm{vec}(\mathbf{M})
		\;=\;
		\left(\sum_{k=1}^{K_i^A}\mathbf{A}_{ik}\otimes \mathbf{A}_{ik}\right)\mathrm{vec}(\mathbf{M}).
	\end{align}
	Applying the vectorization identity to \eqref{eq:kronecker-equality} and then unvectorizing both sides, we have
	\begin{equation*}
		\sum_{k=1}^{K_i^B}\mathbf{B}_{ik}\mathbf{M}\mathbf{B}^\mathrm{T}_{ik}=\sum_{k=1}^{K_i^A}\mathbf{A}_{ik}\mathbf{M}\mathbf{A}^\mathrm{T}_{ik}. \hfill \qedhere
	\end{equation*}
	\end{proof}

	\begin{proof}[\textbf{Proof of Theorem \ref{thm:vech upper bound}}]
		Under the assumptions in Theorem \ref{thm:vech upper bound}, Lemma \ref{lemma:first order derivative} implies that with probability at least $1-C\exp[-Cs\log(T)\log(pd+1)]$, the gradient satisfies $\|\nabla\mathbb{L}(\bm{\Theta}^*)\|_{\infty,\infty} \lesssim \lambda$. Then by Lemma \ref{lemma:ell1 cone}, for each $1 \leq j \leq d$, $\widehat{\bm{\Theta}}_{\bm{\cdot},j}$ lies in the following $\ell_1$ cone:
		\begin{align*}
			\|(\widehat{\bm{\Theta}}_{\bm{\cdot},j} - \bm{\Theta}_{\bm{\cdot},j}^*)_{\mathcal{S}_j^c}\|_1 \leq 3\|(\widehat{\bm{\Theta}}_{\bm{\cdot},j} - \bm{\Theta}_{\bm{\cdot},j}^*)_{\mathcal{S}_j}\|_1,
		\end{align*}
		with probability at least $1-C\exp[-Cs\log(T)\log(pd+1)]$. 
		By Lemma \ref{lemma:restricted eigenvalue condition}, we have
		\begin{align}\label{eq:re}
			\langle\nabla \mathbb{L}(\widehat{\bm{\Theta}}_{\bm{\cdot},j}) - \nabla\mathbb{L}(\bm{\Theta}_{\bm{\cdot},j}^*), \widehat{\bm{\Theta}}_{\bm{\cdot},j} - \bm{\Theta}_{\bm{\cdot},j}^*\rangle \geq \frac{1}{2}\lambda_{\min}(\boldsymbol{\Gamma}_x)\|\widehat{\bm{\Theta}}_{\bm{\cdot},j} - \bm{\Theta}_{\bm{\cdot},j}^*\|_2^2,
		\end{align}
		with probability at least $1-C\operatorname{exp}[-Cs\log(T)\log(pd+1)]$. Besides, by Lemma \ref{lemma:frobenius upper bound for symmetric bergman divergence}, we have
		\begin{align}\label{re:bd}
			\langle\nabla \mathbb{L}(\widehat{\bm{\Theta}}_{\bm{\cdot},j}) - \nabla\mathbb{L}(\bm{\Theta}_{\bm{\cdot},j}^*), \widehat{\bm{\Theta}}_{\bm{\cdot},j} - \bm{\Theta}_{\bm{\cdot},j}^*\rangle \leq 6\lambda\sqrt{s}\|\widehat{\bm{\Theta}}_{\bm{\cdot},j} - \bm{\Theta}_{\bm{\cdot},j}^*\|_2.
		\end{align}
		Combining \eqref{eq:re} and \eqref{re:bd}, for each $1 \leq j \leq d$, we have 
		\begin{align}\label{eq:rowwise upper bound}
			\|\widehat{\bm{\Theta}}_{\bm{\cdot},j} - \bm{\Theta}_{\bm{\cdot},j}^*\|_2 \leq 12\lambda\sqrt{s}\lambda_{\min}^{-1}(\boldsymbol{\Gamma}_x) \asymp s^{3/2}\|\bm{\Theta}^*\|_{1,\infty}\lambda_{\min}^{-1}(\boldsymbol{\Gamma}_x)\left(\frac{M^{1/\epsilon}\log(pd+1)}{T_{\textnormal{eff}}}\right)^{\frac{\epsilon}{1+\epsilon}},
		\end{align}
		with probability at least $1-C\operatorname{exp}[-Cs\log(T)\log(pd+1)]$. 
		Thus, we have
		\begin{align*}
			\|\widehat{\bm{\Theta}} - \bm{\Theta}^*\|_{2,\infty} \lesssim s^{3/2}\|\bm{\Theta}^*\|_{1,\infty}\lambda_{\min}^{-1}(\boldsymbol{\Gamma}_x)\left(\frac{M^{1/\epsilon}\log(pd+1)}{T_{\textnormal{eff}}}\right)^{\frac{\epsilon}{1+\epsilon}}
		\end{align*}
		and
		\begin{equation*}
			\|\widehat{\bm{\Theta}} - \bm{\Theta}^*\|_{\mathrm{F}} = \left(\sum_{j=1}^{d}\|\widehat{\bm{\Theta}}_{\bm{\cdot},j} - \bm{\Theta}_{\bm{\cdot},j}^*\|_2^2\right)^{\frac{1}{2}} \lesssim s^{3/2}N\|\bm{\Theta}^*\|_{1,\infty}\lambda_{\min}^{-1}(\boldsymbol{\Gamma}_x)\left(\frac{M^{1/\epsilon}\log(pd+1)}{T_{\textnormal{eff}}}\right)^{\frac{\epsilon}{1+\epsilon}},
		\end{equation*}
		with probability at least $1-C\operatorname{exp}[-Cs\log(T)\log(pd+1)]$.
	\end{proof}

	\begin{proof}[\textbf{Proof of Theorem \ref{thm:vech minimax lower bound}}]
		We first establish the minimax lower bound under the $\ell_{2,\infty}$-norm. Specifically, we begin by deriving the minimax lower bound under $p=1$ with no intercept term. First, we define the bivariate distribution class $\mathsf{P}_{c,\gamma,s,\epsilon}' = \{\mathbb{P}_+',\mathbb{P}_-'\}$ for $\epsilon \in (0,1]$, $s>0, c>0$ and $\gamma \in (0,1/s)$. The distributions $\mathbb{P}_+'$ and $\mathbb{P}_-'$ are defined as follows:
		\begin{align*}
			&\mathbb{P}_+' (\mathbf{X} = cs^{\epsilon/(1+\epsilon)},\mathbf{Y} = c) = \frac{\gamma}{2},\  \mathbb{P}_+' (\mathbf{X} = -cs^{\epsilon/(1+\epsilon)},\mathbf{Y} = -c) = \frac{\gamma}{2},\\ 
			&\mathbb{P}_+' (\mathbf{X} = 0,\mathbf{Y} = c)= \frac{(s-1)\gamma}{2},\ \mathbb{P}_+' (\mathbf{X} = 0,\mathbf{Y} = -c) = \frac{(s-1)\gamma}{2},\\
			&\mathbb{P}_+' (\mathbf{X} = \mathbf{Y} = 0) = 1- s\gamma,\\
			\text{and}\\
			&\mathbb{P}_-' (\mathbf{X} = cs^{\epsilon/(1+\epsilon)},\mathbf{Y} = -c) = \frac{\gamma}{2},\  \mathbb{P}_-' (\mathbf{X} = -cs^{\epsilon/(1+\epsilon)},\mathbf{Y} = c) = \frac{\gamma}{2},\\ 
			&\mathbb{P}_-' (\mathbf{X} = 0,\mathbf{Y} = c)= \frac{(s-1)\gamma}{2},\ \mathbb{P}_-' (\mathbf{X} = 0,\mathbf{Y} = -c) = \frac{(s-1)\gamma}{2},\\
			&\mathbb{P}_-' (\mathbf{X} = \mathbf{Y} = 0) = 1- s\gamma.
		\end{align*}
		It follows that $\mathbb{E}_+'\mathbf{X} = \mathbb{E}_-'\mathbf{X} = \mathbb{E}_+'\mathbf{Y} = \mathbb{E}_-'\mathbf{Y} = 0$, $\mathbb{E}_+'|\mathbf{X}|^2 = \mathbb{E}_-'|\mathbf{X}|^2 = c^2s^{2\epsilon/(1+\epsilon)}\gamma$, $\mathbb{E}_+'|\mathbf{Y}|^2 = \mathbb{E}_-'|\mathbf{Y}|^2 = c^2s\gamma$, $\mathbb{E}_+'|\mathbf{X}|^{2+2\epsilon} = \mathbb{E}_-'|\mathbf{X}|^{2+2\epsilon}$ $ = c^{2+2\epsilon}s^{2\epsilon}\gamma := M$, $\mathbb{E}_+'|\mathbf{Y}|^{2+2\epsilon} = \mathbb{E}_-'|\mathbf{Y}|^{2+2\epsilon} = c^{2+2\epsilon}s\gamma$, $\mathbb{E}_+'\mathbf{X}\mathbf{Y} = c^2s^{\epsilon/(1+\epsilon)}\gamma$ and $\mathbb{E}_-'\mathbf{X}\mathbf{Y} = -c^2s^{\epsilon/(1+\epsilon)}\gamma$.

		Next, we construct a pair of $d$-dimensional $\alpha$-mixing time series $\{\boldsymbol{y}_t\}$ and $\{\boldsymbol{y}'_t\}$. For $t = 1,\ldots,T$, consider $\widetilde{T} := \left\lceil T/\lfloor \log_{\zeta^{-2}}^T \rfloor\right\rceil$ index groups $\mathcal{G}_1 = \{1,\ldots,\lfloor \log_{\zeta^{-2}}^T \rfloor\},\mathcal{G}_2 = \{\lfloor \log_{\zeta^{-2}}^T \rfloor + 1,\ldots,2\lfloor \log_{\zeta^{-2}}^T \rfloor\},\ldots,\mathcal{G}_{\widetilde{T}} = \{(\widetilde{T}-1)\lfloor \log_{\zeta^{-2}}^T \rfloor + 1,\ldots,T\}$. Denote $\boldsymbol{\phi}_t = (y_{t,1},\ldots,y_{t,s})$ and $\boldsymbol{\phi}_t' = (y_{t,1}',\ldots,y_{t,s}')$. Let $\{\boldsymbol{\phi_t},y_{t+1,s+1},\boldsymbol{\phi_t}',y_{t+1,s+1}'\}_{t \in \mathcal{G}_g}$ be random variables with the joint distribution
		\begin{align}\label{mixingDGP}
			&\mathbb{P}\left(\bigcap_{t \in \mathcal{G}_g}\biggl\{\boldsymbol{\phi}_t = cs^{\epsilon/(1+\epsilon)}\boldsymbol{e}_i,\ y_{t+1,s+1} = c,\boldsymbol{\phi}_t' = cs^{\epsilon/(1+\epsilon)}\boldsymbol{e}_i,\ y_{t+1,s+1}' = -c\biggr\}\right) = \frac{\gamma}{2},\nonumber\\
			&\mathbb{P}\left(\bigcap_{t \in \mathcal{G}_g}\biggl\{\boldsymbol{\phi}_t = -cs^{\epsilon/(1+\epsilon)}\boldsymbol{e}_i,\ y_{t+1,s+1} = -c,\boldsymbol{\phi}_t' = -cs^{\epsilon/(1+\epsilon)}\boldsymbol{e}_i,\ y_{t+1,s+1}' = c\biggr\}\right) = \frac{\gamma}{2},\\
			&\mathbb{P}\left(\bigcap_{t \in \mathcal{G}_g}\biggl\{\boldsymbol{\phi}_t = \boldsymbol{\phi}_t' = \boldsymbol{0}, y_{t+1,s+1} = y_{t+1,s+1}' = 0\biggr\}\right) = 1 - s\gamma,\ 1 \leq i \leq s. \nonumber
		\end{align}
		It can be verified that $\{y_{t,i},y_{t+1,s+1}\} \sim \mathbb{P}_+'$ and $\{y_{t,i}',y_{t+1,s+1}'\} \sim \mathbb{P}_-'$ for $1 \leq i \leq s$. So far, we have defined the distribution of the first $s+1$ elements of $\bbm{y}_t$ and $\bbm{y}_t'$. For the rest $d-s-1$ components, we assume that $y_{t,i} = y_{t,i}' = 0$ for $i > s+1$. Let $\gamma = (2s\widetilde{T})^{-1}\log(1/2\xi) \leq 1/2s$ for some $\xi \in [\exp(-\widetilde{T}/2),1/2)$. Considering only the first $s+1$ components can influence the dependence structure of the process, it suffices to show that $\bm{h}_t = \{(\bm{\phi}_t^{\mathrm{T}}, y_{t,s+1})^{\mathrm{T}}\}$ and $\bm{h}_t' = \{(\bm{\phi}_t^{'\mathrm{T}}, y_{t,s+1}')^{\mathrm{T}}\}$ are $\alpha$-mixing processes with geometric mixing coefficients. To show this, we first consider $t_1,t_2 \in \mathcal{G}_1$ with $t_1 < t_2$, without loss of generality. Then, by the definition of $\alpha$-mixing, we have
		\begin{align*}
			&\alpha\Bigl(\sigma\bigl(\{\boldsymbol{h}_t: t \leq t_1\}\bigr),\sigma\bigl(\{\boldsymbol{y}_t: t \geq t_2\}\bigr)\Bigr) = \alpha\Bigl(\sigma\bigl(\{\boldsymbol{h}_t: 1 \leq t \leq t_1\}\bigr),\sigma\bigl(\{\boldsymbol{h}_t: t_2 \leq t \leq \lfloor \log_{\zeta^{-2}}^T \rfloor \}\bigr)\Bigr)\\
			&=\sup_{\substack{A_1 \in \sigma(\{\boldsymbol{h}_t: 1 \leq t \leq t_1\}),\\ A_2 \in \sigma(\{\boldsymbol{h}_t: t_2 \leq t \leq \lfloor \log_{\zeta^{-2}}^T \rfloor\})}} \bigl|\mathbb{P}(A_1 \cap A_2) - \mathbb{P}(A_1)\mathbb{P}(A_2)\bigr|\\ 
			&= \max\Biggl\{(1-s\gamma) - (1-s\gamma)^2, s\gamma/2 - s^2\gamma^2/4, s\gamma - (s\gamma)^2\Biggr\}\\
			&\leq s\gamma \leq \frac{\log(1/2\xi)}{2\widetilde{T}} = C \zeta^{2\log_{\zeta^{-2}}^T} \leq C\zeta^{\log_{\zeta^{-2}}^T} \leq C\zeta^{\lfloor \log_{\zeta^{-2}}^T \rfloor} \leq C\zeta^{t_2 - t_1}.
		\end{align*}

		For $t_1,t_2$ in the same group, the same argument holds. For $t_1 < t_2$ in different groups, we have $\alpha(\sigma(\{\boldsymbol{h}_t: t \leq t_1\}),\sigma(\{\boldsymbol{h}_t: t \geq t_2\})) = 0$ since variables in different groups are independent. Following the same argument, $\{\bm{h}_t'\}$ is also $\alpha$-mixing with geometric mixing coefficients. Thus, we conclude that the constructed $\{\boldsymbol{y}_t\}$ and $\{\boldsymbol{y}_t'\}$ are both $\alpha$-mixing with geometric mixing coefficients. Note that $\boldsymbol{\phi}_t = \boldsymbol{\phi}_t'$ and $y_{t+1,s+1} = y_{t+1,s+1}'$ hold simultaneously if and only if $\boldsymbol{\phi}_t = \boldsymbol{\phi}_t' = 0$ and $y_{t+1,s+1} = y_{t+1,s+1}' = 0$. Then, for the constructed $\{\boldsymbol{y}_t\}$ and $\{\boldsymbol{y}_t'\}$, we have
		\begin{align*}
			&\mathbb{P}\left(\{\bm{\phi}_t,y_{t+1,s+1}\}_{t=1}^T = \{\bm{\phi}_t',y_{t+1,s+1}'\}_{t=1}^T\right) = (1-s\gamma)^{\widetilde{T}}\geq \exp\left(-\frac{s\gamma\widetilde{T}}{1-s\gamma}\right) \geq \exp\left(-2s\gamma\widetilde{T}\right) = 2\xi.
		\end{align*}

		In the following, we show that with probability at a constant level, it is impossible to obtain the error smaller than the minimax lower bound. Here we consider estimating the coefficient vector of a linear model without intercept, where $\bm{\phi}_t = (y_{t,1},\ldots,y_{t,s})^{\mathrm{T}}$ is the predictor vector and $y_{t+1,s+1}$ is the response.  
		Denote $\bm{\theta}^*_{\mathcal{S}}$ as the subvector of $\bm{\theta}^*$ with support set $\mathcal{S} = \{i \mid 1 \leq i \leq s\}$. By the Yule-Walker equation, the true values of $\bm{\theta}^*_\mathcal{S}$ are defined as $\bm{\theta}^*_\mathcal{S}(\{\boldsymbol{y}_t\}) = \mathbb{E}[\boldsymbol{\phi}_t\boldsymbol{\phi}_t^\mathrm{T}]^{-1}\mathbb{E}[\boldsymbol{\phi}_ty_{t+1,s+1}]$ and $\bm{\theta}^*_\mathcal{S}(\{\boldsymbol{y}_t'\}) = \mathbb{E}[\boldsymbol{\phi}_t'\boldsymbol{\phi}_t^{'\mathrm{T}}]^{-1}\mathbb{E}[\boldsymbol{\phi}_t'y_{t+1,s+1}']$. It can be verified that $\boldsymbol{\Gamma}_{\boldsymbol{\phi}} = \mathbb{E}[\boldsymbol{\phi}_t\boldsymbol{\phi}_t^\mathrm{T}] = \mathbb{E}[\boldsymbol{\phi}_t'\boldsymbol{\phi}_t^{'\mathrm{T}}]$ are diagonal matrices and $\mathbb{E}[\boldsymbol{\phi}_ty_{t+1,s+1}] = -\mathbb{E}[\boldsymbol{\phi}'_ty'_{t+1,s+1}]$. Thus, we have $\bm{\theta}^*_\mathcal{S}(\{\boldsymbol{y}_t\}) = -\bm{\theta}^*_\mathcal{S}(\{\boldsymbol{y}_t'\})$. Besides, we have
		\begin{align*}
			\|\bm{\theta}^*_\mathcal{S}(\{\boldsymbol{y}_t\})\|_2 &= \|\mathbb{E}[\boldsymbol{\phi}_t\boldsymbol{\phi}_t^\mathrm{T}]^{-1}\mathbb{E}[\boldsymbol{\phi}_ty_{t+1,s+1}]\|_2 = \|\lambda^{-1}_{\min}(\boldsymbol{\Gamma}_{\boldsymbol{\phi}})\mathbb{E}[\boldsymbol{\phi}_ty_{t+1,s+1}]\|_2\\
			&= \lambda^{-1}_{\min}(\boldsymbol{\Gamma}_{\boldsymbol{\phi}})c^2s^{\epsilon/(1+\epsilon)}\gamma\sqrt{s} = \lambda^{-1}_{\min}(\boldsymbol{\Gamma}_{\boldsymbol{\phi}})(c^2s^{\epsilon/(1+\epsilon)}\gamma^{1/(1+\epsilon)})\sqrt{s}\gamma^{\epsilon/(1+\epsilon)}\\
			&=\lambda^{-1}_{\min}(\boldsymbol{\Gamma}_{\boldsymbol{\phi}})M^{1/(1+\epsilon)}\sqrt{s}\gamma^{\epsilon/(1+\epsilon)} \asymp \lambda^{-1}_{\min}(\boldsymbol{\Gamma}_{\boldsymbol{\phi}})M^{1/(1+\epsilon)}\sqrt{s}\left(\frac{\log(1/2\xi)}{2\widetilde{T}}\right)^{\frac{\epsilon}{1+\epsilon}}.
		\end{align*}
		Let $\widehat{\mathbb{E}}_\mathcal{S}(\cdot)$ be any estimator of $\bm{\theta}^*_\mathcal{S}$ based on the constructed $\{\bbm{y}_t\}$ or $\{\bbm{y}_t'\}$. Note that the following events satisfy $\{\{\bm{\phi}_t,y_{t+1,s+1}\}_{t=1}^T = \{\bm{\phi}_t',y_{t+1,s+1}'\}_{t=1}^T\}\subset \{\widehat{\mathbb{E}}_\mathcal{S}(\{\boldsymbol{y}_t\}) = \widehat{\mathbb{E}}_\mathcal{S}(\{\boldsymbol{y}_t'\})\} \subset \{\|\widehat{\mathbb{E}}_\mathcal{S}(\{\boldsymbol{y}_t\}) - \bm{\theta}^*_\mathcal{S}(\{\boldsymbol{y}_t\})\|_2 \geq \|\bm{\theta}^*_\mathcal{S}(\{\boldsymbol{y}_t\})\|_2 \text{ or } \|\widehat{\mathbb{E}}_\mathcal{S}(\{\boldsymbol{y}_t'\}) + \bm{\theta}^*_\mathcal{S}(\{\boldsymbol{y}_t\})\|_2 \geq \|\bm{\theta}^*_\mathcal{S}(\{\boldsymbol{y}_t\})\|_2\}$.  		
		This together with $\mathbb{P}(A\cup B)\leq 2\max\{\mathbb{P}(A),\mathbb{P}(B)\}$, implies that
		\begin{align*}
			& \quad \max\left\{\mathbb{P}\left(\|\widehat{\mathbb{E}}_\mathcal{S}(\{\boldsymbol{y}_t\}) - \bm{\theta}^*_\mathcal{S}(\{\boldsymbol{y}_t\})\|_2 \geq \|\bm{\theta}^*_\mathcal{S}(\{\boldsymbol{y}_t\})\|_2\right), \mathbb{P}\left(\|\widehat{\mathbb{E}}_\mathcal{S}(\{\boldsymbol{y}_t'\}) - \bm{\theta}^*_\mathcal{S}(\{\boldsymbol{y}_t'\})\|_2 \geq \|\bm{\theta}^*_\mathcal{S}(\{\boldsymbol{y}_t'\})\|_2\right)\right\}\\
			&= \max\left\{\mathbb{P}\left(\|\widehat{\mathbb{E}}_\mathcal{S}(\{\boldsymbol{y}_t\}) - \bm{\theta}^*_\mathcal{S}(\{\boldsymbol{y}_t\})\|_2 \geq \|\bm{\theta}^*_\mathcal{S}(\{\boldsymbol{y}_t\})\|_2\right), \mathbb{P}\left(\|\widehat{\mathbb{E}}_\mathcal{S}(\{\boldsymbol{y}_t'\}) + \bm{\theta}^*_\mathcal{S}(\{\boldsymbol{y}_t\})\|_2 \geq \|\bm{\theta}^*_\mathcal{S}(\{\boldsymbol{y}_t\})\|_2\right)\right\}\\
			&\geq \frac{1}{2}\mathbb{P}\left(\|\widehat{\mathbb{E}}_\mathcal{S}(\{\boldsymbol{y}_t\}) - \bm{\theta}^*_\mathcal{S}(\{\boldsymbol{y}_t\})\|_2 \geq \|\bm{\theta}^*_\mathcal{S}(\{\boldsymbol{y}_t\})\|_2 \text{ or } \|\widehat{\mathbb{E}}_\mathcal{S}(\{\boldsymbol{y}_t'\}) + \bm{\theta}^*_\mathcal{S}(\{\boldsymbol{y}_t\})\|_2 \geq \|\bm{\theta}^*_\mathcal{S}(\{\boldsymbol{y}_t\})\|_2\right)\\
			&\geq \frac{1}{2}\mathbb{P}\left(\widehat{\mathbb{E}}_\mathcal{S}(\{\boldsymbol{y}_t\}) = \widehat{\mathbb{E}}_\mathcal{S}(\{\boldsymbol{y}_t'\})\right) \geq \frac{1}{2}\mathbb{P}\left(\{\bm{\phi}_t,y_{t+1,s+1}\}_{t=1}^T = \{\bm{\phi}_t',y_{t+1,s+1}'\}_{t=1}^T\right) \geq \xi.
		\end{align*}
		Taking $\xi = 1/3$, then we have that 
		\begin{align*}
			\|\bm{\theta}^*_\mathcal{S}(\{\boldsymbol{y}_t\})\|_2\asymp \lambda^{-1}_{\min}(\boldsymbol{\Gamma}_{\boldsymbol{\phi}})\sqrt{s}\left(\frac{M^{1/\epsilon}\log(T)}{T}\right)^{\frac{\epsilon}{1+\epsilon}}.
		\end{align*}
		Hence, we have
		\begin{align*}
			\max\biggl\{&\mathbb{P}\left(\|\widehat{\mathbb{E}}_\mathcal{S}(\{\boldsymbol{y}_t\}) - \bm{\theta}^*_\mathcal{S}(\{\boldsymbol{y}_t\})\|_2 \gtrsim \lambda^{-1}_{\min}(\boldsymbol{\Gamma}_{\boldsymbol{\phi}})\sqrt{s}\left(\frac{M^{1/\epsilon}\log(T)}{T}\right)^{\frac{\epsilon}{1+\epsilon}}\right),\\
			&\mathbb{P}\left(\|\widehat{\mathbb{E}}_\mathcal{S}(\{\boldsymbol{y}_t'\}) - \bm{\theta}^*_\mathcal{S}(\{\boldsymbol{y}_t'\})\|_2 \gtrsim \lambda^{-1}_{\min}(\boldsymbol{\Gamma}_{\boldsymbol{\phi}})\sqrt{s}\left(\frac{M^{1/\epsilon}\log(T)}{T}\right)^{\frac{\epsilon}{1+\epsilon}}\right)\biggr\} \geq 1/3.
		\end{align*}
		Then, taking expectation over $\{\bbm{y}_t\}$ and $\{\bbm{y}_t'\}$ yields
		\begin{align*}
			&\max\left\{\mathbb{E}\|\widehat{\mathbb{E}}_\mathcal{S}(\{\boldsymbol{y}_t\}) - \bm{\theta}^*_\mathcal{S}(\{\boldsymbol{y}_t\})\|_2^2, \mathbb{E}\|\widehat{\mathbb{E}}_\mathcal{S}(\{\boldsymbol{y}_t'\}) - \bm{\theta}^*_\mathcal{S}(\{\boldsymbol{y}_t'\})\|_2^2\right\} \gtrsim s\lambda^{-2}_{\min}(\boldsymbol{\Gamma}_{\boldsymbol{\phi}})\left(\frac{M^{1/\epsilon}\log(T)}{T}\right)^{\frac{2\epsilon}{1+\epsilon}}.
		\end{align*}
		Since the distributions of $\{\bbm{y}_t\}$ and $\{\bbm{y}_t'\}$ belong to $\mathsf{P}_y(\epsilon,M',\zeta')$ with $\mathsf{P}_{c,\gamma,s,\epsilon} \subset \mathsf{P}_y(\epsilon,M',\zeta')$, and the expectation of lower bound holds for any estimator of $\bm{\theta}^*_\mathcal{S}$, then we have
		\begin{align*}
			&\inf_{\widehat{\mathbb{E}}_\mathcal{S}(\cdot)}\sup_{\mathbb{P}\in\mathsf{P}_y(\epsilon,M',\zeta')}\mathbb{E}\|\widehat{\mathbb{E}}_\mathcal{S}(\cdot) - \bm{\theta}^*_\mathcal{S}\|_2^2 \gtrsim s\lambda^{-2}_{\min}(\boldsymbol{\Gamma}_{\boldsymbol{\phi}})\left(\frac{M^{1/\epsilon}\log(T)}{T}\right)^{\frac{2\epsilon}{1+\epsilon}}.
		\end{align*}
		Therefore, the minimax lower bound for $\bm{\theta}^*_\mathcal{S}$ under the $\ell_2$-norm is established. Since the constructed $\bm{\theta}^*$ with support set $\mathcal{S}$ represents a specific case of a sparse vector $\bm{\theta}^*$, the corresponding minimax lower bound extends naturally to the general sparse setting of $\bm{\theta}^*$. Moreover, our analysis is invariant to the specific column of $\bm{\Theta}^*$ under consideration. Hence, the same lower bound applies uniformly to each column of $\bm{\Theta}^*$. That is, for any $1 \leq j \leq d$,
		\begin{align*}
		\inf_{\widehat{\bm{\Theta}}_{\cdot,j}}
		\;\sup_{\substack{\mathbb{P}\in\mathsf{P}_y(\epsilon,M',\zeta')\\ \|\bm{\Theta}^*_{\cdot,j}\|_0\le s}}
		\mathbb{E}\big\|\widehat{\bm{\Theta}}_{\cdot,j}-\bm{\Theta}^*_{\cdot,j}\big\|_2^2
		\;\gtrsim\;
		s\,\lambda_{\min}^{-2}(\boldsymbol{\Gamma}_{x})
		\left(\frac{M^{1/\epsilon}\log T}{T}\right)^{\frac{2\epsilon}{1+\epsilon}}.
		\end{align*}

		Consequently, the minimax lower bound for the entire matrix $\bm{\Theta}^*$, under the $\ell_{2,\infty}$-norm, follows accordingly. That is,
			\begin{align*}
			\inf_{\widehat{\bm{\Theta}}}\;\sup_{\substack{\mathbb{P}\in\mathsf{P}_y(\epsilon,M',\zeta')\\ \|\bm{\Theta}^*_{\cdot,j}\|_0\le s,\;1\le j\le d}}
			\mathbb{E}\big\|\widehat{\bm{\Theta}}-\bm{\Theta}^*\big\|_{2,\infty}^2
			\;\gtrsim\;
			s\,\lambda_{\min}^{-2}(\boldsymbol{\Gamma}_{x})
			\left(\frac{M^{1/\epsilon}\log T}{T}\right)^{\frac{2\epsilon}{1+\epsilon}}.
		\end{align*}

		Second, we turn to minimax lower bound under the Frobenius norm. Since the estimation procedure of $\bm{\Theta}^*$ is separable across columns, the minimax lower bound can be transferred to the entire coefficient matrix $\bm{\Theta}^*$. Specifically,
		\begin{align*}
			\inf_{\widehat{\bm{\Theta}}}\sup_{\substack{\mathbb{P} \in \mathsf{P}_y(\epsilon,M',\zeta')\\ \|\bm{\Theta}_{\cdot,j}^*\|_0 \leq s, 1\leq j \leq d}} \mathbb{E}\|\widehat{\bm{\Theta}} - \bm{\Theta}^*\|_\mathrm{F}^2 &= \sum_{j=1}^d \inf_{\widehat{\bm{\Theta}}_{\cdot,j}}\sup_{\substack{\mathbb{P} \in \mathsf{P}_y(\epsilon,M',\zeta')\\ \|\bm{\Theta}_{\cdot,j}^*\|_0 \leq s}} \mathbb{E}\|\widehat{\bm{\Theta}}_{\cdot,j} - \bm{\Theta}_{\cdot,j}^*\|_2^2\\
			&\gtrsim sd\lambda^{-2}_{\min}(\boldsymbol{\Gamma}_{x})\left(\frac{M^{1/\epsilon}\log T}{T}\right)^{\frac{2\epsilon}{1+\epsilon}} \asymp sN^2\lambda^{-2}_{\min}(\boldsymbol{\Gamma}_{x})\left(\frac{M^{1/\epsilon}\log T}{T}\right)^{\frac{2\epsilon}{1+\epsilon}}.
		\end{align*}

		Finally, we extend the result to the case of VAR$(p)$ model with lag order $p > 1$ and intercept. Considering the same data generating process as in \eqref{mixingDGP} and use the VAR$(p)$ model with intercept as the running model. As the coefficient matrix in the VAR$(1)$ model is a submatrix of the stacked coefficient matrix in this VAR$(p)$ model, the minimax lower bound for the VAR$(1)$ model can be extended to the VAR$(p)$ model directly. This completes the proof.
	\end{proof}

	\begin{proof}[\textbf{Proof of Corollary \ref{cor:bekk upper bound}}]
		We begin by analyzing the error bound for $\widehat{\bm{\Omega}}$, and then derive the error bound for $\widehat{\mathbf{A}}_{ik}$.
		
		\noindent\textbf{Error Bound for $\widehat{\bm{\Omega}}$}: Given the construction of $\mathbf{D}_N$, it holds that $\|\mathbf{D}_N\bm{\beta}\|_\mathrm{2} \leq \sqrt{2}\|\bm{\beta}\|_2$ for any $\bm{\beta} \in \mathbb{R}^d$. Then, note that $\bm{\Omega}^*=\operatorname{vec}^{-1}(\mathbf{D}_N\bm{\omega}^*)$ and $\widehat{\bm{\Omega}}=\mathcal{P}(\operatorname{vec}^{-1}(\mathbf{D}_N\widehat{\bm{\omega}}))$, it follows from the nonexpansiveness of the projection operator that
		\begin{align}\label{eq:projection}
			\|\widehat{\boldsymbol{\Omega}} - \boldsymbol{\Omega}^*\|_{\mathrm{F}} &= \|\mathcal{P}(\operatorname{vec}^{-1}(\mathbf{D}_N\widehat{\bm{\omega}})) - \boldsymbol{\Omega}^*\|_{\mathrm{F}} \leq \|\operatorname{vec}^{-1}(\mathbf{D}_N\widehat{\bm{\omega}}) - \boldsymbol{\Omega}^*\|_{\mathrm{F}}\notag \\
			&= \|\mathbf{D}_N(\widehat{\boldsymbol{\omega}} - \boldsymbol{\omega}^*)\|_2 \leq \sqrt{2}\|\widehat{\boldsymbol{\omega}} - \boldsymbol{\omega}^*\|_{2}.
		\end{align}
		By Theorem \ref{thm:vech upper bound}, we have the probabilistic upper bound of $\|\widehat{\boldsymbol{\omega}} - \boldsymbol{\omega}^*\|_{2}$ as
		\begin{align}\label{eq:upper bound of omega}
			\|\widehat{\boldsymbol{\omega}} - \boldsymbol{\omega}^*\|_{2} \leq \|\widehat{\bm{\Theta}} - \bm{\Theta}^*\|_{\mathrm{F}} \lesssim s^{3/2}N\|\bm{\Theta}^*\|_{1,\infty}\lambda_{\min}^{-1}(\boldsymbol{\Gamma}_x)\left(\frac{M^{1/\epsilon}\log(pd+1)}{T_{\textnormal{eff}}}\right)^{\frac{\epsilon}{1+\epsilon}},
		\end{align}
		with probability at least $1-C\operatorname{exp}[-Cs\log(T)\log(pd+1)]$. Thus, combining \eqref{eq:projection} with the probabilistic upper bound in \eqref{eq:upper bound of omega}, we have
		\begin{align*}
			\|\widehat{\boldsymbol{\Omega}} - \boldsymbol{\Omega}^*\|_{\mathrm{F}} \lesssim s^{3/2}N\|\bm{\Theta}^*\|_{1,\infty}\lambda_{\min}^{-1}(\boldsymbol{\Gamma}_x)\left(\frac{M^{1/\epsilon}\log(pd+1)}{T_{\textnormal{eff}}}\right)^{\frac{\epsilon}{1+\epsilon}},
		\end{align*}
		with probability at least $1-C\operatorname{exp}[-Cs\log(T)\log(pd+1)]$.
		
		\noindent\textbf{Error Bound for $\widehat{\mathbf{A}}_{ik}$}: In the proof, we fix an index $i$ and derive the corresponding results. We begin by establishing an upper bound for the error term $\|\widehat{\mathbf{A}}_{ik} - \mathbf{A}_{ik}^*\|_{\mathrm{F}}$ in terms of $\|\mathcal{H}(\widehat{\bm{\Phi}}_i,\widehat{\mathbf{W}}_i) - \mathcal{H}(\bm{\Phi}_i^*,\mathbf{W}^*_i)\|_\mathrm{F}$. As noted in the Proof of Theorem \ref{thm:selection consistency of K_i}, matrix $\mathcal{R}(\mathcal{H}(\widehat{\bm{\Phi}}_i,\widehat{\mathbf{W}}_i))$ is symmetric. Since the rearrangement operator $\mathcal{R}(\cdot)$ preserves the Frobenius norm, it follows that
		\begin{align*}
			\left\|\sum_{k=1}^{N^2}\widehat{\lambda}_{i,k}\widehat{\boldsymbol{u}}_{i,k}\widehat{\boldsymbol{u}}_{i,k}^{\mathrm{T}} - \sum_{k=1}^{K_i}\lambda_{i,k}^*\boldsymbol{u}_{i,k}^*\boldsymbol{u}_{i,k}^{*\mathrm{T}}\right\|_{\mathrm{F}} &= \left\|\mathcal{R}(\mathcal{H}(\widehat{\bm{\Phi}}_i,\widehat{\mathbf{W}})) - \mathcal{R}(\mathcal{H}(\bm{\Phi}_i^*,\mathbf{W}^*_i))\right\|_\mathrm{F}\\
			&= \left\|\mathcal{H}(\widehat{\bm{\Phi}}_i,\widehat{\mathbf{W}}_i) - \mathcal{H}(\bm{\Phi}_i^*,\mathbf{W}^*_i)\right\|_\mathrm{F},
		\end{align*}
		where $\lambda_{i,k}^* = \|\mathbf{A}^*_{ik}\|_{\mathrm{F}}^2$, $\boldsymbol{u}_{i,k}^* = \operatorname{vec}(\mathbf{A}^*_{ik})/\|\mathbf{A}_{ik}^*\|_{\mathrm{F}}$, and $\sum_{k=1}^{N^2}\widehat{\lambda}_{i,k}\widehat{\boldsymbol{u}}_{i,k}\widehat{\boldsymbol{u}}_{i,k}^{\mathrm{T}}$ is the eigenvalue decomposition of $\mathcal{R}(\mathcal{H}(\widehat{\bm{\Phi}}_i,\widehat{\mathbf{W}}_i))$. Recall that $\widehat{\mathbf{A}}_{ik}=\operatorname{vec}^{-1}((\widehat{\lambda}_{i,k}^{+})^{1/2}\,\widehat{\bm{u}}_{i,k})$, we have
		\begin{align}\label{eq:upper bound of Aik}
			\|\widehat{\mathbf{A}}_{ik} - \mathbf{A}_{ik}^*\|_{\mathrm{F}} &= \left\|(\widehat{\lambda}_{i,k}^{+})^{1/2}\widehat{\bm{u}}_{i,k} - \lambda_{i,k}^{*1/2}\boldsymbol{u}_{i,k}^*\right\|_{2} = \left\|((\widehat{\lambda}_{i,k}^{+})^{1/2} - \lambda_{i,k}^{*1/2})\widehat{\bm{u}}_{i,k} + \lambda_{i,k}^{*1/2}(\widehat{\bm{u}}_{i,k} - \boldsymbol{u}_{i,k}^*)\right\|_{2} \notag\\
			&\leq \left|(\widehat{\lambda}_{i,k}^{+})^{1/2} - \lambda_{i,k}^{*1/2}\right|\cdot\left\|\widehat{\boldsymbol{u}}_{i,k}\right\|_2 + \lambda_{i,k}^{*1/2}\left\|\widehat{\boldsymbol{u}}_{i,k} - \boldsymbol{u}_{i,k}^*\right\|_2\\
			& = \underbrace{\left|(\widehat{\lambda}_{i,k}^{+})^{1/2} - \lambda_{i,k}^{*1/2}\right|}_{P_1} + \lambda_{i,k}^{*1/2}\underbrace{\left\|\widehat{\boldsymbol{u}}_{i,k} - \boldsymbol{u}_{i,k}^*\right\|_2}_{P_2}.\notag
		\end{align}
		Next, we derive the upper bound of $P_1$ and $P_2$. For $P_1$, by the nonexpansiveness of projection of $\widehat{\lambda}_{i,k}^{+}$ and the Weyl's inequality in Lemma \ref{lemma:weyl inequality}, we have
		\begin{align}\label{eq:upper bound of eigenvalue}
			P_1&=\left|(\widehat{\lambda}_{i,k}^{+})^{1/2} - \lambda_{i,k}^{*1/2}\right| = \frac{|\widehat{\lambda}_{i,k}^{+} - \lambda_{i,k}^*|}{(\widehat{\lambda}_{i,k}^{+})^{1/2} + \lambda_{i,k}^{*1/2}} \leq \frac{|\widehat{\lambda}_{i,k} - \lambda_{i,k}^*|}{\lambda_{i,k}^{*1/2}}\notag\\
			&\leq \frac{\left\|\sum_{k=1}^{N^2}\widehat{\lambda}_{i,k}\widehat{\boldsymbol{u}}_{i,k}\widehat{\boldsymbol{u}}_{i,k}^{\mathrm{T}} - \sum_{k=1}^{K_i}\lambda_{i,k}^*\boldsymbol{u}_{i,k}^*\boldsymbol{u}_{i,k}^{*\mathrm{T}}\right\|_{\textnormal{op}}}{\lambda_{i,k}^{*1/2}} \leq \frac{\left\|\mathcal{H}(\widehat{\bm{\Phi}}_i,\widehat{\mathbf{W}}_i) - \mathcal{H}(\bm{\Phi}_i^*,\mathbf{W}^*_i)\right\|_\mathrm{F}}{\lambda_{i,k}^{*1/2}}.
		\end{align}
		The minimum adjacent norm gap can be represented as $\vartheta_i = \min_{1 \leq k \leq K_i^*}\{\lambda_{i,k-1}^{*1/2} - \lambda_{i,k}^{*1/2},\lambda_{i,k}^{*1/2} - \lambda_{i,k+1}^{*1/2}\}$ with $\lambda_{i,0}^* = \infty$ and $\lambda_{i,K_i^*+1}^* = 0$ for $1 \leq i \leq p$ under Assumption \ref{assumption:restricted parameter space}(ii). Then, by the variant of Davis Kahan Theorem in Lemma \ref{lemma:davis-kahan theorem}, for $\langle\widehat{\boldsymbol{u}}_{i,k},\boldsymbol{u}_{i,k}^*\rangle \geq 0$, we have
		\begin{align}\label{eq:upper bound of eigenvector}
			P_2&=\|\widehat{\boldsymbol{u}}_{i,k} - \boldsymbol{u}_{i,k}^*\|_2 \leq 2^{3/2}\vartheta_i^{-1}\left\|\sum_{k=1}^{N^2}\widehat{\lambda}_{i,k}\widehat{\boldsymbol{u}}_{i,k}\widehat{\boldsymbol{u}}_{i,k}^{\mathrm{T}} - \sum_{k=1}^{K_i}\lambda_{i,k}^*\boldsymbol{u}_{i,k}^*\boldsymbol{u}_{i,k}^{*\mathrm{T}}\right\|_{\textnormal{op}} \notag\\
			&\leq 2^{3/2}\vartheta_i^{-1}\left\|\mathcal{H}(\widehat{\bm{\Phi}}_i,\widehat{\mathbf{W}}_i) - \mathcal{H}(\bm{\Phi}_i^*,\mathbf{W}^*_i)\right\|_\mathrm{F}.
		\end{align}
		Plugging \eqref{eq:upper bound of eigenvalue} and \eqref{eq:upper bound of eigenvector} into \eqref{eq:upper bound of Aik}, we have
		\begin{align}\label{eq:upper bound of Aik 2}
			\|\widehat{\mathbf{A}}_{ik} - \mathbf{A}_{ik}^*\|_{\mathrm{F}} &\leq \frac{\left\|\mathcal{H}(\widehat{\bm{\Phi}}_i,\widehat{\mathbf{W}}_i) - \mathcal{H}(\bm{\Phi}_i^*,\mathbf{W}^*_i)\right\|_\mathrm{F}}{\lambda_{i,k}^{*1/2}} + 2^{3/2}\vartheta_i^{-1}\lambda_{i,k}^{*1/2}\left\|\mathcal{H}(\widehat{\bm{\Phi}}_i,\widehat{\mathbf{W}}_i) - \mathcal{H}(\bm{\Phi}_i^*,\mathbf{W}^*_i)\right\|_\mathrm{F} \notag\\
			&\leq 2^{7/4}\vartheta_i^{-1/2}\left\|\mathcal{H}(\widehat{\bm{\Phi}}_i,\widehat{\mathbf{W}}_i) - \mathcal{H}(\bm{\Phi}_i^*,\mathbf{W}^*_i)\right\|_\mathrm{F}.
		\end{align}
		Then by Assumption \ref{assump:asym_error} and Theorem \ref{thm:vech upper bound}, for each $1 \leq k \leq K_i$, we have 
		\begin{align*}
			\|\widehat{\mathbf{A}}_{ik} - \mathbf{A}_{ik}^*\|_{\mathrm{F}} &\leq 2^{7/4}\vartheta_i^{-1/2}\left\|\mathcal{H}(\widehat{\bm{\Phi}}_i,\widehat{\mathbf{W}}_i) - \mathcal{H}(\bm{\Phi}_i^*,\mathbf{W}^*_i)\right\|_\mathrm{F} \\
			&\lesssim \vartheta_i^{-1/2}\|\widehat{\bm{\Phi}}_i - \bm{\Phi}_i^*\|_\mathrm{F}\leq \vartheta_i^{-1/2}\|\widehat{\bm{\Theta}} - \bm{\Theta}^*\|_{\mathrm{F}}\\
			&\lesssim s^{3/2}N\|\bm{\Theta}^*\|_{1,\infty}\vartheta_i^{-1/2}\lambda_{\min}^{-1}(\boldsymbol{\Gamma}_x)\left(\frac{M^{1/\epsilon}\log(pd+1)}{T_{\textnormal{eff}}}\right)^{\frac{\epsilon}{1+\epsilon}},
		\end{align*}
		with probability at least $1-C\exp[-Cs\log(T)\log(pd+1)]$. Since the preceding arguments do not depend on the specific choice of lag $i$, the results hold uniformly for all $1 \leq i \leq p$. The proof for $\widetilde{\bm A}_{ik}$ is analogous to that for $\widehat{\bm A}_{ik}$ and is therefore omitted. This completes the proof.
	\end{proof}

	\begin{proof}[\textbf{Proof of Thoerem \ref{thm:selection consistency of model order}}]
		To prove this thoerem, it suffices to show that for all $p \neq p^*$ with $p,p^* \leq \overline{p}$, as $T \to \infty$, $\mathbb{P}(\text{BIC}(p) > \text{BIC}(p^*)) \to 1$. Given $p$, $\bm{\Theta}_p^\circ$ is defined by
		\begin{align*}
			\bm{\Theta}_p^\circ = \operatorname{argmin}_{\bm{\Theta}_p \in \mathbb{R}^{(pd+1) \times d}, \|\bm{\Theta}_p\|_0 \leq sd} \limits \mathbb{E}\mathbb{L}(\bm{\Theta}_p),
		\end{align*}
		where $\mathbb{L}(\bm{\Theta}_p) = 1/(2T)\|\mathbf{Y} - \mathbf{X}_p(\tau)\bm{\Theta}_p\|_\mathrm{F}^2$ with $\mathbf{X}_p(\tau)$ adapted to lag order $p$. Note that when $p = p^*$, $\bm{\Theta}_p^\circ = \bm{\Theta}^*$ because $\bm{\Theta}^*$ satisfies the sparsity constraint $\|\bm{\Theta}^*\|_0 \leq sd$ and is a global minimizer of $\mathbb{E} \mathbb{L}(\cdot)$. Therefore, it must also be a minimizer over the restricted parameter space. Analogously, $\widehat{\bm{\Theta}}_p = \widehat{\bm{\Theta}}$ when $p = p^*$.
		Define 
		\begin{align*}
			\varphi_p = \iota_d \left(\frac{\log(pd+1)}{T_\text{eff}}\right)^{\frac{1+2\epsilon}{1+\epsilon}}.
		\end{align*}
		Then the BIC takes the form of $\text{BIC}(p) = \log(\mathbb{L}(\widehat{\bm{\Theta}}_p)) + \varphi_p \log(T)$.
		Note that $\widetilde{\bm{\Theta}}^\circ_p = (\bm{\Theta}_p^{\circ\mathrm{T}}, \mathbf{0}_{d \times (p^* - p)d})^\mathrm{T} \in \mathbb{R}^{(p^*d+1) \times d}$ for $p \leq p^*$. Then we have 
		\begin{align*}
			\text{BIC}(p) - \text{BIC}(p^*) = \log\left(1+\frac{D_p}{\mathbb{L}(\widehat{\bm{\Theta}})}\right) + (\varphi_p - \varphi_{p^*}) \log(T),
		\end{align*}
		where for $p \leq p^*$, 
		\begin{align*}
			D_p = \mathbb{L}(\widehat{\bm{\Theta}}_p) - \mathbb{L}(\widehat{\bm{\Theta}}) =& \underbrace{\mathbb{L}(\bm{\Theta}_p^\circ) - \mathbb{L}(\bm{\Theta}^*) - \frac12\operatorname{vec}^\mathrm{T}(\widetilde{\bm{\Theta}}_p^\circ - \bm{\Theta}^*)(\mathbf{I}_d \otimes \boldsymbol{\Gamma}_x)\operatorname{vec}(\widetilde{\bm{\Theta}}_p^\circ - \bm{\Theta}^*)}_{D_{p,1}}\\
			&+ \underbrace{\frac12\operatorname{vec}^\mathrm{T}(\widetilde{\bm{\Theta}}_p^\circ - \bm{\Theta}^*)(\mathbf{I}_d \otimes \boldsymbol{\Gamma}_x)\operatorname{vec}(\widetilde{\bm{\Theta}}_p^\circ - \bm{\Theta}^*)}_{D_{p,2}} \\ 
			&+ \underbrace{\mathbb{L}(\widehat{\bm{\Theta}}_p) - \mathbb{L}(\bm{\Theta}_p^\circ)}_{D_{p,3}} - \underbrace{[\mathbb{L}(\widehat{\bm{\Theta}}) - \mathbb{L}(\bm{\Theta}^*)]}_{D_{p^*}},
		\end{align*}
		and for $p > p^*$, 
		\begin{align*}
			D_p &= \mathbb{L}(\widehat{\bm{\Theta}}_p) - \mathbb{L}(\widehat{\bm{\Theta}}) = \underbrace{\mathbb{L}(\bm{\Theta}_p^\circ) - \mathbb{L}(\bm{\Theta}^*)}_{D_{p,1}'} + \underbrace{\mathbb{L}(\widehat{\bm{\Theta}}_p) - \mathbb{L}(\bm{\Theta}_p^\circ)}_{D_{p,2}'} - \underbrace{[\mathbb{L}(\widehat{\bm{\Theta}}) - \mathbb{L}(\bm{\Theta}^*)]}_{D_{p^*}}.
		\end{align*}
		We first establish the probabilistic order of $D_{p^*}$. By the optimality of $\widehat{\bm{\Theta}}$, we have
		\begin{align*}
			\mathbb{L}(\widehat{\bm{\Theta}}) + \lambda\|\widehat{\bm{\Theta}}\|_{1,1} \leq \mathbb{L}(\bm{\Theta}^*) + \lambda\|\bm{\Theta}^*\|_{1,1}.
		\end{align*}
		Under the conditions of Theorem \ref{thm:vech upper bound}, Lemmas \ref{lemma:first order derivative} and \ref{lemma:ell1 cone} imply that, with probability at least $1 - C\exp[-Cs\log(T)\log(p^*d+1)]$, we have $\|(\widehat{\bm{\Theta}}_{\bm{\cdot},j} - \bm{\Theta}^*_{\bm{\cdot},j})_{\mathcal{S}_j^c}\|_1 \leq 3\|(\widehat{\bm{\Theta}}_{\bm{\cdot},j} - \bm{\Theta}^*_{\bm{\cdot},j})_{\mathcal{S}_j}\|_1$ for all $1 \leq j \leq d$. Let $\mathcal{S}$ denote the support set of $\bm{\Theta}^*$. Then, it follows that $\|(\widehat{\bm{\Theta}} - \bm{\Theta}^*)_{\mathcal{S}^c}\|_{1,1} \leq 3\|(\widehat{\bm{\Theta}} - \bm{\Theta}^*)_\mathcal{S}\|_{1,1}$ with probability at least $1 - C\exp[-Cs\log(T)\log(p^*d+1)]$. Therefore, as $T \to \infty$, with probability tending to 1, we have
		\begin{align*}
			D_{p^*} = \mathbb{L}(\widehat{\bm{\Theta}}) - \mathbb{L}(\bm{\Theta}^*) &\leq \lambda_{p^*}\left(\|\bm{\Theta}^*\|_{1,1} - \|\widehat{\bm{\Theta}}\|_{1,1}\right) \\ 
			&\leq \lambda_{p^*}\|\widehat{\bm{\Theta}} - \bm{\Theta}^*\|_{1,1}
			= \lambda_{p^*}\left(\|(\widehat{\bm{\Theta}} - \bm{\Theta}^*)_\mathcal{S}\|_{1,1} + \|(\widehat{\bm{\Theta}} - \bm{\Theta}^*)_{\mathcal{S}^c}\|_{1,1}\right) \\
			&\leq 4\lambda_{p^*}\|(\widehat{\bm{\Theta}} - \bm{\Theta}^*)_\mathcal{S}\|_{1,1} \leq 4\sqrt{sd}\lambda_{p^*}\|\widehat{\bm{\Theta}} - \bm{\Theta}^*\|_\mathrm{F}.
		\end{align*}
		Combining with the probabilistic upper bound under Frobenius norm in Theorem \ref{thm:vech upper bound}, we have $D_{p^*} = O_p((s^{1/2}N\lambda_{p^*})^2)$. Next, we discuss on the under- and over-specified models separately.
		
		\noindent\textbf{Under-specified models}: For $p < p^*$, the key analysis is to derive the probabilistic lower bound for $D_{p,2}$ and show that $D_{p,2}$ dominates the other terms in $D_p$.

		Let $\varphi_p = \|\widetilde{\bm{\Theta}}_p^\circ - \bm{\Theta}^*\|_{1,1}$. Since $\boldsymbol{\Gamma}_x$ is positive definite, we have
		\begin{align*}
			D_{p,2} &= \frac12\operatorname{vec}^\mathrm{T}(\widetilde{\bm{\Theta}}_p^\circ - \bm{\Theta}^*)\left(\mathbf{I}_d \otimes \boldsymbol{\Gamma}_x\right)\operatorname{vec}(\widetilde{\bm{\Theta}}_p^\circ - \bm{\Theta}^*) \\ 
			&\geq \frac12\lambda_{\min}(\boldsymbol{\Gamma}_x)\|\widetilde{\bm{\Theta}}_p^\circ - \bm{\Theta}^*\|_\mathrm{F}^2\\
			&\geq \frac12 \cdot (2sd)^{-1}\lambda_{\min}(\boldsymbol{\Gamma}_x)\|\widetilde{\bm{\Theta}}_p^\circ - \bm{\Theta}^*\|_{1,1}^2 = (4sd)^{-1}\lambda_{\min}(\boldsymbol{\Gamma}_x)\varphi_p^2 = O(\varphi_p^2/sd).
			\end{align*}
		By $\|\mathbf{M}_1\|_\mathrm{F}^2 - \|\mathbf{M}_2\|_\mathrm{F}^2= \|\mathbf{M}_1 - \mathbf{M}_2\|_\mathrm{F}^2 + 2\langle \mathbf{M}_1 - \mathbf{M}_2, \mathbf{M}_2\rangle$, we have
		\begin{align*}
			D_{p,1} &= \mathbb{L}(\bm{\Theta}_p^\circ) - \mathbb{L}(\bm{\Theta}^*) - \frac12\operatorname{vec}^\mathrm{T}(\widetilde{\bm{\Theta}}_p^\circ - \bm{\Theta}^*)(\mathbf{I}_d \otimes \boldsymbol{\Gamma}_x)\operatorname{vec}(\widetilde{\bm{\Theta}}_p^\circ - \bm{\Theta}^*)\\
			&= \frac{1}{2T}\|\mathbf{Y}(\tau) - \mathbf{X}_p(\tau)\bm{\Theta}_p^\circ\|_\mathrm{F}^2 - \frac{1}{2T}\|\mathbf{Y}(\tau) - \mathbf{X}(\tau)\bm{\Theta}^*\|_\mathrm{F}^2 - \frac12\operatorname{vec}^\mathrm{T}(\widetilde{\bm{\Theta}}_p^\circ - \bm{\Theta}^*)(\mathbf{I}_d \otimes \boldsymbol{\Gamma}_x)\operatorname{vec}(\widetilde{\bm{\Theta}}_p^\circ - \bm{\Theta}^*)\\
			&= \frac{1}{2T}\|\mathbf{X}(\tau)(\bm{\Theta}^* - \widetilde{\bm{\Theta}}_p^\circ)\|_\mathrm{F}^2 + \frac{1}{T}\langle \mathbf{X}(\tau)(\bm{\Theta}^* - \widetilde{\bm{\Theta}}_p^\circ), \mathbf{Y}(\tau) - \mathbf{X}(\tau)\bm{\Theta}^* \rangle\\
			&\quad - \frac12\operatorname{vec}^\mathrm{T}(\widetilde{\bm{\Theta}}_p^\circ - \bm{\Theta}^*)(\mathbf{I}_d \otimes \boldsymbol{\Gamma}_x)\operatorname{vec}(\widetilde{\bm{\Theta}}_p^\circ - \bm{\Theta}^*)\\
			&= \frac12\operatorname{vec}^\mathrm{T}(\widetilde{\bm{\Theta}}_p^\circ - \bm{\Theta}^*)\left(\mathbf{I}_d \otimes (\mathbf{S} - \bm{\Gamma}_x)\right)\operatorname{vec}(\widetilde{\bm{\Theta}}_p^\circ - \bm{\Theta}^*) + \langle \nabla\mathbb{L}(\bm{\Theta}^*), \bm{\Theta}^* - \widetilde{\bm{\Theta}}_p^\circ \rangle\\
			& \leq \frac12\|\mathbf{S} - \bm{\Gamma}_x\|_{\infty,\infty}\|\widetilde{\bm{\Theta}}_p^\circ - \bm{\Theta}^*\|_{1,1}^2 + \|\nabla\mathbb{L}(\bm{\Theta}^*)\|_{\infty,\infty} \|\widetilde{\bm{\Theta}}_p^\circ - \bm{\Theta}^*\|_{1,1},
		\end{align*}
		where $\mathbf{X}_p(\tau)$ is adapted to lag order $p$, $\mathbf{S} = \frac1T\mathbf{X}^{\mathrm{T}}(\tau)\mathbf{X}(\tau)$ and $\nabla\mathbb{L}(\bm{\Theta}^*) = \frac{1}{T}\mathbf{X}^{\mathrm{T}}(\tau)(\mathbf{Y}(\tau) - \mathbf{X}(\tau)\bm{\Theta}^*)$. Under the conditions of Theorem \ref{thm:vech upper bound}, by Lemmas \ref{lemma:covariance matrix bound} and \ref{lemma:first order derivative}, we have $\|\mathbf{S} - \bm{\Gamma}_x\|_{\infty,\infty} \lesssim \lambda_{p^*}$ and $\|\nabla\mathbb{L}(\bm{\Theta}^*)\|_{\infty,\infty} \lesssim \lambda_{p^*}$ with probability at least $1 - C\exp[-Cs\log(T)\log(p^*d+1)]$. Therefore, with probability tending to 1, we have
		\begin{align*}
			D_{p,1} \lesssim \lambda_{p^*}\|\bm{\Theta}_p^\circ - \bm{\Theta}^*\|_{1,1}^2 + \lambda_{p^*}\|\bm{\Theta}_p^\circ - \bm{\Theta}^*\|_{1,1} = \lambda_{p^*}(\varphi_p^2 + \varphi_p),
		\end{align*}
		and thus $D_{p,1} = O_p(\lambda_{p^*}(\varphi_p^2 + \varphi_p)) = o_p(\varphi_p^2/sd)$. Note that $D_{p,2} = O(\varphi_p^2/sd)$ and $D_{p^*} = O_p((s^{1/2}N\lambda_{p^*})^2) = o_p(\varphi_p^2/sd)$. In addition, $\max_{p \neq p^*} \limits \mathbb{L}(\widehat{\bm{\Theta}}_p) - \mathbb{L}(\bm{\Theta}_p^\circ) = O_p(\varphi_p^2/sd)$ in Assumption \ref{assump:minimum signal strength}(ii) implies that $D_{p,3} = O_p(\varphi_p^2/sd)$. Combining the above results, we have $D_p = D_{p,1} + D_{p,2} + D_{p,3} - D_{p^*} = O_p(\varphi_p^2/sd)$. As $p$ is fixed and $\min_{p < p^*} \limits \varphi_p \gg sd\lambda_p\log(T)^{1/2}$ by Assumption \ref{assump:minimum signal strength}(i), we have $\varphi_p^2/sd^2\tau^4\|\bm{\Theta}^*\|_{1,\infty}^2 \gg s\lambda_{p^*}^2\log(T)/\tau^4\|\bm{\Theta}^*\|_{1,\infty}^2$. This together with the boundedness of $\mathbf{X}(\tau)$, implies that
		\begin{align*}
			&\mathbb{L}(\bm{\Theta}^*) = \frac{1}{2T}\sum_{t=1}^{T}\sum_{j=1}^{d}(y_{t,j}(\tau) - \bbm{x}_t^\mathrm{T}(\tau)\bm{\Theta}_{\bm{\cdot},j}^*)^2 \leq \frac{1}{T}\sum_{t=1}^{T}\sum_{j=1}^{d}\left(y_{t,j}^2(\tau) + \langle\bbm{x}_t(\tau),\bm{\Theta}_{\bm{\cdot},j}^*\rangle^2\right)\\
			&\leq \frac{1}{T}\sum_{t=1}^{T}\sum_{j=1}^{d}\left(y_{t,j}^2(\tau) + \|\bbm{x}_t(\tau)\|_\infty^2\|\bm{\Theta}_{\bm{\cdot},j}^*\|_1^2\right) \leq \frac{1}{T}\sum_{t=1}^{T}\sum_{j=1}^{d}\left(\tau^4 + \tau^4\|\bm{\Theta}^*\|_{1,\infty}^2\right)
			= O(d\tau^4\|\bm{\Theta}^*\|_{1,\infty}^2).
		\end{align*}
		Then, we have $\mathbb{L}(\widehat{\bm{\Theta}}) = D_{p^*} + \mathbb{L}(\bm{\Theta}^*) = O_p((s^{1/2}N\lambda_{p^*})^2) + O(d\tau^4\|\bm{\Theta}^*\|_{1,\infty}^2) = O_p(d\tau^4\|\bm{\Theta}^*\|_{1,\infty}^2)$. By Assumption \ref{assump:penalty parameter}, we have 
		\begin{align*}
			\varphi_{p^*}\log(T) &= \iota_d \left(\frac{\log(p^*d+1)}{T_{\text{eff}}}\right)^{\frac{1+2\epsilon}{1+\epsilon}}\log(T) \asymp s^3M^{1/(1+\epsilon)}\left(\frac{\log(p^*d+1)}{T_{\text{eff}}}\right)^{\frac{1+2\epsilon}{1+\epsilon}}\log(T)\\
			&\asymp s\lambda_{p^*}^2\log(T)/\tau^4\|\bm{\Theta}^*\|_{1,\infty}^2 \ll \varphi_p^2/sd^2\tau^4\|\bm{\Theta}^*\|_{1,\infty}^2.
		\end{align*}
		This together with the fact that $D_p/\mathbb{L}(\widehat{\bm{\Theta}}) = O_p(\varphi_p^2/sd^2\tau^4\|\bm{\Theta}^*\|_{1,\infty}^2)$, $D_{p,2}/\mathbb{L}(\widehat{\bm{\Theta}})>0$ is the dominant term in $D_p/\mathbb{L}(\widehat{\bm{\Theta}})$ and $\log(1+x) \geq \min(0.5x,\log2)$ for any $x > 0$, we have
		\begin{align*}
			\text{BIC}(p) - \text{BIC}(p^*) \geq \min\left(\frac{D_p}{2\mathbb{L}(\widehat{\bm{\Theta}})}, \log2\right) + (\varphi_p - \varphi_{p^*}) \log(T) > 0
		\end{align*}
		with probability tending to 1 as $T \to \infty$.
		
		\noindent\textbf{Over-specified models}: Let $p > p^*$. Since $\bm{\Theta}^*$ is a global minimizer of $\mathbb{E}\widetilde{\mathbb{L}}(\bm{\Theta}_{p^*})$ and $\|\bm{\Theta}^*\|_{0} \leq sd$, it follows that $\bm{\Theta}_{p^*}^\circ = \bm{\Theta}^*$. Consequently, the minimum is attained at $\bm{\Theta}_p^\circ = (\bm{\Theta}^{*\mathrm{T}},\mathbf{0}_{(p-p^*)d \times d})^\mathrm{T}$, i.e.,
		\begin{align*}
			\bm{\Theta}_p^\circ = (\bm{\Theta}^{*\mathrm{T}},\mathbf{0}_{(p-p^*)d \times d})^\mathrm{T} = \operatorname{argmin}_{\bm{\Theta}_p \in \mathbb{R}^{(pd+1) \times d}, \|\bm{\Theta}_p\|_0 \leq sd} \limits \mathbb{E}\widetilde{\mathbb{L}}(\bm{\Theta}_p).
		\end{align*}
		Thus, it follows that
		\begin{align*}
			D_{p,1}' = \mathbb{L}(\bm{\Theta}_p^\circ) - \mathbb{L}(\bm{\Theta}^*) = \frac{1}{2T}\|\mathbf{E}_t\|_\mathrm{F}^2 - \frac{1}{2T}\|\mathbf{E}_t\|_\mathrm{F}^2 = 0.
		\end{align*}
		Following the same arguments of $D_{p,3}$, we have $D_{p,2}' = O_p((s^{1/2}N\lambda_p)^2)$. As mentioned before, $D_{p^*} = O_p((s^{1/2}N\lambda_{p^*})^2)$ and $\mathbb{L}(\widehat{\bm{\Theta}}) = O_p(d\tau^4\|\bm{\Theta}^*\|_{1,\infty}^2)$. Thus, we have
		\begin{align*}
			\frac{D_p}{\mathbb{L}(\widehat{\bm{\Theta}})} &= \frac{D_{p,1}' + D_{p,2}' - D_{p^*}}{\mathbb{L}(\widehat{\bm{\Theta}})} = (s\lambda_p^2/\tau^4\|\bm{\Theta}^*\|_{1,\infty}^2 - s\lambda_{p^*}^2/\tau^4\|\bm{\Theta}^*\|_{1,\infty}^2)O_p(1) = (\varphi_p - \varphi_{p^*})O_p(1).
		\end{align*}
		The last equality holds because $\varphi_p \asymp s\lambda_p^2/\tau^4\|\bm{\Theta}^*\|_{1,\infty}^2$ for any $p \leq \overline{p}$. Noting that $\varphi_p - \varphi_{p^*} > 0$ when $p > p^*$, and applying the inequaility $\log(1+x) \leq x$ for any $x > 0$, we have
		\begin{align*}
			\log\left(1+\frac{D_p}{\mathbb{L}(\widehat{\bm{\Theta}})}\right) \geq -\frac{D_p}{\mathbb{L}(\widehat{\bm{\Theta}})}
		\end{align*}
		with probability tending to 1 as $T \to \infty$. Therefore, we have
		\begin{align*}
			\text{BIC}(p) - \text{BIC}(p^*) = \log\left(1+\frac{D_p}{\mathbb{L}(\widehat{\bm{\Theta}})}\right) + (\varphi_p - \varphi_{p^*})\log(T) \geq -\frac{D_p}{\mathbb{L}(\widehat{\bm{\Theta}})} + (\varphi_p - \varphi_{p^*})\log(T) > 0
		\end{align*}
		with probability tending to 1 as $T \to \infty$. This concludes the proof.
	\end{proof}

	\begin{proof}[\textbf{Proof of Theorem \ref{thm:selection consistency of K_i}}]
		In the proof of this theorem, we fix an index $i$ and derive the corresponding results. Under the conditions in Theorem \ref{thm:vech upper bound}, we have
		\begin{align*}
			\|\widehat{\bm{\Phi}}_i - \bm{\Phi}^*_i\|_{\mathrm{F}} \leq \|\widehat{\bm{\Theta}} - \bm{\Theta}^*\|_{\mathrm{F}} \lesssim s^{3/2}N\|\bm{\Theta}^*\|_{1,\infty}\lambda_{\min}^{-1}(\boldsymbol{\Gamma}_x)\left(\frac{M^{1/\epsilon}\log(pd+1)}{T_{\textnormal{eff}}}\right)^{\frac{\epsilon}{1+\epsilon}}
		\end{align*}
		with probability at least $1-C\exp[-Cs\log(T)\log(pd+1)]$. By Assumption \ref{assump:asym_error}, we have
		\begin{align*}
			\|\mathcal{H}(\widehat{\bm{\Phi}}_i,\widetilde{\mathbf{W}}_i) - \mathcal{H}(\bm{\Phi}_i^*,\mathbf{W}^*_i)\|_\mathrm{F} \lesssim \|\widehat{\bm{\Phi}}_i - \bm{\Phi}_i^*\|_\mathrm{F} \lesssim s^{3/2}N\|\bm{\Theta}^*\|_{1,\infty}\lambda_{\min}^{-1}(\boldsymbol{\Gamma}_x)\left(\frac{M^{1/\epsilon}\log(pd+1)}{T_{\textnormal{eff}}}\right)^{\frac{\epsilon}{1+\epsilon}}
		\end{align*}
		with $\mathcal{H}(\bm{\Phi}_i^*,\mathbf{W}^*_i) = \sum_{k=1}^{K_i}\mathbf{A}_{ik}^* \otimes \mathbf{A}_{ik}^*$. Since the rearrangement operator $\mathcal{R}(\cdot)$ does not change the Frobenius norm, we have
		\begin{align*}
			\|\mathcal{R}(\mathcal{H}(\widehat{\bm{\Phi}}_i,\widetilde{\mathbf{W}}_i)) - \mathcal{R}(\mathcal{H}(\bm{\Phi}_i^*,\mathbf{W}^*_i))\|_\mathrm{F} \lesssim s^{3/2}N\|\bm{\Theta}^*\|_{1,\infty}\lambda_{\min}^{-1}(\boldsymbol{\Gamma}_x)\left(\frac{M^{1/\epsilon}\log(pd+1)}{T_{\textnormal{eff}}}\right)^{\frac{\epsilon}{1+\epsilon}}
		\end{align*}
		with probability at least $1-C\exp[-Cs\log(T)\log(pd+1)]$, where $\mathcal{R}(\mathcal{H}(\bm{\Phi}_i^*,\mathbf{W}^*_i)) = \sum_{k=1}^{K_i^*}\lambda_{i,k}^* \bm{u}_{i,k}^*\bm{u}_{i,k}^{*\mathrm{T}}$ with $\lambda_{i,k}^*=\|\mathbf{A}_{ik}\|_\mathrm{F}^2$ and $\bm{u}_{i,k}^*=\operatorname{vec}(\mathbf{A}_{ik}^*)/\|\mathbf{A}_{ik}^*\|_\mathrm{F}$. Under Assumption \ref{assumption:restricted parameter space}, the right-hand side constitutes the eigendecomposition of $\mathcal{R}(\mathcal{H}(\bm{\Phi}_i^*,\mathbf{W}^*_i))$, with eigenvalues arranged in decreasing order. It is worth noting that the matrices $\mathcal{H}(\widehat{\bm{\Phi}}_i,\widetilde{\mathbf{W}}_i)$ are padded while preserving the underlying Kronecker product structure. Consequently, their rearranged forms $\mathcal{R}(\mathcal{H}(\widehat{\bm{\Phi}}_i,\widetilde{\mathbf{W}}_i))$ are symmetric. Since $\mathcal{R}(\mathcal{H}(\widehat{\bm{\Phi}}_i,\widetilde{\mathbf{W}}_i))$ and $\mathcal{R}(\mathcal{H}(\bm{\Phi}_i^*,\mathbf{W}^*_i))$ are symmetric matrices, let $\widehat{\lambda}_{i,k}$ and $\lambda_{i,k}^*$ denote the $k$-th eigenvalues of $\mathcal{R}(\mathcal{H}(\widehat{\bm{\Phi}}_i,\widetilde{\mathbf{W}}_i))$ and $\mathcal{R}(\mathcal{H}(\bm{\Phi}_i^*,\mathbf{W}^*_i))$, respectively. It follows that $\lambda_{i,k}^* = \|\mathbf{A}_{ik}^*\|_\mathrm{F}^2$ for $1 \leq k \leq K_i^*$, and $\lambda_{i,k}^* = 0$ for $k > K_i^*$. Then, by the Weyl's inequality in Lemma \ref{lemma:weyl inequality}, we have√
		\begin{align*}
			\max_{1 \leq k \leq K_i^*} \limits |\widetilde{\lambda}_{i,k} - \lambda_{i,k}^*| &\leq \|\mathcal{R}(\mathcal{H}(\widehat{\bm{\Phi}}_i,\widetilde{\mathbf{W}}_i)) - \mathcal{R}(\mathcal{H}(\bm{\Phi}_i^*,\mathbf{W}^*_i))\|_{\mathrm{op}} \leq \|\mathcal{R}(\mathcal{H}(\widehat{\bm{\Phi}}_i,\widetilde{\mathbf{W}}_i)) - \mathcal{R}(\mathcal{H}(\bm{\Phi}_i^*,\mathbf{W}^*_i))\|_\mathrm{F}\\ 
			&\lesssim s^{3/2}N\|\bm{\Theta}^*\|_{1,\infty}\lambda_{\min}^{-1}(\boldsymbol{\Gamma}_x)\left(\frac{M^{1/\epsilon}\log(pd+1)}{T_{\textnormal{eff}}}\right)^{\frac{\epsilon}{1+\epsilon}} = o(c(N,T))
		\end{align*}
		with probability approaching 1 as $T \to \infty$ under the conditions in Theorem \ref{thm:selection consistency of K_i}. That is, $\widetilde{\lambda}_{i,k} - \lambda_{i,k}^* = o_p(c(N,T))$ uniformly for $1 \leq k \leq K_i^*$.

		Since $c(N,T) \ll \varsigma \min_{1 \leq i \leq p,\ 1 \leq k \leq K_i^* - 1} \lambda_{i,k+1}^*/\lambda_{i,k}^*$ with $\varsigma = \min_{1 \leq i \leq p} \lambda_{i,K_i^*}^*$, we have 
		$$
		c(N,T) \ll \min_{1 \leq i \leq p,\ 1 \leq k \leq K_i^* - 1} \frac{\lambda_{i,K_i^*}^* \lambda_{i,k+1}^*}{\lambda_{i,k}^*}
		\leq \min_{1 \leq i \leq p,\ 1 \leq k \leq K_i^* - 1} \lambda_{i,k+1}^*
		= o(\lambda_{i,k}^*).
		$$
		Hence, $c(N,T) = o(\lambda_{i,k}^*)$ uniformly for $1 \le k \le K_i^*$. Note that $\widetilde{\lambda}_{i,k} + c(N,T)
		= \lambda_{i,k}^* + \big(\widetilde{\lambda}_{i,k} - \lambda_{i,k}^*\big) + c(N,T)$. For $k > K_i^*$, since $\lambda_{i,k}^* = 0$ and $\widetilde{\lambda}_{i,k} - \lambda_{i,k}^* = o_p(c(N,T))$, the term $c(N,T)$ dominates in $\widetilde{\lambda}_{i,k} + c(N,T)$, i.e., $\widetilde{\lambda}_{i,k} + c(N,T) = O_p(c(N,T))$. For $k \le K_i^*$, since $\widetilde{\lambda}_{i,k} - \lambda_{i,k}^* = o_p(c(N,T))$ and $c(N,T) = o(\lambda_{i,k}^*)$, the term $\lambda_{i,k}^*$ dominates in $\widetilde{\lambda}_{i,k} + c(N,T)$, i.e., $\widetilde{\lambda}_{i,k} + c(N,T) = O_p(\lambda_{i,k}^*)$.\\
		Hence, for $k > K_i^*$, as $T \to \infty$,
		\begin{align*}
			\frac{\widehat{\lambda}_{i,k+1}+c(N,T)}{\widetilde{\lambda}_{i,k}+c(N,T)} \overset{P}{\to} \frac{c(N,T)}{c(N,T)} = 1.
		\end{align*}
		For $k < K_i^*$, as $T \to \infty$,
		\begin{align*}
			\frac{\widehat{\lambda}_{i,k+1}+c(N,T)}{\widetilde{\lambda}_{i,k}+c(N,T)} \overset{P}{\rightarrow} \frac{\lambda_{i,k+1}^*}{\lambda_{i,k}^*}.
		\end{align*}
		Note that $c(N,T) \ll \varsigma \min_{1 \leq i \leq p,\ 1 \leq k \leq K_i^* - 1} \lambda_{i,k+1}^*/\lambda_{i,k}^*$ where $\varsigma = \min_{1 \leq i \leq p} \lambda_{i,K_i^*}^*$. Thus, for $k = K_i^*$, as $T \to \infty$,
		\begin{align*}
			\frac{\widehat{\lambda}_{i,k+1}+c(N,T)}{\widetilde{\lambda}_{i,k}+c(N,T)} \overset{P}{\rightarrow} \frac{c(N,T)}{\lambda_{i,K_i^*}^*} \leq \frac{c(N,T)}{\varsigma} = o\left(\min_{1 \leq k \leq K_i}\limits\frac{\lambda_{i,k+1}^*}{\lambda_{i,k}^*}\right).
		\end{align*}
		It follows that, with probability approaching 1 as $T \to \infty$, the ridge-type ratio $[\widehat{\lambda}_{i,K_i^*+1}+c(N,T)]/[\widehat{\lambda}_{i,K_i^*}+c(N,T)]$ achieves its minimum. Since the above arguments do not depend on the specific choice of $i$, the results hold uniformly for all $1 \leq i \leq p$.
		Note that
		\[
		\{\widehat{\mathcal{K}}_p^* \neq \mathcal{K}_p^*\}
		= \bigcup_{i=1}^p \{\widehat{K}_i \neq K_i^*\}.
		\]
		Then, by Boole's inequality,
		\[
		\mathbb{P}(\widehat{\mathcal{K}}_p \neq \mathcal{K}_p^*)
		\;\le\; \sum_{i=1}^{p} \mathbb{P}(\widehat{K}_i \neq K_i^*).
		\]
		Since $p^*$ is fixed and $\mathbb{P}(\widehat{K}_i \neq K_i^*) \to 0$ for each $i$,
		the right-hand side tends to $0$. Hence
		$\mathbb{P}(\widehat{\mathcal{K}}_p \neq \mathcal{K}_p^*) \to 0$, which implies
		$\mathbb{P}(\widehat{\mathcal{K}}_p=\mathcal{K}_p^*) \to 1$. This concludes the proof.
	\end{proof}

	\section{Proofs of Primary Lemmas}\label{append:proofs of primary lemmas}
	\renewcommand{\theequation}{D.\arabic{equation}}
	\setcounter{equation}{0}

	\begin{proof}[\textbf{Proof of Lemma \ref{lemma:covariance matrix bound}}]
	Denote $\boldsymbol{\Gamma}_x(\tau)=\mathbb{E}[\boldsymbol{x}_t(\tau)\boldsymbol{x}_t^\mathrm{T}(\tau)]$. 
	Recall that $\boldsymbol{\Gamma}_x=\mathbb{E}[\boldsymbol{x}_t\boldsymbol{x}_t^\mathrm{T}]$ and $\mathbf{S}=1/T\sum_{t=1}^T \bbm{x}_t(\tau)\bbm{x}_t^\mathrm{T}(\tau)$, where 
	$\boldsymbol{x}_t=(1,\boldsymbol{y}_t^\mathrm{T},\ldots,\boldsymbol{y}_{t-p}^\mathrm{T})^\mathrm{T}$ and 
	$\boldsymbol{x}_t(\tau)=(1,\boldsymbol{y}_t^\mathrm{T}(\tau),\ldots,\boldsymbol{y}_{t-p}^\mathrm{T}(\tau))^\mathrm{T}$with $\boldsymbol{y}_t\in\mathbb{R}^d$ and $\boldsymbol{x}_t\in\mathbb{R}^{pd+1}$. The matrices $\boldsymbol{\Gamma}_x(\tau), \boldsymbol{\Gamma}_x$ and $\mathbf{S}$ share the same block structure: the upper-left entry equals $1$; the intercept–lag-$l_1$ block equals the mean of $\bbm{y}_{t-l_1}(\tau)$ (i.e., $\mathbb{E}[\,\bbm{y}_{t-l_1}(\tau)\,]$ for $\bm{\Gamma}_x(\tau)$ and $\frac{1}{T}\sum_{t=1}^T \bbm{y}_{t-l_1}(\tau)$ for $\mathbf{S}$); and the lag–lag $(l_1,l_2)$ block equals the cross second moment of the corresponding lagged vectors (i.e., $\mathbb{E}[\,\bbm{y}_{t-l_1}(\tau)\,\bbm{y}_{t-l_2}^\mathrm{T}(\tau)]$ for $\bm{\Gamma}_x(\tau)$ and $\tfrac{1}{T}\sum_{t=1}^T \bbm{y}_{t-l_1}(\tau)\,\bbm{y}_{t-l_2}^\mathrm{T}(\tau)$ for $\mathbf{S}$). Replacing $\bbm{y}(\tau)$ by $\bbm{y}$ yields the untruncated counterparts.

	The proof proceeds in two steps. First, we establish a deterministic bound for $\|\boldsymbol{\Gamma}_x(\tau)-\boldsymbol{\Gamma}_x\|_{\infty,\infty}$; second, we derive a high-probability bound for $\|\mathbf{S}-\boldsymbol{\Gamma}_x(\tau)\|_{\infty,\infty}$. Specifically, leveraging the block structure of $\boldsymbol{\Gamma}_x(\tau), \boldsymbol{\Gamma}_x$ and $\mathbf{S}$ described above, entrywise differences can be decomposed into three types: the intercept–intercept entry (which vanishes identically), the intercept–lag blocks (differences of means of $\bbm{y}_{t-l_1}(\tau)$), and the lag–lag blocks (differences of second moments of lagged vectors). Consequently, it suffices to show uniform bounds, over the last two blocks, for $\mathbb{E}[\bbm{y}_{t-l_1}(\tau)]-\mathbb{E}[\,\bbm{y}_{t-l_1}]$ and $\mathbb{E}[\bbm{y}_{t-l_1}(\tau)\bbm{y}_{t-l_2}^\mathrm{T}(\tau)]-\mathbb{E}[\bbm{y}_{t-l_1}\bbm{y}_{t-l_2}^\mathrm{T}]$, as well as for $\tfrac{1}{T}\sum_{t=1}^T \bbm{y}_{t-l_1}(\tau)-\mathbb{E}[\,\bbm{y}_{t-l_1}(\tau)\,]$ and $\tfrac{1}{T}\sum_{t=1}^T \bbm{y}_{t-l_1}(\tau)\bbm{y}_{t-l_2}^\mathrm{T}(\tau)-\mathbb{E}[\,\bbm{y}_{t-l_1}(\tau)\bbm{y}_{t-l_2}^\mathrm{T}(\tau)\,]$.
	
	\noindent\textbf{Deterministic bound for $\|\boldsymbol{\Gamma}_x(\tau) - \boldsymbol{\Gamma}_x\|_{\infty,\infty}$}:
	For $1 \leq i,j \leq d$ and $1 \leq l_1,l_2 \leq p$, without loss of generality, we assume $y_{t-l_1,i} = r_{t-l_1,i_1}r_{t-l_1,i_2}$ and $y_{t-l_1,i}(\tau) = r_{t-l_1,i_1}(\tau)r_{t-l_1,i_2}(\tau)$. Similarly, $y_{t-l_2,j}$ and $y_{t-l_2,j}(\tau)$ are defined in the same manner. By the assumption that $\tau \asymp (MT_{\textnormal{eff}}/\log(pd+1))^{1/(4+4\epsilon)}$ and $T \gtrsim \log(pd+1)$, we have $\tau^{-2}\to 0$ as $T\to\infty$, and thus there exists a finite constant $C>0$ such that $\tau^{-2}\le C$ uniformly in $T$.  
	Then, we have
	\begin{align*}
		&\left|\mathbb{E}\Bigl(y_{t-l_1,i} y_{t-l_2,j}\Bigr) - \mathbb{E}\Bigl(y_{t-l_1,i}(\tau) y_{t-l_2,j}(\tau)\Bigr)\right| \\
		=& \Bigl|\mathbb{E}\bigl(r_{t-l_1,i_1} r_{t-l_1,i_2} 
		r_{t-l_2,j_1} r_{t-l_2,j_2} \bigr) - \mathbb{E}\bigl(r_{t-l_1,i_1}(\tau) r_{t-l_1,i_2}(\tau) r_{t-l_2,j_1}(\tau) r_{t-l_2,j_2}(\tau)\bigr) \Bigr| \\
		\leq& \mathbb{E} \Bigl[\Bigl| r_{t-l_1,i_1} r_{t-l_1,i_2} r_{t-l_2,j_1} r_{t-l_2,j_2} \Bigr| \Bigl( \mathds{1}(|r_{t-l_1,i_1}| \geq \tau) + \mathds{1}(|r_{t-l_1,i_2}| \geq \tau)\Bigr. \\
		&+ \mathds{1}(|r_{t-l_2,j_1}| \geq \tau) + \mathds{1}(|r_{t-l_2,j_2}| \geq \tau) \Bigr)\Bigr]\\
		\leq& \Bigl(\mathbb{E}\Bigl| r_{t-l_1,i_1} r_{t-l_1,i_2} r_{t-l_2,j_1} r_{t-l_2,j_2} \Bigr|^{1+\epsilon}\Bigr)^{\frac{1}{1+\epsilon}} \bigl[\mathbb{P}(|r_{t-l_1,i_1}| \geq \tau)^{\frac{\epsilon}{1+\epsilon}} + \mathbb{P}(|r_{t-l_1,i_2}| \geq \tau)^{\frac{\epsilon}{1+\epsilon}} \\
		&+ \mathbb{P}(|r_{t-l_2,j_1}| \geq \tau)^{\frac{\epsilon}{1+\epsilon}} + \mathbb{P}(|r_{t-l_2,j_2}| \geq \tau)^{\frac{\epsilon}{1+\epsilon}} \bigr]\\
		\leq& \Bigl(\mathbb{E}\left|r_{t-l_1,i_1}\right|^{4+4\epsilon} \mathbb{E}\left|r_{t-l_1,i_2}\right|^{4+4\epsilon} \mathbb{E}\left|r_{t-l_2,j_1}\right|^{4+4\epsilon} \mathbb{E}\left|r_{t-l_2,j_2}\right|^{4+4\epsilon}\Bigr)^{\frac{1}{4+4\epsilon}} \biggl[\Bigl(\frac{\mathbb{E}\left|r_{t-l_1,i_1}\right|^{4+4\epsilon}}{\tau^{4+4\epsilon}}\Bigr)^\frac{\epsilon}{1+\epsilon} \\
		& + \Bigl(\frac{\mathbb{E}\left|r_{t-l_1,i_2}\right|^{4+4\epsilon}}{\tau^{4+4\epsilon}}\Bigr)^\frac{\epsilon}{1+\epsilon} + \Bigl(\frac{\mathbb{E}\left|r_{t-l_2,j_1}\right|^{4+4\epsilon}}{\tau^{4+4\epsilon}}\Bigr)^\frac{\epsilon}{1+\epsilon} + \Bigl(\frac{\mathbb{E}\left|r_{t-l_2,j_2}\right|^{4+4\epsilon}}{\tau^{4+4\epsilon}}\Bigr)^\frac{\epsilon}{1+\epsilon} \biggr]\\
		\leq& M_{4+4\epsilon}^{\frac{1}{1+\epsilon}} \cdot 4\biggl(\frac{M_{4+4\epsilon}}{\tau^{4+4\epsilon}}\biggr)^{\frac{\epsilon}{1+\epsilon}} \leq 4\frac{M}{\tau^{4\epsilon}} \asymp \biggl(\frac{M^{1/\epsilon} \log(pd+1)}{T_{\textnormal{eff}}}\biggr)^{\frac{\epsilon}{1+\epsilon}},
	\end{align*}
	and
	\begin{align*}
		&\left|\mathbb{E}y_{t-l_1,i} - \mathbb{E}y_{t-l_1,i}(\tau)\right| = \Bigl|\mathbb{E}\bigl(r_{t-l_1,i_1} r_{t-l_1,i_2}\bigr)
		- \mathbb{E}\bigl(r_{t-l_1,i_1}(\tau) r_{t-l_1,i_2}(\tau)\bigr)\Bigr| \\
		\leq& \mathbb{E} \Bigl[\Bigl| r_{t-l_1,i_1} r_{t-l_1,i_2} \Bigr| \Bigl( \mathds{1}(|r_{t-l_1,i_1}| \geq \tau) + \mathds{1}(|r_{t-l_1,i_2}| \geq \tau) \Bigr)\Bigr]\\
		\leq& \Bigl(\mathbb{E}\Bigl| r_{t-l_1,i_1} r_{t-l_1,i_2} \Bigr|^{2+2\epsilon}\Bigr)^{\frac{1}{2+2\epsilon}} \bigl[ \mathbb{P}(|r_{t-l_1,i_1}| \geq \tau)^{\frac{1+2\epsilon}{2+2\epsilon}} + \mathbb{P}(|r_{t-l_1,i_2}| \geq \tau)^{\frac{1+2\epsilon}{2+2\epsilon}} \bigr]\\
		\leq& \Bigl(\mathbb{E}\Bigl| r_{t-l_1,i_1} r_{t-l_1,i_2} \Bigr|^{2+2\epsilon}\Bigr)^{\frac{1}{2+2\epsilon}} \biggl[\Bigl(\frac{\mathbb{E}\left|r_{t-l_1,i_1}\right|^{4+4\epsilon}}{\tau^{4+4\epsilon}}\Bigr)^\frac{1+2\epsilon}{2+2\epsilon} + \Bigl(\frac{\mathbb{E}\left|r_{t-l_1,i_2}\right|^{4+4\epsilon}}{\tau^{4+4\epsilon}}\Bigr)^\frac{1+2\epsilon}{2+2\epsilon} \biggr]\\
		\leq& M_{4+4\epsilon}^{\frac{1}{2+2\epsilon}} \cdot 2\biggl(\frac{M_{4+4\epsilon}}{\tau^{4+4\epsilon}}\biggr)^{\frac{1+2\epsilon}{2+2\epsilon}} = 2\frac{M_{4+4\epsilon}}{\tau^{2+4\epsilon}} \\
		\leq& \frac{2CM}{\tau^{4\epsilon}} \asymp \biggl(\frac{M^{1/\epsilon} \log(pd+1)}{T_{\textnormal{eff}}}\biggr)^{\frac{\epsilon}{1+\epsilon}}.
	\end{align*}
	As a result, we have
	\begin{equation}\label{eq:det_bound}
		\|\boldsymbol{\Gamma}_x(\tau) - \boldsymbol{\Gamma}_x\|_{\infty,\infty} \lesssim \biggl(\frac{M^{1/\epsilon} \log(pd+1)}{T_{\textnormal{eff}}}\biggr)^{\frac{\epsilon}{1+\epsilon}}.
	\end{equation}

	\textbf{Probabilistic bound for $\|\mathbf{S} - \boldsymbol{\Gamma}_x(\tau)\|_{\infty,\infty}$}: For the truncated $y_{t-l_1,i}(\tau)$ and $y_{t-l_2,j}(\tau)$, it can be verified that
	\begin{align*}
		& \mathbb{E}|y_{t-l_1,i}(\tau) y_{t-l_2,j}(\tau)|^2 = \mathbb{E}|r_{t-l_1,i_1}(\tau) r_{t-l_1,i_2}(\tau) r_{t-l_2,j_1}(\tau) r_{t-l_2,j_2}(\tau)|^2\\
		\leq& \tau^{4-4\epsilon}\mathbb{E}|r_{t-l_1,i_1}(\tau) r_{t-l_1,i_2}(\tau) r_{t-l_2,j_1}(\tau) r_{t-l_2,j_2}(\tau)|^{1+\epsilon}\\
		\leq& \tau^{4-4\epsilon} \bigl(\mathbb{E}\left|r_{t-l_1,i_1}(\tau)\right|^{4+4\epsilon} \mathbb{E}\left|r_{t-l_1,i_2}(\tau)\right|^{4+4\epsilon} \mathbb{E}\left|r_{t-l_2,j_1}(\tau)\right|^{4+4\epsilon} \mathbb{E}\left|r_{t-l_2,j_2}(\tau)\right|^{4+4\epsilon}\bigr)^{\frac{1}{4}}\\
		\leq& \tau^{4-4\epsilon} M_{4+4\epsilon} \leq \tau^{4-4\epsilon} M,
	\end{align*}
	and
	\begin{align*}
		& \mathbb{E}|y_{t-l_1,i}(\tau)|^2 = \mathbb{E}|r_{t-l_1,i_1}(\tau) r_{t-l_1,i_2}(\tau)|^2 
		\leq \tau^{2-2\epsilon}\mathbb{E}|r_{t-l_1,i_1}(\tau) r_{t-l_1,i_2}(\tau)|^{1+\epsilon}\\
		\leq& \tau^{2-2\epsilon} \bigl(\mathbb{E}\left|r_{t-l_1,i_1}(\tau)\right|^{4+4\epsilon} \mathbb{E}\left|r_{t-l_1,i_2}(\tau)\right|^{4+4\epsilon}\bigr)^{\frac{1}{4}} \leq \tau^{2-2\epsilon} M_{4+4\epsilon}^{1/2} \overset{(i)}{\leq} C\tau^{4-4\epsilon} M,
	\end{align*}
	where $M=\max\{M_{4+4\epsilon},1\}$. Here the last inequality $(i)$ is due to the assumption that $\tau \asymp (MT_{\textnormal{eff}}/\log(pd+1))^{1/(4+4\epsilon)}$ and $T \gtrsim \log(pd+1)$, such that $\tau^{2-2\epsilon} \to \infty$ for any $\epsilon \in (0,1]$, and thus there exists a finite constant $C>0$ such that $\tau^{2-2\epsilon} \ge 1/C$ uniformly in $T$.  
	Similarly, there exists a finite constant $C>0$ such that $\tau^{2(n-2)}\ge C^{-1}$ uniformly in $T$ for any fixed $n>2$. Then for any $n>2$, it holds that
	\begin{align*}
		&\mathbb{E}|y_{t-l_1,i}(\tau) y_{t-l_2,j}(\tau)|^n = \mathbb{E}|r_{t-l_1,i_1}(\tau) r_{t-l_1,i_2}(\tau) 
		r_{t-l_2,j_1}(\tau) r_{t-l_2,j_2}(\tau)|^n\\
		\leq& \tau^{4(n-2)}\mathbb{E}|r_{t-l_1,i_1}(\tau) r_{t-l_1,i_2}(\tau) 
		r_{t-l_2,j_1}(\tau) r_{t-l_2,j_2}(\tau)|^2 \leq \tau^{4(n-2)} \cdot \tau^{4-4\epsilon} M,
	\end{align*}
	and
	\begin{align*}
		&\mathbb{E}|y_{t-l_1,i}(\tau)|^n = \mathbb{E}|r_{t-l_1,i_1}(\tau) r_{t-l_1,i_2}(\tau)|^n \leq \tau^{2(n-2)}\mathbb{E}|r_{t-l_1,i_1}(\tau) r_{t-l_1,i_2}(\tau)|^2\\
		\leq& C\tau^{4(n-2)} \mathbb{E}|r_{t-l_1,i_1}(\tau) r_{t-l_1,i_2}(\tau)|^2 \leq C\tau^{4(n-2)} \cdot \tau^{4-4\epsilon} M.
	\end{align*}

	Since the operations of hard thresholding, vector outer product and lower-triangular vectorization preserves measurability. Consequently, by Lemma~\ref{lemma:measurable map} and Assumption \ref{assumption:process conditions}$(ii)$ that \(\{\boldsymbol{r}_t\}\) is \(\alpha\)-mixing with geometric mixing coefficients \(\alpha(\ell)=O(\zeta^\ell)\), the process \(\{\boldsymbol{y}_t(\tau)\}\) is also \(\alpha\)-mixing with geometric mixing coefficients. Furthermore, by Lemma \ref{lemma:lag process mixing}, $\boldsymbol{x}_t(\tau)=(1,\boldsymbol{y}_{t-1}^{\mathrm{T}}(\tau),\dots,\boldsymbol{y}_{t-p}^{\mathrm{T}}(\tau))^{\mathrm{T}}$ is also $\alpha$-mixing with geometric mixing coefficients as the lag order $p$ is fixed.
	Let $q \asymp T/\log(T)$ with $q \in [1,T/2]$ and $\delta \asymp (M^{1/\epsilon}\log(pd+1)/T_{\textnormal{eff}})^{\epsilon/(1+\epsilon)}$. Then, $\mu(\delta) = \delta^2/(\tau^4\delta+\tau^{4-4\epsilon}M) \asymp \log(pd+1)/T_{\textnormal{eff}} \ll \log(T)$. Note that Assumption \ref{assumption:process conditions}(ii) implies that $y_{t-l_1,i}(\tau) y_{t-l_2,j}(\tau)$ and $y_{t-l_1,i}(\tau)$ are $\alpha$-mixing with geometric mixing coefficients $\alpha(\ell)=O(\zeta^\ell)$. Then we have
	\begin{align*}
		\alpha\left(\left[\frac{T}{q+1}\right]\right)^{\frac{2n}{2n+1}} \asymp \zeta^{-C\log(T)} = \exp\left[-C\log(\zeta)\log(T)\right].
	\end{align*}
	Note that $n$, $\epsilon$ and $\zeta$ are constants independent of the sample size $T$ or dimension $d$. These together with $\tau \asymp (MT_{\textnormal{eff}}/\log(pd+1))^{1/(4+4\epsilon)}$, $T \gtrsim \log(pd+1)$ and the concentration inequality for $\alpha$-mixing processes in Lemma \ref{lemma:Concentration inequailty for alpha mixing process}, imply that 
	\begin{align*}
		&\mathbb{P}\left(\left|\frac{1}{T}\sum_{t=1}^{T}y_{t-l_1,i}(\tau) y_{t-l_2,j}(\tau) - \mathbb{E}\left[y_{t-l_1,i}(\tau) y_{t-l_2,j}(\tau)\right]\right| \geq \delta\right)\\
		\leq& 2[1+T / q+\mu(\delta)] \exp [-q \mu(\delta)]+11 T\left[1+5 \delta^{-1}\left(\mathbb{E} |y_{t-l_1,i}(\tau) y_{t-l_2,j}(\tau)|^n\right)^{\frac{1}{2n+1}}\right] \alpha\left(\left[\frac{T}{q+1}\right]\right)^{\frac{2n}{2n+1}}\\
		\asymp& \left(1+\log(T)+\frac{\log(pd+1)}{T_{\textnormal{eff}}}\right)\exp\left[-C\frac{T}{\log(T)} \cdot \frac{\log(pd+1)}{T_{\textnormal{eff}}}\right]\\
		& + T\left[1 + \left(\frac{T_{\textnormal{eff}}}{M^{1/\epsilon}\log(pd+1)}\right)^{\frac{\epsilon}{1+\epsilon}}\left(\mathbb{E} |y_{t-l_1,i}(\tau) y_{t-l_2,j}(\tau)|^n\right)^{\frac{1}{2n+1}}\right]\exp\left[-C\log(\zeta)\log(T)\right]\\
		\leq& \left(1+\log(T)\log(pd+1)+\frac{\log(pd+1)}{T_{\textnormal{eff}}}\right)\exp\left[-C\log(T)\log(pd+1)\right]\\
		& + T\left[1 + \left(\frac{T_{\textnormal{eff}}}{M^{1/\epsilon}\log(pd+1)}\right)^{\frac{\epsilon}{1+\epsilon}}\cdot\left(C\tau^{4(n-\epsilon-1)} M\right)^{\frac{1}{2n+1}}\right]\exp\left[-C\log(\zeta)\log(T)\right]\\
		\asymp&\log(T)\log(pd+1)\exp\left[-C\log(T)\log(pd+1)\right] + T \left(\frac{T_{\textnormal{eff}}}{\log(pd+1)}\right)^{\frac{n-1}{1+\epsilon}}\exp\left[-C\log(\zeta)\log(T)\right]\\
		\lesssim& \exp\left[\log(\log(T)\log(pd+1))-C\log(T)\log(pd+1)\right] + \exp\left(\left(\frac{n+\epsilon}{1+\epsilon} -C\log(\zeta)\right)\log(T)\right)\\
		\leq& C\exp[-C\log(T)\log(pd+1)],
	\end{align*}
	and
	\begin{align*}
		&\mathbb{P}\left(\left|\frac{1}{T}\sum_{t=1}^{T}y_{t-l_1,i}(\tau) - \mathbb{E}\left[y_{t-l_1,i}(\tau)\right]\right| \geq \delta\right)\\
		\leq& 2[1+T / q+\mu(\delta)] \exp [-q \mu(\delta)]+11 T\left[1+5 \delta^{-1}\left(\mathbb{E} |y_{t-l_1,i}(\tau)|^n\right)^{\frac{1}{2n+1}}\right] \alpha\left(\left[\frac{T}{q+1}\right]\right)^{\frac{2n}{2n+1}}\\
		\asymp& \left(1+\log(T)+\frac{\log(pd+1)}{T_{\textnormal{eff}}}\right)\exp\left[-C\frac{T}{\log(T)} \cdot \frac{\log(pd+1)}{T_{\textnormal{eff}}}\right]\\
		& + T\left[1 + \left(\frac{T_{\textnormal{eff}}}{M^{1/\epsilon}\log(pd+1)}\right)^{\frac{\epsilon}{1+\epsilon}}\left(\mathbb{E} |y_{t-l_1,i}(\tau)|^n\right)^{\frac{1}{2n+1}}\right]\exp\left[-C\log(\zeta)\log(T)\right]\\
		\leq& \left(1+\log(T)\log(pd+1)+\frac{\log(pd+1)}{T_{\textnormal{eff}}}\right)\exp\left[-C\log(T)\log(pd+1)\right]\\
		& + T\left[1 + \left(\frac{T_{\textnormal{eff}}}{M^{1/\epsilon}\log(pd+1)}\right)^{\frac{\epsilon}{1+\epsilon}}\cdot\left(C\tau^{4(n-\epsilon-1)} M\right)^{\frac{1}{2n+1}}\right]\exp\left[-C\log(\zeta)\log(T)\right]\\
		\asymp& \log(T)\log(pd+1)\exp\left[-C\log(T)\log(pd+1)\right] + T \left(\frac{T_{\textnormal{eff}}}{\log(pd+1)}\right)^{\frac{n-1}{1+\epsilon}}\exp\left[-C\log(\zeta)\log(T)\right]\\
		\lesssim& \exp\left[\log(\log(T)\log(pd+1))-C\log(T)\log(pd+1)\right] + \exp\left(\left(\frac{n+\epsilon}{1+\epsilon} -C\log(\zeta)\right)\log(T)\right)\\
		\leq& C\exp[-C\log(T)\log(pd+1)].
	\end{align*}
	Under slightly abuse of notation, for $1 \leq i,j \leq pd+1$,
	\begin{align*}
		&\mathbb{P}\left(\max_{1 \leq i,j \leq pd+1}\left|S_{ij}-\Gamma_x(\tau)_{ij}\right| \geq C\left(\frac{M^{1/\epsilon}\log(pd+1)}{T_{\textnormal{eff}}}\right)^{\frac{\epsilon}{1+\epsilon}}\right)\\
		\leq& \sum_{\substack{1 \leq i \leq pd+1\\ 1 \leq j \leq pd+1}}\mathbb{P}\left(\left|S_{ij}-\Gamma_x(\tau)_{ij}\right| \geq C\left(\frac{M^{1/\epsilon}\log(pd+1)}{T_{\textnormal{eff}}}\right)^{\frac{\epsilon}{1+\epsilon}}\right)\\
		\leq& C(pd+1)^2\exp\left[-C\log(T)\log(pd+1)\right] = C\exp\left[2\log(pd+1)-C\log(T)\log(pd+1)\right]\\
		\leq& C\exp\left[-C\log(T)\log(pd+1)\right].
	\end{align*}
	Therefore, with probability at least $1-C\exp\left[-C\log(T)\log(pd+1)\right]$,
	\begin{align}\label{eq:prob_bound}
		\|\mathbf{S} - \boldsymbol{\Gamma}_x(\tau)\|_{\infty,\infty} \lesssim \biggl(\frac{M^{1/\epsilon} \log(pd+1)}{T_{\textnormal{eff}}}\biggr)^{\frac{\epsilon}{1+\epsilon}}.
	\end{align}
	Hence, the lemma follows from \eqref{eq:det_bound} and \eqref{eq:prob_bound} by the triangle inequality, which concludes the proof.
	\end{proof}

	\begin{proof}[\textbf{Proof of Lemma \ref{lemma:first order derivative}}]
		The loss function given $\bm{\Theta}^*$ can be written as
		\begin{align*}
		\mathbb{L}(\bm{\Theta}^*) = \frac{1}{2T}\sum_{t=1}^{T}\sum_{i=1}^{d}\bigl(y_{t,i}(\tau)-\bbm{x}_t^{\mathrm{T}}(\tau)\bm{\Theta}_{\bm{\cdot},i}^*\bigr)^2.
		\end{align*}
		For $1 \leq l \leq p$, $1 \leq j,k \leq d$, we have 
		\begin{align*}
			\{\nabla \mathbb{L}(\bm{\Theta}^*)\}_{(l-1)d+j+1,k} = -\frac{1}{T}\sum_{t=1}^{T}y_{t-l,j}(\tau)\bigl(y_{t,k}(\tau)-\boldsymbol{x}_t^{\mathrm{T}}(\tau)\bm{\Theta}_{\bm{\cdot},k}^*\bigr)
		\end{align*}
		and
		\begin{align*}
			\{\nabla \mathbb{L}(\bm{\Theta}^*)\}_{1,k} = -\frac{1}{T}\sum_{t=1}^{T}y_{t,k}(\tau)-\boldsymbol{x}_t^{\mathrm{T}}(\tau)\bm{\Theta}_{\bm{\cdot},k}^*.
		\end{align*}
		Note that $\varepsilon_{t,k} = y_{t,k}(\tau)-\boldsymbol{x}_t^{\mathrm{T}}(\tau)\bm{\Theta}_{\bm{\cdot},k}^*$. It follows that
		\begin{align*}
			|\varepsilon_{t,k}| = |y_{t,k}(\tau)-\boldsymbol{x}_t^{\mathrm{T}}(\tau)\bm{\Theta}_{\bm{\cdot},k}^*| \leq |y_{t,k}(\tau)|+|\boldsymbol{x}_t^{\mathrm{T}}(\tau)\bm{\Theta}_{\bm{\cdot},k}^*| \leq \tau^2(1+\|\bm{\Theta}^*\|_{1,\infty})
		\end{align*}
		and
		\begin{align}\label{eq:varepsilon}
			\varepsilon_{t,k} & = (y_{t,k}(\tau) - y_{t,k}) + (y_{t,k} - \langle\boldsymbol{x}_t,\bm{\Theta}_{\bm{\cdot},k}^*\rangle) + \langle\boldsymbol{x}_t - \boldsymbol{x}_t(\tau),\bm{\Theta}_{\bm{\cdot},k}^*\rangle \notag\\
			&= (y_{t,k}(\tau) - y_{t,k}) + \langle\boldsymbol{x}_t - \boldsymbol{x}_t(\tau),\bm{\Theta}_{\bm{\cdot},k}^*\rangle + e_{t,k}.
		\end{align}
		Note that $\mathbb{E}[e_{t,k}] = 0$. Taking expectations on both sides of equation \eqref{eq:varepsilon}, we have
		\begin{align*}
			&\mathbb{E}\left[\varepsilon_{t,k}\right] = \mathbb{E}\left[y_{t,k}(\tau) - y_{t,k}\right] +\mathbb{E}\left[\langle\boldsymbol{x}_t - \boldsymbol{x}_t(\tau),\bm{\Theta}_{\cdot,k}^*\rangle\right].
		\end{align*}
		Multiply both sides of equation \eqref{eq:varepsilon} by $y_{t-l,j}(\tau)$ and then take the expectation. Considering that $\mathbb{E}[e_{t,k} \mid \mathcal{F}_{t-1}] = 0$ and using the iterated expectation theorem, we have
		\begin{align*}
			&\mathbb{E}\left[y_{t-l,j}(\tau)\varepsilon_{t,k}\right] = \mathbb{E}\left[y_{t-l,j}(\tau)(y_{t,k}(\tau) - y_{t,k})\right] + \mathbb{E}\left[y_{t-l,j}(\tau)\langle\boldsymbol{x}_t - \boldsymbol{x}_t(\tau),\bm{\Theta}_{\cdot,k}^*\rangle\right].
		\end{align*}
		Thus, by the triangle inequaility, we have
		\begin{align*}
			&\left|\mathbb{E}\left[\varepsilon_{t,k}\right]\right| \leq \underbrace{\left|\mathbb{E}\left[y_{t,k}(\tau) - y_{t,k}\right]\right|}_{P_1'} + \underbrace{\left|\mathbb{E}\left[\langle\boldsymbol{x}_t - \boldsymbol{x}_t(\tau),\bm{\Theta}_{\cdot,k}^*\rangle\right]\right|}_{P_2'}
		\end{align*}
		and
		\begin{align*}
			&\left|\mathbb{E}\left[y_{t-l,j}(\tau)\varepsilon_{t,k}\right]\right| \leq \underbrace{\left|\mathbb{E}\left[y_{t-l,j}(\tau)(y_{t,k}(\tau) - y_{t,k})\right]\right|}_{P_1} + \underbrace{\left|\mathbb{E}\left[y_{t-l,j}(\tau)\langle\boldsymbol{x}_t - \boldsymbol{x}_t(\tau),\bm{\Theta}_{\cdot,k}^*\rangle\right]\right|}_{P_2}.
		\end{align*}

		The proof of this lemma consists of the following two parts: deriving the deterministic upper bound for $\|\mathbb{E}(\nabla \mathbb{L}(\bm{\Theta}^*))\|_{\infty,\infty}$, and establishing the probabilistic upper bound for $\|\nabla \mathbb{L}(\bm{\Theta}^*) -  \mathbb{E}(\nabla\mathbb{L}(\bm{\Theta}^*))\|_{\infty,\infty}$. Without loss of generality, we assume $y_{t,k}(\tau) = r_{t,k_1}(\tau)r_{t,k_2}(\tau)$, $y_{t-l,j}(\tau) = r_{t-l,j_1}(\tau)r_{t-l,j_2}(\tau)$, and $x_{t,i}(\tau) = y_{t-f(i),g(i)}(\tau) = r_{t-f(i),g_1(i)}(\tau)r_{t-f(i),g_2(i)}(\tau)$ for $i > 1$. Let $\mathcal{S}_j$ denote the support set of $\bm{\Theta}_{\bm{\cdot},j}^*$, with cardinalities $|\mathcal{S}_j| = s_j \leq s$ for each $j$. Here we focus on the case where $1 \notin \mathcal{S}_j$, noting that $x_{t,1} = x_{t,1}(\tau) = 1$ and thus the upper bounds for $P_2$ and $P_2'$ given $1 \in \mathcal{S}_j$ coincide with those in the case where $1 \notin \mathcal{S}_j$. Next, we provide the upper bounds for $P_1$, $P_1'$, $P_2$ and $P_2'$ respectively.
		\begin{align*}
			&P_1 = \left|\mathbb{E}\left[y_{t-l,j}(\tau)(y_{t,k}(\tau) - y_{t,k})\right]\right| = \left|\mathbb{E}\left[y_{t-l,j}(\tau)(y_{t,k}(\tau) - y_{t,k})\left(\mathds{1}(|r_{t,k_1}|>\tau)+\mathds{1}(|r_{t,k_2}|>\tau)\right)\right]\right|\\
			\leq& \mathbb{E}\left[|r_{t-l,j_1}(\tau)r_{t-l,j_2}(\tau)r_{t,k_1}r_{t,k_2}|\left(\mathds{1}(|r_{t,k_1}|>\tau)+\mathds{1}(|r_{t,k_2}|>\tau)\right)\right]\\
			\leq& \mathbb{E}\left[|r_{t,k_1}r_{t,k_2}r_{t-l,j_1}(\tau)r_{t-l,j_2}(\tau)|^{1+\epsilon}\right]^\frac{1}{1+\epsilon}\left(\mathbb{P}(|r_{t,k_1}|>\tau)^\frac{\epsilon}{1+\epsilon}+\mathbb{P}(|r_{t,k_2}|>\tau)^\frac{\epsilon}{1+\epsilon}\right)\\
			\leq& M_{4+4\epsilon}^{\frac{1}{1+\epsilon}} \cdot 2\left(\frac{M_{4+4\epsilon}}{\tau^{4+4\epsilon}}\right)^\frac{\epsilon}{1+\epsilon} = \frac{2M_{4+4\epsilon}}{\tau^{4\epsilon}} \leq \frac{2M}{\tau^{4\epsilon}} \asymp \left(\frac{M^{1/\epsilon}\log(pd+1)}{T_{\textnormal{eff}}}\right)^{\frac{\epsilon}{1+\epsilon}},\\
			\\
			&P_1' = \left|\mathbb{E}\left[y_{t,k}(\tau) - y_{t,k}\right]\right| = \left|\mathbb{E}\left[(y_{t,k}(\tau) - y_{t,k})\left(\mathds{1}(|r_{t,k_1}|>\tau)+\mathds{1}(|r_{t,k_2}|>\tau)\right)\right]\right|\\
			\leq& \mathbb{E}\left[|r_{t,k_1}r_{t,k_2}|\left(\mathds{1}(|r_{t,k_1}|>\tau)+\mathds{1}(|r_{t,k_2}|>\tau)\right)\right]\\
			\leq& \mathbb{E}\left[|r_{t,k_1}r_{t,k_2}|^{2+2\epsilon}\right]^\frac{1}{2+2\epsilon}\left(\mathbb{P}(|r_{t,k_1}|>\tau)^\frac{1+2\epsilon}{2+2\epsilon}+\mathbb{P}(|r_{t,k_2}|>\tau)^\frac{1+2\epsilon}{2+2\epsilon}\right)\\ 
			\leq& M_{4+4\epsilon}^{\frac{1}{2+2\epsilon}} \cdot 2\left(\frac{M_{4+4\epsilon}}{\tau^{4+4\epsilon}}\right)^\frac{1+2\epsilon}{2+2\epsilon} = \frac{2M_{4+4\epsilon}}{\tau^{2+4\epsilon}} \overset{(i)}{\leq} \frac{2CM}{\tau^{4\epsilon}} \asymp \left(\frac{M^{1/\epsilon}\log(pd+1)}{T_{\textnormal{eff}}}\right)^{\frac{\epsilon}{1+\epsilon}},\\
			\\
			&P_2 = \left|\mathbb{E}\left[y_{t-l,j}(\tau)\langle\boldsymbol{x}_t - \boldsymbol{x}_t(\tau),\bm{\Theta}_{\cdot,j}^*\rangle\right]\right| \leq \mathbb{E}\left[\left|y_{t-l,j}(\tau)\langle(\boldsymbol{x}_t - \boldsymbol{x}_t(\tau))_{\mathcal{S}_j},(\bm{\Theta}_{\cdot,j}^*)_{\mathcal{S}_j}\rangle\right|\right]\\
			\leq& \mathbb{E}\left[\left|y_{t-l,j}(\tau)\|(\boldsymbol{x}_t - \boldsymbol{x}_t(\tau))_{\mathcal{S}_j}\|_\infty\|(\bm{\Theta}_{\cdot,j}^*)_{\mathcal{S}_j}\|_1\right|\right] \leq \|\bm{\Theta}^*\|_{1,\infty}\sum_{i \in \mathcal{S}_j}\mathbb{E}\left[\left|y_{t-l,j}(\tau)\left(x_{t,i} - x_{t,i}(\tau)\right)\right|\right]\\ 
			\leq& \|\bm{\Theta}^*\|_{1,\infty}\sum_{i \in \mathcal{S}_j}\mathbb{E}\left[\left|y_{t-l,j}(\tau)\left(r_{t-f(i),g_1(i)}r_{t-f(i),g_2(i)} - r_{t-f(i),g_1(i)}(\tau)r_{t-f(i),g_2(i)}(\tau)\right)\right|\right]\\
			\leq& \|\bm{\Theta}^*\|_{1,\infty}\sum_{i \in \mathcal{S}_j}\mathbb{E}\left[\left|y_{t-l,j}(\tau)r_{t-f(i),g_1(i)}r_{t-f(i),g_2(i)}\right|\left(\mathds{1}(|r_{t-f(i),g_1(i)}|>\tau)+\mathds{1}(|r_{t-f(i),g_2(i)}|>\tau)\right)\right]\\
			\leq& \|\bm{\Theta}^*\|_{1,\infty}\sum_{i \in \mathcal{S}_j}\mathbb{E}\left[\left|r_{t-l,j_1}(\tau)r_{t-l,j_2}(\tau)r_{t-f(i),g_1(i)}r_{t-f(i),g_2(i)}\right|^{1+\epsilon}\right]^\frac{1}{1+\epsilon}\\
			&\qquad\qquad\qquad\qquad\qquad\qquad\qquad\qquad\qquad \cdot\left(\mathbb{P}(|r_{t-f(i),g_1(i)}|>\tau)^\frac{\epsilon}{1+\epsilon}+\mathbb{P}(|r_{t-f(i),g_2(i)}|>\tau)^\frac{\epsilon}{1+\epsilon}\right)\\
			\leq& \|\bm{\Theta}^*\|_{1,\infty}\sum_{i \in \mathcal{S}_j}\frac{2M_{4+4\epsilon}}{\tau^{4\epsilon}}\leq \frac{2s_j\|\bm{\Theta}^*\|_{1,\infty}M}{\tau^{4\epsilon}} \leq \frac{2s\|\bm{\Theta}^*\|_{1,\infty}M}{\tau^{4\epsilon}} \asymp s\|\bm{\Theta}^*\|_{1,\infty}\left(\frac{M^{1/\epsilon}\log(pd+1)}{T_{\textnormal{eff}}}\right)^{\frac{\epsilon}{1+\epsilon}}\\
			\text{and}\\
			&P_2' = \left|\mathbb{E}\left[\langle\boldsymbol{x}_t - \boldsymbol{x}_t(\tau),\bm{\Theta}_{\cdot,j}^*\rangle\right]\right| \leq \mathbb{E}\left[\left|\langle(\boldsymbol{x}_t - \boldsymbol{x}_t(\tau))_{\mathcal{S}_j},(\bm{\Theta}_{\cdot,j}^*)_{\mathcal{S}_j}\rangle\right|\right]\\
			\leq& \|\bm{\Theta}^*\|_{1,\infty}\sum_{i \in \mathcal{S}_j}\mathbb{E}\left[\left|\left(r_{t-f(i),g_1(i)}r_{t-f(i),g_2(i)} - r_{t-f(i),g_1(i)}(\tau)r_{t-f(i),g_2(i)}(\tau)\right)\right|\right]\\
			\leq& \|\bm{\Theta}^*\|_{1,\infty}\sum_{i \in \mathcal{S}_j}\mathbb{E}\left[\left|\left(r_{t-f(i),g_1(i)}r_{t-f(i),g_2(i)}\right)\right|\left(\mathds{1}(|r_{t-f(i),g_1(i)}|>\tau)+\mathds{1}(|r_{t-f(i),g_2(i)}|>\tau)\right)\right]\\
			\leq& \|\bm{\Theta}^*\|_{1,\infty}\sum_{i \in \mathcal{S}_j}\mathbb{E}\left[\left|r_{t-f(i),g_1(i)}r_{t-f(i),g_2(i)}\right|^{2+2\epsilon}\right]^\frac{1}{2+2\epsilon}\left(\mathbb{P}(|r_{t-f(i),g_1(i)}|>\tau)^\frac{1+2\epsilon}{2+2\epsilon}+\mathbb{P}(|r_{t-f(i),g_2(i)}|>\tau)^\frac{1+2\epsilon}{2+2\epsilon}\right)\\
			\leq& \|\bm{\Theta}^*\|_{1,\infty}\sum_{i \in \mathcal{S}_j}\frac{2M_{4+4\epsilon}}{\tau^{2+4\epsilon}} \leq \frac{2s_j\|\bm{\Theta}^*\|_{1,\infty}M}{\tau^{2+4\epsilon}} \overset{(ii)}{\leq} \frac{2Cs\|\bm{\Theta}^*\|_{1,\infty}M}{\tau^{4\epsilon}} \asymp s\|\bm{\Theta}^*\|_{1,\infty}\left(\frac{M^{1/\epsilon}\log(pd+1)}{T_{\textnormal{eff}}}\right)^{\frac{\epsilon}{1+\epsilon}}.
		\end{align*}
		The inequalities $(i)$ and $(ii)$ in $P_1'$ and $P_2'$ hold by the same arguments as in the deterministic bound in the proof of Lemma \ref{lemma:covariance matrix bound}.
		Combining all the above inequalities, we have
		\begin{align*}
			&\left|\mathbb{E}\left[y_{t-l,j}(\tau)\varepsilon_{t,k}\right]\right| \lesssim s\|\bm{\Theta}^*\|_{1,\infty}\left(\frac{M^{1/\epsilon}\log(pd+1)}{T_{\textnormal{eff}}}\right)^{\frac{\epsilon}{1+\epsilon}} \; \text{and} \;\left|\mathbb{E}\left[\varepsilon_{t,k}\right]\right| \lesssim s\|\bm{\Theta}^*\|_{1,\infty}\left(\frac{M^{1/\epsilon}\log(pd+1)}{T_{\textnormal{eff}}}\right)^{\frac{\epsilon}{1+\epsilon}}.
		\end{align*}
		Thus, the \textbf{deterministic bound} for $\|\mathbb{E}(\nabla \mathbb{L}(\bm{\Theta}^*))\|_{\infty,\infty}$ follows as:
		\begin{align}\label{eq:deterministic_bound_nabla}
			&\|\mathbb{E}(\nabla \mathbb{L}(\bm{\Theta}^*))\|_{\infty,\infty} \lesssim s\|\bm{\Theta}^*\|_{1,\infty}\left(\frac{M^{1/\epsilon}\log(pd+1)}{T_{\textnormal{eff}}}\right)^{\frac{\epsilon}{1+\epsilon}}.
		\end{align}
		Next, we show the \textbf{probabilistic bound} for $\|\nabla \mathbb{L}(\bm{\Theta}^*) - \mathbb{E}(\nabla \mathbb{L}(\bm{\Theta}^*))\|_{\infty,\infty}$. Here we only focus on the case where $1 \in \mathcal{S}_j$ for simplicity, noting that similar bounds can be derived for the case where $1 \notin \mathcal{S}_j$. By $|a+b|^{1+\epsilon} \leq 2^\epsilon(|a|^{1+\epsilon}+|b|^{1+\epsilon})$ for $\epsilon \in (0,1]$, it follows that
		\begin{align*}
			&\mathbb{E}\left|y_{t-l,j}(\tau)\varepsilon_{t,k}\right|^2 \leq (\tau^2)^{1-\epsilon}\left(\tau^2(1+\|\bm{\Theta}^*\|_{1,\infty})\right)^{1-\epsilon}\mathbb{E}\left[\left|y_{t-l,j}(\tau)\varepsilon_{t,k}\right|^{1+\epsilon}\right]\\
			\leq& \left(\tau^4(1+\|\bm{\Theta}^*\|_{1,\infty})\right)^{1-\epsilon}\mathbb{E}\left[\left|y_{t-l,j}\right|^{1+\epsilon}|y_{t,k}(\tau) - \langle\boldsymbol{x}_t(\tau),\bm{\Theta}_{\cdot,j}^*\rangle|^{1+\epsilon}\right]\\
			\leq& 2^\epsilon\left(\tau^4(1+\|\bm{\Theta}^*\|_{1,\infty})\right)^{1-\epsilon}\left\{\mathbb{E}\left|y_{t-l,j}y_{t,k}(\tau)\right|^{1+\epsilon} + \mathbb{E}\left|y_{t-l,j}\langle\boldsymbol{x}_t(\tau)_{\mathcal{S}_j}, (\bm{\Theta}_{\cdot,j}^*)_{\mathcal{S}_j}\rangle\right|^{1+\epsilon}\right\}\\
			\leq& 2^\epsilon\left(\tau^4(1+\|\bm{\Theta}^*\|_{1,\infty})\right)^{1-\epsilon}\biggl\{\mathbb{E}\left|r_{t-l,j_1}r_{t-l,j_2}r_{t,k_1}(\tau)r_{t,k_2}(\tau)\right|^{1+\epsilon} + \|\bm{\Theta}^*\|_{1,\infty}^{1+\epsilon}\mathbb{E}\left|r_{t-l,j_1}r_{t-l,j_2}\right|^{1+\epsilon}\\
			&\qquad\qquad\qquad\qquad\qquad\qquad\qquad+ \|\bm{\Theta}^*\|_{1,\infty}^{1+\epsilon}\sum_{\substack{i \in \mathcal{S}_j\\ i \neq 1}}\mathbb{E}\left|r_{t-l,j_1}r_{t-l,j_2}r_{t-f(i),g_1(i)}(\tau)r_{t-f(i),g_2(i)}(\tau)\right|^{1+\epsilon}\biggr\}\\
			\leq& 2^{2\epsilon}\left(\tau^4(1+\|\bm{\Theta}^*\|_{1,\infty})\right)^{1-\epsilon}\biggl(M_{4+4\epsilon}+\|\bm{\Theta}^*\|_{\infty,\infty}^{1+\epsilon}M_{4+4\epsilon}^{1/2} +\|\bm{\Theta}^*\|_{\infty,\infty}^{1+\epsilon} \sum_{\substack{i \in \mathcal{S}_j\\ i \neq 1}}M_{4+4\epsilon}\biggr) \leq C s\|\bm{\Theta}^*\|_{1,\infty}^2M\tau^{4-4\epsilon}\\
			\text{and} \\
			&\mathbb{E}\left|\varepsilon_{t,k}\right|^2 \leq \left(\tau^2(1+\|\bm{\Theta}^*\|_{1,\infty})\right)^{1-\epsilon} \mathbb{E}\left|\varepsilon_{t,k}\right|^{1+\epsilon} \leq \left(\tau^2(1+\|\bm{\Theta}^*\|_{1,\infty})\right)^{1-\epsilon} \mathbb{E}\left|y_{t,k}(\tau) - \langle\boldsymbol{x}_t(\tau), \bm{\Theta}_{\cdot,j}^*\rangle\right|^{1+\epsilon}\\
			\leq& 2^{\epsilon}\left(\tau^2(1+\|\bm{\Theta}^*\|_{1,\infty})\right)^{1-\epsilon}\left\{\mathbb{E}\left|y_{t,k}(\tau)\right|^{1+\epsilon} + \sum_{i \in \mathcal{S}_j}\mathbb{E}\left|\langle\boldsymbol{x}_t(\tau)_{\mathcal{S}_j}, (\bm{\Theta}_{\cdot,j}^*)_{\mathcal{S}_j}\rangle\right|^{1+\epsilon}\right\}\\
			\overset{(i)}{\leq} &2^{\epsilon}\left(\tau^4(1+\|\bm{\Theta}^*\|_{1,\infty})\right)^{1-\epsilon}\left\{\mathbb{E}\left|y_{t,k}(\tau)\right|^{1+\epsilon} + \sum_{i \in \mathcal{S}_j}\mathbb{E}\left|\langle\boldsymbol{x}_t(\tau)_{\mathcal{S}_j}, (\bm{\Theta}_{\cdot,j}^*)_{\mathcal{S}_j}\rangle\right|^{1+\epsilon}\right\}\\
			\leq & 2^\epsilon\left(\tau^4(1+\|\bm{\Theta}^*\|_{1,\infty})\right)^{1-\epsilon}\biggl\{\mathbb{E}\left|r_{t,k_1}(\tau)r_{t,k_2}(\tau)\right|^{1+\epsilon} + \|\bm{\Theta}^*\|_{1,\infty}^{1+\epsilon}\\
			&\qquad\qquad\qquad\qquad\qquad\qquad\qquad + \|\bm{\Theta}^*\|_{1,\infty}^{1+\epsilon}\sum_{\substack{i \in \mathcal{S}_j\\ i \neq 1}}\mathbb{E}\left|r_{t-f(i),g_1(i)}(\tau)r_{t-f(i),g_2(i)}(\tau)\right|^{1+\epsilon}\biggr\}\\
			\leq& 2^{\epsilon} \left(\tau^4(1+\|\bm{\Theta}^*\|_{1,\infty})\right)^{1-\epsilon}\biggl(M_{4+4\epsilon}^{1/2} + \|\bm{\Theta}^*\|_{1,\infty}^{1+\epsilon} + \|\bm{\Theta}^*\|_{1,\infty}^{1+\epsilon}\sum_{\substack{i \in \mathcal{S}_j\\ i \neq 1}}M_{4+4\epsilon}^{1/2}\biggr) \leq C s\|\bm{\Theta}^*\|_{1,\infty}^2M\tau^{4-4\epsilon},
		\end{align*}
		where the inequalities $(i)$ and $(ii)$ hold by the same arguments as in the probabilistic bound in the proof of Lemma \ref{lemma:covariance matrix bound}. 
		Also, for any $n=3,4,\dots$, the higher-order moments of $y_{t-l,j}(\tau)\varepsilon_{t,k}$ and $\varepsilon_{t,k}$ satisfy
		\begin{align*}
			&\mathbb{E}|y_{t-l,j}(\tau)\varepsilon_{t,k}|^n \leq (\tau^2 \cdot \tau^2(1+\|\bm{\Theta}^*\|_{1,\infty}))^{n-2}\mathbb{E}|y_{t-l,j}(\tau)\varepsilon_{t,k}|^2 \leq (C \tau^4\|\bm{\Theta}^*\|_{1,\infty})^{n-2}\mathbb{E}|y_{t-l,j}(\tau)\varepsilon_{t,k}|^2
		\end{align*}
		and
		\begin{align*}
			&\mathbb{E}|\varepsilon_{t,k}|^n \leq (\tau^2(1+\|\bm{\Theta}^*\|_{1,\infty}))^{n-2}\mathbb{E}|\varepsilon_{t,k}|^2 \leq (C \tau^2\|\bm{\Theta}^*\|_{1,\infty})^{n-2}\mathbb{E}|\varepsilon_{t,k}|^2 \overset{(ii)}{\leq} (C \tau^4\|\bm{\Theta}^*\|_{1,\infty})^{n-2}\mathbb{E}|\varepsilon_{t,k}|^2.
		\end{align*}
		Similar to the proof in Lemma \ref{lemma:covariance matrix bound}, we can show that both $\varepsilon_{t,k}$ and $y_{t-l,j}(\tau)\varepsilon_{t,k}$ form $\alpha$-mixing processes with geometric mixing coefficients. Let $q \asymp T/\log(T)$ with $q \in [1,T/2]$ and $\delta \asymp s\|\bm{\Theta}^*\|_{1,\infty}\left(M^{1/\epsilon}\log(pd+1)/T_{\textnormal{eff}}\right)^{\epsilon/(1+\epsilon)}$. Then, $\mu(\delta) = \delta^2/(\tau^4\|\bm{\Theta}^*\|_{1,\infty}\delta+s\|\bm{\Theta}^*\|_{1,\infty}^2M\tau^{4-4\epsilon}) \asymp s\log(pd+1)/T_{\textnormal{eff}} \ll \log(T)$. Assumption \ref{assumption:process conditions}(ii) together with Lemma \ref{lemma:measurable map}, implies that $y_{t-l_1,i}(\tau)$ and $y_{t-l_1,i}(\tau) y_{t-l_2,j}(\tau)$ are $\alpha$-mixing with geometric mixing coefficients $\alpha(\ell)=O(\zeta^\ell)$. Then we have
		\begin{align*}
			\alpha\left(\left[\frac{T}{q+1}\right]\right)^{\frac{2n}{2n+1}} \asymp \zeta^{-C\log(T)} = \exp\left[-C\log(\zeta)\log(T)\right].
		\end{align*}
		Note that $n$, $\epsilon$, $\zeta$, $s$ and $\|\bm{\Theta}^*\|_{1,\infty}$ are constants independent of the sample size $T$ or dimension $d$. Thus, by $\tau \asymp (MT_{\textnormal{eff}}/\log(pd+1))^{1/(4+4\epsilon)}$, $T \gtrsim \log(pd+1)$ and the concentration inequality for $\alpha$-mixing processes in Lemma \ref{lemma:Concentration inequailty for alpha mixing process}, we have
		\begin{align*}
			&\mathbb{P}\left(\left|\frac1T\sum_{t=1}^{T}y_{t-l,j}(\tau)\varepsilon_{t,k} - \mathbb{E}\left[y_{t-l,j}(\tau)\varepsilon_{t,k}\right]\right| > \delta\right)\\
			\leq& 2[1+T / q+\mu(\delta)] \exp [-q \mu(\delta)]+11 T\left[1+5 \delta^{-1}\left(\mathbb{E} |y_{t-l,j}(\tau)\varepsilon_{t,k}|^n\right)^{\frac{1}{2n+1}}\right] \alpha\left(\left[\frac{T}{q+1}\right]\right)^{\frac{2n}{2n+1}}\\
			\asymp& \left(1+\log(T)+\frac{s\log(pd+1)}{T_{\textnormal{eff}}}\right)\exp\left[-C\frac{T}{\log(T)} \cdot \frac{s\log(pd+1)}{T_{\textnormal{eff}}}\right]\\
			& + T\left[1 + (s\|\bm{\Theta}^*\|_{1,\infty})^{-1}\left(\frac{T_{\textnormal{eff}}}{M^{1/\epsilon}\log(pd+1)}\right)^{\frac{\epsilon}{1+\epsilon}}\left(\mathbb{E} |y_{t-l,j}(\tau)\varepsilon_{t,k}|^n\right)^{\frac{1}{2n+1}}\right]\exp\left[-C\log(\zeta)\log(T)\right]\\
			\leq& \left(1+\log(T)\log(pd+1)+\frac{s\log(pd+1)}{T_{\textnormal{eff}}}\right)\exp\left[-Cs\log(T)\log(pd+1)\right]\\
			& + T\left[1 + (s\|\bm{\Theta}^*\|_{1,\infty})^{-1}\left(\frac{T_{\textnormal{eff}}}{M^{1/\epsilon}\log(pd+1)}\right)^{\frac{\epsilon}{1+\epsilon}}\cdot\left(Cs\|\bm{\Theta}^*\|_{1,\infty}^n\tau^{4(n-\epsilon-1)} M\right)^{\frac{1}{2n+1}}\right]\exp\left[-C\log(\zeta)\log(T)\right]\\
			\asymp& \log(T)\log(pd+1)\exp\left[-Cs\log(T)\log(pd+1)\right] + T \left(\frac{T_{\textnormal{eff}}}{\log(pd+1)}\right)^{\frac{n-1}{1+\epsilon}}\exp\left[-C\log(\zeta)\log(T)\right]\\
			\lesssim& \exp\left[\log(\log(T)\log(pd+1))-Cs\log(T)\log(pd+1)\right] + \exp\left(\left(\frac{n+\epsilon}{1+\epsilon} -C\log(\zeta)\right)\log(T)\right)\\
			\leq& C\exp[-Cs\log(T)\log(pd+1)]
		\end{align*}
		and
		\begin{align*}			
			&\mathbb{P}\left(\left|\frac1T\sum_{t=1}^{T}\varepsilon_{t,k} - \mathbb{E}\left[\varepsilon_{t,k}\right]\right| > \delta\right)\\
			\leq& 2[1+T / q+\mu(\delta)] \exp [-q \mu(\delta)]+11 T\left[1+5 \delta^{-1}\left(\mathbb{E} |\varepsilon_{t,k}|^n\right)^{\frac{1}{2n+1}}\right] \alpha\left(\left[\frac{T}{q+1}\right]\right)^{\frac{2n}{2n+1}}\\
			\asymp& \left(1+\log(T)+\frac{s\log(pd+1)}{T_{\textnormal{eff}}}\right)\exp\left[-C\frac{T}{\log(T)} \cdot \frac{s\log(pd+1)}{T_{\textnormal{eff}}}\right]\\
			& + T\left[1 + (s\|\bm{\Theta}^*\|_{1,\infty})^{-1}\left(\frac{T_{\textnormal{eff}}}{M^{1/\epsilon}\log(pd+1)}\right)^{\frac{\epsilon}{1+\epsilon}}\left(\mathbb{E} |\varepsilon_{t,k}|^n\right)^{\frac{1}{2n+1}}\right]\exp\left[-C\log(\zeta)\log(T)\right]\\
			\leq &\left(1+\log(T)\log(pd+1)+\frac{s\log(pd+1)}{T_{\textnormal{eff}}}\right)\exp\left[-Cs\log(T)\log(pd+1)\right]\\
			& + T\left[1 + (s\|\bm{\Theta}^*\|_{1,\infty})^{-1}\left(\frac{T_{\textnormal{eff}}}{M^{1/\epsilon}\log(pd+1)}\right)^{\frac{\epsilon}{1+\epsilon}}\cdot\left(Cs\|\bm{\Theta}^*\|_{1,\infty}^n\tau^{4(n-\epsilon-1)} M\right)^{\frac{1}{2n+1}}\right]\exp\left[-C\log(\zeta)\log(T)\right]\\
			\asymp& \log(T)\log(pd+1)\exp\left[-Cs\log(T)\log(pd+1)\right] + T \left(\frac{T_{\textnormal{eff}}}{\log(pd+1)}\right)^{\frac{n-1}{1+\epsilon}}\exp\left[-C\log(\zeta)\log(T)\right]\\
			\lesssim& \exp\left[\log(\log(T)\log(pd+1))-Cs\log(T)\log(pd+1)\right] + \exp\left(\left(\frac{n+\epsilon}{1+\epsilon} -C\log(\zeta)\right)\log(T)\right)\\
			\leq& C\exp[-Cs\log(T)\log(pd+1)].
		\end{align*}
		Thus, under slightly abuse of notation, for $1 \leq i \leq pd+1$ and $1 \leq j \leq d$, we have 
		\begin{align*}
			&\mathbb{P}\left(\max_{\substack{1 \leq i \leq pd+1\\ 1 \leq j \leq d}}\left|\nabla \mathbb{L}(\bm{\Theta}^*)_{i,j} - \mathbb{E}(\nabla \mathbb{L}(\bm{\Theta}^*))_{i,j}\right| \geq Cs\|\bm{\Theta}^*\|_{1,\infty}\left(\frac{M^{1/\epsilon}\log(pd+1)}{T_{\textnormal{eff}}}\right)^{\frac{\epsilon}{1+\epsilon}}\right)\\
			\leq& \sum_{\substack{1 \leq i \leq pd+1\\ 1 \leq j \leq d}}\mathbb{P}\left(\left|\nabla \mathbb{L}(\bm{\Theta}^*)_{i,j} - \mathbb{E}(\nabla \mathbb{L}(\bm{\Theta}^*))_{i,j}\right| \geq Cs\|\bm{\Theta}^*\|_{1,\infty}\left(\frac{M^{1/\epsilon}\log(pd+1)}{T_{\textnormal{eff}}}\right)^{\frac{\epsilon}{1+\epsilon}}\right)\\
			\leq& C(pd+1)d\exp\left[-Cs\log(T)\log(pd+1)\right] \leq C\exp\left[2\log(pd+1)-Cs\log(T)\log(pd+1)\right]\\
			\leq& C\exp\left[-Cs\log(T)\log(pd+1)\right].
		\end{align*}
		Therefore, with probability at least $1-C\exp\left[-Cs\log(T)\log(pd+1)\right]$,
		\begin{align}\label{eq:prob_bound_nabla}
			\|\nabla \mathbb{L}(\bm{\Theta}^*) - \mathbb{E}(\nabla\mathbb{L}(\bm{\Theta}^*))\|_{\infty,\infty} \lesssim s\|\bm{\Theta}^*\|_{1,\infty}\left(\frac{M^{1/\epsilon}\log(pd+1)}{T_{\textnormal{eff}}}\right)^{\frac{\epsilon}{1+\epsilon}}.
		\end{align}
		The lemma follows from \eqref{eq:deterministic_bound_nabla} and \eqref{eq:prob_bound_nabla} by triangle inequality, which concludes the proof.
	\end{proof}

	\begin{proof}[\textbf{Proof of Lemma \ref{lemma:ell1 cone}}]
		Since the objective function in \eqref{eq:loss function} is separable,  the optimization problem can be decomposed into a set of independent subproblems \eqref{eq:row-wise loss function}, each of which can be solved in parallel:
		\begin{align}\label{eq:row-wise loss function}
			\widehat{\bm{\Theta}}_{\bm{\cdot},j} = \operatorname{argmin}_{\bm{\Theta}_{\bm{\cdot},j} \in \mathbb{R}^{pd+1}} \limits \frac{1}{2T}\|\mathbf{Y}(\tau)_{\bm{\cdot},j}-\mathbf{X}(\tau)\bm{\Theta}_{\bm{\cdot},j}\|_\mathrm{F}^2+\lambda\|\bm{\Theta}_{\bm{\cdot},j}\|_1,\ \text{for} \ 1 \leq j \leq d.
		\end{align}
		By the first order optimality condition of \eqref{eq:row-wise loss function}, for each $1 \leq j \leq d$, there exists $\boldsymbol{\gamma}_j \in \partial\|\widehat{\bm{\Theta}}_{\bm{\cdot},j}\|_1$ such that $\nabla \mathbb{L}(\widehat{\bm{\Theta}}_{\bm{\cdot},j}) + \lambda\boldsymbol{\gamma}_j = \bm{0}$. Consequently, we have
		\begin{align}\label{eq:optimality condition}
			\langle\nabla \mathbb{L}(\widehat{\bm{\Theta}}_{\bm{\cdot},j}) + \lambda\boldsymbol{\gamma}_j, \widehat{\bm{\Theta}}_{\bm{\cdot},j} - \bm{\Theta}^*_{\bm{\cdot},j}\rangle = 0.
		\end{align}
		Due to the convexity of $\mathbb{L}(\bm{\Theta}_{\bm{\cdot},j})$, we have 
		\begin{align}\label{eq:convexity property}
			\langle\nabla \mathbb{L}(\widehat{\bm{\Theta}}_{\bm{\cdot},j}) -\nabla \mathbb{L}(\bm{\Theta}^*_{\bm{\cdot},j}), \widehat{\bm{\Theta}}_{\bm{\cdot},j} - \bm{\Theta}^*_{\bm{\cdot},j}\rangle \geq 0.
		\end{align}
		Combining \eqref{eq:optimality condition} and \eqref{eq:convexity property}, it follows that
		\begin{align*}
			\underbrace{\langle\nabla \mathbb{L}(\bm{\Theta}^*_{\bm{\cdot},j}), \widehat{\bm{\Theta}}_{\bm{\cdot},j} - \bm{\Theta}^*_{\bm{\cdot},j}\rangle}_{P_1} + \underbrace{\lambda\langle\boldsymbol{\gamma}_j, \widehat{\bm{\Theta}}_{\bm{\cdot},j} - \bm{\Theta}^*_{\bm{\cdot},j}\rangle}_{P_2} \leq 0.
		\end{align*}
		Next, we establish the lower bounds for $P_1$ and $P_2$, respectively. \\
		\textbf{Lower bound for $P_1$}:
			\begin{align}\label{eq:lower bound P1}
				P_1 = \langle\nabla \mathbb{L}(\bm{\Theta}^*_{\bm{\cdot},j}), \widehat{\bm{\Theta}}_{\bm{\cdot},j} - \bm{\Theta}^*_{\bm{\cdot},j}\rangle \geq -\|\nabla \mathbb{L}(\bm{\Theta}^*_{\bm{\cdot},j})\|_\infty\|\widehat{\bm{\Theta}}_{\bm{\cdot},j} - \bm{\Theta}^*_{\bm{\cdot},j}\|_1 \geq -\frac\lambda2\|\widehat{\bm{\Theta}}_{\bm{\cdot},j} - \bm{\Theta}^*_{\bm{\cdot},j}\|_1,
			\end{align}
			where the second inequality holds since $\|\nabla \mathbb{L}(\bm{\Theta}^*_{\bm{\cdot},j})\|_\infty \leq \|\nabla \mathbb{L}(\bm{\Theta}^*)\|_{\infty,\infty}\leq \lambda/2$.\\
			\textbf{Lower bound for $P_2$}: By the definition of subgradient of $\ell_1$-norm, we have $\langle\boldsymbol{\gamma}_j, \widehat{\bm{\Theta}}_{\bm{\cdot},j}\rangle = \|\widehat{\bm{\Theta}}_{\bm{\cdot},j}\|_1$ and $\|\boldsymbol{\gamma}_j\|_\infty \leq 1$. Thus, we have
			\begin{align}\label{eq:lower bound P2}
				P_2 &= \lambda\langle\boldsymbol{\gamma}_j, \widehat{\bm{\Theta}}_{\bm{\cdot},j} - \bm{\Theta}^*_{\bm{\cdot},j}\rangle = \lambda\langle(\boldsymbol{\gamma}_j)_{\mathcal{S}_j}, (\widehat{\bm{\Theta}}_{\bm{\cdot},j} - \bm{\Theta}^*_{\bm{\cdot},j})_{\mathcal{S}_j}\rangle + \lambda\langle(\boldsymbol{\gamma}_j)_{\mathcal{S}_j^c}, (\widehat{\bm{\Theta}}_{\bm{\cdot},j} - \bm{\Theta}^*_{\bm{\cdot},j})_{\mathcal{S}_j^c}\rangle \notag\\
				&\geq - \lambda\|(\boldsymbol{\gamma}_j)_{\mathcal{S}_j}\|_{\infty}\|(\widehat{\bm{\Theta}}_{\bm{\cdot},j} - \bm{\Theta}^*_{\bm{\cdot},j})_{\mathcal{S}_j}\|_1 + \lambda\langle(\boldsymbol{\gamma}_j)_{\mathcal{S}_j^c}, (\widehat{\bm{\Theta}}_{\bm{\cdot},j} - \bm{\Theta}^*_{\bm{\cdot},j})_{\mathcal{S}_j^c}\rangle\\
				&\geq - \lambda\|(\widehat{\bm{\Theta}}_{\bm{\cdot},j} - \bm{\Theta}^*_{\bm{\cdot},j})_{\mathcal{S}_j}\|_1 + \lambda\|(\widehat{\bm{\Theta}}_{\bm{\cdot},j} - \bm{\Theta}^*_{\bm{\cdot},j})_{\mathcal{S}_j^c}\|_1,\notag
			\end{align}
			where the last inequality holds because $(\bm{\Theta}_{\bm{\cdot},j}^*)_{\mathcal{S}_j^c} = \boldsymbol{0}$ and $\langle(\boldsymbol{{\gamma}}_j)_{\mathcal{S}_j^c}, (\widehat{\bm{\Theta}}_{\bm{\cdot},j})_{\mathcal{S}_j^c}\rangle = \|(\widehat{\bm{\Theta}}_{\bm{\cdot},j})_{\mathcal{S}_j^c}\|_1$.
		Combining equations \eqref{eq:lower bound P1}--\eqref{eq:lower bound P2} with \eqref{eq:convexity property}, we have
		\begin{align*}
			-\frac{\lambda}{2}\|\widehat{\bm{\Theta}}_{\bm{\cdot},j} - \bm{\Theta}^*_{\bm{\cdot},j}\|_1 - \lambda\|(\widehat{\bm{\Theta}}_{\bm{\cdot},j} - \bm{\Theta}^*_{\bm{\cdot},j})_{\mathcal{S}_j}\|_1 + \lambda\|(\widehat{\bm{\Theta}}_{\bm{\cdot},j} - \bm{\Theta}^*_{\bm{\cdot},j})_{\mathcal{S}_j^c}\|_1 \leq 0.
		\end{align*}
		Rearranging the terms, for each $1 \leq j \leq d $, we have
		\begin{equation*}
			\|(\widehat{\bm{\Theta}}_{\bm{\cdot},j} - \bm{\Theta}^*_{\bm{\cdot},j})_{\mathcal{S}_j^c}\|_1 \leq 3\|(\widehat{\bm{\Theta}}_{\bm{\cdot},j} - \bm{\Theta}^*_{\bm{\cdot},j})_{\mathcal{S}_j}\|_1.
		\end{equation*}
		This completes the proof.
	\end{proof}

	\begin{proof}[\textbf{Proof of Lemma \ref{lemma:frobenius upper bound for symmetric bergman divergence}}]
		Following the similar arguments in the proof of Lemma \ref{lemma:ell1 cone}, for each $1 \leq j \leq d$, there exists $\boldsymbol{\gamma}_j \in \partial\|\widehat{\bm{\Theta}}_{\bm{\cdot},j}\|_1$ such that $\langle\nabla \mathbb{L}(\widehat{\bm{\Theta}}_{\bm{\cdot},j}), \widehat{\bm{\Theta}}_{\bm{\cdot},j} - \bm{\Theta}^*_{\bm{\cdot},j}\rangle =  \lambda\langle\boldsymbol{\gamma}_j, \widehat{\bm{\Theta}}_{\bm{\cdot},j} - \bm{\Theta}^*_{\bm{\cdot},j}\rangle$. Then, substituting $\langle\nabla \mathbb{L}(\widehat{\bm{\Theta}}_{\bm{\cdot},j}),\widehat{\bm{\Theta}}_{\bm{\cdot},j} - \bm{\Theta}_{\bm{\cdot},j}^*\rangle$ with $\lambda\langle\boldsymbol{\gamma}_j, \bm{\Theta}_{\bm{\cdot},j}^* - \widehat{\bm{\Theta}}_{\bm{\cdot},j}\rangle$ in $\langle\nabla \mathbb{L}(\widehat{\bm{\Theta}}_{\bm{\cdot},j}) - \nabla\mathbb{L}(\bm{\Theta}_{\bm{\cdot},j}^*), \widehat{\bm{\Theta}}_{\bm{\cdot},j} - \bm{\Theta}_{\bm{\cdot},j}^*\rangle$, we have
		\begin{align*}
			\langle\nabla \mathbb{L}(\widehat{\bm{\Theta}}_{\bm{\cdot},j}) - \nabla\mathbb{L}(\bm{\Theta}_{\bm{\cdot},j}^*), \widehat{\bm{\Theta}}_{\bm{\cdot},j} - \bm{\Theta}_{\bm{\cdot},j}^*\rangle = \underbrace{\langle\nabla\mathbb{L}(\bm{\Theta}_{\bm{\cdot},j}^*),\bm{\Theta}_{\bm{\cdot},j}^*-\widehat{\bm{\Theta}}_{\bm{\cdot},j}\rangle}_{P_1} + \underbrace{\lambda\langle\boldsymbol{\gamma}_j,\bm{\Theta}_{\bm{\cdot},j}^*-\widehat{\bm{\Theta}}_{\bm{\cdot},j}\rangle}_{P_2}.
		\end{align*}
		By the cone property $\|(\widehat{\bm{\Theta}}_{\bm{\cdot},j} - \bm{\Theta}^*_{\bm{\cdot},j} )_{\mathcal{S}_j^c}\|_1 \leq 3\|(\widehat{\bm{\Theta}}_{\bm{\cdot},j} - \bm{\Theta}^*_{\bm{\cdot},j} )_{\mathcal{S}_j}\|_1$ in Lemma \ref{lemma:ell1 cone}, we establish the following upper bounds for $P_1$ and $P_2$, respectively.\\
		\textbf{Upper bound for $P_1$}:
			\begin{align}\label{eq:upper bound P1}
				P_1 &= \langle\nabla \mathbb{L}(\bm{\Theta}_{\bm{\cdot},j}^*), \bm{\Theta}_{\bm{\cdot},j}^*-\widehat{\bm{\Theta}}_{\bm{\cdot},j}\rangle \leq \|\nabla \mathbb{L}(\bm{\Theta}_{\bm{\cdot},j}^*)\|_\infty\|\bm{\Theta}_{\bm{\cdot},j}^*-\widehat{\bm{\Theta}}_{\bm{\cdot},j}\|_1 \leq \frac\lambda2\|\widehat{\bm{\Theta}}_{\bm{\cdot},j} - \bm{\Theta}_{\bm{\cdot},j}^*\|_1 \notag\\ 
				&= \frac\lambda2\left(\|(\widehat{\bm{\Theta}}_{\bm{\cdot},j} - \bm{\Theta}_{\bm{\cdot},j}^*)_{\mathcal{S}_j}\|_1+\|(\widehat{\bm{\Theta}}_{\bm{\cdot},j} - \bm{\Theta}_{\bm{\cdot},j}^*)_{\mathcal{S}_j^c}\|_1\right) \leq 2\lambda\|(\widehat{\bm{\Theta}}_{\bm{\cdot},j} - \bm{\Theta}_{\bm{\cdot},j}^*)_{\mathcal{S}_j}\|_1.
			\end{align}
			\textbf{Upper bound for $P_2$}: Note that $\|\boldsymbol{\gamma}_j\|_\infty \leq 1$, we have
			\begin{align}\label{eq:upper bound P2}
				P_2 &= \lambda\langle\boldsymbol{\gamma}_j, \bm{\Theta}_{\bm{\cdot},j}^* - \widehat{\bm{\Theta}}_{\bm{\cdot},j}\rangle \leq \lambda\|\boldsymbol{\gamma}_j\|_\infty\|\widehat{\bm{\Theta}}_{\bm{\cdot},j} - \bm{\Theta}_{\bm{\cdot},j}^*\|_1\leq \lambda\|\widehat{\bm{\Theta}}_{\bm{\cdot},j} - \bm{\Theta}_{\bm{\cdot},j}^*\|_1 \notag\\
				&= \lambda\left(\|(\widehat{\bm{\Theta}}_{\bm{\cdot},j} - \bm{\Theta}_{\bm{\cdot},j}^*)_{\mathcal{S}_j}\|_1+\|(\widehat{\bm{\Theta}}_{\bm{\cdot},j} - \bm{\Theta}_{\bm{\cdot},j}^*)_{\mathcal{S}_j^c}\|_1\right) \leq 4\lambda\|(\widehat{\bm{\Theta}}_{\bm{\cdot},j} - \bm{\Theta}_{\bm{\cdot},j}^*)_{\mathcal{S}_j}\|_1.
			\end{align}
		Combining \eqref{eq:upper bound P1} with \eqref{eq:upper bound P2}, implies that
		\begin{align*}
			&\langle\nabla \mathbb{L}(\widehat{\bm{\Theta}}_{\bm{\cdot},j}) - \nabla\mathbb{L}(\bm{\Theta}_{\bm{\cdot},j}^*), \widehat{\bm{\Theta}}_{\bm{\cdot},j} - \bm{\Theta}_{\bm{\cdot},j}^*\rangle = P_1 + P_2 \leq 6\lambda\|(\widehat{\bm{\Theta}}_{\bm{\cdot},j} - \bm{\Theta}_{\bm{\cdot},j}^*)_{\mathcal{S}_j}\|_1 \\
			\leq& 6\lambda\sqrt{s}\|(\widehat{\bm{\Theta}}_{\bm{\cdot},j} - \bm{\Theta}_{\bm{\cdot},j}^*)_{\mathcal{S}_j}\|_2 \leq 6\lambda\sqrt{s}\|\widehat{\bm{\Theta}}_{\bm{\cdot},j} - \bm{\Theta}_{\bm{\cdot},j}^*\|_2.
		\end{align*}
		This completes the proof.
	\end{proof}

	\begin{proof}[\textbf{Proof of Lemma \ref{lemma:localized restricted eigenvalues}}]
		The loss function for LSE of truncated data can be written as
		$$
		\mathbb{L}\bigl(\bm{\Theta}\bigr)=\frac{1}{2T}\sum_{t=1}^{T}\sum_{j=1}^{d}\bigl(y_{t,j}(\tau)-\mathbf{x}_t^{\mathrm{T}}(\tau)\bm{\Theta}_{\cdot,j}\bigr)^2.
		$$
		The Hessian matrix of this loss function with respect to each $\bm{\Theta}_{\bm{\cdot},j}$ is 
		\begin{align*}
			\nabla^2 \mathbb{L}(\bm{\Theta}_{\bm{\cdot},j}) = \frac{1}{T}\sum_{t=1}^{T}\boldsymbol{x}_t(\tau)\boldsymbol{x}_t^{\mathrm{T}}(\tau) = \mathbf{S}.
		\end{align*}
		For any $\mathbf{u} \in \mathcal{C}(\mathcal{S}_j, \gamma)$, given $|\mathcal{S}_j| \leq s$ for $1 \leq j \leq d$, we have 
		\begin{align*}
			&\frac{\mathbf{u}^{\mathrm{T}}\nabla^2 \mathbb{L}(\bm{\Theta}_{\bm{\cdot},j})\mathbf{u}}{\|\mathbf{u}\|_2^2} = \frac{\mathbf{u}^{\mathrm{T}} \boldsymbol{\Gamma}_x \mathbf{u}}{\|\mathbf{u}\|_2^2} + \frac{\mathbf{u}^{\mathrm{T}} \left(\mathbf{S} - \boldsymbol{\Gamma}_x\right)\mathbf{u}}{\|\mathbf{u}\|_2^2} \geq \frac{\mathbf{u}^{\mathrm{T}} \boldsymbol{\Gamma}_x \mathbf{u}}{\|\mathbf{u}\|_2^2} - \|\mathbf{S} - \boldsymbol{\Gamma}_x\|_{\infty,\infty}\frac{\|\mathbf{u}\|_1^2}{\|\mathbf{u}\|_2^2}\\
			\geq& \frac{\mathbf{u}^{\mathrm{T}} \boldsymbol{\Gamma}_x \mathbf{u}}{\|\mathbf{u}\|_2^2} - \|\mathbf{S} - \boldsymbol{\Gamma}_x\|_{\infty,\infty}\left(\frac{(1+\gamma)\|\mathbf{u}_{\mathcal{S}_j}\|_1}{\|\mathbf{u}\|_2}\right)^2 \geq \lambda_{\min}(\boldsymbol{\Gamma}_x) - (1+\gamma)^2s\|\mathbf{S} - \boldsymbol{\Gamma}_x\|_{\infty,\infty}.
		\end{align*}
		Under the conditions in Lemma \ref{lemma:covariance matrix bound}, we have $$\|\mathbf{S} - \boldsymbol{\Gamma}_x\|_{\infty,\infty} \lesssim \biggl(\frac{M^{1/\epsilon} \log(pd+1)}{T_{\textnormal{eff}}}\biggr)^{\frac{\epsilon}{1+\epsilon}},$$ with probability at least $1-C\exp\left[-C\log(T)\log(pd+1)\right]$.
		Thus taking the infimum over $\mathbf{u} \in \mathcal{C}(\mathcal{S}_j, \gamma)$, we have
		\begin{align*}
			\nu(\nabla^2 \mathbb{L}(\bm{\Theta}_{\bm{\cdot},j}),\mathcal{S}_j, \gamma) &\geq \lambda_{\min}(\boldsymbol{\Gamma}_x) - (1+\gamma)^2s\|\mathbf{S} - \boldsymbol{\Gamma}_x\|_{\infty,\infty}\\
			&\geq \lambda_{\min}(\boldsymbol{\Gamma}_x) - C(1+\gamma)^2s\biggl(\frac{M^{1/\epsilon} \log(pd+1)}{T_{\textnormal{eff}}}\biggr)^{\frac{\epsilon}{1+\epsilon}},
		\end{align*}
		with probability at least $1-C\exp\left[-C\log(T)\log(pd+1)\right]$.

		Taking $T > C\log(pd+1)$ for sufficiently large $C$, we have $\nu(\nabla^2 \mathbb{L}(\bm{\Theta}_{\bm{\cdot},j}),\mathcal{S}_j, \gamma) \geq 1/2\lambda_{\min}(\boldsymbol{\Gamma}_x)$ with probability at least $1-C\exp\left[-C\log(T)\log(pd+1)\right]$. This concludes the proof.
	\end{proof}

	\begin{proof}[\textbf{Proof of Lemma \ref{lemma:restricted eigenvalue condition}}]
		Define $\bm{\Theta}_{\bm{\cdot},j}(u) \triangleq u\bm{\Theta}_{\bm{\cdot},j} + (1-u)\bm{\Theta}_{\bm{\cdot},j}^*$ for $u \in [0,1]$ and denote $f(u) \triangleq \nabla \mathbb{L}(\bm{\Theta}_{\bm{\cdot},j}(u))$. Given that $\mathbb{L}(\cdot)$ is almost surely twice continuously differentiable, the function $f(u)$ inherits smoothness property along the path $\bm{\Theta}_{\bm{\cdot},j}(u)$. Thus, we have $\bm{\Theta}_{\bm{\cdot},j}'(u) = \bm{\Theta}_{\bm{\cdot},j} - \bm{\Theta}_{\bm{\cdot},j}^*$ and then $f'(u) = \nabla^2 \mathbb{L}(\bm{\Theta}_{\bm{\cdot},j}(u))(\bm{\Theta}_{\bm{\cdot},j} - \bm{\Theta}_{\bm{\cdot},j}^*)$, where $\nabla^2 \mathbb{L}(\bm{\Theta}_{\bm{\cdot},j}(u))$ is the Hessian matrix of $\mathbb{L}(\cdot)$ at $\bm{\Theta}_{\bm{\cdot},j}(u)$.
		Since $f(u)$ is differentiable on $[0,1]$ and its derivative $f'(u)$ is almost everywhere continuous (i.e., the set of discontinuities has Lebesgue measure zero), then by the fundamental theorem of calculus for Lebesgue integrals, we have
		\begin{align*}
			\nabla\mathbb{L}(\bm{\Theta}_{\bm{\cdot},j}) - \nabla\mathbb{L}(\bm{\Theta}_{\bm{\cdot},j}^*) = f(1) - f(0) = \int_0^1 f'(u)dt = \int_0^1 \nabla^2 \mathbb{L}(\bm{\Theta}_{\bm{\cdot},j}(u))(\bm{\Theta}_{\bm{\cdot},j} - \bm{\Theta}_{\bm{\cdot},j}^*)dt.
		\end{align*}
		Thus we have 
		\begin{align*}
			\langle\nabla\mathbb{L}(\bm{\Theta}_{\bm{\cdot},j}) - \nabla\mathbb{L}(\bm{\Theta}_{\bm{\cdot},j}^*), \bm{\Theta}_{\bm{\cdot},j} - \bm{\Theta}_{\bm{\cdot},j}^*\rangle = \int_0^1 \langle\nabla^2 \mathbb{L}(\bm{\Theta}_{\bm{\cdot},j}(u))(\bm{\Theta}_{\bm{\cdot},j} - \bm{\Theta}_{\bm{\cdot},j}^*), \bm{\Theta}_{\bm{\cdot},j} - \bm{\Theta}_{\bm{\cdot},j}^*\rangle dt.
		\end{align*}

		It is easily verified that $\nabla^2 \mathbb{L}(\bm{\Theta}_{\bm{\cdot},j}(u)) = \nabla^2 \mathbb{L}(\bm{\Theta}_{\bm{\cdot},j}) = 1/T\sum_{t=1}^T\bbm{x}_t(\tau)\bbm{x}_t^T(\tau)$, which is invariant of $u$. Consequently, for $\bm{\Theta}_{\bm{\cdot},j} - \bm{\Theta}_{\bm{\cdot},j}^*$ in $\mathcal{C}(\mathcal{S}_j,\gamma)$, taking the infimum over the cone leads to the bound:
		\begin{align*}
			&\langle\nabla\mathbb{L}(\bm{\Theta}_{\bm{\cdot},j}) - \nabla\mathbb{L}(\bm{\Theta}_{\bm{\cdot},j}^*), \bm{\Theta}_{\bm{\cdot},j} - \bm{\Theta}_{\bm{\cdot},j}^*\rangle = \int_0^1 \langle\nabla^2 \mathbb{L}(\bm{\Theta}_{\bm{\cdot},j}(u))(\bm{\Theta}_{\bm{\cdot},j} - \bm{\Theta}_{\bm{\cdot},j}^*), \bm{\Theta}_{\bm{\cdot},j} - \bm{\Theta}_{\bm{\cdot},j}^*\rangle dt\\
			=& \int_0^1 \langle\nabla^2 \mathbb{L}(\bm{\Theta}_{\bm{\cdot},j})(\bm{\Theta}_{\bm{\cdot},j} - \bm{\Theta}_{\bm{\cdot},j}^*), \bm{\Theta}_{\bm{\cdot},j} - \bm{\Theta}_{\bm{\cdot},j}^*\rangle dt = \langle\nabla^2 \mathbb{L}(\bm{\Theta}_{\bm{\cdot},j})(\bm{\Theta}_{\bm{\cdot},j} - \bm{\Theta}_{\bm{\cdot},j}^*), \bm{\Theta}_{\bm{\cdot},j} - \bm{\Theta}_{\bm{\cdot},j}^*\rangle\\
			\geq& \nu(\nabla^2 \mathbb{L}(\bm{\Theta}_{\bm{\cdot},j}),\mathcal{S}_j,\gamma)\|\bm{\Theta}_{\bm{\cdot},j} - \bm{\Theta}_{\bm{\cdot},j}^*\|_2^2.
		\end{align*}
		By Lemma \ref{lemma:localized restricted eigenvalues}, with probability at least $1-C\operatorname{exp}\bigl[-C\log(T)\log(pd+1)\bigr]$, we have
		\begin{equation*}
			\langle\nabla\mathbb{L}(\bm{\Theta}_{\bm{\cdot},j}) - \nabla\mathbb{L}(\bm{\Theta}_{\bm{\cdot},j}^*), \bm{\Theta}_{\bm{\cdot},j} - \bm{\Theta}_{\bm{\cdot},j}^*\rangle \geq \frac12\lambda_{\min}(\boldsymbol{\Gamma}_x)\|\bm{\Theta}_{\bm{\cdot},j} - \bm{\Theta}_{\bm{\cdot},j}^*\|_2^2.
		\end{equation*}
	This concludes the proof.
	\end{proof}

	\section{Technical Lemmas}\label{append:technical lemmas}
	\renewcommand{\thelemma}{E.\arabic{lemma}}
	\setcounter{lemma}{0}

	Lemmas \ref{lemma:measurable map} and \ref{lemma:lag process mixing} establish the transitivity of the $\alpha$-mixing condition; these results follow directly from \cite{wang2023rate}, and we reproduce their proofs here for completeness. Lemma \ref{lemma:Concentration inequailty for alpha mixing process}, which provides a concentration inequality essential to our error analysis, is drawn from Theorem 1.4 of \cite{bosq2012nonparametric}. For perturbation bounds, we rely on two classical tools: Lemma \ref{lemma:weyl inequality} applies Weyl's inequality \citep{weyl1912asymptotische} for eigenvalues, while Lemma \ref{lemma:davis-kahan theorem} adapts Corollary 1 of \cite{yu2015useful} via minor modifications to establish eigenvector perturbation bounds within our framework.  Proofs are omitted for conciseness.

	\begin{lemma}\label{lemma:measurable map}
		For any $\alpha$-mixing process $\{\boldsymbol{r}_t\}$ and measurable map $f(\cdot)$, the process $\{f(\boldsymbol{r}_t)\}$ is also $\alpha$-mixing with mixing coefficients bounded by those of the original process.
	\end{lemma}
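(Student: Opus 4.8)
\textbf{Proof proposal for Lemma \ref{lemma:measurable map}.}
The plan is to verify the $\alpha$-mixing inequality directly from the definition, using only the fact that applying a measurable map to the coordinates of a stochastic process cannot enlarge the $\sigma$-algebras it generates. Write $\mathcal{G}_{a}^{b} = \sigma\{\bm{r}_{a},\dots,\bm{r}_{b}\}$ for the original process and $\widetilde{\mathcal{G}}_{a}^{b} = \sigma\{f(\bm{r}_{a}),\dots,f(\bm{r}_{b})\}$ for the image process. The first and only substantive step is to observe that since each $f(\bm{r}_{t})$ is a measurable function of $\bm{r}_{t}$, every event in $\widetilde{\mathcal{G}}_{a}^{b}$ is also an event in $\mathcal{G}_{a}^{b}$; that is, $\widetilde{\mathcal{G}}_{a}^{b} \subseteq \mathcal{G}_{a}^{b}$ for all $a \le b$.

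With this inclusion in hand, the conclusion is a one-line consequence of the definition of the mixing coefficient. For any $\ell \ge 1$,
\begin{align*}
\widetilde{\alpha}(\ell)
&= \sup_{\widetilde{A}\in\widetilde{\mathcal{G}}_{-\infty}^{t},\,\widetilde{B}\in\widetilde{\mathcal{G}}_{t+\ell}^{\infty}} \bigl|\mathbb{P}(\widetilde{A}\cap\widetilde{B}) - \mathbb{P}(\widetilde{A})\mathbb{P}(\widetilde{B})\bigr| \\
&\le \sup_{A\in\mathcal{G}_{-\infty}^{t},\,B\in\mathcal{G}_{t+\ell}^{\infty}} \bigl|\mathbb{P}(A\cap B) - \mathbb{P}(A)\mathbb{P}(B)\bigr| = \alpha(\ell),
\end{align*}
where the inequality is because the supremum on the second line is taken over a larger collection of pairs of events. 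Hence $\widetilde{\alpha}(\ell) \le \alpha(\ell)$ for every $\ell$, which is exactly the claim that $\{f(\bm{r}_{t})\}$ is $\alpha$-mixing with coefficients bounded by those of $\{\bm{r}_{t}\}$.

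There is essentially no obstacle here; the only point requiring a modicum of care is the measurability bookkeeping, namely that $\sigma\{f(\bm{r}_{a}),\dots,f(\bm{r}_{b})\}$ is contained in $\sigma\{\bm{r}_{a},\dots,\bm{r}_{b}\}$ even when $b-a$ or the index range is infinite, which follows from the standard fact that the $\sigma$-algebra generated by a family of measurable maps is the smallest one making them all measurable and is therefore monotone under composition with measurable functions. In the application to the present paper, $f$ is the composition of coordinatewise hard truncation $r \mapsto \operatorname{sign}(r)\min\{|r|,\tau\}$, the outer product $\bm{u}\mapsto \bm{u}\bm{u}^{\mathrm{T}}$, and half-vectorization, each of which is continuous (hence Borel measurable), so the lemma applies to $\{\bm{y}_{t}(\tau)\}$ as used in the proofs of Lemmas \ref{lemma:covariance matrix bound} and \ref{lemma:first order derivative}.
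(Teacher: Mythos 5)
Your proof is correct and follows essentially the same route as the paper's: establish the inclusion $\sigma\{f(\bm{r}_t):t\}\subseteq\sigma\{\bm{r}_t:t\}$ and note that the supremum defining the mixing coefficient of the image process is then taken over a smaller family of events, so $\widetilde{\alpha}(\ell)\le\alpha(\ell)$. The additional remarks on the measurability bookkeeping and on the specific map used for $\{\bm{y}_t(\tau)\}$ are sound but not needed beyond what the paper itself records.
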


	\begin{proof}[\textbf{Proof of Lemma \ref{lemma:measurable map}}]
	For any measurable map $f(\cdot)$, it is clear that $\sigma(f(\boldsymbol{r}_t)) \subseteq \sigma(\boldsymbol{r}_t)$.
	Then, 
	\begin{align*}
		&\alpha\Bigl(\sigma\bigl(\{f(\boldsymbol{r}_t): t\leq t_1\}\bigr),\sigma\bigl(\{f(\boldsymbol{r}_t): t \geq t_2\}\bigr)\Bigr) = \sup_{\substack{A_1 \in \sigma(\{f(\boldsymbol{r}_t): t\leq t_1\}),\\ A_2 \in \sigma(\{f(\boldsymbol{r}_t): t \geq t_2\})}} \bigl|\mathbb{P}(A_1 \cap A_2) - \mathbb{P}(A_1)\mathbb{P}(A_2)\bigr|\\
		\leq& \sup_{\substack{A_1 \in \sigma(\{\boldsymbol{r}_t: t\leq t_1\}),\\ A_2 \in \sigma(\{\boldsymbol{r}_t: t \geq t_2\})}} \bigl|\mathbb{P}(A_1 \cap A_2) - \mathbb{P}(A_1)\mathbb{P}(A_2)\bigr| = \alpha\Bigl(\sigma\bigl(\{\boldsymbol{r}_t: t\leq t_1\}\bigr),\sigma\bigl(\{\boldsymbol{r}_t: t \geq t_2\}\bigr)\Bigr). 
	\end{align*}
	This concludes the proof.
	\end{proof}

	\begin{lemma}\label{lemma:lag process mixing}
	Suppose $\{\boldsymbol{\psi}_t\}$ is $\alpha$-mixing. For any fixed $p$, the process $\{\boldsymbol{\phi}_t\} = (1,\boldsymbol{\psi}_{t-1}^\mathrm{T},\boldsymbol{\psi}_{t-2}^\mathrm{T},\cdots,\\\boldsymbol{\psi}_{t-p}^\mathrm{T})^\mathrm{T}$ is also $\alpha$-mixing with mixing coefficients that are bounded by those of $\{\boldsymbol{\psi}_t\}$.
	\end{lemma}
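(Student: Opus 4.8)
\textbf{Proof proposal for Lemma \ref{lemma:lag process mixing}.}
The plan is to show directly, from the definition of the $\alpha$-mixing coefficient, that the lagged-and-stacked process $\{\bm{\phi}_t\}$ inherits the mixing coefficients of $\{\bm{\psi}_t\}$, up to a shift in the index that is absorbed since $p$ is fixed. The key observation is that each $\bm{\phi}_t = (1,\bm{\psi}_{t-1}^\mathrm{T},\dots,\bm{\psi}_{t-p}^\mathrm{T})^\mathrm{T}$ is a measurable function of the block $(\bm{\psi}_{t-p},\dots,\bm{\psi}_{t-1})$, so the $\sigma$-algebra generated by $\{\bm{\phi}_t : t \le t_1\}$ is contained in $\sigma(\{\bm{\psi}_s : s \le t_1-1\})$, and similarly $\sigma(\{\bm{\phi}_t : t \ge t_2\}) \subseteq \sigma(\{\bm{\psi}_s : s \ge t_2-p\})$. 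This is the same containment idea used in Lemma \ref{lemma:measurable map}, but now applied to finite blocks rather than single coordinates.

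First I would fix $t_1 < t_2$ and write out the two relevant $\sigma$-algebras. Since the constant coordinate $1$ contributes nothing, $\bm{\phi}_t$ is $\sigma(\bm{\psi}_{t-p},\dots,\bm{\psi}_{t-1})$-measurable, hence
\[
\sigma\bigl(\{\bm{\phi}_t : t \le t_1\}\bigr) \subseteq \mathcal{F}_{-\infty}^{\,t_1-1}
\quad\text{and}\quad
\sigma\bigl(\{\bm{\phi}_t : t \ge t_2\}\bigr) \subseteq \mathcal{F}_{\,t_2-p}^{\,\infty},
\]
where $\mathcal{F}_a^b = \sigma(\{\bm{\psi}_s : a \le s \le b\})$. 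Then by monotonicity of the supremum defining the $\alpha$-mixing coefficient over larger $\sigma$-algebras,
\[
\alpha\Bigl(\sigma\bigl(\{\bm{\phi}_t : t \le t_1\}\bigr),\sigma\bigl(\{\bm{\phi}_t : t \ge t_2\}\bigr)\Bigr)
\le \alpha\bigl(\mathcal{F}_{-\infty}^{\,t_1-1},\mathcal{F}_{\,t_2-p}^{\,\infty}\bigr)
= \alpha_{\bm{\psi}}\bigl((t_2-p)-(t_1-1)\bigr)
= \alpha_{\bm{\psi}}(t_2-t_1-p+1),
\]
where $\alpha_{\bm{\psi}}(\cdot)$ denotes the mixing coefficient sequence of $\{\bm{\psi}_t\}$. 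Since $p$ is a fixed finite constant, this gives $\alpha_{\bm{\phi}}(\ell) \le \alpha_{\bm{\psi}}(\ell - p + 1)$ for $\ell > p-1$, so the $\alpha$-mixing property transfers and, in the geometric case $\alpha_{\bm{\psi}}(\ell) = O(\zeta^\ell)$, one obtains $\alpha_{\bm{\phi}}(\ell) = O(\zeta^{\ell-p+1}) = O(\zeta^\ell)$ with the same base $\zeta$, as claimed.

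I do not anticipate a genuine obstacle here; the only point requiring care is the bookkeeping of the index shift by $p$ and the verification that including the trivial constant coordinate $1$ does not enlarge the generated $\sigma$-algebras (it does not, since $\sigma(1)$ is trivial). The argument is essentially a one-line consequence of the measurable-mapping principle in Lemma \ref{lemma:measurable map} combined with the fact that a finite window of a process has a mixing gap reduced by at most the window length.
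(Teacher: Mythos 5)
Your proposal is correct and follows essentially the same route as the paper's proof: both rely on the containment of the $\sigma$-algebras generated by $\{\bm{\phi}_t\}$ in those generated by $\{\bm{\psi}_t\}$ (the measurable-mapping principle of Lemma \ref{lemma:measurable map}) together with monotonicity of the $\alpha$-coefficient, and both absorb the fixed index shift of at most $p$. Your version is marginally sharper in using $\mathcal{F}_{-\infty}^{\,t_1-1}$ rather than $\mathcal{F}_{-\infty}^{\,t_1}$ and in making the bound $\alpha_{\bm{\phi}}(\ell)\le\alpha_{\bm{\psi}}(\ell-p+1)$ explicit, but this is a refinement of the same argument, not a different one.
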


	\begin{proof}[\textbf{Proof of Lemma \ref{lemma:lag process mixing}}]
	Since the operations involved (i.e., taking a constant, concatenation) do not affect measurability, $\{\boldsymbol{\phi}_t\}$ is also measurable. In addition, $\sigma\bigl(\{\boldsymbol{\phi}_t: t \leq t_1\}\bigr) \subseteq \sigma\bigl(\{\boldsymbol{\psi}_t: t \leq t_1\}\bigr)$ and $\sigma\bigl(\{\boldsymbol{\phi}_t: t \geq t_2\}\bigr) \subseteq \sigma\bigl(\{\boldsymbol{\psi}_t: t \geq t_2-p\}\bigr)$ holds for any two time points $t_1$ and $t_2$. Following the proof of Lemma \ref{lemma:measurable map},
	\begin{align*}
		\alpha\Bigl(\sigma\bigl(\{\boldsymbol{\phi}_t: t\leq t_1\}\bigr),\sigma\bigl(\{\boldsymbol{\phi}_t: t \geq t_2\}\bigr)\Bigr) \leq \alpha\Bigl(\sigma\bigl(\{\boldsymbol{\psi}_t: t\leq t_1\}\bigr),\sigma\bigl(\{\boldsymbol{\psi}_t: t \geq t_2-p\}\bigr)\Bigr).
	\end{align*}
	Thus, when $\ell \geq p$, the dependence between the $\sigma$-fields $\sigma(\{\boldsymbol{\phi}_t: t \leq t_1\})$ and 
	$\sigma(\{\boldsymbol{\phi}_t: t \geq t_2\})$ is no stronger than that of the original process $\{\boldsymbol{\psi}_t\}$ at lag $\ell$. For $\ell < p$, there are at most $p-1$ such cases, each of which can be bounded by a finite constant. Consequently, for a fixed lag order $p$, $\{\boldsymbol{\phi}_t\}$ is also $\alpha$-mixing with $\alpha$-mixing coefficients controlled by those of $\{\boldsymbol{\psi}_t\}$. This concludes the proof.
	\end{proof}

	\begin{lemma}[Concentration Inequality for $\alpha$-mixing Process]\label{lemma:Concentration inequailty for alpha mixing process}
	Let $\{X_t\}$ be a covariance stationary zero-mean $\alpha$-mixing process with geometric mixing coefficients $\alpha(\ell) = O(\zeta^\ell)$ and $\zeta = \zeta(N) < \bar{\zeta}$. Suppose $\{X_t\}$ satisfies the Cramer's condition: $\mathbb{E}|X_t|^n \leq c^{n-2}n!\mathbb{E}X_t^2$ for some constant $c > 0$ and all $n \geq 2$. Then, for any $T \geq 2$, $q \in \left[1, T/2\right]$, $\delta>0$ and $n \geq 3$, the following inequality holds:
	\begin{align*}
		\mathbb{P}\left(\left|\frac{1}{T}\sum_{t=1}^{T} X_{t}\right| \geq \delta\right) & \leq 2[1+T / q+\mu(\delta)] \exp [-q \mu(\delta)] \notag\\
		&+11 T\left[1+5 \delta^{-1}\left(\mathbb{E} |X_{t}|^n\right)^{\frac{1}{2n+1}}\right] \alpha\left(\left[\frac{T}{q+1}\right]\right)^{\frac{2n}{2n+1}},
	\end{align*}
	where $\mu(\delta) = \delta^2/(5c\delta+25\mathbb{E}X_t^2)$. This concludes the proof.
	\end{lemma}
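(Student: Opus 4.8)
The plan is to establish this Bernstein-type bound through the classical big-block/small-block decoupling scheme for strongly mixing sequences, which is the route underlying Theorem 1.4 of \cite{bosq2012nonparametric}. The two summands in the target inequality correspond to the two sources of error, and I would produce them separately: the exponential term $2[1+T/q+\mu(\delta)]\exp[-q\mu(\delta)]$ arises from a Bernstein inequality applied to an \emph{independent surrogate} of the block sums, whereas the term $11T[1+5\delta^{-1}(\mathbb{E}|X_t|^n)^{1/(2n+1)}]\alpha([T/(q+1)])^{2n/(2n+1)}$ is the cost of replacing the genuinely dependent blocks by independent ones via a coupling argument. Concretely, I would first partition $\{1,\dots,T\}$ into consecutive blocks of length on the order of $[T/(q+1)]$, so that there are about $q$ blocks and any two non-adjacent blocks are separated by a lag of at least $[T/(q+1)]$; writing $\sum_{t=1}^{T}X_t=\sum_{j}\xi_j$ with $\xi_j$ the sum over the $j$-th block, the dependence between separated blocks is then controlled by the geometrically small coefficient $\alpha([T/(q+1)])$.

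The technical heart is the coupling step. I would invoke Bradley's coupling lemma to construct, on a possibly enlarged probability space, independent random variables $\xi_1^*,\dots,\xi_q^*$ with $\xi_j^*\overset{d}{=}\xi_j$ such that $\xi_j$ and $\xi_j^*$ differ only with probability controlled by a power of the inter-block mixing coefficient. Bradley's lemma is precisely what produces the numerical constant $11$, the moment factor $5\delta^{-1}(\mathbb{E}|X_t|^n)^{1/(2n+1)}$ (arising from an $L^n$ bound on the block sums), and the exponent $2n/(2n+1)$ on $\alpha$. Summing the coupling errors over the indices, and bounding each block-sum $L^n$ norm in terms of the single-observation moment $\mathbb{E}|X_t|^n$, yields exactly the second term of the stated bound.

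On the event that the coupling succeeds, the deviation of $T^{-1}\sum_t X_t$ is governed by $\sum_j \xi_j^*$, a sum of $q$ independent, zero-mean summands. Here I would apply the classical Bernstein inequality, using the Cramér condition $\mathbb{E}|X_t|^{n}\le c^{n-2}n!\,\mathbb{E}X_t^2$ to control the moment generating function of each block; this is exactly the input needed to obtain the rate function $\mu(\delta)=\delta^2/(5c\delta+25\,\mathbb{E}X_t^2)$ and hence the factor $\exp[-q\mu(\delta)]$, with the polynomial prefactor $1+T/q+\mu(\delta)$ emerging as the standard book-keeping from the Bernstein tail together with the residual (incomplete) block. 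A union bound over the ``coupling fails'' and ``independent surrogate sum is large'' events then combines the two estimates into the claimed inequality, leaving $q\in[1,T/2]$ and $n\ge 3$ free to be optimized in the applications (as in Lemmas \ref{lemma:covariance matrix bound} and \ref{lemma:first order derivative}).

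I expect the main obstacle to be the coupling step: obtaining the precise constant $11$ and the exponent $2n/(2n+1)$ requires the quantitative form of Bradley's lemma, together with a careful $L^n$ bound on block sums under only the Cramér moment condition, and one must check that the mixing coefficients of the \emph{process} enter at the correct separation lag $[T/(q+1)]$. A secondary difficulty is transferring the Cramér condition from the individual $X_t$ to the block (equivalently, surrogate) sums so that the constants $5c$ and $25\,\mathbb{E}X_t^2$ defining $\mu(\delta)$ appear exactly, rather than merely up to unspecified absolute constants; this is what forces the specific numerical coefficients in the final statement.
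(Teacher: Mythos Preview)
Your sketch is correct and follows the standard big-block/small-block coupling argument that underlies Theorem~1.4 of \cite{bosq2012nonparametric}. Note, however, that the paper does not supply its own proof of this lemma: it simply cites the result from Bosq and omits the proof, so there is nothing further to compare against beyond the source you have already identified.
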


	\begin{lemma}[Weyl's Inequality]\label{lemma:weyl inequality}
	For any symmetric matrices $\mathbf{M}_1$ and $\mathbf{M}_2$ with the same dimensions, we have
		\begin{align*}
			\max_i|\lambda_i(\mathbf{M}_1) - \lambda_i(\mathbf{M}_2)| \leq \|\mathbf{M}_1 - \mathbf{M}_2\|_{\textnormal{op}}.
		\end{align*}
	\end{lemma}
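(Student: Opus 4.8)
\textbf{Proof proposal for Lemma \ref{lemma:weyl inequality} (Weyl's inequality).}

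The plan is to treat this as a standard consequence of the Courant--Fischer min-max characterization of eigenvalues of symmetric matrices, so the proof will be short. First I would fix a common dimension $N$ and recall the variational formula: for any symmetric $\mathbf{M}\in\mathbb{R}^{N\times N}$ with eigenvalues $\lambda_1(\mathbf{M})\ge\cdots\ge\lambda_N(\mathbf{M})$, one has
\[
\lambda_i(\mathbf{M}) \;=\; \max_{\substack{V\subseteq\mathbb{R}^N\\ \dim V = i}}\ \min_{\substack{\bm{v}\in V\\ \|\bm{v}\|_2=1}} \bm{v}^{\mathrm{T}}\mathbf{M}\bm{v}.
\]
Writing $\mathbf{M}_1 = \mathbf{M}_2 + (\mathbf{M}_1-\mathbf{M}_2)$ and using that for any unit vector $\bm{v}$ we have $\bm{v}^{\mathrm{T}}\mathbf{M}_1\bm{v} \le \bm{v}^{\mathrm{T}}\mathbf{M}_2\bm{v} + \|\mathbf{M}_1-\mathbf{M}_2\|_{\textnormal{op}}$, I would push this bound through the min-max formula: taking the subspace $V$ that achieves the maximum for $\lambda_i(\mathbf{M}_2)$ gives $\lambda_i(\mathbf{M}_1)\le \lambda_i(\mathbf{M}_2) + \|\mathbf{M}_1-\mathbf{M}_2\|_{\textnormal{op}}$. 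By symmetry (swapping the roles of $\mathbf{M}_1$ and $\mathbf{M}_2$, and noting $\|\mathbf{M}_1-\mathbf{M}_2\|_{\textnormal{op}}=\|\mathbf{M}_2-\mathbf{M}_1\|_{\textnormal{op}}$), the reverse inequality $\lambda_i(\mathbf{M}_2)\le\lambda_i(\mathbf{M}_1)+\|\mathbf{M}_1-\mathbf{M}_2\|_{\textnormal{op}}$ also holds, so $|\lambda_i(\mathbf{M}_1)-\lambda_i(\mathbf{M}_2)|\le\|\mathbf{M}_1-\mathbf{M}_2\|_{\textnormal{op}}$ for each $i$, and taking the maximum over $i$ finishes it.

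Since this is a classical result (Weyl, 1912), in the actual paper I would simply cite it rather than reproduce the argument; the excerpt already says ``Proofs are omitted for conciseness.'' If a self-contained argument were wanted, the only genuinely delicate point is making the min-max inequality manipulation rigorous --- specifically justifying that if $\bm{v}^{\mathrm{T}}\mathbf{A}\bm{v}\le\bm{v}^{\mathrm{T}}\mathbf{B}\bm{v}+c$ holds for all unit $\bm{v}$, then $\lambda_i(\mathbf{A})\le\lambda_i(\mathbf{B})+c$. This follows because the optimal $i$-dimensional subspace for $\mathbf{B}$ serves as a (not necessarily optimal, hence lower-bounding inside the outer max) candidate subspace for $\mathbf{A}$, so there is no real obstacle --- it is essentially bookkeeping with the variational formula. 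I do not anticipate any hard step; the lemma is elementary linear algebra and the entire ``difficulty'' is merely choosing whether to cite or to include the three-line proof.
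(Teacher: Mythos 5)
Your argument via the Courant--Fischer min-max characterization is correct and is the standard proof of Weyl's inequality; the paper itself gives no proof of this lemma, simply citing the classical result of Weyl (1912) and omitting the argument for conciseness, exactly as you anticipate. So your proposal is consistent with the paper's treatment and contains no gap.
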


	\begin{lemma}[A Variant of Davis-Kahan Theorem]\label{lemma:davis-kahan theorem}
		Let $\mathbf{M}_1,\mathbf{M}_2 \in \mathbb{R}^{n \times n}$ be symmetric, with eigenvalues $\lambda_1(\mathbf{M}_1) \geq \cdots \geq \lambda_n(\mathbf{M}_1) \geq 0$ and $\lambda_1(\mathbf{M}_2) \geq \cdots \geq \lambda_n(\mathbf{M}_2)$ respectively. Fix $i \in \{1,\ldots,n\}$, and assume that $\min\bigl(\lambda_{i-1}(\mathbf{M}_1) - \lambda_{i}(\mathbf{M}_1),\lambda_{i}(\mathbf{M}_1) - \lambda_{i+1}(\mathbf{M}_1)\bigr) \geq \vartheta$ for some $\vartheta > 0$, where we define $\lambda_{0}(\mathbf{M}_1) = \infty$ and $\lambda_{n+1}(\mathbf{M}_1) = 0$. If $\boldsymbol{u},\widehat{\boldsymbol{u}} \in \mathbb{R}^n$ satisfy $\mathbf{M}_1\boldsymbol{u} = \lambda_i(\mathbf{M}_1)\boldsymbol{u}$, $\mathbf{M}_2\widehat{\boldsymbol{u}} = \lambda_i(\mathbf{M}_2)\widehat{\boldsymbol{u}}$ and $\langle\boldsymbol{u},\widehat{\boldsymbol{u}}\rangle \geq 0$, then
		\begin{align*}
			\|\widehat{\boldsymbol{u}} - \boldsymbol{u} \|_2 \leq 2^{3/2}\vartheta^{-1}\|\mathbf{M}_1 - \mathbf{M}_2\|_{\textnormal{op}}.
		\end{align*}
	\end{lemma}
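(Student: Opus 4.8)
The plan is to obtain this estimate by specializing the $\sin\Theta$ form of the Davis--Kahan theorem in the statisticians' version of \cite{yu2015useful}, and then converting the subspace-angle bound into a Euclidean-norm bound using the sign-alignment hypothesis $\langle\boldsymbol{u},\widehat{\boldsymbol{u}}\rangle\ge 0$. Throughout I would take $\boldsymbol{u}$ and $\widehat{\boldsymbol{u}}$ to be \emph{unit} eigenvectors without loss of generality: this is the normalization under which the inequality is meaningful, and it is the one in force wherever the lemma is invoked (e.g.\ in Corollary \ref{cor:bekk upper bound} and Theorem \ref{thm:selection consistency of K_i}, where $\boldsymbol{u}=\operatorname{vec}(\mathbf{A}^*_{ik})/\|\mathbf{A}^*_{ik}\|_{\mathrm F}$ and $\widehat{\boldsymbol{u}}$ is a unit eigenvector produced by an eigendecomposition).

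The first step is to check that the hypothesis here supplies exactly the eigen-separation required by the one-dimensional Davis--Kahan bound. That bound, for a single eigenvalue $\lambda_i(\mathbf{M}_1)$, needs $\delta:=\min\bigl(\lambda_{i-1}(\mathbf{M}_1)-\lambda_i(\mathbf{M}_1),\,\lambda_i(\mathbf{M}_1)-\lambda_{i+1}(\mathbf{M}_1)\bigr)>0$, with the conventions that the ``gap above'' is $+\infty$ when $i=1$ and the ``gap below'' is measured against $-\infty$ when $i=n$. Here the convention $\lambda_{n+1}(\mathbf{M}_1)=0$ is used instead; since $\mathbf{M}_1$ is positive semidefinite, for $i<n$ the hypothesis reads literally $\delta\ge\vartheta$, while for $i=n$ the hypothesis $\min\bigl(\lambda_{n-1}(\mathbf{M}_1)-\lambda_n(\mathbf{M}_1),\,\lambda_n(\mathbf{M}_1)\bigr)\ge\vartheta$ in particular forces $\lambda_{n-1}(\mathbf{M}_1)-\lambda_n(\mathbf{M}_1)\ge\vartheta$, which is the only separation that case needs. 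Thus $\delta\ge\vartheta>0$ in all cases, so $\lambda_i(\mathbf{M}_1)$ is a simple eigenvalue and $\boldsymbol{u}$ is determined up to sign; applying the $\sin\Theta$ inequality (Theorem~2 / Corollary~1 of \cite{yu2015useful} with $r=s=i$, for which the multidimensional factor is trivial) gives $\sin\Theta(\widehat{\boldsymbol{u}},\boldsymbol{u})=\sqrt{1-\langle\boldsymbol{u},\widehat{\boldsymbol{u}}\rangle^{2}}\le 2\vartheta^{-1}\|\mathbf{M}_1-\mathbf{M}_2\|_{\mathrm{op}}$.

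The second step upgrades this to the stated Euclidean bound. With $c:=\langle\boldsymbol{u},\widehat{\boldsymbol{u}}\rangle\in[0,1]$ (using $c\ge 0$ together with $\|\boldsymbol{u}\|_2=\|\widehat{\boldsymbol{u}}\|_2=1$) one has $\|\widehat{\boldsymbol{u}}-\boldsymbol{u}\|_2^{2}=2(1-c)$, and since $1-c=(1-c^{2})/(1+c)\le 1-c^{2}=\sin^{2}\Theta(\widehat{\boldsymbol{u}},\boldsymbol{u})$, this yields $\|\widehat{\boldsymbol{u}}-\boldsymbol{u}\|_2\le\sqrt{2}\,\sin\Theta(\widehat{\boldsymbol{u}},\boldsymbol{u})$. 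Combining the two steps delivers $\|\widehat{\boldsymbol{u}}-\boldsymbol{u}\|_2\le 2^{3/2}\vartheta^{-1}\|\mathbf{M}_1-\mathbf{M}_2\|_{\mathrm{op}}$.

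I do not expect a genuinely hard step here. The only real content is the bookkeeping of the eigengap conventions (one-sided versus two-sided separation, and the $0$ versus $-\infty$ endpoint, which only matters when $i=n$ and is harmless because $\mathbf{M}_1$ is positive semidefinite) together with the elementary observation that the sign-alignment hypothesis is precisely what converts $\sin\Theta$ control into $\ell_2$ control. Equivalently, one may cite Corollary~1 of \cite{yu2015useful} in the single-eigenvector case almost verbatim, the sole point needing a remark being that endpoint-convention mismatch; I would most likely write the argument in that compressed form.
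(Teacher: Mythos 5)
Your proposal is correct and follows essentially the same route as the paper, which states this lemma as a minor adaptation of Corollary~1 of \cite{yu2015useful} and omits the proof: you invoke the single-eigenvector $\sin\Theta$ bound and convert it to the $\ell_2$ bound via $\|\widehat{\boldsymbol{u}}-\boldsymbol{u}\|_2^2=2(1-\langle\boldsymbol{u},\widehat{\boldsymbol{u}}\rangle)\le 2\sin^2\Theta$, exactly recovering the $2^{3/2}\vartheta^{-1}$ constant, and your handling of the $\lambda_{n+1}(\mathbf{M}_1)=0$ versus $-\infty$ endpoint convention (harmless since $\mathbf{M}_1$ is positive semidefinite) is right. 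Your remark that $\boldsymbol{u},\widehat{\boldsymbol{u}}$ must be read as unit eigenvectors is also the intended interpretation, consistent with how the lemma is applied in Corollary~\ref{cor:bekk upper bound} and Theorem~\ref{thm:selection consistency of K_i}.
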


	\section{Auxiliary Matrices and Operators}\label{append:Specification}
	\renewcommand{\theassumption}{A.\arabic{assum}}
	\renewcommand{\thetheorem}{A.\arabic{thm}}
	\subsection{Explicit Forms of \texorpdfstring{$\mathbf{D}_N$}{D\_N} and \texorpdfstring{$\mathbf{D}_N^\dagger$}{D\_Ndagger}}\label{subappend:duplication-matrix-elimination-matrix}
	Denote  $f(l,j) = \frac{1}{2} (j-1)(2N-j+2) + (l-j+1)$ for $1 \leq j \leq l \leq N$. $\mathbf{D}_N$ satisfies $(\mathbf{D}_N)_{r, f(l,j)} = \mathds{1}\{l = j\}\,\delta_{r, l+(j-1)N} + \mathds{1}\{l > j\}\, (\delta_{r, l+(j-1)N} + \delta_{r, j+(l-1)N})$, where $\delta_{a,b}$ represents the Kronecker delta, i.e, $\delta_{a,b} = 1$ if $a=b$ and $0$ otherwise. Through straightforward calculations, we can easily verify that $\mathbf{D}_N^{\dagger}$ satisfies $(\mathbf{D}_N^{\dagger})_{f(l,j), r} = \mathds{1}\{l = j\}\,\delta_{r, l+(j-1)N} + \frac12\mathds{1}\{l > j\}\, (\delta_{r, l+(j-1)N} + \delta_{r, j+(l-1)N})$.\\
	\textbf{Examples of $\mathbf{D}_N$ and $\mathbf{D}_N^{\dagger}$ for $N=3$}: The duplication matrix $\mathbf{D}_3$ and its Moore-Penrose inverse $\mathbf{D}_3^{\dagger}$, i.e., elimination matrix are given by:
	\begin{align*}
		\mathbf{D}_3 = \begin{bmatrix}
			1 & 0 & 0 & 0 & 0 & 0 \\
			0 & 1 & 0 & 0 & 0 & 0 \\
			0 & 0 & 1 & 0 & 0 & 0 \\
			0 & 1 & 0 & 0 & 0 & 0 \\
			0 & 0 & 0 & 1 & 0 & 0 \\
			0 & 0 & 0 & 0 & 1 & 0 \\
			0 & 0 & 1 & 0 & 0 & 0 \\
			0 & 0 & 0 & 0 & 1 & 0 \\
			0 & 0 & 0 & 0 & 0 & 1
		\end{bmatrix} \in \mathbb{R}^{9 \times 6}, \quad \mathbf{D}_3^{\dagger} = \begin{bmatrix}
			1 & 0 & 0 & 0 & 0 & 0 & 0 & 0 & 0 \\
			0 & 1/2 & 0 & 1/2 & 0 & 0 & 0 & 0 & 0 \\
			0 & 0 & 1/2 & 0 & 0 & 0 & 1/2 & 0 & 0 \\
			0 & 0 & 0 & 0 & 1 & 0 & 0 & 0 & 0 \\
			0 & 0 & 0 & 0 & 0 & 1/2 & 0 & 1/2 & 0 \\
			0 & 0 & 0 & 0 & 0 & 0 & 0 & 0 & 1
		\end{bmatrix} \in \mathbb{R}^{6 \times 9}.
	\end{align*}

	\subsection{Construction of \texorpdfstring{$\mathcal{H}(\bm{\Phi},\mathbf{W})$}{H(Phi,W)}}\label{subappend:construction of H(W)}
	Throughout, $\mathcal{H}(\cdot,\mathbf{W})$ denotes the padding construction defined below. Apart from substituting $\bm{\Phi}_i^*$ with $\widehat{\bm{\Phi}}_i$, the procedure remains identical, and all results established for $\mathcal{H}(\bm{\Phi}_i^*, \mathbf{W}_i)$ also carry over to $\mathcal{H}(\widehat{\bm{\Phi}}_i, \mathbf{W}_i)$. For brevity, we simply write $\mathcal{H}(\bm{\Phi},\mathbf{W})$.	Let
	\begin{align*}
		\kappa(u,v) =
			\begin{cases}
			\dfrac{(u-1)(2N-u+2)}{2}, & u=v,\\[6pt]
			\dfrac{(u-1)(2N-u+2)}{2} + (v-u), & u<v,
			\end{cases}
	\end{align*}
	\begin{align*}
		e(u,v) = \dfrac{(u-1)(2N-u)}{2} + (v-u-1), \; u<v,
	\end{align*}
	and
	\begin{align*}
		\pi(u,v) = (u-1)N + (v-1), \; 1\leq u,v\leq N.
	\end{align*}
	For any $\mathbf{W} \in \mathbb{R}^{g \times g}$ and $\bm{\Phi}\in\mathbb{R}^{d\times d}$ with $g=N(N-1)/2$ and $d=N(N+1)/2$, the construction of $\mathcal{H}(\bm{\Phi},\mathbf{W}) \in \mathbb{R}^{N^2\times N^2}$ proceeds in two stages: column-wise padding and row-wise padding.\\
	\textbf{Column-wise padding:}
	Define the intermediate column-extended matrix $\widetilde{\mathcal H}(\bm{\Phi},\mathbf{W})\in\mathbb{R}^{d\times N^2}$ column by column as follows.
	For each $1\leq l\leq N$,
	\begin{align*}
		\widetilde{\mathcal H}(\bm{\Phi},\mathbf{W})\big[:\,,\,\pi(l,l)\big]
		\;=\;\bm{\Phi}\big[:\,,\,\kappa(l,l)\big].
	\end{align*}
	For each $1\leq j<l\leq N$, we distinguish two cases:\\
	\textbf{Case 1}: $1\leq h\leq N$,
	\begin{align*}
		\widetilde{\mathcal H}(\bm{\Phi},\mathbf{W})\big[\kappa(h,h),\,\pi(j,l)\big]
		\;=\;\frac{1}{2}\bm{\Phi}\big[\kappa(h,h),\,\kappa(j,l)\big],
	\end{align*}
	\begin{align*}
		\widetilde{\mathcal H}(\bm{\Phi},\mathbf{W})\big[\kappa(h,h),\,\pi(l,j)\big]
		\;=\;\frac{1}{2}\bm{\Phi}\big[\kappa(h,h),\,\kappa(j,l)\big].
	\end{align*}
	\textbf{Case 2}: $1\leq k<h\leq N$,
	\begin{align*}
		\widetilde{\mathcal H}(\bm{\Phi},\mathbf{W})\big[\kappa(k,h),\,\pi(l,j)\big]
		\;=\;\mathbf{W}\big[e(j,l),\,e(k,h)\big],
	\end{align*}
	\begin{align*}
		\widetilde{\mathcal H}(\bm{\Phi},\mathbf{W})\big[\kappa(k,h),\,\pi(j,l)\big]
		\;=\;\bm{\Phi}\big[\kappa(k,h),\,\kappa(j,l)\big]\;-\;\mathbf{W}\big[e(j,l),\,e(k,h)\big].
	\end{align*}
	\textbf{Row-wise padding:}
	The final matrix $\mathcal H(\bm{\Phi},\mathbf{W})\in\mathbb{R}^{N^2\times N^2}$ is obtained by a fixed reindexing of the rows of $\widetilde{\mathcal H}(\bm{\Phi},\mathbf{W})$.\\
	For $1\leq l\leq N$,
	\begin{align*}
		\mathcal H(\bm{\Phi},\mathbf{W})\big[\pi(l,l),:\big]
		\;=\;\widetilde{\mathcal H}(\bm{\Phi},\mathbf{W})\big[\kappa(l,l),:\big].
	\end{align*}
	For $1\leq j<l\leq N$, we distinguish two cases:\\
	\textbf{Case 1}: $1\leq h\leq N$,
	\begin{align*}
		\mathcal H(\bm{\Phi},\mathbf{W})\big[\pi(j,l),\,\pi(h,h)\big]
		\;=\;\widetilde{\mathcal H}(\bm{\Phi},\mathbf{W})\big[\kappa(j,l),\,\pi(h,h)\big],
	\end{align*}
	\textbf{Case 2}: $1\leq k<h\leq N$,
	\begin{align*}
		\mathcal H(\bm{\Phi},\mathbf{W})\big[\pi(l,j),\,\pi(k,h)\big]
		&=\;\widetilde{\mathcal H}(\bm{\Phi},\mathbf{W})\big[\kappa(j,l),\,\pi(k,h)\big],\\
		\mathcal H(\bm{\Phi},\mathbf{W})\big[\pi(l,j),\,\pi(h,k)\big]
		&=\;\widetilde{\mathcal H}(\bm{\Phi},\mathbf{W})\big[\kappa(j,l),\,\pi(h,k)\big],\\
		\mathcal H(\bm{\Phi},\mathbf{W})\big[\pi(j,l),\,\pi(k,h)\big]
		&=\;\widetilde{\mathcal H}(\bm{\Phi},\mathbf{W})\big[\kappa(j,l),\,\pi(h,k)\big],\\
		\mathcal H(\bm{\Phi},\mathbf{W})\big[\pi(j,l),\,\pi(h,k)\big]
		&=\;\widetilde{\mathcal H}(\bm{\Phi},\mathbf{W})\big[\kappa(j,l),\,\pi(k,h)\big].
	\end{align*}

	\subsection{Construction of \texorpdfstring{$\mathcal{R}(\cdot)$}{R(·)}}\label{subappend:construction of R(M)}
	Denote $\iota(i,j) = (i-1)N + j$ for $1 \leq i,j \leq N$. For any matrix $\mathbf{M}\in\mathbb{R}^{N^2\times N^2}$, whose entries we index as $\mathbf{M}[\iota(i,j),\,\iota(k,l)]$, we define the rearrangement operator $\mathcal{R}(\cdot):\mathbb{R}^{N^2\times N^2}\to\mathbb{R}^{N^2\times N^2}$ by
	\begin{align*}
		\mathcal{R}(\mathbf{M})[{\iota(i,k),\,\iota(j,l)}]= \mathbf{M}[\iota(i,j),\,\iota(k,l)],\; 1\leq i,j,k,l\leq N.
	\end{align*}
	\textbf{Example ($N=2$).} For illustration, let $\mathbf{X} \in \mathbb{R}^{2 \times 2}$ to correspond to the kronecker product $\mathbf{X} \otimes \mathbf{X}$ structure in the main paper. Then, we have 
	\begin{align*}
		\mathbf{X} \otimes \mathbf{X} =
		\left[\begin{array}{cc|cc}
			x_{11}x_{11} & x_{11}x_{12} & x_{12}x_{11} & x_{12}x_{12} \\
			x_{11}x_{21} & x_{11}x_{22} & x_{12}x_{21} & x_{12}x_{22} \\
			\hline
			x_{21}x_{11} & x_{21}x_{12} & x_{22}x_{11} & x_{22}x_{12} \\
			x_{21}x_{21} & x_{21}x_{22} & x_{22}x_{21} & x_{22}x_{22} \\
		\end{array} \right] 
	\end{align*}
	and 
	\begin{align*}
		\mathcal{R}(\mathbf{X} \otimes \mathbf{X}) =
		\left[ \begin{array}{c|c|c|c}
			x_{11}x_{11} & x_{21}x_{11} & x_{12}x_{11} & x_{22}x_{11} \\
			x_{11}x_{21} & x_{21}x_{21} & x_{12}x_{21} & x_{22}x_{21} \\
			x_{11}x_{12} & x_{21}x_{12} & x_{12}x_{12} & x_{22}x_{12} \\
			x_{11}x_{22} & x_{21}x_{22} & x_{12}x_{22} & x_{22}x_{22} \\
		\end{array}\right]
		= \operatorname{vec}(\mathbf{X})\operatorname{vec}^\mathrm{T}(\mathbf{X}).
	\end{align*}

	\section{Additional Simulation Results}\label{append:additional simulation results}
	\renewcommand{\thefigure}{E.\arabic{figure}}
	\renewcommand{\thetable}{E.\arabic{table}}
	\setcounter{table}{0}

	To save space, we only report the estimation and prediction errors on the configuration $\mathcal K_3^*=\{2,1,1\}$ in the main paper. In this supplementary material, we provide the corresponding results for $\mathcal{K}_3 = \{1,1,1\}$ and $\{1,2,1\}$ in Tables~\ref{tab:theta_error_combined_K111}--\ref{tab:sigma_error_combined_K121}.
	The overall patterns are consistent with those under $\mathcal{K}_3=\{2,1,1\}$ in Section 5 of the main paper, and therefore we omit a repeated discussion of these results.

	\begin{table}[H]
	\centering
	\setlength{\tabcolsep}{3pt}
	\renewcommand{\arraystretch}{1.15}
	\caption{Comparison of estimation errors for $\bbm \Theta^*$ under $\mathcal{K}_3=\{1,1,1\}$ for $(N,s)=(20,3)$ and $(N,s)=(100,10)$. Entries are means with standard deviations in parentheses. The best values are shown in bold.}
	\label{tab:theta_error_combined_K111}
	\begin{tabular}{llcccc|cccc}
	\toprule \toprule
	&
	& \multicolumn{4}{c|}{$N=20,\ s=3$}
	& \multicolumn{4}{c}{$N=100,\ s=10$} \\
	\cmidrule(lr){3-6} \cmidrule(lr){7-10}
	Distribution $\backslash$ $T$
	&
	& $600$ & $1200$ & $1800$ & $2400$
	& $900$ & $1800$ & $2700$ & $3600$ \\
	\midrule
	\multirow[c]{2.5}{*}{Gaussian}
	& $\widehat{\bbm \Theta}$
	& \makecell{\textbf{1.902}\\\textbf{(0.015)}}
	& \makecell{\textbf{1.456}\\\textbf{(0.012)}}
	& \makecell{\textbf{1.156}\\\textbf{(0.017)}}
	& \makecell{\textbf{0.998}\\\textbf{(0.014)}}
	& \makecell{\textbf{1.257}\\\textbf{(0.006)}}
	& \makecell{\textbf{1.072}\\\textbf{(0.006)}}
	& \makecell{\textbf{1.049}\\\textbf{(0.004)}}
	& \makecell{\textbf{0.963}\\\textbf{(0.003)}} \\
	&
	$\widehat{\bbm \Theta}^{\star}$
	& \makecell{4.175\\(0.226)}
	& \makecell{2.192\\(0.088)}
	& \makecell{1.571\\(0.062)}
	& \makecell{1.305\\(0.048)}
	& \makecell{2.674\\(0.142)}
	& \makecell{2.502\\(0.121)}
	& \makecell{1.906\\(0.146)}
	& \makecell{1.982\\(0.118)} \\
	\midrule
	\multirow[c]{2.5}{*}{Laplace}
	& $\widehat{\bbm \Theta}$
	& \makecell{\textbf{1.372}\\\textbf{(0.013)}}
	& \makecell{\textbf{1.106}\\\textbf{(0.010)}}
	& \makecell{\textbf{0.953}\\\textbf{(0.006)}}
	& \makecell{\textbf{0.892}\\\textbf{(0.006)}}
	& \makecell{\textbf{0.912}\\\textbf{(0.003)}}
	& \makecell{\textbf{0.948}\\\textbf{(0.007)}}
	& \makecell{\textbf{0.862}\\\textbf{(0.006)}}
	& \makecell{\textbf{0.834}\\\textbf{(0.003)}} \\
	&
	$\widehat{\bbm \Theta}^{\star}$
	& \makecell{6.178\\(0.626)}
	& \makecell{2.877\\(0.283)}
	& \makecell{2.055\\(0.247)}
	& \makecell{1.710\\(0.231)}
	& \makecell{2.739\\(0.116)}
	& \makecell{2.514\\(0.173)}
	& \makecell{2.318\\(0.140)}
	& \makecell{1.967\\(0.145)} \\
	\midrule
	\multirow[c]{2.5}{*}{$t_{4.2}$}
	& $\widehat{\bbm \Theta}$
	& \makecell{\textbf{1.744}\\\textbf{(0.021)}}
	& \makecell{\textbf{1.382}\\\textbf{(0.022)}}
	& \makecell{\textbf{1.088}\\\textbf{(0.015)}}
	& \makecell{\textbf{0.938}\\\textbf{(0.015)}}
	& \makecell{\textbf{1.042}\\\textbf{(0.018)}}
	& \makecell{\textbf{0.980}\\\textbf{(0.008)}}
	& \makecell{\textbf{0.871}\\\textbf{(0.007)}}
	& \makecell{\textbf{0.901}\\\textbf{(0.006)}} \\
	&
	$\widehat{\bbm\Theta}^{\star}$
	& \makecell{6.589\\(1.604)}
	& \makecell{3.414\\(0.746)}
	& \makecell{2.511\\(0.700)}
	& \makecell{2.074\\(0.590)}
	& \makecell{2.788\\(0.225)}
	& \makecell{2.597\\(0.223)}
	& \makecell{2.366\\(0.178)}
	& \makecell{1.991\\(0.320)} \\
	\bottomrule
	\end{tabular}
	\end{table}

	\begin{table}[H]
	\centering
	\setlength{\tabcolsep}{3pt}
	\renewcommand{\arraystretch}{1.15}
	\caption{Comparison of estimation errors for $\bbm \Theta^*$ under $\mathcal{K}_3=\{1,2,1\}$ for $(N,s)=(20,3)$ and $(N,s)=(100,10)$. Entries are means with standard deviations in parentheses. The best values are shown in bold.}
	\label{tab:theta_error_combined_K121}
	\begin{tabular}{llcccc|cccc}
	\toprule \toprule
	&
	& \multicolumn{4}{c|}{$N=20,\ s=3$}
	& \multicolumn{4}{c}{$N=100,\ s=10$} \\
	\cmidrule(lr){3-6} \cmidrule(lr){7-10}
	Distribution $\backslash$ $T$
	&
	& $600$ & $1200$ & $1800$ & $2400$
	& $900$ & $1800$ & $2700$ & $3600$ \\
	\midrule
	\multirow[c]{2.5}{*}{Gaussian}
	& $\widehat{\bbm \Theta}$
	& \makecell{\textbf{1.769}\\\textbf{(0.014)}}
	& \makecell{\textbf{1.380}\\\textbf{(0.012)}}
	& \makecell{\textbf{1.155}\\\textbf{(0.018)}}
	& \makecell{\textbf{1.008}\\\textbf{(0.014)}}
	& \makecell{\textbf{1.256}\\\textbf{(0.008)}}
	& \makecell{\textbf{1.238}\\\textbf{(0.004)}}
	& \makecell{\textbf{1.203}\\\textbf{(0.003)}}
	& \makecell{\textbf{1.229}\\\textbf{(0.003)}} \\
	&
	$\widehat{\bbm \Theta}^{\star}$
	& \makecell{4.127\\(0.220)}
	& \makecell{2.221\\(0.095)}
	& \makecell{1.622\\(0.091)}
	& \makecell{1.344\\(0.051)}
	& \makecell{2.734\\(0.062)}
	& \makecell{2.551\\(0.069)}
	& \makecell{2.315\\(0.132)}
	& \makecell{2.020\\(0.105)} \\
	\midrule
	\multirow[c]{2.5}{*}{Laplace}
	& $\widehat{\bbm \Theta}$
	& \makecell{\textbf{1.498}\\\textbf{(0.016)}}
	& \makecell{\textbf{1.118}\\\textbf{(0.011)}}
	& \makecell{\textbf{0.973}\\\textbf{(0.007)}}
	& \makecell{\textbf{0.926}\\\textbf{(0.008)}}
	& \makecell{\textbf{1.268}\\\textbf{(0.010)}}
	& \makecell{\textbf{1.199}\\\textbf{(0.009)}}
	& \makecell{\textbf{1.113}\\\textbf{(0.007)}}
	& \makecell{\textbf{1.074}\\\textbf{(0.006)}} \\
	&
	$\widehat{\bbm \Theta}^{\star}$
	& \makecell{6.298\\(0.695)}
	& \makecell{2.940\\(0.314)}
	& \makecell{2.166\\(0.318)}
	& \makecell{1.773\\(0.196)}
	& \makecell{2.753\\(0.189)}
	& \makecell{2.568\\(0.141)}
	& \makecell{2.357\\(0.095)}
	& \makecell{1.991\\(0.133)} \\
	\midrule
	\multirow[c]{2.5}{*}{$t_{4.2}$}
	& $\widehat{\bbm \Theta}$
	& \makecell{\textbf{1.703}\\\textbf{(0.021)}}
	& \makecell{\textbf{1.369}\\\textbf{(0.022)}}
	& \makecell{\textbf{1.112}\\\textbf{(0.014)}}
	& \makecell{\textbf{0.959}\\\textbf{(0.014)}}
	& \makecell{\textbf{1.249}\\\textbf{(0.012)}}
	& \makecell{\textbf{1.222}\\\textbf{(0.008)}}
	& \makecell{\textbf{1.154}\\\textbf{(0.006)}}
	& \makecell{\textbf{1.133}\\\textbf{(0.005)}} \\
	&
	$\widehat{\bbm\Theta}^{\star}$
	& \makecell{6.513\\(1.648)}
	& \makecell{3.425\\(0.838)}
	& \makecell{2.529\\(0.517)}
	& \makecell{2.164\\(0.607)}
	& \makecell{2.869\\(0.172)}
	& \makecell{2.619\\(0.194)}
	& \makecell{2.369\\(0.169)}
	& \makecell{2.026\\(0.277)} \\
	\bottomrule
	\end{tabular}
	\end{table}

	\begin{table}[H]
	\setlength{\tabcolsep}{3pt}
	\centering
	\renewcommand{\arraystretch}{1.15}
	\caption{Comparison of estimation errors for $\bm A_{11}^*$ under $\mathcal{K}_3=\{1,1,1\}$ for $(N,s)=(20,3)$ and $(N,s)=(100,10)$. Entries are mean with standard deviation in parentheses. The best values are shown in bold and the second-best are underlined.}
	\label{tab:a11_error_combined_K111}
	\begin{tabular}{llcccc|cccc}
	\toprule \toprule
	&
	& \multicolumn{4}{c|}{$N=20,\ s=3$}
	& \multicolumn{4}{c}{$N=100,\ s=10$} \\
	\cmidrule(lr){3-6} \cmidrule(lr){7-10}
	Distribution\ \textbackslash \ $T$
	&
	& $600$ & $1200$ & $1800$ & $2400$
	& $900$ & $1800$ & $2700$ & $3600$ \\
	\midrule
	\multirow[c]{6}{*}{Gaussian}
	& $\widetilde{\mathbf{A}}_{11}$
	& \makecell{\underline{1.169}\\\textbf{(0.043)}}
	& \makecell{\underline{0.843}\\\underline{(0.087)}}
	& \makecell{\underline{0.643}\\\underline{(0.049)}}
	& \makecell{\underline{0.546}\\\underline{(0.026)}}
	& \makecell{1.263\\\textbf{(0.047)}}
	& \makecell{1.187\\\textbf{(0.051)}}
	& \makecell{1.138\\\underline{(0.038)}}
	& \makecell{1.129\\(0.026)} \\
	&
	$\widehat{\mathbf{A}}_{11}$
	& \makecell{\textbf{1.165}\\\underline{(0.049)}}
	& \makecell{\textbf{0.639}\\\textbf{(0.063)}}
	& \makecell{\textbf{0.491}\\\textbf{(0.021)}}
	& \makecell{\textbf{0.436}\\\textbf{(0.014)}}
	& \makecell{\textbf{0.886}\\(0.092)}
	& \makecell{\textbf{0.719}\\(0.058)}
	& \makecell{\textbf{0.656}\\\textbf{(0.018)}}
	& \makecell{\textbf{0.617}\\\textbf{(0.019)}} \\
	&
	$\widetilde{\mathbf{A}}_{11}^{\star}$
	& \makecell{1.494\\(0.071)}
	& \makecell{1.196\\(0.108)}
	& \makecell{0.833\\(0.166)}
	& \makecell{0.645\\(0.094)}
	& \makecell{1.345\\(0.062)}
	& \makecell{1.271\\\underline{(0.056)}}
	& \makecell{1.207\\(0.042)}
	& \makecell{1.163\\\underline{(0.023)}} \\
	&
	$\widehat{\mathbf{A}}_{11}^{\star}$
	& \makecell{1.498\\(0.071)}
	& \makecell{1.194\\(0.123)}
	& \makecell{0.744\\(0.225)}
	& \makecell{0.562\\(0.126)}
	& \makecell{\underline{1.003}\\\underline{(0.048)}}
	& \makecell{\underline{0.984}\\(0.069)}
	& \makecell{\underline{0.927}\\(0.063)}
	& \makecell{\underline{0.881}\\(0.055)} \\
	\midrule
	\multirow[c]{6}{*}{Laplace}
	& $\widetilde{\mathbf{A}}_{11}$
	& \makecell{\underline{1.058}\\\textbf{(0.069)}}
	& \makecell{\underline{0.729}\\\underline{(0.075)}}
	& \makecell{\underline{0.549}\\\underline{(0.033)}}
	& \makecell{\underline{0.482}\\\underline{(0.022)}}
	& \makecell{1.159\\\textbf{(0.051)}}
	& \makecell{1.088\\\underline{(0.036)}}
	& \makecell{1.074\\\underline{(0.030)}}
	& \makecell{1.077\\\underline{(0.019)}} \\
	&
	$\widehat{\mathbf{A}}_{11}$
	& \makecell{\textbf{0.989}\\\underline{(0.116)}}
	& \makecell{\textbf{0.545}\\\textbf{(0.025)}}
	& \makecell{\textbf{0.430}\\\textbf{(0.021)}}
	& \makecell{\textbf{0.395}\\\textbf{(0.013)}}
	& \makecell{\textbf{0.802}\\(0.114)}
	& \makecell{\textbf{0.645}\\\textbf{(0.017)}}
	& \makecell{\textbf{0.597}\\\textbf{(0.017)}}
	& \makecell{\textbf{0.581}\\\textbf{(0.018)}} \\
	&
	$\widetilde{\mathbf{A}}_{11}^{\star}$
	& \makecell{1.708\\(0.130)}
	& \makecell{1.357\\(0.129)}
	& \makecell{1.132\\(0.209)}
	& \makecell{0.950\\(0.258)}
	& \makecell{1.433\\(0.102)}
	& \makecell{1.338\\(0.062)}
	& \makecell{1.237\\(0.063)}
	& \makecell{1.233\\(0.030)} \\
	&
	$\widehat{\mathbf{A}}_{11}^{\star}$
	& \makecell{1.711\\(0.131)}
	& \makecell{1.355\\(0.131)}
	& \makecell{1.101\\(0.251)}
	& \makecell{0.885\\(0.315)}
	& \makecell{\underline{1.049}\\\underline{(0.056)}}
	& \makecell{\underline{1.025}\\(0.061)}
	& \makecell{\underline{0.996}\\(0.082)}
	& \makecell{\underline{0.926}\\(0.069)} \\
	\midrule
	\multirow[c]{6}{*}{$t_{4.2}$}
	& $\widetilde{\mathbf{A}}_{11}$
	& \makecell{\underline{1.127}\\\textbf{(0.059)}}
	& \makecell{\underline{0.787}\\\underline{(0.072)}}
	& \makecell{\underline{0.596}\\\underline{(0.042)}}
	& \makecell{\underline{0.503}\\\underline{(0.022)}}
	& \makecell{1.246\\\textbf{(0.059)}}
	& \makecell{\underline{1.148}\\\underline{(0.045)}}
	& \makecell{\underline{1.100}\\\underline{(0.052)}}
	& \makecell{\underline{1.065}\\\underline{(0.041)}} \\
	&
	$\widehat{\mathbf{A}}_{11}$
	& \makecell{\textbf{1.124}\\\underline{(0.061)}}
	& \makecell{\textbf{0.624}\\\textbf{(0.062)}}
	& \makecell{\textbf{0.473}\\\textbf{(0.022)}}
	& \makecell{\textbf{0.412}\\\textbf{(0.013)}}
	& \makecell{\textbf{0.888}\\(0.135)}
	& \makecell{\textbf{0.662}\\\textbf{(0.020)}}
	& \makecell{\textbf{0.604}\\\textbf{(0.010)}}
	& \makecell{\textbf{0.574}\\\textbf{(0.008)}} \\
	&
	$\widetilde{\mathbf{A}}_{11}^{\star}$
	& \makecell{1.914\\(0.328)}
	& \makecell{1.571\\(0.279)}
	& \makecell{1.417\\(0.289)}
	& \makecell{1.264\\(0.295)}
	& \makecell{1.548\\\underline{(0.108)}}
	& \makecell{1.462\\(0.094)}
	& \makecell{1.373\\(0.079)}
	& \makecell{1.232\\(0.092)} \\
	&
	$\widehat{\mathbf{A}}_{11}^{\star}$
	& \makecell{1.915\\(0.326)}
	& \makecell{1.569\\(0.278)}
	& \makecell{1.412\\(0.294)}
	& \makecell{1.248\\(0.328)}
	& \makecell{\underline{1.169}\\(0.278)}
	& \makecell{1.150\\(0.122)}
	& \makecell{1.114\\(0.080)}
	& \makecell{1.122\\(0.095)} \\
	\bottomrule
	\end{tabular}
	\end{table}

	\begin{table}[H]
	\centering
	\setlength{\tabcolsep}{3pt}
	\renewcommand{\arraystretch}{1.15}
	\caption{Comparison of estimation errors for $\bm A_{11}^*$ under $\mathcal{K}_3=\{1,2,1\}$ for $(N,s)=(20,3)$ and $(N,s)=(100,10)$. Entries are mean with standard deviation in parentheses. The best values are shown in bold and the second-best are underlined.}
	\label{tab:a11_error_combined_K121}
	\begin{tabular}{llcccc|cccc}
	\toprule \toprule
	&
	& \multicolumn{4}{c|}{$N=20,\ s=3$}
	& \multicolumn{4}{c}{$N=100,\ s=10$} \\
	\cmidrule(lr){3-6} \cmidrule(lr){7-10}
	Distribution\ \textbackslash \ $T$
	&
	& $600$ & $1200$ & $1800$ & $2400$
	& $900$ & $1800$ & $2700$ & $3600$ \\
	\midrule
	\multirow[c]{6}{*}{Gaussian}
	& $\widetilde{\mathbf{A}}_{11}$
	& \makecell{\underline{1.141}\\\textbf{(0.046)}}
	& \makecell{\underline{0.831}\\\underline{(0.090)}}
	& \makecell{\underline{0.632}\\\underline{(0.045)}}
	& \makecell{\underline{0.543}\\\underline{(0.022)}}
	& \makecell{1.247\\\textbf{(0.041)}}
	& \makecell{1.221\\(0.058)}
	& \makecell{1.231\\(0.044)}
	& \makecell{1.142\\(0.033)} \\
	&
	$\widehat{\mathbf{A}}_{11}$
	& \makecell{\textbf{1.138}\\\underline{(0.057)}}
	& \makecell{\textbf{0.604}\\\textbf{(0.036)}}
	& \makecell{\textbf{0.490}\\\textbf{(0.022)}}
	& \makecell{\textbf{0.417}\\\textbf{(0.014)}}
	& \makecell{\textbf{0.885}\\(0.085)}
	& \makecell{\textbf{0.738}\\(0.070)}
	& \makecell{\textbf{0.659}\\\underline{(0.045)}}
	& \makecell{\textbf{0.632}\\\textbf{(0.021)}} \\
	&
	$\widetilde{\mathbf{A}}_{11}^{\star}$
	& \makecell{1.506\\(0.083)}
	& \makecell{1.199\\(0.106)}
	& \makecell{0.825\\(0.163)}
	& \makecell{0.646\\(0.100)}
	& \makecell{1.521\\(0.116)}
	& \makecell{1.352\\\textbf{(0.046)}}
	& \makecell{1.279\\\textbf{(0.031)}}
	& \makecell{1.204\\\underline{(0.025)}} \\
	&
	$\widehat{\mathbf{A}}_{11}^{\star}$
	& \makecell{1.512\\(0.081)}
	& \makecell{1.193\\(0.119)}
	& \makecell{0.732\\(0.220)}
	& \makecell{0.564\\(0.145)}
	& \makecell{\underline{0.993}\\\underline{(0.041)}}
	& \makecell{\underline{0.982}\\\underline{(0.057)}}
	& \makecell{\underline{0.896}\\(0.047)}
	& \makecell{\underline{0.861}\\(0.039)} \\
	\midrule
	\multirow[c]{6}{*}{Laplace}
	& $\widetilde{\mathbf{A}}_{11}$
	& \makecell{\underline{1.083}\\\textbf{(0.063)}}
	& \makecell{\underline{0.722}\\\underline{(0.070)}}
	& \makecell{\underline{0.556}\\\underline{(0.032)}}
	& \makecell{\underline{0.497}\\\underline{(0.022)}}
	& \makecell{1.287\\\underline{(0.051)}}
	& \makecell{1.236\\\textbf{(0.045)}}
	& \makecell{1.205\\\underline{(0.037)}}
	& \makecell{1.128\\\underline{(0.030)}} \\
	&
	$\widehat{\mathbf{A}}_{11}$
	& \makecell{\textbf{1.041}\\\underline{(0.106)}}
	& \makecell{\textbf{0.545}\\\textbf{(0.024)}}
	& \makecell{\textbf{0.433}\\\textbf{(0.019)}}
	& \makecell{\textbf{0.401}\\\textbf{(0.013)}}
	& \makecell{\textbf{0.848}\\(0.096)}
	& \makecell{\textbf{0.687}\\(0.066)}
	& \makecell{\textbf{0.620}\\\textbf{(0.021)}}
	& \makecell{\textbf{0.593}\\\textbf{(0.010)}} \\
	&
	$\widetilde{\mathbf{A}}_{11}^{\star}$
	& \makecell{1.715\\(0.133)}
	& \makecell{1.352\\(0.141)}
	& \makecell{1.192\\(0.186)}
	& \makecell{0.999\\(0.238)}
	& \makecell{1.616\\(0.072)}
	& \makecell{1.541\\(0.069)}
	& \makecell{1.438\\(0.063)}
	& \makecell{1.347\\(0.050)} \\
	&
	$\widehat{\mathbf{A}}_{11}^{\star}$
	& \makecell{1.717\\(0.135)}
	& \makecell{1.359\\(0.138)}
	& \makecell{1.170\\(0.220)}
	& \makecell{0.953\\(0.301)}
	& \makecell{\underline{1.001}\\\textbf{(0.045)}}
	& \makecell{\underline{1.014}\\\underline{(0.056)}}
	& \makecell{\underline{0.987}\\(0.073)}
	& \makecell{\underline{0.932}\\(0.083)} \\
	\midrule
	\multirow[c]{6}{*}{$t_{4.2}$}
	& $\widetilde{\mathbf{A}}_{11}$
	& \makecell{1.105\\\textbf{(0.063)}}
	& \makecell{\underline{0.765}\\\underline{(0.078)}}
	& \makecell{\underline{0.599}\\\underline{(0.039)}}
	& \makecell{\underline{0.505}\\\underline{(0.020)}}
	& \makecell{1.339\\\textbf{(0.046)}}
	& \makecell{1.234\\\textbf{(0.052)}}
	& \makecell{\underline{1.123}\\\textbf{(0.061)}}
	& \makecell{\underline{1.070}\\\underline{(0.044)}} \\
	&
	$\widehat{\mathbf{A}}_{11}$
	& \makecell{\textbf{1.095}\\\underline{(0.079)}}
	& \makecell{\textbf{0.595}\\\textbf{(0.035)}}
	& \makecell{\textbf{0.473}\\\textbf{(0.020)}}
	& \makecell{\textbf{0.413}\\\textbf{(0.012)}}
	& \makecell{\textbf{0.854}\\\underline{(0.094)}}
	& \makecell{\textbf{0.704}\\\underline{(0.062)}}
	& \makecell{\textbf{0.652}\\\underline{(0.068)}}
	& \makecell{\textbf{0.598}\\\textbf{(0.010)}} \\
	&
	$\widetilde{\mathbf{A}}_{11}^{\star}$
	& \makecell{1.897\\(0.353)}
	& \makecell{1.558\\(0.300)}
	& \makecell{1.418\\(0.240)}
	& \makecell{1.281\\(0.353)}
	& \makecell{1.667\\(0.121)}
	& \makecell{1.632\\(0.109)}
	& \makecell{1.542\\(0.113)}
	& \makecell{1.399\\(0.109)} \\
	&
	$\widehat{\mathbf{A}}_{11}^{\star}$
	& \makecell{1.897\\(0.352)}
	& \makecell{1.558\\(0.298)}
	& \makecell{1.415\\(0.244)}
	& \makecell{1.268\\(0.376)}
	& \makecell{\underline{1.139}\\(0.226)}
	& \makecell{\underline{1.123}\\(0.106)}
	& \makecell{1.125\\(0.149)}
	& \makecell{1.121\\(0.104)} \\
	\bottomrule
	\end{tabular}
	\end{table}

	\setlength{\tabcolsep}{3pt}
	\renewcommand{\arraystretch}{1.0}

	\begin{longtable}{llcccc|cccc}
	\caption{Comparison of prediction errors for $\bm \Sigma_t$ under $\mathcal{K}_3=\{1,1,1\}$ for $(N,s)=(20,3)$ and $(N,s)=(100,10)$. Entries are mean with standard deviation in parentheses. The best values are shown in bold and the second-best are underlined.}
	\label{tab:sigma_error_combined_K111}\\

	\toprule \toprule
	&
	& \multicolumn{4}{c|}{$N=20,\ s=3$}
	& \multicolumn{4}{c}{$N=100,\ s=10$} \\
	\cmidrule(lr){3-6} \cmidrule(lr){7-10}
	Distribution\ \textbackslash \ $T$
	&
	& $600$ & $1200$ & $1800$ & $2400$
	& $900$ & $1800$ & $2700$ & $3600$ \\
	\midrule
	\endfirsthead

	\toprule \toprule
	&
	& \multicolumn{4}{c|}{$N=20,\ s=3$}
	& \multicolumn{4}{c}{$N=100,\ s=10$} \\
	\cmidrule(lr){3-6} \cmidrule(lr){7-10}
	Distribution\ \textbackslash \ $T$
	&
	& $600$ & $1200$ & $1800$ & $2400$
	& $900$ & $1800$ & $2700$ & $3600$ \\
	\midrule
	\endhead

	\midrule
	\multicolumn{10}{r}{Continued on next page}
	\endfoot

	\bottomrule
	\endlastfoot
	\multirow[c]{11}{*}{Gaussian}
	& $\breve{\boldsymbol{\Sigma}}_t$
	& \makecell{1.088\\(0.012)}
	& \makecell{0.898\\\textbf{(0.005)}}
	& \makecell{0.814\\\textbf{(0.004)}}
	& \makecell{0.768\\\textbf{(0.003)}}
	& \makecell{2.713\\(0.464)}
	& \makecell{2.586\\(0.226)}
	& \makecell{2.538\\(0.208)}
	& \makecell{2.494\\(0.162)} \\
	&
	$\widetilde{\boldsymbol{\Sigma}}_t$
	& \makecell{\underline{0.983}\\\textbf{(0.008)}}
	& \makecell{\underline{0.870}\\(0.017)}
	& \makecell{0.740\\(0.016)}
	& \makecell{0.654\\(0.013)}
	& \makecell{1.381\\(0.435)}
	& \makecell{1.101\\(0.211)}
	& \makecell{1.056\\(0.168)}
	& \makecell{1.063\\(0.114)} \\
	&
	$\widehat{\boldsymbol{\Sigma}}_t$
	& \makecell{\textbf{0.983}\\\underline{(0.009)}}
	& \makecell{\textbf{0.772}\\\underline{(0.018)}}
	& \makecell{\textbf{0.637}\\\underline{(0.014)}}
	& \makecell{\textbf{0.562}\\\underline{(0.010)}}
	& \makecell{\textbf{0.854}\\(0.048)}
	& \makecell{\textbf{0.789}\\\underline{(0.033)}}
	& \makecell{\textbf{0.750}\\\underline{(0.022)}}
	& \makecell{\textbf{0.719}\\\underline{(0.017)}} \\
	&
	$\breve{\boldsymbol{\Sigma}}_t^{\star}$
	& \makecell{2.529\\(0.031)}
	& \makecell{1.786\\(0.025)}
	& \makecell{1.452\\(0.026)}
	& \makecell{1.272\\(0.025)}
	& \makecell{3.229\\\textbf{(0.025)}}
	& \makecell{3.134\\\textbf{(0.013)}}
	& \makecell{3.040\\\textbf{(0.013)}}
	& \makecell{2.979\\\textbf{(0.010)}} \\
	&
	$\widetilde{\boldsymbol{\Sigma}}_t^{\star}$
	& \makecell{1.626\\(0.298)}
	& \makecell{1.086\\(0.152)}
	& \makecell{0.775\\(0.147)}
	& \makecell{0.632\\(0.107)}
	& \makecell{1.979\\(0.168)}
	& \makecell{1.786\\(0.100)}
	& \makecell{1.551\\(0.079)}
	& \makecell{1.305\\(0.031)} \\
	&
	$\widehat{\boldsymbol{\Sigma}}_t^{\star}$
	& \makecell{1.635\\(0.293)}
	& \makecell{1.086\\(0.152)}
	& \makecell{\underline{0.717}\\(0.176)}
	& \makecell{\underline{0.566}\\(0.121)}
	& \makecell{\underline{1.048}\\\underline{(0.041)}}
	& \makecell{\underline{1.029}\\(0.056)}
	& \makecell{\underline{0.971}\\(0.029)}
	& \makecell{\underline{0.942}\\(0.020)} \\
	\midrule
	\multirow[c]{11}{*}{Laplace}
	& $\breve{\boldsymbol{\Sigma}}_t$
	& \makecell{\textbf{0.877}\\\textbf{(0.003)}}
	& \makecell{0.840\\\textbf{(0.002)}}
	& \makecell{0.833\\\textbf{(0.002)}}
	& \makecell{0.830\\\textbf{(0.002)}}
	& \makecell{2.676\\(0.331)}
	& \makecell{2.548\\(0.187)}
	& \makecell{2.474\\(0.217)}
	& \makecell{2.460\\(0.191)} \\
	&
	$\widetilde{\boldsymbol{\Sigma}}_t$
	& \makecell{0.911\\\underline{(0.010)}}
	& \makecell{\underline{0.800}\\(0.020)}
	& \makecell{\underline{0.694}\\(0.016)}
	& \makecell{\underline{0.653}\\(0.017)}
	& \makecell{\underline{0.992}\\(0.218)}
	& \makecell{1.053\\(0.174)}
	& \makecell{\underline{0.987}\\(0.217)}
	& \makecell{1.055\\(0.143)} \\
	&
	$\widehat{\boldsymbol{\Sigma}}_t$
	& \makecell{\underline{0.891}\\(0.020)}
	& \makecell{\textbf{0.697}\\\underline{(0.013)}}
	& \makecell{\textbf{0.594}\\\underline{(0.014)}}
	& \makecell{\textbf{0.563}\\\underline{(0.011)}}
	& \makecell{\textbf{0.818}\\\underline{(0.045)}}
	& \makecell{\textbf{0.731}\\\underline{(0.025)}}
	& \makecell{\textbf{0.687}\\\underline{(0.027)}}
	& \makecell{\textbf{0.672}\\\underline{(0.022)}} \\
	&
	$\breve{\boldsymbol{\Sigma}}_t^{\star}$
	& \makecell{2.691\\(0.063)}
	& \makecell{1.965\\(0.068)}
	& \makecell{1.634\\(0.086)}
	& \makecell{1.451\\(0.098)}
	& \makecell{3.241\\\textbf{(0.025)}}
	& \makecell{3.144\\\textbf{(0.016)}}
	& \makecell{3.073\\\textbf{(0.010)}}
	& \makecell{2.997\\\textbf{(0.009)}} \\
	&
	$\widetilde{\boldsymbol{\Sigma}}_t^{\star}$
	& \makecell{2.377\\(0.568)}
	& \makecell{1.413\\(0.392)}
	& \makecell{1.128\\(0.371)}
	& \makecell{0.986\\(0.389)}
	& \makecell{1.998\\(0.293)}
	& \makecell{1.920\\(0.213)}
	& \makecell{1.667\\(0.070)}
	& \makecell{1.417\\(0.043)} \\
	&
	$\widehat{\boldsymbol{\Sigma}}_t^{\star}$
	& \makecell{2.383\\(0.564)}
	& \makecell{1.415\\(0.389)}
	& \makecell{1.115\\(0.376)}
	& \makecell{0.950\\(0.404)}
	& \makecell{1.092\\(0.050)}
	& \makecell{\underline{1.045}\\(0.028)}
	& \makecell{1.028\\(0.046)}
	& \makecell{\underline{0.989}\\(0.049)} \\
	\midrule
	\multirow[c]{11}{*}{$t_{4.2}$}
	& $\breve{\boldsymbol{\Sigma}}_t$
	& \makecell{1.031\\\textbf{(0.009)}}
	& \makecell{0.913\\\textbf{(0.005)}}
	& \makecell{0.807\\\textbf{(0.004)}}
	& \makecell{0.753\\\textbf{(0.003)}}
	& \makecell{2.602\\(1.180)}
	& \makecell{2.434\\(0.247)}
	& \makecell{2.339\\(0.243)}
	& \makecell{2.317\\(0.177)} \\
	&
	$\widetilde{\boldsymbol{\Sigma}}_t$
	& \makecell{\underline{0.966}\\\underline{(0.011)}}
	& \makecell{\underline{0.805}\\\underline{(0.020)}}
	& \makecell{\underline{0.685}\\(0.017)}
	& \makecell{\underline{0.605}\\(0.013)}
	& \makecell{\underline{0.886}\\(0.154)}
	& \makecell{\underline{1.027}\\(0.197)}
	& \makecell{\underline{0.959}\\(0.454)}
	& \makecell{\underline{0.967}\\(0.205)} \\
	&
	$\widehat{\boldsymbol{\Sigma}}_t$
	& \makecell{\textbf{0.964}\\(0.012)}
	& \makecell{\textbf{0.717}\\(0.020)}
	& \makecell{\textbf{0.594}\\\underline{(0.014)}}
	& \makecell{\textbf{0.518}\\\underline{(0.010)}}
	& \makecell{\textbf{0.836}\\\underline{(0.094)}}
	& \makecell{\textbf{0.720}\\\underline{(0.036)}}
	& \makecell{\textbf{0.672}\\\underline{(0.048)}}
	& \makecell{\textbf{0.650}\\\textbf{(0.021)}} \\
	&
	$\breve{\boldsymbol{\Sigma}}_t^{\star}$
	& \makecell{2.601\\(0.128)}
	& \makecell{2.049\\(0.172)}
	& \makecell{1.761\\(0.229)}
	& \makecell{1.572\\(0.228)}
	& \makecell{3.248\\\textbf{(0.072)}}
	& \makecell{3.173\\\textbf{(0.036)}}
	& \makecell{2.982\\(0.326)}
	& \makecell{2.955\\\underline{(0.024)}} \\
	&
	$\widetilde{\boldsymbol{\Sigma}}_t^{\star}$
	& \makecell{3.278\\(1.571)}
	& \makecell{1.983\\(0.895)}
	& \makecell{1.632\\(0.796)}
	& \makecell{1.391\\(0.706)}
	& \makecell{2.232\\(1.182)}
	& \makecell{2.052\\(0.305)}
	& \makecell{1.716\\(0.243)}
	& \makecell{1.528\\(0.234)} \\
	&
	$\widehat{\boldsymbol{\Sigma}}_t^{\star}$
	& \makecell{3.275\\(1.564)}
	& \makecell{1.980\\(0.892)}
	& \makecell{1.625\\(0.794)}
	& \makecell{1.377\\(0.707)}
	& \makecell{1.554\\(0.101)}
	& \makecell{1.261\\(0.078)}
	& \makecell{1.103\\\textbf{(0.033)}}
	& \makecell{1.045\\(0.025)} \\
	\bottomrule
	\end{longtable}

	\setlength{\tabcolsep}{3pt}
	\renewcommand{\arraystretch}{1.0}

	\begin{longtable}{llcccc|cccc}
	\caption{Comparison of prediction errors for $\bm \Sigma_t$ under $\mathcal{K}_3=\{1,2,1\}$ for $(N,s)=(20,3)$ and $(N,s)=(100,10)$. Entries are mean with standard deviation in parentheses. The best values are shown in bold and the second-best are underlined.}
	\label{tab:sigma_error_combined_K121}\\

	\toprule \toprule
	&
	& \multicolumn{4}{c|}{$N=20,\ s=3$}
	& \multicolumn{4}{c}{$N=100,\ s=10$} \\
	\cmidrule(lr){3-6} \cmidrule(lr){7-10}
	Distribution\ \textbackslash \ $T$
	&
	& $600$ & $1200$ & $1800$ & $2400$
	& $900$ & $1800$ & $2700$ & $3600$ \\
	\midrule
	\endfirsthead

	\toprule \toprule
	&
	& \multicolumn{4}{c|}{$N=20,\ s=3$}
	& \multicolumn{4}{c}{$N=100,\ s=10$} \\
	\cmidrule(lr){3-6} \cmidrule(lr){7-10}
	Distribution\ \textbackslash \ $T$
	&
	& $600$ & $1200$ & $1800$ & $2400$
	& $900$ & $1800$ & $2700$ & $3600$ \\
	\midrule
	\endhead

	\midrule
	\multicolumn{10}{r}{Continued on next page}
	\endfoot

	\bottomrule
	\endlastfoot
	\multirow[c]{4}{*}{Gaussian}
	& $\breve{\boldsymbol{\Sigma}}_t$
	& \makecell{1.039\\\underline{(0.009)}}
	& \makecell{0.867\\\textbf{(0.004)}}
	& \makecell{0.809\\\textbf{(0.003)}}
	& \makecell{0.774\\\textbf{(0.002)}}
	& \makecell{2.826\\(0.324)}
	& \makecell{2.737\\(0.189)}
	& \makecell{2.659\\(0.148)}
	& \makecell{2.634\\(0.186)} \\
	&
	$\widetilde{\boldsymbol{\Sigma}}_t$
	& \makecell{\underline{0.974}\\\textbf{(0.008)}}
	& \makecell{\underline{0.887}\\(0.018)}
	& \makecell{\underline{0.775}\\\underline{(0.016)}}
	& \makecell{0.688\\\underline{(0.015)}}
	& \makecell{1.266\\(0.232)}
	& \makecell{1.156\\(0.236)}
	& \makecell{1.065\\(0.086)}
	& \makecell{1.054\\(0.067)} \\
	&
	$\widehat{\boldsymbol{\Sigma}}_t$
	& \makecell{\textbf{0.973}\\(0.010)}
	& \makecell{\textbf{0.802}\\\underline{(0.016)}}
	& \makecell{\textbf{0.678}\\(0.019)}
	& \makecell{\textbf{0.574}\\(0.016)}
	& \makecell{\textbf{0.836}\\\underline{(0.043)}}
	& \makecell{\textbf{0.786}\\\underline{(0.036)}}
	& \makecell{\textbf{0.742}\\\underline{(0.017)}}
	& \makecell{\textbf{0.722}\\\underline{(0.016)}} \\
	&
	$\breve{\boldsymbol{\Sigma}}_t^{\star}$
	& \makecell{2.591\\(0.034)}
	& \makecell{1.836\\(0.026)}
	& \makecell{1.503\\(0.028)}
	& \makecell{1.318\\(0.026)}
	& \makecell{3.417\\\textbf{(0.035)}}
	& \makecell{3.312\\\textbf{(0.022)}}
	& \makecell{3.260\\\textbf{(0.013)}}
	& \makecell{3.196\\\textbf{(0.007)}} \\
	&
	$\widetilde{\boldsymbol{\Sigma}}_t^{\star}$
	& \makecell{1.936\\(0.369)}
	& \makecell{1.204\\(0.157)}
	& \makecell{0.914\\(0.205)}
	& \makecell{0.737\\(0.115)}
	& \makecell{2.186\\(0.271)}
	& \makecell{1.989\\(0.094)}
	& \makecell{1.662\\(0.057)}
	& \makecell{1.389\\(0.078)} \\
	&
	$\widehat{\boldsymbol{\Sigma}}_t^{\star}$
	& \makecell{1.942\\(0.365)}
	& \makecell{1.205\\(0.156)}
	& \makecell{0.869\\(0.220)}
	& \makecell{\underline{0.683}\\(0.128)}
	& \makecell{\underline{1.038}\\(0.046)}
	& \makecell{\underline{1.014}\\(0.055)}
	& \makecell{\underline{0.949}\\(0.025)}
	& \makecell{\underline{0.946}\\(0.047)} \\
	\midrule
	\multirow[c]{11}{*}{Laplace}
	& $\breve{\boldsymbol{\Sigma}}_t$
	& \makecell{\textbf{0.893}\\\textbf{(0.003)}}
	& \makecell{0.839\\\textbf{(0.002)}}
	& \makecell{0.829\\\textbf{(0.002)}}
	& \makecell{0.826\\\textbf{(0.002)}}
	& \makecell{2.729\\(0.279)}
	& \makecell{2.729\\(0.250)}
	& \makecell{2.606\\(0.250)}
	& \makecell{2.676\\(0.860)} \\
	&
	$\widetilde{\boldsymbol{\Sigma}}_t$
	& \makecell{0.925\\\underline{(0.010)}}
	& \makecell{\underline{0.814}\\(0.017)}
	& \makecell{\underline{0.718}\\(0.017)}
	& \makecell{\underline{0.685}\\(0.015)}
	& \makecell{1.363\\(0.307)}
	& \makecell{1.151\\(0.115)}
	& \makecell{1.057\\(0.123)}
	& \makecell{1.049\\(0.283)} \\
	&
	$\widehat{\boldsymbol{\Sigma}}_t$
	& \makecell{\underline{0.915}\\(0.017)}
	& \makecell{\textbf{0.733}\\\underline{(0.015)}}
	& \makecell{\textbf{0.614}\\\underline{(0.017)}}
	& \makecell{\textbf{0.579}\\\underline{(0.012)}}
	& \makecell{\textbf{0.815}\\\underline{(0.045)}}
	& \makecell{\textbf{0.747}\\\underline{(0.028)}}
	& \makecell{\textbf{0.707}\\\underline{(0.019)}}
	& \makecell{\textbf{0.685}\\\underline{(0.036)}} \\
	&
	$\breve{\boldsymbol{\Sigma}}_t^{\star}$
	& \makecell{2.779\\(0.059)}
	& \makecell{2.021\\(0.070)}
	& \makecell{1.707\\(0.106)}
	& \makecell{1.507\\(0.084)}
	& \makecell{3.446\\\textbf{(0.022)}}
	& \makecell{3.354\\\textbf{(0.016)}}
	& \makecell{3.259\\\textbf{(0.017)}}
	& \makecell{3.196\\\textbf{(0.032)}} \\
	&
	$\widetilde{\boldsymbol{\Sigma}}_t^{\star}$
	& \makecell{2.889\\(0.815)}
	& \makecell{1.618\\(0.522)}
	& \makecell{1.383\\(0.471)}
	& \makecell{1.132\\(0.334)}
	& \makecell{2.194\\(0.259)}
	& \makecell{2.046\\(0.247)}
	& \makecell{1.690\\(0.125)}
	& \makecell{1.430\\(0.233)} \\
	&
	$\widehat{\boldsymbol{\Sigma}}_t^{\star}$
	& \makecell{2.892\\(0.809)}
	& \makecell{1.622\\(0.520)}
	& \makecell{1.375\\(0.475)}
	& \makecell{1.111\\(0.353)}
	& \makecell{\underline{1.091}\\(0.068)}
	& \makecell{\underline{1.038}\\(0.031)}
	& \makecell{\underline{1.035}\\(0.066)}
	& \makecell{\underline{0.992}\\(0.093)} \\
	\midrule
	\multirow[c]{3}{*}{$t_{4.2}$}
	& $\breve{\boldsymbol{\Sigma}}_t$
	& \makecell{1.046\\\textbf{(0.010)}}
	& \makecell{0.917\\\textbf{(0.005)}}
	& \makecell{0.822\\\textbf{(0.004)}}
	& \makecell{0.766\\\textbf{(0.003)}}
	& \makecell{2.759\\(1.236)}
	& \makecell{2.655\\(0.302)}
	& \makecell{2.559\\(0.232)}
	& \makecell{2.561\\(0.211)} \\
	&
	$\widetilde{\boldsymbol{\Sigma}}_t$
	& \makecell{\underline{0.969}\\\underline{(0.011)}}
	& \makecell{\underline{0.830}\\(0.020)}
	& \makecell{\underline{0.716}\\\underline{(0.017)}}
	& \makecell{\underline{0.624}\\(0.013)}
	& \makecell{1.315\\(0.325)}
	& \makecell{1.166\\(0.147)}
	& \makecell{\underline{1.099}\\(0.332)}
	& \makecell{\underline{1.035}\\(0.086)} \\
	&
	$\widehat{\boldsymbol{\Sigma}}_t$
	& \makecell{\textbf{0.964}\\(0.013)}
	& \makecell{\textbf{0.758}\\\underline{(0.020)}}
	& \makecell{\textbf{0.622}\\(0.022)}
	& \makecell{\textbf{0.533}\\\underline{(0.013)}}
	& \makecell{\textbf{0.811}\\(0.067)}
	& \makecell{\textbf{0.752}\\\underline{(0.043)}}
	& \makecell{\textbf{0.712}\\\underline{(0.033)}}
	& \makecell{\textbf{0.685}\\\textbf{(0.023)}} \\
	&
	$\breve{\boldsymbol{\Sigma}}_t^{\star}$
	& \makecell{2.637\\(0.124)}
	& \makecell{2.088\\(0.204)}
	& \makecell{1.802\\(0.175)}
	& \makecell{1.628\\(0.215)}
	& \makecell{3.474\\\textbf{(0.037)}}
	& \makecell{3.390\\\textbf{(0.024)}}
	& \makecell{3.290\\(0.055)}
	& \makecell{3.195\\(0.148)} \\
	&
	$\widetilde{\boldsymbol{\Sigma}}_t^{\star}$
	& \makecell{3.708\\(1.780)}
	& \makecell{2.240\\(1.136)}
	& \makecell{1.773\\(0.713)}
	& \makecell{1.642\\(0.869)}
	& \makecell{2.437\\(1.089)}
	& \makecell{2.098\\(0.354)}
	& \makecell{1.807\\(0.288)}
	& \makecell{1.544\\(0.301)} \\
	&
	$\widehat{\boldsymbol{\Sigma}}_t^{\star}$
	& \makecell{3.711\\(1.774)}
	& \makecell{2.236\\(1.131)}
	& \makecell{1.768\\(0.709)}
	& \makecell{1.633\\(0.869)}
	& \makecell{\underline{1.237}\\\underline{(0.045)}}
	& \makecell{\underline{1.161}\\(0.068)}
	& \makecell{1.177\\\textbf{(0.021)}}
	& \makecell{1.081\\\underline{(0.026)}} \\
	\bottomrule
	\end{longtable}

	By providing the positive definite rate (PD-rate) of $\operatorname{vech}^{-1}(\widehat{\bm{\Theta}}^{\mathrm{T}}\bbm{x}_t)$, we show that the projection $\mathcal{P}(\cdot)$ is not frequently needed in practice. The PD-rate is defined as the proportion of time points $t$ such that $\operatorname{vech}^{-1}(\widehat{\bm{\Theta}}^{\mathrm{T}}\bbm{x}_t)$ is positive definite among $t=p+1,\ldots,T$ and taken average over 500 replications, i.e., 
	\begin{align*}
		\text{PD-proportion} = \frac{1}{500}\sum_{i=1}^{500}\frac{1}{T-p}\sum_{t=p+1}^T\mathds{1}\{\operatorname{vech}^{-1}(\widehat{\bm{\Theta}}_i^{(t)\mathrm{T}}\bbm{x}_t) \succ \bm{0}\} \times 100,
	\end{align*}
	where $\mathds{1}\{\cdot\}$ is the indicator function and $\widehat{\bm{\Theta}}_i^{(t)}$ is the regularized LSE estimated from the training data up to time $t$ in the $i$-th replication. Tables \ref{tab:PD-proportion} reports the PD-proportion for different $(N,s)$ configurations. We observe that the PD-proportion is very close to $100$ as $T$ increases, indicating that the projection $\mathcal{P}(\cdot)$ is rarely needed in practice.

	\begin{table}[H]
	\centering
	\caption{PD-proportion under different $(N,s)$ configurations.}
	\label{tab:PD-proportion}
	\renewcommand{\arraystretch}{1.0}
	\setlength{\tabcolsep}{6pt}
	\makebox[\textwidth][c]{
		\begin{tabular}{c ccc ccc ccc}
			\toprule \toprule
			$N=20$\multirow{2}{*} & \multicolumn{3}{c}{$\mathcal{K}_3 = \{1, 1, 1\}$} & \multicolumn{3}{c}{$\mathcal{K}_3 = \{2,1,1\}$} & \multicolumn{3}{c}{$\mathcal{K}_3 = \{1,2,1\}$}\\
			\cmidrule(lr{4pt}){2-4} \cmidrule(lr{4pt}){5-7} \cmidrule(lr{4pt}){8-10}
			$T\backslash \bm{\eta}_t$ & Normal & Laplace & $t$ & Normal & Laplace & $t$ & Normal & Laplace & $t$\\
			\midrule
			100 & 91.92 & 20.02 & 18.69 & 92.25 & 28.16 & 24.24 & 93.75 & 23.73 & 30.84\\
			200 & 99.98 & 90.57 & 77.24 & 99.99 & 95.84 & 86.38 & 100 & 93.59 & 92.14\\
			300 & 100 & 99.16 & 94.69 & 100 & 99.86 & 97.84 & 100 & 99.62 & 99.20\\
			400 & 100 & 99.95 & 98.73 & 100 & 100 & 99.65 & 100 & 99.99 & 99.92\\
			\midrule
			$N=100$\multirow{2}{*} & \multicolumn{3}{c}{$\mathcal{K}_3 = \{1, 1, 1\}$} & \multicolumn{3}{c}{$\mathcal{K}_3 = \{2,1,1\}$} & \multicolumn{3}{c}{$\mathcal{K}_3 = \{1,2,1\}$}\\
			\cmidrule(lr{4pt}){2-4} \cmidrule(lr{4pt}){5-7} \cmidrule(lr{4pt}){8-10}
			$T\backslash \bm{\eta}_t$ & Normal & Laplace & $t$ & Normal & Laplace & $t$ & Normal & Laplace & $t$\\
			\midrule
			100 & 79.73 & 79.09 & 32.73 & 80.32 & 88.10 & 77.56 & 79.73 & 84.61 & 24.59\\
			200 & 100 & 99.99 & 99.57 & 100 & 100 & 100 & 100 & 100 & 99.15\\
			300 & 100 & 100 & 99.99 & 100 & 100 & 100 & 100 & 100 & 99.99\\
			400 & 100 & 100 & 100 & 100 & 100 & 100 & 100 & 100 & 100\\
			\bottomrule
		\end{tabular}
	}
	\end{table}

\end{document}